\newtheorem{thm}{Theorem}[section]
\newtheorem{cor}[thm]{Corollary}
\newtheorem{lemma}[thm]{Lemma}
\newtheorem{prop}[thm]{Proposition}
\newtheorem{dfn&prop}[thm]{Definition and Proposition}
\newtheorem{dfn&thm}[thm]{Definition and Theorem}
\theoremstyle{definition}
\newtheorem{defn}[thm]{Definition}
\newtheorem{observation}[thm]{Observation}
\newtheorem*{structure}{Structure of the article}
\newtheorem{discussion}[thm]{}
\theoremstyle{remark}
\newtheorem*{Claim}{Claim}
\newtheorem*{remark}{Remark}
\numberwithin{equation}{section}
\newtheorem*{thmIntroOrb}{Theorem \ref{thm_intro_orb}}
\newtheorem*{thmEstimates}{Theorem \ref{prop_expansion_Intro}}
\newtheorem*{examples}{Examples}
\newenvironment{subproof}{\begin{proof}[Proof of claim.]}{%
\end{proof}}
\newcommand{\Deriv}{{\rm D}}
\def\phi{\varphi}
\def\B{{\mathcal{B}}}
\def\C{{\mathbb{C}}}
\def\D{{\mathbb{D}}}
\def\N{{\mathbb{N}}}
\def\Z{{\mathbb{Z}}}
\def\R{{\mathbb{R}}}
\def\H{{\mathbb{H}}}
\def\Or{{\mathcal{O}}}
\def\Ort{{\widetilde{\mathcal{O}}}}
\newcommand{\dist}{\operatorname{dist}}
\newcommand{\e}{\operatorname{e}}
\newcommand{\degr}{\operatorname{deg}}
\newcommand{\lcm}{\operatorname{lcm}}
\newcommand{\id}{\operatorname{id}}
\newcommand{\Ima}{\operatorname{Im}}
\newcommand{\Rea}{\operatorname{Re}}
\newcommand{\Crit}{\operatorname{Crit}}
\newcommand*{\defeq}{\mathrel{\vcenter{\baselineskip0.5ex \lineskiplimit0pt
	\hbox{\scriptsize.}\hbox{\scriptsize.}}}%
=}
\newcommand{\eqdef}{=\mathrel{\vcenter{\baselineskip0.5ex \lineskiplimit0pt
	\hbox{\scriptsize.}\hbox{\scriptsize.}}}}
\title{Orbifold expansion and entire functions with bounded Fatou components}
\author[L. Pardo-Sim\'{o}n]{Leticia Pardo-Sim\'{o}n}
\address{Institute of Mathematics of the Polish Academy of Sciences \\ ul.  \'Sniadeckich 8 \\ 00-656 Warsaw \\ Poland \\ 
\textsc{\newline \indent 
\href{https://orcid.org/0000-0003-4039-5556%
}{\includegraphics[width=1em,height=1em]{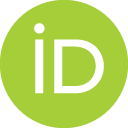} {\normalfont https://orcid.org/0000-0003-4039-5556}}
}}
\email{l.pardo-simon@impan.pl}
\subjclass[2010]{Primary 37F10; secondary 30D05, 57R18.}
\begin{document}

\begin{abstract}
Many authors have studied the dynamics of \emph{hyperbolic} transcendental entire functions; these are those for which the postsingular set is a compact subset of the Fatou set. Equivalenty, they are characterized as being \textit{expanding}. Mihaljevi\'c-Brandt studied a more general class of maps for which finitely many of their postsingular points can be in their Julia set, and showed that these maps are also expanding with respect to a certain orbifold metric. In this paper we generalise these ideas further, and consider a class of maps for which the postsingular set is not even bounded. We are able to prove that these maps are also expanding with respect to a suitable orbifold metric, and use this expansion to draw conclusions on the topology and dynamics of the maps. In particular, we generalize existing results for hyperbolic functions, giving criteria for the boundedness of Fatou components and local connectivity of Julia sets. As part of this study, we develop some novel results on \textit{hyperbolic orbifold metrics}. These are of independent interest, and may have future applications in holomorphic dynamics.	
\end{abstract}
\maketitle

\section{Introduction}
In the study of \textit{dynamical systems}, the notion of \textit{expansion} in its various forms is fundamental. For a holomorphic function, expansion has frequently been understood in terms of a conformal metric defined on a neighbourhood of its Julia set. More specifically, a polynomial $p$ is \textit{hyperbolic} if it is expanding with respect to a hyperbolic metric induced on a neighbourhood of its compact Julia set $J(p)$. This is equivalent to saying that every critical value of $p$ belongs to the basin of attraction of a periodic cycle, and in particular its orbit lies in the Fatou set $F(p)$ \cite[Theorem 1, p. 21]{Orsaynotes}. As a consequence of this expansion, whenever the Julia set of a hyperbolic polynomial is connected, then it is also locally connected \cite[Proposition 4, p. 19]{Orsaynotes}. For transcendental entire maps, infinity is an essential singularity and thus their Julia sets are no longer compact. Still, with slight modifications on the notion of expansion, that in particular requires the hyperbolic metric to be defined in a punctured neighbourhood of infinity, a definition and characterization of \textit{hyperbolic transcendental maps} are analogous to those in the polynomial case; see \cite[Theorem and Definition 1.3]{Lasse_Dave_ClassB} and Definition \ref{def_hyperb}. Again, expansion of hyperbolic transcendental maps was used in \cite{lasseNuriaWalter} to draw conclusions on the topology of their Julia and Fatou sets.

Regarding expansion arguments, it is crucial for a hyperbolic map $f$ that both its set of singular values $S(f)$, that is, the closure of the set of singularities of the inverse $f^{-1}$, and the closure of its forward orbit, called the \textit{postsingular set} $P(f) \defeq\overline{\bigcup_{n\geq 0}f^n (S(f))}$, are contained in its Fatou set. This is because then, all iterates of $f$ act as a covering map in a neighbourhood of $J(f)$, where the hyperbolic metric sits, that does not intersect $P(f)$. Even if such a neighbourhood no longer exists for \textit{subhyperbolic polynomials}, that is, those for which $P(p)\cap F(p)$ is compact and $P(p)\cap J(p)$ is finite, still Douady and Hubbard were able to extend these ideas to this more general setting. More precisely, inspired by work of Thurston \cite{thurstonGeom}, they overcame the presence of postsingular points in the Julia sets of subhyperbolic polynomials by considering $J(p)$ as a subset of a Riemann orbifold on which $p$ acts as an orbifold covering map. In particular, they proved subhyperbolic polynomials to be expanding with respect to a corresponding \text{orbifold metric} \cite{Orsaynotes}, see \S\ref{sec_back_orb} for basic definitions on orbifold metrics. Thanks to this expansion, they showed that the aforementioned result on local connectivity of Julia sets for hyperbolic polynomials generalizes to subhyperbolic ones \cite[Proposition 4, p. 19]{Orsaynotes}.

The notion of subhyperbolicity for transcendental entire functions was first introduced by Mihaljevi\'c-Brandt \cite{helenaSemi}. A transcendental entire map $f$ is said to be \textit{subhyperbolic} if $P(f)\cap F(f)$ is compact and $P(f)\cap J(f)$ is finite. For a transcendental entire function, the presence of asymptotic values or critical points with arbitrarily large local degree in its Julia set, prevents its Julia set to be successfully considered a subset of an orbifold \cite[Proposition 3.6]{helenaSemi}. However, orbifold expansion is achieved in \cite[Theorem 4.1]{helenaSemi} for subhyperbolic functions for which this does not occur. That is, subhyperbolic maps with \textit{bounded criticality} on their Julia sets, which are called \textit {strongly subhyperbolic}.

Note that since the postsingular set of subhyperbolic transcendental maps is bounded, all these maps belong to the broadly studied \textit{Eremenko-Lyubich class} $\mathcal{B}$. This class consists of all transcendental entire functions with bounded singular set, and this resemblance to the polynomial case has made this class a target of study in transcendental dynamics. Moreover, the fact that for subhyperbolic maps the postsingular set is also bounded is decisive in the arguments concerning estimates on orbifold metrics in \cite{Orsaynotes, helenaSemi}. In this paper, we generalize strongly subhyperbolic functions to a class of functions that contain critical values escaping to infinity, and thus their postsingular set might be \textbf{unbounded}. More precisely, we say that a transcendental entire function $f$ is \textit{postcritically separated} if $P(f)\cap F(f)$ is compact and $P_J \defeq J(f)\cap P(f)$ is discrete. If in addition $f$ has bounded criticality in $J(f)$, there is a uniform bound for the number of critical points in the orbit of any $z\in J(f)$, and there is $\epsilon>0$ so that for any distinct $z,w\in P_J$, $\vert z-w\vert\geq \epsilon \max\{\vert z \vert, \vert w \vert\}$, we say that $f$ is \textit{strongly postcritically separated.} See Definition \ref{def_strongps}. We note that these maps might have unbounded singular set, but expansion is achieved for those that are additionally in $\B$.

To each strongly postcritically separated map $f\in \B$, we associate a pair of orbifolds $\Or$ and $\Ort$ whose \textit{underlying surfaces} are respective neighbourhoods of $J(f)$, and so that we can \textit{extend} $f$ to be an orbifold covering map between them. With slight abuse of notation, we also denote this map between orbifolds by $f$, see \S\ref{sec_back_orb} for more details. Moreover, for a holomorphic (orbifold) map $f\colon \Ort \rightarrow \Or$ between two hyperbolic orbifolds, we define the hyperbolic derivative with respect to their orbifold metrics as 
\begin{equation*}
\Vert \Deriv f(z)\Vert^\Or_{\Ort}\defeq\vert f'(z)\vert \cdot\frac{\rho_{\Or}(f(z))}{\rho_{\Ort}(z)},
\end{equation*}
whenever the quotient is defined. If, in addition, both $z$ and $f(z)$ belong to the underlying surface of $\Or$, we also abbreviate $\Vert \Deriv f(z)\Vert_{\Or} \defeq  \Vert \Deriv f(z)\Vert^\Or_{\Or}$.

\begin{thm}[Orbifold expansion for strongly postcritically separated maps] \label{thm_main_intro_Orb} Let $f\in \B$ be a strongly postcritically separated map. Then, there exist a constant $\Lambda > 1$ and a pair of hyperbolic orbifolds $\Or$ and $\Ort$ such that $f \colon\Ort\to\Or$ is an orbifold covering map,
\begin{equation}\label{eq_exp_lambda}
\Vert \Deriv f(z)\Vert_{\Or}=\vert f'(z)\vert \cdot\frac{\rho_{\Or}(f(z))}{\rho_{\Or}(z)} \geq\Lambda
\end{equation}
whenever the quotient is defined, and $J(f)$ is contained in the underlying surfaces of both $\Or$ and $\Ort$.
\end{thm}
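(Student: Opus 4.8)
The plan is to follow the Thurston-type strategy used by Douady--Hubbard for subhyperbolic polynomials \cite{Orsaynotes} and by Mihaljevi\'c-Brandt for strongly subhyperbolic transcendental maps \cite{helenaSemi}: build an orbifold $\Or$ whose cone points absorb the ramification of $f$ along $P_J$, take $\Ort$ to be its pullback under $f$, and extract expansion from the interplay between the orbifold metric of $\Or$ and the pullback metric $\rho_{\Ort}$. The one genuinely new ingredient will be a set of estimates for hyperbolic orbifold metrics near infinity, which is where the real work lies.

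\textbf{Construction of the orbifolds.} Since $P(f)\cap F(f)$ is compact and disjoint from the closed set $J(f)$, I would first fix an open set $V_0\supseteq J(f)$ with $\overline{V_0}\cap(P(f)\cap F(f))=\emptyset$, so that $V_0\cap P(f)=P_J$; as $f\in\B$ the set $S(f)$ is bounded, so $V_0$ meets $S(f)$ in the finite set $S(f)\cap P_J$. As in \cite{helenaSemi}, by taking $\C\setminus V_0$ to be a suitable forward-invariant compact neighbourhood of $P(f)\cap F(f)$ in $F(f)$ (and, if $f$ has an omitted value, puncturing it out), one arranges that $f^{-1}(V_0)\subseteq V_0$ and that $f\colon f^{-1}(V_0)\to V_0$ is a branched covering map ramified only over $P_J$. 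Next, starting from $\nu\equiv 1$ and iterating the assignment $\nu(p)\mapsto\lcm\{\degr(f,q)\cdot\nu(q)\colon f(q)=p\}$, one obtains a ramification function $\nu\colon V_0\to\N$ supported on $P_J$; the standing hypotheses — bounded criticality of $f$ in $J(f)$ and a uniform bound on the number of critical points along orbits — ensure both that this process stabilises after finitely many steps and that the resulting $\nu$ is \emph{uniformly bounded}, which will be essential for the metric estimates. Put $\Or=(V_0,\nu)$, and let $\Ort$ be the orbifold with underlying surface $f^{-1}(V_0)$ and ramification $\tilde\nu(z)=\nu(f(z))/\degr(f,z)$. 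A short verification — using that every critical point of $f$ in $f^{-1}(V_0)$ has its image in $S(f)\cap V_0\subseteq P(f)\cap V_0=P_J$, together with the defining recursion for $\nu$ — shows that $\tilde\nu$ is integer-valued, that $\tilde\nu\geq\nu$ on $f^{-1}(V_0)$, and that $f\colon\Ort\to\Or$ is an orbifold covering map. Since $f^{-1}(J(f))=J(f)$, both $V_0$ and $f^{-1}(V_0)$ contain $J(f)$, and one checks that $\Or$, hence also $\Ort$, is hyperbolic.

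\textbf{The non-strict bound.} As an orbifold covering map, $f\colon\Ort\to\Or$ is a local isometry for the orbifold metrics, so $|f'(z)|\,\rho_{\Or}(f(z))=\rho_{\Ort}(z)$ wherever defined. Hence, for every $z$ with $z,f(z)\in V_0$ (so that $z\in f^{-1}(V_0)$ automatically),
\[
\Vert\Deriv f(z)\Vert_{\Or}=|f'(z)|\cdot\frac{\rho_{\Or}(f(z))}{\rho_{\Or}(z)}=\frac{\rho_{\Ort}(z)}{\rho_{\Or}(z)}.
\]
Because $f^{-1}(V_0)\subseteq V_0$ and $\tilde\nu\geq\nu$ there, with strict inclusion of surfaces (since $f^{-1}(V_0)\subsetneq V_0$) or strict inequality of ramification somewhere, monotonicity of hyperbolic orbifold metrics under shrinking the underlying surface and enlarging the ramification function gives $\rho_{\Ort}>\rho_{\Or}$ on $f^{-1}(V_0)$; thus $\Vert\Deriv f(z)\Vert_{\Or}>1$ pointwise.

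\textbf{Uniform expansion: the main obstacle.} It remains to produce a single constant $\Lambda>1$. I would split $J(f)$ into a large compact piece $J(f)\cap\{|z|\leq R\}$, on which $z\mapsto\Vert\Deriv f(z)\Vert_{\Or}$ is continuous and $>1$, hence bounded below by some $\Lambda_0>1$ (exactly as in \cite{Orsaynotes,helenaSemi}), and a neighbourhood of infinity. For the latter I would invoke $f\in\B$: by the standard expansion estimate for the Eremenko--Lyubich class, $f$ is strongly expanding near infinity with respect to the cylindrical metric $|dz|/|z|$, with expansion factor bounded below by a constant larger than $1$ (in fact tending to $\infty$). Transferring this to the orbifold metric requires the comparison $\rho_{\Or}(z)\asymp|dz|/|z|$ for $|z|$ large and $z$ away from the cone points, with multiplicative constants \emph{independent of $z$}, together with the corresponding sharp description of $\rho_{\Or}$ near each cone point of $P_J$. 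This is exactly where the separation hypothesis $|z-w|\geq\epsilon\max\{|z|,|w|\}$ on $P_J$ enters: in logarithmic coordinates the cone points of $\Or$ become uniformly separated and of uniformly bounded order, so each one perturbs $\rho_{\Or}$ only within a bounded neighbourhood of itself, and a patching argument yields the comparison with uniform constants — this is the content of the ``novel results on hyperbolic orbifold metrics'' announced in the introduction, and the step I expect to demand the most effort. Taking $\Lambda$ to be the minimum of $\Lambda_0$ and the bound obtained near infinity gives \eqref{eq_exp_lambda}, and since $J(f)\subseteq V_0\cap f^{-1}(V_0)$ it lies in the underlying surfaces of both $\Or$ and $\Ort$, completing the argument.
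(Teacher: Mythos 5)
Your construction and the pointwise inequality $\Vert \Deriv f(z)\Vert_{\Or}=\rho_{\Ort}(z)/\rho_{\Or}(z)>1$ follow the paper's Thurston-type strategy, but the step that actually produces a \emph{uniform} $\Lambda>1$ is where your argument breaks down. First, the comparison $\rho_{\Or}(z)\asymp |dz|/|z|$ near infinity with constants independent of $z$ is false: the separation hypothesis on $P_J$ only bounds from \emph{above} the number of cone points in an annulus $A(r,Kr)$ (Observation \ref{obs_separation}); it does not force cone points to exist near a given far-away point. For $f=\cosh$, for instance, $P_J$ lies on the real axis and is extremely sparse, so at $z=iR$ the orbifold density is comparable to the hyperbolic density of the underlying domain, which decays like $1/(|z|\log|z|)$, and $\rho_{\Or}(z)|z|\to 0$. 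So the Eremenko--Lyubich cylindrical expansion cannot be transferred with uniform constants as you claim. Second, even granting some corrected comparison, the EL estimate only controls points where $|f(z)|$ is large; points $z$ with $|z|$ arbitrarily large but $f(z)$ in a fixed bounded part of $J(f)$ fall into neither your compact piece (handled by continuity) nor your neighbourhood of infinity (handled by EL), and for these your argument yields only the non-uniform pointwise bound $>1$.

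The paper's proof closes exactly this gap by a different mechanism, and with an ingredient your construction omits: besides $P_J$, the ramification locus of $\Or$ includes an auxiliary repelling periodic cycle $B\subset J(f)\setminus P_J$ with $\nu\equiv 2$ on $B$ (Definition and Proposition \ref{prop_Or}). Since $p\in B$ is not postsingular, all but finitely many of its (infinitely many) preimages $z$ satisfy $\tilde{\nu}(z)=2>1=\nu(z)$, so they lie in $\mathbb{B}^\Or_\Ort$; moreover they populate every logarithmic tract with bounded gaps in modulus (Proposition \ref{cor_Kzi}). Combining this with the annulus estimates of Lemma \ref{lem_annulus} (which rest on Theorems \ref{thm_intro_orb} and \ref{thm_cont_or}) gives a uniform bound $d_\Or(z,\mathbb{B}^\Or_\Ort)\leq R$ for all $z\in\Ort$ -- bounded and unbounded parts alike -- and then Theorem \ref{prop_expansion_Intro} yields $\rho_\Ort(z)/\rho_\Or(z)\geq e^R/\sqrt{e^{2R}-1}=:\Lambda>1$. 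Note that this $\Lambda$ is only slightly larger than $1$, whereas your route would predict expansion tending to infinity near infinity, another sign that the cylindrical comparison is not compatible with the orbifold metric. Without the cycle $B$ (e.g.\ when $F(f)=\emptyset$, so $\tilde{S}=S=\C$ and $\mathbb{B}^\Or_\Ort$ consists only of points with $\tilde\nu>\nu$, over which you have no quantitative control), there is no argument in your proposal that upgrades pointwise strict inequality to a uniform constant.
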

Theorem \ref{thm_main_intro_Orb} has allowed us to provide in the sequel paper \cite{mio_splitting} a complete description of the topological dynamics of certain transcendental functions in class $\B$ with critical values that escape to infinity, being this the first result of the kind. Namely, the results in \cite{mio_splitting} hold for
strongly postcritically separated functions satisfying some additional condition that guarantees the existence of \textit{dynamic rays}, see Definition \ref{def_ray}, in their Julia sets. In this paper, 
we use Theorem \ref{thm_main_intro_Orb} to generalize some of the results in \cite{lasseNuriaWalter} on the topology of Julia and Fatou sets for hyperbolic functions, to the larger class of strongly postcritically separated maps. The first one is a generalization of \cite[Theorem 1.2]{lasseNuriaWalter}:
\begin{thm}[Bounded Fatou components] \label{thm_intro1.2} 
Let $f\in \B$ be strongly postcritically separated. Then the following are equivalent:
\begin{enumerate}[label=(\alph*)]
\item \label{itema_intro1.2} every component of $F(f)$ is a bounded Jordan domain;
\item \label{itemb_intro1.2} $f$ has no asymptotic values and every component of $F(f)$ contains at most finitely many critical points.
\end{enumerate}
\end{thm}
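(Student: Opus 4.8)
The plan is to prove the two implications separately, closely following the strategy of \cite[Theorem 1.2]{lasseNuriaWalter} but substituting orbifold expansion (Theorem \ref{thm_main_intro_Orb}) for the classical hyperbolic expansion available in the hyperbolic case. For the implication \ref{itema_intro1.2}$\Rightarrow$\ref{itemb_intro1.2}, I would argue by contraposition. If $f$ has an asymptotic value $a$, then there is an unbounded path $\gamma$ along which $f\to a$; this path lies in a single Fatou component (for instance by a normal families / maximum principle argument, since $f$ omits values near $a$ along a neighbourhood of the tract), giving an unbounded Fatou component and contradicting \ref{itema_intro1.2}. If instead some Fatou component $U$ contains infinitely many critical points, one uses the fact that a bounded Jordan domain $U$ must be (eventually) periodic and the restriction of some iterate $f^p$ to $U$ is a proper map of finite degree onto its image (another bounded Jordan domain); infinitely many critical points would force infinite degree, again a contradiction. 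Here I would be careful to invoke that for $f\in\B$ wandering domains can still occur in principle, so the reduction to the periodic case needs the Jordan-domain hypothesis together with a Riemann–Hurwitz count on the relevant iterate.

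For the substantive direction \ref{itemb_intro1.2}$\Rightarrow$\ref{itema_intro1.2}, the plan is: first show that every Fatou component $U$ is a bounded domain, then upgrade boundedness to "bounded Jordan domain." Boundedness is where orbifold expansion does the work. Suppose for contradiction that some component $U$ is unbounded. Since $f$ has no asymptotic values and $P(f)\cap F(f)$ is compact, $U$ cannot be (a preimage of) a component meeting the postsingular set in an essential way; passing to a forward iterate we may assume $U$ is a periodic component or maps into one, and the dynamics on $U$ is (super)attracting or parabolic — Siegel discs and Baker domains are excluded because $f\in\B$ has no Baker domains \cite{EremenkoLyubich} and bounded-type Siegel discs have boundary in $P(f)$, which meets $J(f)$ in a discrete set. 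On such $U$ one can pull back the orbifold metric $\rho_{\Or}$; the expansion estimate \eqref{eq_exp_lambda} shows that $f^{-n}$ branches defined near $J(f)$ contract, which forces components of $f^{-n}(U)$ to shrink and ultimately contradicts the existence of an unbounded component after finitely many steps, much as in the hyperbolic case but now measuring lengths in the orbifold metric near the (possibly unbounded) set $P_J$. The separation condition $|z-w|\ge\epsilon\max\{|z|,|w|\}$ on $P_J$ and the bounded-criticality hypothesis are exactly what guarantee the orbifold metric is comparable to the Euclidean one away from a controlled neighbourhood of $P_J$, so that orbifold-contraction translates into genuine Euclidean smallness.

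Once every component $U$ is known to be bounded, I would show it is a Jordan domain by the standard local-connectivity-of-the-boundary argument: $\partial U\subseteq J(f)$, and orbifold expansion implies $J(f)$ is locally connected along $\partial U$ — one covers $\partial U$ by small orbifold-balls, pulls them back under iterates using \eqref{eq_exp_lambda} to get a shrinking sequence of connected neighbourhoods, and concludes that $\partial U$ is a locally connected continuum; a bounded simply connected domain (simple connectivity of Fatou components of entire maps is classical) with locally connected boundary that is moreover "unshielded" in the appropriate sense is a Jordan domain. The transcendental subtlety here, compared with polynomials, is that $\infty$ is not an attracting fixed point, so one must separately rule out that $\partial U$ accumulates at $\infty$ — but that is precisely the boundedness of $U$ just established, so the argument closes.

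I expect the main obstacle to be the boundedness step, specifically making the pullback/contraction argument rigorous when $P_J$ is unbounded: the orbifold $\Or$ has punctures/cone points marching off to infinity, so one cannot simply invoke compactness to get a uniform Euclidean lower bound on $\rho_{\Or}$. The resolution should lean on the quantitative separation of $P_J$ built into the definition of strongly postcritically separated maps, which the construction behind Theorem \ref{thm_main_intro_Orb} is designed to exploit, giving scale-invariant comparisons between $\rho_{\Or}$ and $|dz|/|z|$ (or $|dz|$) on suitable annular regions; carrying this comparison through the iteration is the technical heart of the proof.
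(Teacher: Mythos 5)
Your direction \ref{itema_intro1.2}$\Rightarrow$\ref{itemb_intro1.2} is essentially fine, though it can be done more cheaply: for the critical-point half, zeros of $f'$ accumulate only at infinity, so a component with infinitely many critical points is automatically unbounded (no properness or periodicity argument is needed); and for the asymptotic-value half the correct justification is that bounded criticality on $J(f)$ forces $\mathrm{AV}(f)\subset F(f)$, so a tail of the asymptotic path lies in $f^{-1}$ of a Fatou neighbourhood of the asymptotic value --- your ``$f$ omits values near $a$ on the tract, hence normality'' reasoning does not by itself put the path in $F(f)$. Also note that the worry about wandering domains and parabolic components is resolved by Lemma \ref{lem_deff}: for $f\in\B$ postcritically separated, $F(f)$ is a finite union of attracting basins, so every component is preperiodic and the reduction to periodic components is immediate.

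The genuine gap is in your boundedness step for \ref{itemb_intro1.2}$\Rightarrow$\ref{itema_intro1.2}. The proposed contradiction --- that orbifold contraction of inverse branches makes ``components of $f^{-n}(U)$ shrink'' until an unbounded component is impossible --- cannot work as stated: for a periodic component $U$ of period $p$ one has $U\subseteq f^{-np}(U)$, so these preimage components never shrink, and the expansion estimate \eqref{eq_exp_lambda} only contracts preimages of \emph{bounded curves avoiding $P(f)$} (Corollary \ref{cor_uniform}), not Fatou components. Moreover, your intended bridge from orbifold to Euclidean smallness is false near infinity: $\rho_\Or$ decays there, so it is not comparable to $|dz|$ away from $P_J$. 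The argument that actually closes this step (Theorem \ref{thm_my1.10}, implication \ref{item_7per}$\Rightarrow$\ref{item_1per}) does not prove boundedness first and local connectivity afterwards; it shows that ``no asymptotic curves and finitely many critical points'' makes $f\colon D\to D$ proper of some degree $d$ (Proposition \ref{prop_5.2_3}), exhausts $D$ by bounded Jordan domains $U_n=f^{-n}(U_0)\cap D$ with $U_0$ from Proposition \ref{prop_Jordan}, conjugates $f$ to $z\mapsto z^d$ on $D\setminus\overline{U_0}$, and applies Corollary \ref{cor_uniform} to the radial arcs $\gamma_{n,\theta}$ to make the boundary parametrizations $\sigma_n$ of $\partial U_n$ a Cauchy sequence; the limit exhibits $\partial D$ as a continuous closed curve, so boundedness and the Jordan property come out simultaneously (with the maximum principle supplying that $\C\setminus\overline D$ has no bounded components --- note that ``bounded, simply connected, locally connected boundary'' alone does not give a Jordan domain). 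Unboundedness is separately ruled out in that theorem via Theorem \ref{thm_2.6}, not by a metric-contraction contradiction. Finally, strictly preperiodic components are handled by Proposition \ref{prop_5.2_3}\ref{item_B} and induction on the pre-period, a step your outline omits.
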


As a consequence of this theorem, we obtain the following result on local connectivity of Julia sets, that generalizes \cite[Corollary 1.8]{lasseNuriaWalter}.

\begin{cor}[Bounded degree implies local connectivity]\label{cor_intro1.8} Let $f\in \B$ be strongly postcritically separated with no asymptotic values. Suppose that there is a uniform bound on the number of critical points, counting multiplicity, in the Fatou components of $f$. Then $J(f)$ is locally connected.
\end{cor}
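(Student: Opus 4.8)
The plan is to deduce Corollary~\ref{cor_intro1.8} from Theorem~\ref{thm_intro1.2}, together with the orbifold expansion provided by Theorem~\ref{thm_main_intro_Orb}, by adapting to the orbifold setting the argument used for hyperbolic functions in \cite{lasseNuriaWalter}. First I would check that the hypotheses place us in condition~\ref{itemb_intro1.2} of Theorem~\ref{thm_intro1.2}: $f$ has no asymptotic values, and a uniform bound $N$ on the number of critical points in the components of $F(f)$ in particular makes each component contain only finitely many. Hence every component $U$ of $F(f)$ is a bounded Jordan domain. Since such a $U$ then maps properly onto a component $V=f(U)$ of $F(f)$ and both are simply connected, the Riemann--Hurwitz formula gives $\deg(f|_U)\le N+1$; that is, $f$ acts between Fatou components with \emph{uniformly} bounded degree.

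Write $\hat J\defeq J(f)\cup\{\infty\}\subseteq\hat{\C}$, a continuum because every component of $J(f)$ is unbounded. By a standard criterion from continuum theory it is enough to establish Property~S for $\hat J$: for every $\epsilon>0$, $\hat J$ is a union of finitely many connected sets of spherical diameter $<\epsilon$. Following \cite{lasseNuriaWalter}, I would fix an initial finite ``puzzle'' covering a large compact part of $\hat J$ --- built from the closures of finitely many Fatou components and the pieces into which a suitable finite graph cuts $J(f)$ --- cover a spherical neighbourhood of $\infty$ by finitely many connected pieces using the control of the dynamics near $\infty$, and then pull the puzzle back under the iterates of~$f$. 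By Theorem~\ref{thm_main_intro_Orb}, $f\colon\Ort\to\Or$ is an orbifold covering map which, by~\eqref{eq_exp_lambda}, expands the orbifold metric $\rho_{\Or}$ by a definite factor $\Lambda>1$, so the orbifold diameters of the pulled-back pieces tend to $0$; and since $f$ acts with uniformly bounded degree, the relevant inverse branches have uniformly bounded distortion (a Koebe-type distortion estimate for maps of bounded degree), so the pieces stay connected and do not degenerate. Comparing $\rho_{\Or}$ with the spherical metric to convert orbifold decay into spherical decay, one gets the required finite covers by connected sets of arbitrarily small spherical diameter; the nesting and finiteness of the puzzle, and local connectivity at finite points, are then organised exactly as in \cite{lasseNuriaWalter}.

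The main obstacle, and the only genuinely new point compared with \cite{lasseNuriaWalter}, is that $\rho_{\Or}$ is singular at the postsingular points contained in $J(f)$, and for strongly postcritically separated maps the set $P_J=J(f)\cap P(f)$ is merely discrete and may be unbounded, accumulating only at~$\infty$. Consequently the comparisons between the orbifold, Euclidean and spherical metrics, and the distortion control for inverse branches travelling near cone points, must be made \emph{uniformly} over all of $P_J$ and near~$\infty$. This is precisely where the strong separation hypothesis enters --- the requirement that distinct $z,w\in P_J$ satisfy $\lvert z-w\rvert\ge\epsilon\max\{\lvert z\rvert,\lvert w\rvert\}$ keeps the cone points spherically well separated --- together with the estimates on hyperbolic orbifold metrics established earlier in the paper. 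Once those uniform bounds are in place, the shrinking-puzzle argument goes through and shows that $J(f)$ is locally connected.
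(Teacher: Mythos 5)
Your first step is exactly the paper's: condition \ref{itemb_intro1.2} of Theorem \ref{thm_intro1.2} gives that every Fatou component is a bounded Jordan domain, and (using Lemma \ref{lem_deff}, Proposition \ref{prop_5.2_3}\ref{item_1.5.2} and simple connectivity of the components) the Riemann--Hurwitz formula bounds the degree of $f$ on each component by $N+1$. After that, however, the paper concludes in one line by invoking Theorem \ref{thm_morosawalc}, its version of the Bergweiler--Morosawa criterion (Jordan Fatou components plus uniformly bounded degrees on them imply $J(f)$ locally connected), a criterion it supports by first proving that strongly postcritically separated maps are semihyperbolic. You instead propose to re-derive such a criterion from scratch by a shrinking-puzzle argument based on the orbifold expansion of Theorem \ref{thm_main_intro_Orb}; the paper's remark after Theorem \ref{thm_morosawalc} acknowledges this route is plausible, but you do not carry it out, and as written it has genuine gaps.

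Concretely: (i) you attribute the shrinking and non-degeneration of pulled-back puzzle pieces to a ``Koebe-type distortion estimate for maps of bounded degree'', but the degree bound you actually established ($\leq N+1$ on Fatou components) says nothing about the degree of $f^n$ on preimage components of small discs centred at points of $J(f)$, which is what controls inverse branches along the Julia set; that uniform bound is precisely semihyperbolicity, which follows from Definition \ref{def_strongps}\ref{itema_defsps}--\ref{itemb_defsps} and is what the paper proves and feeds into Theorem \ref{thm_morosawalc}, and your argument never supplies it. (ii) The construction of the initial puzzle near $\infty$, the uniform comparison of $\rho_{\Or}$ with the spherical metric near the unboundedly many cone points and near $\infty$, and the fact that the resulting pieces behave as required are asserted to work ``exactly as in'' the hyperbolic case; these are exactly the new difficulties of the present setting --- the place where Theorems \ref{prop_expansion_Intro} and \ref{thm_intro_orb} and Lemma \ref{lem_annulus} would have to be deployed --- and naming them is not the same as resolving them. (iii) Aiming at Property S for $J(f)\cup\{\infty\}$ in fact proves local connectivity at $\infty$ as well, which is more than the corollary claims and requires uniform control of the puzzle in a full spherical neighbourhood of $\infty$ that you never establish. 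In short, your first paragraph coincides with the paper's proof, but the remainder is a programme whose key lemma (the analogue of Theorem \ref{thm_morosawalc}) is left unproved, whereas the paper's own proof is a short reduction to that theorem.
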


The next result provides further sufficient conditions for the local connectivity of Julia sets. Compare to \cite[Corollary 1.9(a)]{lasseNuriaWalter}.
\begin{cor}[Locally connected Julia sets]\label{cor_intro1.9} Let $f\in \B$ be strongly postcritically separated with no asymptotic values, suppose that every component of $F(f)$ contains at most one critical value, and that the multiplicity of the critical points of $f$ is uniformly bounded. Then $J(f)$ is locally connected.
\end{cor}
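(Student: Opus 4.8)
The plan is to deduce Corollary~\ref{cor_intro1.9} from Corollary~\ref{cor_intro1.8}. Since $f\in\B$ is strongly postcritically separated and has no asymptotic values by hypothesis, it suffices to produce a uniform bound on the number of critical points of $f$, counted with multiplicity, lying in a single Fatou component. Fix $m\in\N$ such that every critical point of $f$ has multiplicity at most $m$; I will show that every Fatou component of $f$ contains \emph{at most one} critical point of $f$, of multiplicity at most $m$, so that the uniform bound $M=m$ works in Corollary~\ref{cor_intro1.8}.

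First I would record two facts. Since $f\in\B$, the escaping set of $f$ lies in $J(f)$, whereas any multiply connected Fatou component would be a wandering domain on which the iterates tend to $\infty$ and hence would lie in the escaping set; consequently every Fatou component of $f$ is simply connected. Next, let $U$ be a Fatou component containing a critical point of $f$ and let $U'$ be the Fatou component with $f(U)\subseteq U'$. The image under $f$ of any critical point of $f$ lying in $U$ belongs to $f(U)\subseteq U'$, and $U'$ contains at most one critical value of $f$; hence all critical points of $f$ in $U$ map to a single value $v\in U'$. Moreover $U$ is a connected component of $f^{-1}(U')$, because a connected open set containing $U$ and mapped by $f$ into $U'\subseteq F(f)$ must itself lie in $F(f)$, hence in $U$.

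The heart of the matter is a covering-space count for $g\defeq f|_U\colon U\to U'$. Since $f$ has no asymptotic values and every critical point of $g$ lies in $f^{-1}(v)$, one checks that $g$ restricts to a covering map $U\setminus f^{-1}(v)\to U'\setminus\{v\}$; here the absence of asymptotic values of $f$, together with $\partial U\subseteq J(f)$ (which prevents a finite asymptotic value of $g$ from landing in $U'\subseteq F(f)$), is used to rule out missing sheets. As $U'$ is simply connected, $U'\setminus\{v\}$ is conformally a punctured disc, so a connected covering of it is either of finite degree or is its universal cover; the latter is simply connected, which would force $f^{-1}(v)\cap U=\emptyset$ and contradict the presence of a critical point of $f$ in $U$. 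Thus the covering has some finite degree $d$, and comparing Euler characteristics gives $\chi(U\setminus f^{-1}(v))=d\cdot\chi(U'\setminus\{v\})=0$; since $U$ is simply connected this forces $f^{-1}(v)\cap U$ to be a single point $c$, which is therefore the unique critical point of $f$ in $U$ and satisfies $\deg_c f=d$, so its multiplicity equals $d-1\le m$. Hence each Fatou component of $f$ contains at most $m$ critical points counted with multiplicity (in particular only finitely many, so one may also invoke Theorem~\ref{thm_intro1.2}), and Corollary~\ref{cor_intro1.8} yields the local connectivity of $J(f)$.

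The step I expect to be the main obstacle is the verification that $g$ genuinely restricts to a \emph{covering} map onto $U'\setminus\{v\}$, i.e.\ that it is surjective with no missing sheets and not merely a local homeomorphism; this is exactly where the hypothesis that $f$ has no asymptotic values must be exploited, with a little extra care for unbounded components $U$ if such occur. Once that is in hand, the Euler-characteristic bookkeeping and the reduction to Corollary~\ref{cor_intro1.8} are routine.
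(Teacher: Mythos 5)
Your proposal is correct and takes essentially the same route as the paper: reduce to Corollary~\ref{cor_intro1.8} by showing that each Fatou component contains at most one critical point, whose multiplicity is uniformly bounded by hypothesis. The only difference is that the paper gets the one-critical-point statement by citing Proposition~\ref{prop_5.2_3}\ref{item_A} (quoted from \cite{lasseNuriaWalter}), whereas you reprove it via the covering-space argument on $U\setminus f^{-1}(v)\to U'\setminus\{v\}$ — which is precisely the mechanism behind that cited result, and your identification of the no-asymptotic-values hypothesis together with the complete invariance of $J(f)$ as what guarantees the covering property is exactly the right point.
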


In order to successfully associate orbifolds to a holomorphic function so that some analogue of Theorem \ref{thm_main_intro_Orb} holds, the set of ramified points of the orbifolds, and hence the set of singularities of the corresponding orbifold metrics, must contain the postsingular points of the function that are also in its Julia set; see the discussion at the beginning of \S\ref{sec_orbifolds}. Since for strongly postcritically separated functions, these points might tend to the essential singularity at infinity, we require global estimates of the densities of metrics on hyperbolic orbifolds, in particular generalizing some known estimates for metrics on hyperbolic domains. These estimates, that hold for orbifolds $\Ort$, $\Or$ for which the inclusion $\Ort \hookrightarrow\Or$ is holomorphic,  come in terms of the \textit{boundary of $\Ort$ in $\Or$}, denoted $\mathbb{B}^\Or_\Ort$. The set $\mathbb{B}^\Or_\Ort$ consists of boundary points of the underlying surface of $\Ort$ that are in $\Or$, together with those points that have greater ramification value in $\Ort$ than in $\Or$; see Definition \ref{def_setD}.

\begin{thm}[Estimates on relative densities]\label{prop_expansion_Intro}
Let $\Ort$ and $\Or$ be hyperbolic orbifolds such that the inclusion $\Ort \hookrightarrow \Or$ is holomorphic, and denote by $\rho_{\Ort}$ and $\rho_\Or$ their respective densities. If $R$ is the $\Or$-distance between $z\in \Ort$ and $\mathbb{B}^\Or_\Ort$, then
\begin{equation}
1< \frac{e^R}{\sqrt{e^{2R}-1}} \leq \frac{\rho_{\Ort}(z)}{\rho_{\Or}(z)}\leq 1+ \frac{2}{e^R-1},
\end{equation}
whenever the quotient is defined.
\end{thm}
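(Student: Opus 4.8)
The plan is to reduce the statement to an explicit computation on the unit disc by passing to the universal orbifold covering of $\Or$, and then to a pair of one-variable inequalities. Concretely, I would take the universal orbifold covering $\pi\colon\D\to\Or$ normalised so that $\pi(0)=z$, and set $\widetilde{\Ort}\defeq\pi^{-1}(\Ort)$, the suborbifold of $\D$ obtained by pulling back the ramification data of $\Ort$. Since a quotient of two conformal metrics is a function that pulls back under $\pi$, and $\pi$ is a local isometry for the orbifold metrics, one has $\rho_{\Ort}(z)/\rho_{\Or}(z)=\rho_{\widetilde{\Ort}}(0)/\rho_{\D}(0)$ (the identity being read through $\pi$ if $z$ happens to be a ramified point of $\Or$). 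The geometric heart of the reduction is the identity $\dist_{\D}(0,\mathbb{B}^{\D}_{\widetilde{\Ort}})=R$: the bound "$\le$" follows by lifting an $\Or$-geodesic from $z$ to a nearest point of $\mathbb{B}^{\Or}_{\Ort}$, whose endpoint is a preimage of that point and lies in $\mathbb{B}^{\D}_{\widetilde{\Ort}}$; the bound "$\ge$" follows from $\mathbb{B}^{\D}_{\widetilde{\Ort}}\subseteq\pi^{-1}(\mathbb{B}^{\Or}_{\Ort})$ — which uses that $\pi$ is open and that the local degree of $\pi$ at a preimage of an order-$n$ cone point of $\Or$ is exactly $n$ — together with the fact that $\pi$ does not increase lengths of paths. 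After a rotation we may assume a nearest point of $\mathbb{B}^{\D}_{\widetilde{\Ort}}$ is $\varrho\defeq\tanh(R/2)\in(0,1)$, which makes both bounds explicit, since $e^{R}/\sqrt{e^{2R}-1}=(1+\varrho)/(2\sqrt\varrho)$ and $1+2/(e^{R}-1)=1/\varrho$.

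For the upper bound, note that the open hyperbolic $R$-ball about $0$ in $\D$ is the round disc $\{|w|<\varrho\}$; being connected, containing $0$, and meeting no point of $\mathbb{B}^{\D}_{\widetilde{\Ort}}$, it is contained in the underlying surface of $\widetilde{\Ort}$ and contains no cone point of $\widetilde{\Ort}$, so the inclusion $\{|w|<\varrho\}\hookrightarrow\widetilde{\Ort}$ is holomorphic and the Schwarz--Pick inequality for orbifolds gives $\rho_{\widetilde{\Ort}}(0)\le\rho_{\{|w|<\varrho\}}(0)=2/\varrho$; thus the ratio is at most $1/\varrho=1+2/(e^{R}-1)$.

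For the lower bound, the nearest point $\varrho$ of $\mathbb{B}^{\D}_{\widetilde{\Ort}}$ is either a boundary point of the underlying surface of $\widetilde{\Ort}$, whence $\widetilde{\Ort}\hookrightarrow\D\setminus\{\varrho\}$, or a cone point of $\widetilde{\Ort}$ of some order $k\ge 2$, whence $\widetilde{\Ort}\hookrightarrow\D(k)_{\varrho}$, the good hyperbolic orbifold carrying a single order-$k$ cone point at $\varrho$ (uniformised by $w\mapsto(w^{k}+\varrho)/(1+\varrho w^{k})$). In either case Schwarz--Pick gives $\rho_{\widetilde{\Ort}}(0)\ge\rho_{\D\setminus\{\varrho\}}(0)$, respectively $\rho_{\widetilde{\Ort}}(0)\ge\rho_{\D(k)_{\varrho}}(0)$, and a direct computation of these two densities reduces the claim, after the substitution $x=\sqrt\varrho$ (resp. $x=\varrho^{1/k}$), to $\tfrac1x-x+2\log x\ge 0$ on $(0,1]$ — immediate from $g(1)=0$ and $g'(x)=-(x-1)^{2}/x^{2}\le 0$ — respectively to its finite-$k$ refinement $2x^{k/2}(1-x^{k})\ge kx^{k-1}(1-x^{2})$, which holds with equality for $k=2$ and strictly for $k\ge 3$ by a routine estimate; the inequality $e^{R}/\sqrt{e^{2R}-1}>1$ is trivial. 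In particular the lower bound is sharp, attained already by adjoining one order-$2$ cone point at distance $R$.

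I expect the main difficulty to lie in the bookkeeping of the reduction rather than in the analytic estimates: verifying carefully that the relative boundary $\mathbb{B}^{\D}_{\widetilde{\Ort}}$ transforms correctly under the covering (the identity $\dist_{\D}(0,\mathbb{B}^{\D}_{\widetilde{\Ort}})=R$, including the case where the infimum is approached rather than attained), dealing cleanly with the possibility that $z$ is itself a ramified point of $\Or$, and — for the lower bound — identifying the correct extremal model $\D(k)_{\varrho}$ and checking that it is genuinely a hyperbolic (good) orbifold. Once these points are settled, the remaining content is one-variable calculus.
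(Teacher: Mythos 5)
Your proposal follows essentially the same route as the paper's proof: lift via the universal covering $\pi\colon\D\to\Or$ based at $z$, pull back the ramification of $\Ort$, note that the relative boundary sits at hyperbolic distance $R$ from $0$, obtain the upper bound from the inclusion of the round disc of radius $\tanh(R/2)$, and the lower bound by comparing with either the punctured disc or the disc carrying a single cone point of order $k$ at $\tanh(R/2)$, with $k=2$ extremal. The one-variable inequalities you reduce to are exactly those the paper verifies (via the monotonicity of $s/\sinh s$), so the argument is correct and matches the published proof.
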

We also show in this paper that whenever singularities of an orbifold metric are ``continuously perturbed'', the orbifold metric of the new orbifold is a ``continuous perturbation'' of the metric of the original orbifold; see Theorem \ref{thm_cont_or}. In particular, that result has the following implication.
\begin{thm}[Distances are uniformly bounded across certain orbifolds]\label{thm_intro_orb} Given a compact set $A\subset U$ and constants $\epsilon>0$ and $c, M\in \N_{\geq 1}$, there is a constant $R\defeq R(U,A,\epsilon,c,M)>0$ such that for every orbifold $\Or$ with underlying surface $U$ and at most $M$ ramified points, each with ramification value at most $c$, and such that the Euclidean distance between any two of them is at least $\epsilon$, it holds that $$d_\Or(p,q)<R \quad \text{ for every } \quad p,q \in A.$$
\end{thm}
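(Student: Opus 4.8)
The plan is to reduce the statement to a compactness argument, combining the monotonicity (Schwarz--Pick) property of hyperbolic orbifold metrics from \S\ref{sec_back_orb}, the density estimates of Theorem~\ref{prop_expansion_Intro}, and the continuity of orbifold metrics under perturbation of their singularities (Theorem~\ref{thm_cont_or}). First I would pass to a fixed surface with compact closure: as $A$ is compact and $U$ open and connected, fix an open connected $U'$ with $A\subset U'$ and $\overline{U'}$ a compact subset of $U$ (a proper subdomain, hence again hyperbolic). Given an orbifold $\Or$ as in the statement, let $\Or'$ be the orbifold with underlying surface $U'$ whose ramified points are exactly those of $\Or$ lying in $U'$, with the same ramification values. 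The inclusion $\Or'\hookrightarrow\Or$ is orbifold-holomorphic, so $\rho_{\Or}\le\rho_{\Or'}$ on $U'$; hence every path $\gamma$ in $U'$ has $\Or$-length at most its $\Or'$-length, and therefore $d_{\Or}(p,q)\le d_{\Or'}(p,q)$ for all $p,q\in U'$. It thus suffices to bound $\diam_{\Or'}(A)$ uniformly over all orbifolds $\Or'$ whose underlying surface is the \emph{fixed} set $U'$ and which carry at most $M$ ramified points, with orders in $\{2,\dots,c\}$ and pairwise at Euclidean distance $\ge\epsilon$.

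Such orbifolds are determined by the finite configuration of their ramified points, and this configuration space fails to be compact only because a ramified point may approach $\partial U'$. Let $(\Or'_n)_n$ be any sequence of them; passing to a subsequence, the ramified points converge in the compact set $\overline{U'}$, say to $z^*_1,\dots,z^*_m$, and the orders stabilise. Let $\Or'_*$ be the orbifold with surface $U'$ and ramified points $\{z^*_i:z^*_i\in U'\}$ carrying the limiting orders. I claim $\rho_{\Or'_n}\to\rho_{\Or'_*}$ in the sense relevant for distances (i.e.\ locally uniformly away from the $z^*_i$, together with uniform integrable control near them), so that $d_{\Or'_n}\to d_{\Or'_*}$ uniformly on $A\times A$. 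For the ramified points converging to interior points of $U'$ this is exactly Theorem~\ref{thm_cont_or}. For those converging to $\partial U'$, write $\Or'_{n,\flat}$ for the orbifold with surface $U'$ carrying only the non-escaping ramified points, and $\Or'_{n,\sharp}$ for the orbifold obtained from $\Or'_{n,\flat}$ by puncturing $U'$ at the escaping points; by monotonicity, $\rho_{\Or'_{n,\flat}}\le\rho_{\Or'_n}\le\rho_{\Or'_{n,\sharp}}$ on the surface of $\Or'_{n,\sharp}$, which contains $A$ for $n$ large. Since the escaping points recede to infinite $U'$-distance, Theorem~\ref{prop_expansion_Intro} forces $\rho_{\Or'_{n,\sharp}}/\rho_{\Or'_{n,\flat}}\to1$ uniformly on compacta of $U'$, while $\rho_{\Or'_{n,\flat}}\to\rho_{\Or'_*}$ there by Theorem~\ref{thm_cont_or}; squeezing yields the claim. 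Hence $\limsup_n\diam_{\Or'_n}(A)\le\diam_{\Or'_*}(A)<\infty$. A routine extraction argument upgrades this to a finite bound $R_0$ for $\diam_{\Or'}(A)$ over all admissible configurations, and setting $R\defeq R_0+1$ --- a constant depending only on $U'$ (hence on $U,A$), on $\epsilon$, $c$ and $M$ --- we conclude $d_{\Or}(p,q)\le d_{\Or'}(p,q)\le R$ for every admissible $\Or$ and all $p,q\in A$.

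The one genuinely delicate point is the appeal to Theorem~\ref{thm_cont_or} when a ramified point sits in the interior of $A$: there $\rho_{\Or'_n}$ carries the moving --- though integrable --- singularity $\sim|z-z^{(n)}_i|^{1/k-1}$, and one must know that lengths of paths passing through it, and therefore $d_{\Or'_n}$, still converge and remain bounded as the point and its order $k\le c$ vary; this is precisely what that theorem secures. An alternative, more hands-on route bypasses the global compactness altogether: cover a compact connected neighbourhood of $A$ by a number $N=N(U,A,\epsilon)$ of Euclidean disks so small that each, together with its concentric double, has closure in $U$ and diameter $<\epsilon$ --- so each double meets at most one ramified point --- then use Schwarz--Pick to pass from $\Or$ to the one-ramified-point sub-orbifold carried by that double, bound the orbifold diameter of each disk by a constant $\Xi(c)$ via the local model $w\mapsto w^{k}$, and telescope along a chain of at most $N$ overlapping disks to obtain $R=N\,\Xi(c)$; in this route the ramified points far from $A$ never intervene, which explains why $R$ need not genuinely depend on $M$.
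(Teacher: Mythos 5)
Your proposal is correct in substance and rests on exactly the same three ingredients as the paper --- Theorem \ref{thm_cont_or}, Theorem \ref{prop_expansion_Intro} and the monotonicity of Corollary \ref{cor_pick} --- but it is organised quite differently. The paper never takes limits of configurations: it first bounds $d_{\Or}(p,q)$ for orbifolds whose ramified points all lie \emph{in} $A$, by applying Theorem \ref{thm_cont_or} with the single ramification value $X=c!$ (so that every admissible value divides $X$ and Corollary \ref{cor_pick} transfers the bound) and using that $A^2\times\mathcal{T}_\epsilon^N(A)$ is compact, hence the continuous function $h$ is bounded there; ramified points outside $A$ are then absorbed by comparing $\Or$ with the auxiliary orbifold $\hat\Or$ (ramification only on $A$) on the sub-orbifold carried by $W=\{z: d_{\hat\Or}(A,z)<\tilde R\}$, via Theorem \ref{prop_expansion_Intro}. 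You instead shrink to a relatively compact $U'$, run a sequential-compactness argument on configurations in $\overline{U'}$, and dispose of cone points escaping to $\partial U'$ by the squeeze $\rho_{\Or'_{n,\flat}}\le\rho_{\Or'_n}\le\rho_{\Or'_{n,\sharp}}$ together with Theorem \ref{prop_expansion_Intro} (their $\flat$-distance to compacta of $U'$ tends to infinity because $\rho_{\Or'_{n,\flat}}\ge\rho_{U'}$). Both routes work; yours trades the paper's explicit auxiliary orbifold for a limit/extraction argument, and your second, chaining route is genuinely more elementary and has the pleasant by-product that $R$ can be taken independent of $M$, which the paper's argument does not show.

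Three points need patching. First, Theorem \ref{thm_cont_or} is stated only for configurations in which \emph{all} ramified points carry one common value $X$ and lie in the fixed compact set, so ``this is exactly Theorem \ref{thm_cont_or}'' is not quite accurate when the surviving points carry distinct orders: either note that its proof goes through verbatim for fixed mixed orders, or do what the paper does and apply it with $X=c!$ plus Corollary \ref{cor_pick}; since your squeeze only needs an eventual \emph{upper} bound on $d_{\Or'_{n,\flat}}$ on $A\times A$ (not actual convergence), the $c!$ trick suffices. Second, in the squeeze you must justify that near-geodesics for $\Or'_{n,\flat}$ between points of $A$ stay in one fixed compact subset of $U'$ on which $\rho_{\Or'_{n,\sharp}}/\rho_{\Or'_{n,\flat}}\to1$ uniformly; this follows from the uniform bound on $d_{\Or'_{n,\flat}}(p,q)$ together with $\rho_{\Or'_{n,\flat}}\ge\rho_{U'}$, but it should be said, as the escaping cone points sit precisely where the ratio is large. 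Third, in the alternative route the constant $\Xi(c)$ is not supplied by the centred model $w\mapsto w^k$ alone: the cone point may sit anywhere in the doubled disc, so you need uniformity over its position --- a one-ramified-point instance of the theorem, obtainable from Theorem \ref{thm_cont_or} when the point stays in a compact part of the double, from Theorem \ref{prop_expansion_Intro} when it is near the double's boundary, or from an explicit radial estimate using \eqref{eq_density}. All of these are fillable with the tools already at hand, so I regard them as presentational rather than structural gaps.
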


Finally, we introduce in section \ref{sec_homotopies} a modified notion of \textit{homotopy} for which we obtain in Proposition \ref{cor_homot} an analogue of the Homotopy Lifting property for certain class of curves that contain postsingular points. Moreover, we show in Corollary \ref{cor_homot2} that if $U$ is a bounded set of a hyperbolic orbifold such that $P(f)\subset U$ is finite and there is a dynamic ray of $f$ landing at each of those postsingular points, then there exists a constant $\mu$ such that for any piece of dynamic ray of $f$ contained in $U$, we can find a curve on its ``modified homotopy class'' with orbifold length at most $\mu$. This result is of great value for expansion arguments, and, in particular, it is used in \cite{mio_splitting}.

\begin{structure}
In \S\ref{sec_setting} we provide the formal definition of strongly postcritically separated maps, their basic properties and give some examples. \S \ref{sec_back_orb} includes background on Riemann orbifolds, and \S\ref{sec_orb_estimates} studies orbifold metrics and contains the proofs of Theorems \ref{prop_expansion_Intro} and \ref{thm_intro_orb}. Using these results, in \S\ref{sec_orbifolds} we construct for each strongly postcritically separated map a pair of dynamically associated orbifolds and prove Theorem \ref{thm_main_intro_Orb}. \S\ref{sec_Fatou} contains the proofs of the results on Fatou components and local connectivity of Julia sets, that is, Theorem~\ref{thm_intro1.2} and Corollaries \ref{cor_intro1.8} and \ref{cor_intro1.9}. These proofs will easily follow from the study of \textit{periodic} Fatou components: Theorem \ref{thm_my1.10} gives several conditions equivalent to the boundedness of a periodic Fatou component. Finally, \S\ref{sec_homotopies} includes results regarding curves in homotopy classes with a uniform bound on their (orbifold) length.
\end{structure}

\noindent \textbf{Basic notation} As introduced throughout this section, the Fatou and Julia set of an entire function $f$ are denoted by $F(f)$ and $J(f)$ respectively. Moreover, we define its \textit{escaping set} as $I(f)\defeq \{z\in \C : f^n(z)\rightarrow \infty \text{ as } n\rightarrow \infty \}$. The set of critical values of $f$ is $CV(f)$, that of asymptotic values is $AV(f)$, and the set of critical points is $\Crit(f)$. The set of singular values of $f$ is $S(f)$, and $P(f)$ is its postsingular set. Moreover, we let $P_{J}\defeq P(f)\cap J(f)$ and $P_{F}\defeq P(f)\cap F(f)$. We denote the complex plane by $\C$, the Riemann sphere by $\widehat{\C}$ and the upper half-plane by $\H$. A disc of radius $\epsilon$ centred at a point $p$ will be $\D_\epsilon(p)$, the unit disc centred at $0$ will be abbreviated as $\D$, and $\D^\ast\defeq \D\setminus \{0\}$. We indicate the closure of a domain either by $\overline{U}$ or cl$(U)$ in such a way that it will be clear from the context, and these closures must be understood to be taken in $\C$. $A\Subset B$ means that $A$ is compactly contained in $B$. The annulus with radii $a<b \in \C$ will be denoted by $A(a,b)\defeq\lbrace w\in \C : a< \vert w \vert < b \rbrace.$ For a holomorphic function $f$ and a set $A$, $\text{Orb}^{-}(A)$ and $\text{Orb}^{+}(A)$ are the backward and forward orbit of $A$ under $f$; that is, $\text{Orb}^{-}(A)\defeq \bigcup^{\infty}_{n=0} f^{-n}(A)$ and $\text{Orb}^{+}(A)\defeq \bigcup^{\infty}_{n=0} f^{n}(A).$

\textbf{Acknowledgements.} I am very grateful to my supervisors Lasse Rempe and Dave Sixsmith for their continuous help and support. I also thank Daniel Meyer, Phil Rippon and the referee for very valuable comments.
\section{Strongly postcritically separated functions}\label{sec_setting}
\begin{defn}[Postcritically separated, subhyperbolic and hyperbolic maps]\label{def_hyperb} A transcendental entire function $f$ is \textit{postcritically separated} if $P_{J}\defeq P(f)\cap J(f)$ is discrete and $P_{F}\defeq P(f)\cap F(f)$ is compact. In the particular case when $P(f)\cap J(f)$ is finite, $f$ is called \textit{subhyperbolic}, and when $P(f)\cap J(f)=\emptyset$, $f$ is \textit{hyperbolic}.
\end{defn}

\begin{observation}[Dichotomy of points in $P_J$] \label{rem_setting} If $f$ is postcritically separated, then any $p\in P_J$ is either (pre)periodic, or it escapes to infinity: indeed, if $p\notin I(f)$, then there exists a subsequence of points in the orbit of $p$ that lies in a bounded set, and by discreteness of $P_J$ on that set, the claim follows. By the same argument, if in addition $f\in \mathcal{B}$, then there can be at most finitely many points in $S(f)\cap I(f)$.
\end{observation}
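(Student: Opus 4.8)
The plan is to combine the forward invariance of the postsingular set with the elementary fact that a discrete subset of $\C$ meets every compact set in a finite set. First I would record that $P(f)$ is forward invariant: by continuity of $f$ one has $f\bigl(\overline{\bigcup_{n\ge 0}f^n(S(f))}\bigr)\subseteq\overline{\bigcup_{n\ge 1}f^n(S(f))}\subseteq P(f)$. Since $J(f)$ is forward invariant as well, so is $P_J=P(f)\cap J(f)$, and therefore the whole forward orbit $\{f^n(p):n\ge 0\}$ of a point $p\in P_J$ remains inside $P_J$.

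Interpreting ``discrete'' as ``without accumulation points in $\C$'', the set $P_J\cap\overline{\D_r(0)}$ is finite for every $r>0$. Now let $p\in P_J$ with $p\notin I(f)$. By definition of the escaping set, the orbit of $p$ does not converge to $\infty$, so some subsequence $\{f^{n_k}(p)\}_{k}$, with $n_k\to\infty$, stays in a disc $\overline{\D_r(0)}$. These points all lie in the finite set $P_J\cap\overline{\D_r(0)}$, so the pigeonhole principle provides indices $n_k<n_l$ with $f^{n_k}(p)=f^{n_l}(p)$; that common value is a periodic point, and hence $p$ is preperiodic. This is exactly the claimed dichotomy.

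For the last assertion, suppose in addition that $f\in\B$, so that $S(f)$ is bounded, say $S(f)\subseteq\overline{\D_R(0)}$, and note that $S(f)\subseteq P(f)$ directly from the definition of the postsingular set. A point of $P(f)$ that escapes to $\infty$ cannot belong to the bounded compact set $P_F=P(f)\cap F(f)$, so $P(f)\cap I(f)\subseteq J(f)$, i.e.\ $P(f)\cap I(f)\subseteq P_J$; consequently $S(f)\cap I(f)\subseteq P_J\cap\overline{\D_R(0)}$, which is finite by the previous paragraph. (Alternatively, one may invoke $I(f)\subseteq J(f)$ for maps in $\B$ in place of the compactness of $P_F$.) I do not anticipate a genuine obstacle here: the argument is elementary, and the only points that deserve a line of care are the forward invariance of $P(f)$ and, in the second part, the remark that an escaping point cannot lie in the bounded set $P_F$ and so must belong to $J(f)$.
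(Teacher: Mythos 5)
Your proof is correct and follows essentially the same route as the paper's inline argument: forward invariance of $P_J$ plus the fact that the closed discrete set $P_J$ meets any bounded set in finitely many points, so a non-escaping orbit in $P_J$ must repeat a value, and for $f\in\B$ the escaping singular values land in $P_J\cap S(f)$, a finite set. The only difference is that you spell out the details the paper leaves implicit (forward invariance of $P(f)$, and why escaping postsingular points lie in $J(f)$, either via compactness and forward invariance of $P_F$ or via $I(f)\subset J(f)$ for class $\B$), which is a faithful filling-in rather than a different approach.
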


Recall that for a holomorphic map $f:\widetilde{S}\rightarrow S$ 
between Riemann surfaces, the \emph{local degree} of $f$ 
at a point $z_0\in \widetilde{S}$, denoted by $\deg(f,z_0)$, is the unique integer $n\geq 1$ 
such that the local power series development of $f$ is of the form
\begin{equation*}
f(z)=f(z_0) + a_n (z-z_0)^n + \text{(higher terms)},
\end{equation*}
where $a_n\neq 0$. Thus, $z_0\in \C$ is a critical point of $f$ if and only if $\degr(f,z_0)>1$. We say that $f$ has \textit{bounded criticality} in a set $A$ if $\text{AV}(f) \cap A=\emptyset$ and in addition, there exists a constant $M<\infty$ such that $\degr(f,z)<M \text{ for all } z \in A.$

\begin{defn}[Strongly postcritically separated functions]\label{def_strongps}
A postcritically separated transcendental entire map $f$ is \textit{strongly postcritically separated} with parameters $(c,\epsilon)$ if:
\begin{enumerate}[label=(\alph*)]
\item \label{itema_defsps} $f$ has bounded criticality in $J(f)$;
\item \label{itemb_defsps} for each $z\in J(f)$, $\#(\text{Orb}^+(z)\cap \Crit(f)) \leq c$;
\item \label{itemd_defsps} for all distinct $z,w\in P_J$, $\vert z-w\vert\geq \epsilon \max\{\vert z \vert, \vert w \vert\}$.
\end{enumerate}
\end{defn}
\begin{observation}[Separation of points in $P_J$]\label{obs_separation}
In the definition of strongly postcritically separated map, \ref{itemd_defsps} has the following implication: for every constant $K>0$, there exists a constant $M>0$, depending only on $\epsilon$ and $K$, so that 
\begin{equation}\label{itemc_defsps} 
\#(P_J \cap \overline{A(r,Kr)})\leq M \quad \text{ for all }r>0. 
\end{equation}
To see this, note that the annulus $\overline{A(1,K)}$ admits at most some number $M$ of points in $P_J$, so that these points are at pairwise distance at least $\epsilon$. Moreover, by \ref{itemd_defsps}, for any $r>0$ and all distinct $w,z\in (P_J \cap A(r,Kr))$, $\vert z/r-w/r\vert>\epsilon$. The combination of these two facts implies \eqref{itemc_defsps}. In particular, since, as $r$ increases, the annuli ``$A(r,Kr)$'' are of greater area, the orbit of any point $z\in S(f)\cap I(f)$ must converge to infinity at more than a \textit{constant rate}, i.e., for all $C\in \R^+$, there must exist $n\geq 0$ so that $\vert f^{n+1}(z)\vert> \vert f^n(z)\vert +C$.
\end{observation}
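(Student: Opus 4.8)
The plan is to establish the implication \ref{itemd_defsps}$\Rightarrow$\eqref{itemc_defsps} by a scaling reduction followed by a standard area--packing estimate, and then to read off the escape-rate statement by counting how many points of the orbit of an escaping singular value are forced to lie in a dyadic annulus $\cl{A(r,2r)}$.

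First I would prove \eqref{itemc_defsps}. Fix $K>0$ and $r>0$; if $z,w\in P_J\cap\cl{A(r,Kr)}$ are distinct, then $\max\{\vert z\vert,\vert w\vert\}\geq r$, so \ref{itemd_defsps} gives $\vert z-w\vert\geq\epsilon r$. Rescaling by $1/r$, the finite set $\tfrac1r\bigl(P_J\cap\cl{A(r,Kr)}\bigr)$ lies in the \emph{fixed} compact annulus $\cl{A(1,K)}$ and has all its points at pairwise Euclidean distance at least $\epsilon$. The open discs of radius $\epsilon/2$ centred at these rescaled points are therefore pairwise disjoint and contained in the disc of radius $K+\epsilon/2$ about the origin, so comparing areas bounds their number --- hence $\#\bigl(P_J\cap\cl{A(r,Kr)}\bigr)$ --- by $(1+2K/\epsilon)^2$, a quantity depending only on $\epsilon$ and $K$ and independent of $r$. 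Taking $M$ to be this value yields \eqref{itemc_defsps}.

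Next I would deduce the escape-rate consequence. Let $z\in S(f)\cap I(f)$; then $f^n(z)\in P(f)$ for all $n\geq 1$, $f^n(z)\to\infty$, and $n\mapsto f^n(z)$ is injective (a collision would force the orbit to be eventually periodic, hence bounded). Since $P_F=P(f)\cap F(f)$ is compact and $P(f)=P_F\cup P_J$, we have $f^n(z)\in P_J$ once $\vert f^n(z)\vert$ exceeds the radius of $P_F$. Assume for contradiction that there is $C>0$ with $\vert f^{n+1}(z)\vert\leq\vert f^n(z)\vert+C$ for all $n$. For $r$ large (in particular $r>\vert z\vert$, $r>C$, and $r$ beyond the radius of $P_F$), let $n_2$ be the first time with $\vert f^n(z)\vert\geq 2r$ and let $n_1<n_2$ be the last time before $n_2$ with $\vert f^n(z)\vert<r$ (well defined, since $n=0$ qualifies). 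For $n_1<n<n_2$ one has $r\leq\vert f^n(z)\vert<2r$, so $f^n(z)\in\cl{A(r,2r)}\cap P_J$; and since, telescoping, $\vert f^{n_2}(z)\vert-\vert f^{n_1}(z)\vert\leq (n_2-n_1)C$ while the left side exceeds $2r-r=r$, necessarily $n_2-n_1>r/C$. Hence $\cl{A(r,2r)}$ contains more than $r/C-1$ distinct points of $P_J$, which for $r$ large enough contradicts the uniform bound $M=M(\epsilon,2)$ from the previous step. Therefore, for every $C\in\R^+$ there is some $n$ with $\vert f^{n+1}(z)\vert>\vert f^n(z)\vert+C$.

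Both ingredients are essentially routine, so I do not expect a serious obstacle: the scaling step is immediate from the homogeneous form of \ref{itemd_defsps}, and the packing bound is classical. The only place needing a little care is the bookkeeping in the last step --- one must check that the orbit genuinely spends more than $r/C$ consecutive iterates inside $\cl{A(r,2r)}$ even though $n\mapsto\vert f^n(z)\vert$ need not be monotone --- which is precisely why the entry and exit times $n_1<n_2$ are defined as above and $r$ is chosen larger than $\vert z\vert$, than $C$, and than the radius of $P_F$.
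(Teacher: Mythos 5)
Your proposal is correct and follows essentially the same route as the paper: rescaling by $1/r$ reduces the count to an $\epsilon$-separated set in the fixed annulus $\overline{A(1,K)}$, and the escape-rate claim is obtained by playing the resulting uniform bound (with $K=2$) against an orbit that would otherwise deposit arbitrarily many distinct points of $P_J$ in a single annulus. You merely make explicit what the paper leaves implicit (the area-packing constant and the entry/exit-time bookkeeping), so no substantive difference.
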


\begin{remark}
When $f$ is subhyperbolic and Definition \ref{def_strongps}\ref{itema_defsps} holds, $f$ is called \textit{strongly subhyperbolic} \cite[Definition 2.11]{helenaSemi}. Note that for subhyperbolic maps, conditions \ref{itemb_defsps} and \ref{itemd_defsps} in Definition \ref{def_strongps} are trivially satisfied, and thus any strongly subhyperbolic map is a strongly postcritically separated one. 
\end{remark}

\begin{figure}[htb]
\begingroup%
\makeatletter%
\providecommand\color[2][]{%
\errmessage{(Inkscape) Color is used for the text in Inkscape, but the package 'color.sty' is not loaded}%
\renewcommand\color[2][]{}%
}%
\providecommand\transparent[1]{%
\errmessage{(Inkscape) Transparency is used (non-zero) for the text in Inkscape, but the package 'transparent.sty' is not loaded}%
\renewcommand\transparent[1]{}%
}%
\providecommand\rotatebox[2]{#2}%
\newcommand*\fsize{\dimexpr\f@size pt\relax}%
\newcommand*\lineheight[1]{\fontsize{\fsize}{#1\fsize}\selectfont}%
\ifx\svgwidth\undefined%
\setlength{\unitlength}{425.19685039bp}%
\ifx\svgscale\undefined%
\relax%
\else%
\setlength{\unitlength}{\unitlength * \real{\svgscale}}%
\fi%
\else%
\setlength{\unitlength}{\svgwidth}%
\fi%
\global\let\svgwidth\undefined%
\global\let\svgscale\undefined%
\makeatother%
\resizebox{0.75\textwidth}{!}{
\begin{picture}(1,0.46666667)%
\lineheight{1}%
\setlength\tabcolsep{0pt}%
\put(0,0){\includegraphics[width=\unitlength,page=1]{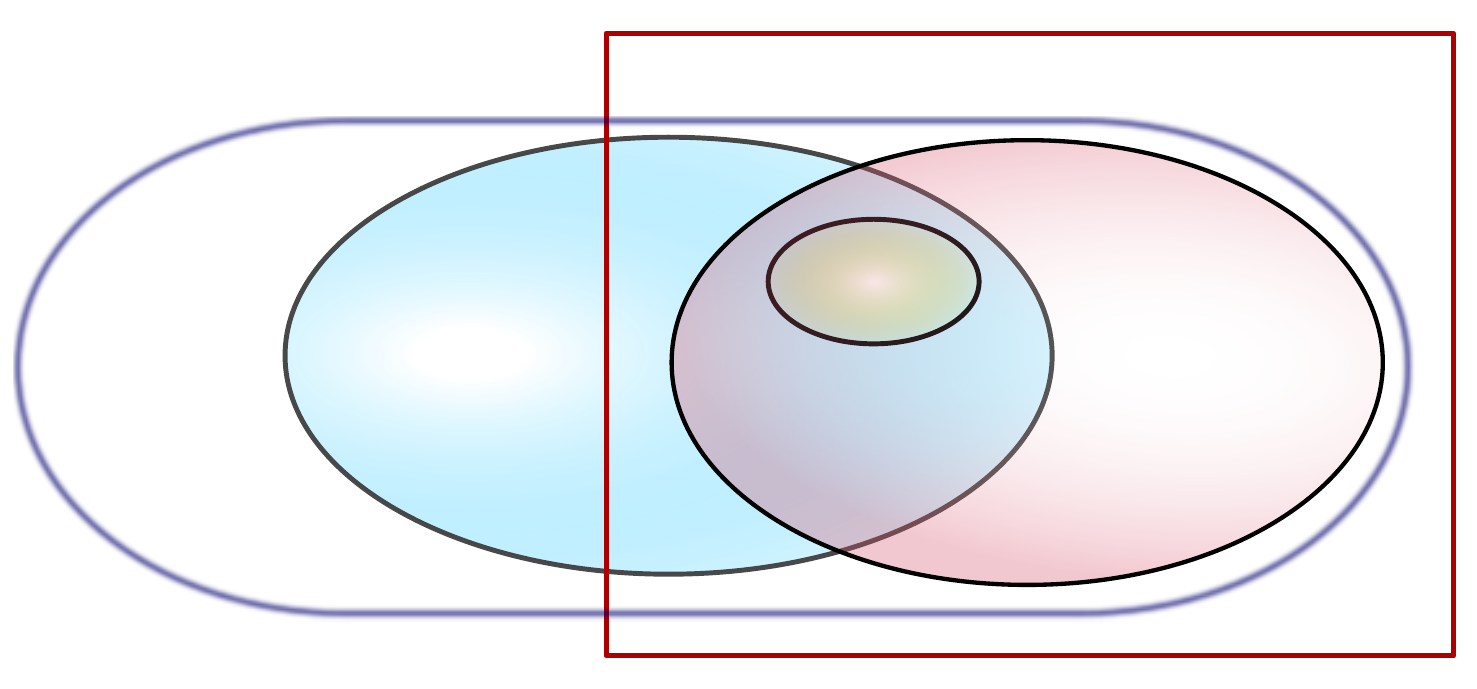}}%
\put(0.53420635,1.1414881){\color[rgb]{0,0,0}\makebox(0,0)[lt]{\begin{minipage}{0.29734129\unitlength}\raggedright \end{minipage}}}%
\put(0.58208333,1.05077378){\color[rgb]{0,0,0}\makebox(0,0)[lt]{\begin{minipage}{0.06551586\unitlength}\raggedright \end{minipage}}}%
\put(0.68287697,1.1540873){\color[rgb]{0,0,0}\makebox(0,0)[lt]{\begin{minipage}{0.46869054\unitlength}\raggedright \end{minipage}}}%
\put(0.86682536,1.13644841){\color[rgb]{0,0,0}\makebox(0,0)[lt]{\begin{minipage}{0.09323413\unitlength}\raggedright \end{minipage}}}%
\put(-0.30742064,0.7660318){\color[rgb]{0,0,0}\makebox(0,0)[lt]{\begin{minipage}{0.8617857\unitlength}\raggedright \end{minipage}}}%
\put(0.7509127,1.68073414){\color[rgb]{0,0,0}\makebox(0,0)[lt]{\begin{minipage}{0.1915079\unitlength}\raggedright \end{minipage}}}%
\put(0.23697899,0.2922147){\color[rgb]{0,0,0}\makebox(0,0)[lt]{\begin{minipage}{0.14788915\unitlength}\raggedright \end{minipage}}}%
\put(0.52724487,0.27173968){\color[rgb]{0,0,0}\makebox(0,0)[lt]{\lineheight{1.25}\smash{\begin{tabular}[t]{l}Hyperbolic\end{tabular}}}}%
\put(0.5874264,0.20245962){\color[rgb]{0,0,0}\makebox(0,0)[t]{\lineheight{0.75}\smash{\begin{tabular}[t]{c}Strongly \\subhyperbolic\end{tabular}}}}%
\put(0.73976232,0.21684647){\color[rgb]{0,0,0}\makebox(0,0)[lt]{\lineheight{0.75}\smash{\begin{tabular}[t]{l}Subhyperbolic\end{tabular}}}}%
\put(0.31330284,0.25982596){\color[rgb]{0,0,0}\makebox(0,0)[t]{\lineheight{1.25}\smash{\begin{tabular}[t]{c}Strongly \\postcritically\\separated\end{tabular}}}}%
\put(0.10891233,0.23704712){\color[rgb]{0,0,0}\makebox(0,0)[t]{\lineheight{1.25}\smash{\begin{tabular}[t]{c}Postcritically \\separated\end{tabular}}}}%
\put(0.60350199,0.41073413){\color[rgb]{0.68627451,0,0}\makebox(0,0)[lt]{\lineheight{1.25}\smash{\begin{tabular}[t]{l}\textbf{Class $\mathcal{B}$}\end{tabular}}}}%
\put(1.90952372,-0.53191121){\color[rgb]{0,0,0}\makebox(0,0)[lt]{\begin{minipage}{0.32481974\unitlength}\raggedright \end{minipage}}}%
\end{picture}%
}
\endgroup%
\caption{Illustration of the relationships between the classes of functions defined in this section.}
\label{fig:inclusions}
\end{figure}

\begin{remark}If $f$ is a strongly postcritically separated map, then so is $f^n$ for all $n\geq~1$. This follows from the facts that $\text{AV}(f^n)=\bigcup^{n-1}_{i=0} f^i(\text{AV}(f))$, $\text{CV}(f^n)=\bigcup^{n-1}_{i=0} f^i(\text{CV}(f))$, $J(f^n)=J(f)$ and $P(f^n)=P(f)$. 
\end{remark}

\begin{examples}The following functions belong to the classes of maps just defined:
\begin{itemize}[wide=0pt, leftmargin=\dimexpr\labelwidth + 2\labelsep\relax]
\item The exponential map is a postcritically separated map in class $\B$ that is neither strongly postcritically separated nor subhyperbolic, since its asymptotic value $0$ escapes to infinity and is in its Julia set; see for example \cite{Lasse_exp_chaotic}.
\item The function $f(z)\defeq \pi\sinh(z)$ has only two critical values and no asymptotic values. Moreover, $P(f)=\{0,\pm\pi i\} \subset J(f)$. Thus, $f$ is strongly subhyperbolic, and hence strongly postcritically separated; see \cite[Appendix A]{helenaSemi} for a description of the dynamics of this map.
\item For the function $f(z)\defeq \cosh(z)$, $S(f)=\text{CV}(f)=\{-1,1\}\subset I(f)$. Moreover, $f\in \B$ and is strongly postcritically separated, but not subhyperbolic; see \cite{mio_cosine} for more details on the dynamics of this map. In particular, Theorem \ref{thm_main_intro_Orb} applies to it.
\item Let $\text{erf}$ denote the error function \cite[p. 297]{abramowitz_stegun}, and let $\alpha\in \C$ be a complex solution to $\text{erf}(\alpha)=1$. In particular, we set $\alpha\approx 5.902- 0.262 i$. Let $g\colon \C\rightarrow \C$ given by
$$g(z)\defeq \frac{2 i \Ima(\alpha)}{\sqrt{\pi}}\int_0^z \e^{-w^2}dw + \Rea(\alpha)= i \Ima(\alpha)\text{erf}(z) + \Rea(\alpha).$$
Then $S(g)=\text{AV}(g)=\{\alpha, \overline{\alpha}\}$, where $\overline{\alpha}$ is the complex conjugate of $\alpha$. Since $\text{erf}(\overline{\alpha})=\overline{\text{erf}(\alpha)}=\overline{\alpha}$, both asymptotic values are fixed points in $J(g)$. Hence, $g$ is postcritically separated but not strongly postcritically separated; see \cite[p. 7]{Dave_infinityconnected} for more details on functions constructed this way.	
\item The function $\cosh(z) -1$ has as singular set two critical values, namely the point $0$, which is superattracting, and the point $-2$, which belongs to the escaping set of the function. Hence, this is another example of a strongly postcritically separated function in class $\B$.
\end{itemize}
\end{examples}
For a transcendental entire map $f$, we denote by $\mathcal{A}(f)$ the set of all points whose forward orbit converges to some attracting cycle of $f$. The following property will be of use to us when $f\in \B$ is postcritically separated, since as we shall see in Lemma \ref{lem_deff}, in that case $P_F \Subset F(f)=\mathcal{A}(f)$. By \textit{Jordan domain} we mean a complementary component of a Jordan curve on the sphere that is also a simply connected domain in $\C$. In particular, it might be bounded or unbounded.
\begin{prop}[Compact subsets of attracting basins {\cite[Proposition 3.1]{helena_thesis}}] \label{prop_Jordan}
Let $f$ be a transcendental entire function and let $C \subset \mathcal{A}(f)$ be a compact set. Then there exist bounded Jordan domains $U_1,\ldots, U_n$ compactly contained in pairwise different components of $\mathcal{A}(f)$ such that if $U \defeq \bigcup^n_{i=1} U_i$, then 
$$f(U) \Subset U \Subset \mathcal{A}(f) \quad \text{ and } \quad \text{Orb}^+(C) \Subset U.$$
\end{prop}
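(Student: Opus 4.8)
The plan is to first force the forward orbit of $C$ into a trapping neighbourhood of the attracting cycle(s), and then to thicken the resulting compact data, one Fatou component at a time, into bounded Jordan domains without destroying forward invariance. I will use two standard inputs: that $\mathcal{A}(f)$ is forward invariant and that on each of its components $f^n$ converges locally uniformly to a point of the relevant attracting cycle; and a theorem of Baker stating that every multiply connected Fatou component of a transcendental entire function is a wandering domain, so that, since every component of $\mathcal{A}(f)$ is preperiodic, all of them are simply connected. I also use that, $\mathcal{A}(f)$ being open, every compact subset of it meets only finitely many of its components.

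First I would invoke the standard construction of an exhausting trapping region for each attracting cycle met by $C$: Koenigs' linearisation (or B\"ottcher's, in the superattracting case) near the cycle points, pulled back repeatedly within the immediate basin, yields for each immediate-basin component $\Delta_l$ an increasing sequence of bounded Jordan domains $D^0_l\Subset D^1_l\Subset\cdots$ with $\bigcup_M D^M_l=\Delta_l$ and $f(D^M_l)\Subset D^M_{\sigma(l)}$, where $\sigma$ is the permutation that $f$ induces on the immediate-basin components. Since $C$ is compact in $\mathcal{A}(f)$ and the iterates converge uniformly on it to the cycle, there is $N$ with $f^N(C)\Subset D:=\bigcup_l D^0_l$. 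Set $K:=\overline D\cup\bigcup_{t=0}^N f^t(C)$. A direct check shows $K$ is a compact subset of $\mathcal{A}(f)$ with $\text{Orb}^+(C)\subseteq K$ and $f(K)\subseteq K$. Now let $V_1,\dots,V_n$ be the finitely many components of $\mathcal{A}(f)$ meeting $K$, put $K_j:=K\cap V_j$, and let $\pi$ be the self-map of $\{1,\dots,n\}$ with $f(V_j)\subseteq V_{\pi(j)}$; then $f(K_j)\subseteq K_{\pi(j)}$, and since every orbit in $\mathcal{A}(f)$ eventually enters an immediate basin, iterating $\pi$ reaches the immediate-basin indices, which $\pi$ permutes, so $\pi$ presents $\{1,\dots,n\}$ as a union of finite rooted trees attached to those cycles.

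Next I would thicken, processing the $V_j$ in any order that refines ``decreasing $\pi$-distance to the immediate basins'' and handles the immediate-basin components last. When a non-immediate-basin $V_j$ is reached, all of its $\pi$-preimages $i$ have already been treated, so $L_j:=K_j\cup\bigcup_{\pi(i)=j}f(\overline{U_i})$ is a compact subset of the simply connected domain $V_j$; choosing a Riemann map $\phi_j\colon\D\to V_j$ and $r<1$ with $\phi_j^{-1}(L_j)\subseteq\D_r(0)$, I set $U_j:=\phi_j(\D_r(0))$, a bounded Jordan domain with $L_j\subseteq U_j\Subset V_j$, so that $K_j\subseteq U_j$ and $f(U_i)\Subset U_j$ for each such $i$. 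For an immediate-basin component $\Delta_l$ I instead take $U_l:=D^M_l$ with $M$ large enough (using that the $D^M_l$ exhaust $\Delta_l$) that $D^M_l\supseteq K_l\cup\bigcup_{\pi(i)=l}f(\overline{U_i})$ for every $l$; then $K_l\subseteq U_l$ and $f(U_i)\Subset U_l$ for each child $i$, while $f(D^M_l)\Subset D^M_{\sigma(l)}$ deals with the intra-cycle transitions. In all cases $U_j\Subset V_j\subseteq\mathcal{A}(f)$, $K_j\subseteq U_j$, and $f(U_j)\Subset U_{\pi(j)}$.

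Then $U:=\bigcup_{j=1}^n U_j$ does the job: the $U_j$ are bounded Jordan domains compactly contained in the pairwise distinct components $V_j$ of $\mathcal{A}(f)$, so $U\Subset\mathcal{A}(f)$; from $f(U_j)\Subset U_{\pi(j)}$ and finiteness, $\overline{f(U)}=\bigcup_j\overline{f(U_j)}\subseteq\bigcup_j U_{\pi(j)}\subseteq U$, i.e.\ $f(U)\Subset U$; and $\text{Orb}^+(C)\subseteq K=\bigcup_j K_j\subseteq U$ with $K$ compact gives $\text{Orb}^+(C)\Subset U$. I expect the main obstacle to be exactly the tension among the three demands on $U$: it must be forward invariant, yet be a finite union of bounded \emph{Jordan} domains lying in \emph{pairwise distinct} components, so one can neither thicken a forward-invariant compact set by a uniform amount (since $f$ expands away from the cycles) nor Jordan-ify the components after the fact (that would break invariance). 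The recursive thickening above is what reconciles them; it leans essentially on Baker's theorem (to know the non-basin components are simply connected, so that winding compact subsets can be enclosed in Jordan subdomains) and on the exhausting trapping region near the cycles (the one place where the ``maps compactly into'' property is genuinely needed).
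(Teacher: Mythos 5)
Most of your scheme is sound: the compact forward-invariant set $K$, the finiteness of the components of $\mathcal{A}(f)$ meeting it, the tree structure of the index map $\pi$, and the Riemann-map thickening in the strictly preperiodic components (legitimate because, as you say, Baker's theorem makes every component of $\mathcal{A}(f)$ simply connected) all work. The genuine gap is the ingredient you dismiss as standard: the claim that pulling back the linearizing discs produces an exhaustion of each immediate-basin component $\Delta_l$ by \emph{bounded Jordan} domains $D^0_l\Subset D^1_l\Subset\cdots$ with $f(D^M_l)\Subset D^M_{\sigma(l)}$. The pullback components are indeed increasing, forward-trapped and exhaust $\Delta_l$, but they need be neither bounded nor Jordan, and your final choice $U_l:=D^M_l$ inherits the defect. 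Concretely, take $f(z)=\lambda(e^z-1)$ with $\lambda=1/10$: the origin is an attracting fixed point whose immediate basin contains the asymptotic value $-\lambda$ and the segment $[-\lambda,0]$, and every left half-plane $\{\Rea z<-T\}$ with $T$ large maps into a small disc about $-\lambda$. Once $M$ is large enough that $[-\lambda,0]\subset D^M$, the component of $f^{-1}(D^M)$ containing $0$ contains the whole negative real axis (which maps into $(-\lambda,0]\subset D^M$) and hence such a half-plane, so from some stage on the $D^M$ are unbounded; they are then neither bounded Jordan domains nor compactly contained in $\Delta_l$. (Even when bounded, Jordan-ness can fail once critical points reach the boundary of a pullback.) Thus the step that was supposed to resolve the cyclic constraint in the immediate basins — the one place, as you yourself note, where the ``maps compactly into'' property is genuinely needed — is exactly the step that is missing.

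The repair stays within your own toolkit. Since the periodic components are simply connected, the hyperbolic disc of radius $R$ about the cycle point $z_l$ in $\Delta_l$ is the image of a round disc $\D_{\tanh(R/2)}(0)$ under a Riemann map, hence a bounded Jordan domain compactly contained in $\Delta_l$, and these discs exhaust $\Delta_l$ as $R\to\infty$. Pick's theorem gives $f\bigl(\overline{B_{\Delta_l}(z_l,R)}\bigr)\subseteq \overline{B_{\Delta_{\sigma(l)}}(z_{\sigma(l)},R)}$, and since the return map $f^p\colon\Delta_l\to\Delta_l$ has an attracting fixed point it cannot be a covering map, so it contracts strictly on compact sets; choosing the radii along the cycle with a little care (using this strict contraction to absorb the wrap-around) yields the strict trapping $f(\overline{U_l})\subset U_{\sigma(l)}$ that your argument needs, after which the rest of your proof goes through verbatim. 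For reference, the paper itself offers no proof to compare against: the proposition is quoted from Mihaljevi\'c-Brandt's thesis.
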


The types of Fatou components that might occur for postcritically separated functions follow from classical results; see for example \cite[\S4 and Theorem 6]{Bergweilermerophormic} for definitions and a classification of periodic Fatou components. In particular, we say that a wandering domain $U$ is \textit{escaping} if $U \subseteq I(f)$. We denote by $P(f)′$ the derived set of $P(f)$, that is, the set of its finite limit points.
\begin{lemma}[Fatou components for postcritically separated maps] \label{lem_deff} Let $f$ be postcritically separated. Then $F(f)$ is either empty or might consist of a collection of attracting basins, Baker domains and escaping wandering domains. The number of attracting basins must be finite, and in the two latter cases, the domains do not contain singular values. In particular, $P_F$ is contained in a finite union of attracting basins, and every periodic cycle in $J(f)$ is repelling. If in addition $f \in \B$, $F(f)$ is either empty or a finite union of attracting basins.
\end{lemma}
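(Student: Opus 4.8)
The plan is to combine the classical classification of Fatou components of a transcendental entire map (see \cite{Bergweilermerophormic}) with the two defining features of postcritical separation --- discreteness of $P_J$ and compactness of $P_F$ --- funneled through one elementary observation: \emph{$P(f)$ has no finite accumulation point lying in $J(f)$.} To see this, if $q\in J(f)$ were a limit point of $P(f)$, pick $p_j\in P(f)\setminus\{q\}$ with $p_j\to q$; passing to a subsequence, either all $p_j\in J(f)$, so $q$ is a finite limit point of $P_J$, contradicting discreteness, or all $p_j\in F(f)$, so $q\in\overline{P_F}$ while $q\notin P_F$ (as $q\in J(f)$), contradicting compactness of $P_F$. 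I stress that neither hypothesis alone suffices here.

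First I would dispose of the unwanted periodic Fatou components. By the classical theory each periodic component is an attracting basin, a parabolic basin, a rotation domain, or a Baker domain (Herman rings cannot occur for entire maps). A parabolic cycle point lies in $J(f)$ and is the limit of the orbit of a singular value contained in the immediate parabolic basin, hence a limit point of $P(f)$: excluded by the observation. The boundary of a rotation domain is a non-discrete closed subset of $P(f)\cap J(f)$, hence contains a finite limit point of $P(f)$ inside $J(f)$: excluded. A Cremer cycle point lies in $J(f)$ and is accumulated by $P(f)$: excluded. Since attracting cycles lie in $F(f)$, this already yields that every periodic cycle in $J(f)$ is repelling, and that periodic Fatou components are only attracting basins or Baker domains.

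Next I would handle wandering domains $U$. I would invoke that every limit function of iterates on $U$ is constant, that a \emph{finite} such constant $c$ is a limit point of $P(f)$ \cite{Bergweilermerophormic}, and that $c\in J(f)$ because the sets $f^n(U)$ lie in pairwise distinct Fatou components and cannot accumulate inside one of them; the observation then forces $U\subseteq I(f)$, so every wandering domain is escaping (and, incidentally, ``escaping wandering domain'' is well defined, since escape of one point of $U$ forces escape of all). Now Baker domains, their preimages, and escaping wandering domains all lie in the completely invariant set $I(f)$; since the forward orbit of any point of $P_F$ stays in $F(f)\cap P(f)=P_F$, which is bounded, no point of $P_F$ --- a fortiori no singular value --- can lie in such a domain. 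Hence $P_F$ sits in the union of the attracting-basin components. Finiteness of the number of attracting basins follows because each attracting cycle lies in $P_F$ (limit of the orbit of a singular value in its immediate basin), distinct attracting cycles have cycle points in distinct Fatou components, and $P_F$ is compact, so these cycles cannot accumulate. Finally, if $f\in\B$ then $I(f)\subseteq J(f)$ by Eremenko--Lyubich, so there are no Baker domains and no escaping wandering domains, and since all wandering domains are escaping, $F(f)$ is a finite union of attracting basins, or empty.

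The main obstacle --- in fact the only genuinely non-elementary ingredient --- is the structure theorem for limit functions on wandering domains together with the classical facts pinning $P(f)$ near parabolic, Cremer, and rotation-domain cycles; everything else is bookkeeping around the observation above, the delicate part being to call upon discreteness of $P_J$ and compactness of $P_F$ at exactly the right moments.
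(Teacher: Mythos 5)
Your proposal is correct and follows essentially the same route as the paper: the observation that $J(f)\cap P(f)'=\emptyset$ (used there to rule out parabolic components, Siegel discs and Cremer points via \cite[Corollary 14.4]{milnor_book}), the limit-function theorem for wandering domains \cite{BergweilerWandering93} to force escape, the fact that $P_F\cap I(f)=\emptyset$ to keep singular orbits out of Baker and escaping wandering domains, and Eremenko--Lyubich for class $\B$. The only (minor) divergence is that you obtain finiteness of the attracting basins directly from compactness of $P_F$ plus ``distinct cycles sit in distinct components'', whereas the paper routes this through Proposition \ref{prop_Jordan} and the fact that every attracting cycle attracts a singular value; both arguments are sound.
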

\begin{remark}
We do not claim the existence of examples of postcritically separated functions with wandering domains nor Baker domains. Instead, this lemma discards the existence of some types of Fatou components for these functions. 
\end{remark}

\begin{proof}[Proof of Lemma \ref{lem_deff}]
Compactness of $P_F$ excludes parabolic components: suppose that $F(f)$ had a parabolic component $U$ of period $p$, with a parabolic fixed point $z_0 \in \partial U$ such that for every $z \in U$, $f^{np}(z) \rightarrow z_0$ as $n \rightarrow\infty$. Then, since every cycle of immediate attracting or parabolic basins contains a singular value, \cite[Theorem 7]{Bergweilermerophormic}, there would exist $w\in S(f)$ that would also belong to some component in the cycle of $U$, and hence $f^j(w)\in U$ for some $0\leq j< p$. But $\text{Orb}^+(w)\subset P_F$, and simultaneously, $\text{Orb}^+(w)$ would contain the subsequence $f^{p+j}(w), f^{2p+j}(w), \ldots$ converging to $z_0 \notin F(f)$, which would contradict compactness of $P_F$. Hence, $F(f)$ does not contain parabolic components. Note that by our assumptions of the discreteness of $P_J$ and the compactness of $P_F$, $J(f)\cap P(f)'=\emptyset$. Thus, since every boundary point of each Siegel disk is a limit point of $P(f)$, see \cite[Corollary 14.4]{milnor_book}, Siegel disks cannot occur for $f$.

If $U$ is a wandering domain of $f$, since $J(f)\cap P(f)'=\emptyset$, by \cite{BergweilerWandering93}, the only possible limit function of $\{f^n\vert_U\}_{n\geq 0}$ is infinity, and so $U$ must be escaping. 
Since $P_F$ is compact, $I(f)\cap P_F=\emptyset,$ and so if Baker or escaping wandering domains occur for $f$, they cannot contain singular orbits. Hence, $P_F\subset \mathcal{A}(f)$, and by Proposition \ref{prop_Jordan}, $P_F$ is contained in finitely many attracting basins. Since each cycle of attracting periodic components must contain a postsingular point, there cannot be any further attracting basins of $F(f)$. We have already discarded parabolic cycles in $J(f)$, as there are no parabolic components in $F(f)$. If $z_0$ was an irrationally indifferent periodic point in $J(f)$, then there would be a sequence $\{w_k\}_k \subset P(f)$ converging non-trivially to $z_0$; see \cite[Corollary 14.4]{milnor_book}. Since $P_J$ is discrete and $P_F$ is contained in the union of finitely many attracting basins, this is impossible, and so all periodic cycles in $J(f)$ must be repelling. By \cite{eremenkoclassB}, functions in class $\B$ do not have Baker domains nor escaping wandering domains, and so for postcritically separated functions in this class, only attracting basins can occur.
\end{proof}

\section{Background on Riemann orbifolds}\label{sec_back_orb}
An \emph{orbifold} is a space that is locally represented as a quotient of an open subset $S$ of $\R^n$ by a linear action of a finite group (see \cite[Chapter 13]{thurstonGeom}). For the purposes of this paper, we are only interested in orbifolds modelled over Riemann surfaces. In this case, orbifolds are conveniently totally characterized by the surface $S$ together with a map that ``marks'' a discrete set of points of $S$. For a more detailed introduction to this particular case, we refer the interested reader to \cite[Appendix A]{mcmullen1994complex} and \cite[Chapter 19 and Appendix E]{milnor_book}. For the case when the orbifold is constructed over a $2$-sphere, see also \cite[Appendix A.9]{bonkexpanding}.
\begin{defn}[Riemann orbifold]
A \emph{Riemann orbifold} is a pair $(S,\nu)$ consisting of a Riemann surface $S$, called the \textit{underlying surface}, and a \emph{ramification map}\footnote{Unlike in other texts, we only allow the ramification map to take finite values.}
$\nu \colon S\rightarrow\N_{\geq 1}$ such that the set 
\begin{equation*}
\lbrace z\in S\; :\; \nu(z)>1\rbrace
\end{equation*}
is discrete. A point $z\in S$ for which $\nu(z)>1$ is called a \emph{ramified} or \emph{marked point}, and $\nu(z)$ is its \textit{ramification value}. If $\nu(z)=1$ we say that $z$ is \textit{unramified}. The \emph{signature} of an orbifold is the list of values that the ramification map $\nu$ assumes at
the ramified points, where each of them is repeated as often as it is assumed by $\nu$.
\end{defn} 
\begin{remark} We shall often use the term ``orbifold'' synonymously with ``Riemann orbifold''. Note that a traditional Riemann surface is a Riemann orbifold with ramification map $\nu\equiv 1$. In some cases, we will allow underlying surfaces to be disconnected, and hence certain properties should be understood component-wise. 
\end{remark}	

In order to define holomorphic maps between orbifolds, we recall that $f\colon\widetilde{S}\rightarrow S$ between Riemann surfaces is a \emph{branched covering map} if every $z\in S$ has a connected neighbourhood $U\ni z$ such that $f$ maps any component of $f^{-1}(U)$ onto $U$ as a proper map. 

\begin{defn}[Holomorphic and covering orbifold maps]
Let $\Ort=(\widetilde{S},\tilde{\nu})$ and $\Or=(S, \nu)$ be Riemann orbifolds. A \emph{holomorphic map} $f \colon\Ort\rightarrow\Or$ is a holomorphic map $f \colon\widetilde{S}\rightarrow S$ between the underlying Riemann surfaces such that
\begin{equation}\label{eq_holom_orb}
\nu(f(z)) \text{ divides } \degr(f,z)\cdot \tilde{\nu}(z)\quad \text{ for all } \quad z\in \widetilde{S}.
\end{equation}
If in addition $f \colon\widetilde{S}\rightarrow S$ is a branched covering map such that 
\begin{equation}\label{eq_covering_orb}
\nu(f(z)) = \degr(f,z)\cdot\tilde{\nu}(z)\quad \text{ for all } \quad z\in \widetilde{S},
\end{equation}
then $f \colon\Ort\rightarrow\Or$ is an \emph{orbifold covering map}. If there exists an orbifold covering map between $\Ort$ and $\Or$, $\widetilde{S}$ is simply-connected and $\tilde{\nu}\equiv 1$, then $\Ort$ a \emph{universal covering orbifold} of $\Or$ and $f$ is a \textit{universal covering map}.
\end{defn}
We note that an orbifold covering map needs not be 
a covering map, in the usual sense, %
between the underlying surfaces. In fact, that will be the most frequent case for us.
\begin{observation}[Lifts of covering maps {\cite[Lemma E.2]{milnor_book}}]\label{obs_covering_lifting} Let $\Ort, \Or$ be a pair of orbifolds with universal covering orbifolds. Then $f : \Ort \rightarrow \Or$ is an orbifold covering map if and only if it lifts to a conformal isomorphism between the universal covering orbifolds.
\end{observation}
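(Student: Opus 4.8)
The plan is to derive both implications from two standard structural facts about Riemann orbifolds admitting universal covers: first, that orbifold covering maps compose, and that the composition of an orbifold covering map with a conformal isomorphism of orbifolds is again an orbifold covering map; and second, that the universal covering orbifold is unique up to conformal isomorphism over the base, equivalently that a universal covering orbifold map $p\colon X\to\Or$ realises $\Or$ as the quotient $X/\Gamma$ by the (properly discontinuous) action of its deck group $\Gamma\defeq\operatorname{Deck}(p)$, and that for any subgroup $H\le\Gamma$ the induced projection $X/H\to X/\Gamma=\Or$ is an orbifold covering map. Throughout I fix universal covering orbifold maps $p\colon X\to\Ort$ and $q\colon Y\to\Or$, so $X,Y$ are simply connected with trivial ramification.

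\emph{Forward direction.} Assume $f\colon\Ort\to\Or$ is an orbifold covering map. I would first check that $f\circ p\colon X\to\Or$ is again an orbifold covering map: on underlying surfaces it is a composition of branched coverings, hence a branched covering, and for the covering equation \eqref{eq_covering_orb} at a point $\hat z\in X$ one multiplies the equations for $p$ at $\hat z$ and for $f$ at $p(\hat z)$, using multiplicativity of the local degree, $\degr(f\circ p,\hat z)=\degr(f,p(\hat z))\cdot\degr(p,\hat z)$, and $\tilde\nu\equiv1$ on $X$. Since $X$ is simply connected with trivial ramification, $f\circ p$ is then a universal covering orbifold map of $\Or$, and uniqueness of the universal covering orbifold yields a conformal isomorphism $\widetilde f\colon X\to Y$ with $q\circ\widetilde f=f\circ p$; this $\widetilde f$ is the required lift.

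\emph{Converse direction.} Assume $f$ lifts to a conformal isomorphism $\widetilde f\colon X\to Y$ with $q\circ\widetilde f=f\circ p$. Then $f\circ p=q\circ\widetilde f$ is an orbifold covering map, being the composition of the orbifold covering map $q$ with a conformal isomorphism; it remains to ``cancel'' $p$. For every $\gamma\in\Gamma\defeq\operatorname{Deck}(p)$ we have $q\circ\widetilde f\circ\gamma=f\circ p\circ\gamma=f\circ p=q\circ\widetilde f$, hence $\widetilde f\,\gamma\,\widetilde f^{-1}\in\operatorname{Deck}(q)$; so $\Gamma'\defeq\widetilde f\,\Gamma\,\widetilde f^{-1}$ is a subgroup of $\operatorname{Deck}(q)$, and the projection $\rho\colon Y/\Gamma'\to Y/\operatorname{Deck}(q)=\Or$ is an orbifold covering map. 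Moreover $\widetilde f$ descends to a conformal isomorphism $\bar f\colon\Ort=X/\Gamma\to Y/\Gamma'$, $[\hat z]_\Gamma\mapsto[\widetilde f(\hat z)]_{\Gamma'}$, well defined because $\Gamma'=\widetilde f\Gamma\widetilde f^{-1}$, and a direct check gives $\rho\circ\bar f=f$. Thus $f$ is a composition of a conformal isomorphism with an orbifold covering map, hence an orbifold covering map.

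I expect the routine part to be the bookkeeping with \eqref{eq_covering_orb} and local degrees, which only invokes multiplicativity of $\degr$. The main obstacle is making precise the orbifold covering-space dictionary used in the converse --- that a universal covering orbifold map realises the base as a quotient by the deck group acting properly discontinuously, and that quotients by subgroups are orbifold covering maps with the ramification values recorded correctly --- which is exactly the content of \cite[Appendix E]{milnor_book}; in a self-contained write-up I would either cite it or, alternatively, argue by comparing the local normal forms $\zeta\mapsto\zeta^{k}$ of $p$ and of $f\circ p$ in suitable charts to verify directly that $f$ is proper over small neighbourhoods and satisfies \eqref{eq_covering_orb}.
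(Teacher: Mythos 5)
The paper gives no argument for this statement at all: it is recorded as an Observation quoted directly from \cite[Lemma E.2]{milnor_book}, so any proof you supply is by construction a different route from the paper's. Your argument is correct in substance and is essentially the standard covering-space proof underlying Milnor's lemma: in the forward direction you show $f\circ p$ is again a universal covering of $\Or$ and invoke uniqueness of the universal covering orbifold to produce the lift; in the converse you conjugate the deck group, $\Gamma'=\widetilde f\,\mathrm{Deck}(p)\,\widetilde f^{-1}\le \mathrm{Deck}(q)$, and factor $f$ as a conformal isomorphism $\Ort\to Y/\Gamma'$ followed by the subgroup-quotient covering $Y/\Gamma'\to Y/\mathrm{Deck}(q)=\Or$. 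The two places where you lean on unproved infrastructure are precisely the facts the paper would also take from \cite[Appendix E]{milnor_book}: (i) that the universal covering is regular, so that $p$ realises $\Ort$ as $X/\mathrm{Deck}(p)$ with $\tilde\nu$ given by orders of point stabilisers, and that quotients by subgroups of the deck group are orbifold covering maps; and (ii) your one-line claim that $f\circ p$ is a branched covering because ``compositions of branched coverings are branched coverings'' --- this is not purely formal, since for infinite-degree coverings a component of $p^{-1}(V)$ over a component $V$ of $f^{-1}(U)$ could a priori fail to map properly, and one must shrink $U$ and use the covering relation \eqref{eq_covering_orb} (so that a small loop about a ramified point of $\Ort$ has finite order in $\mathrm{Deck}(p)$) to see that such components are finite-degree branched discs. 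Since you flag this reliance explicitly and offer the local-normal-form alternative, the argument stands; what your route buys over the paper's bare citation is a self-contained verification, at the cost of importing the deck-group dictionary that the citation was meant to encapsulate.
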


\begin{remark}
With slight abuse of notation, we will sometimes write $z\in \Or$ to indicate that $z$ belongs to the underlying surface of $\Or$. Similarly, given a holomorphic map $f$ between orbifolds, we also denote by $f$ the holomorphic map between their underlying surfaces, and vice-versa.
\end{remark}

As a generalization of the Uniformization theorem for Riemann surfaces, with only two exceptions, every Riemann orbifold has a universal covering orbifold: 
\begin{thm}[Uniformization of Riemann orbifolds]\label{thm_uniform}
Let $\Or=(S,\nu)$ be a Riemann orbifold for which $S$ is connected. Then $\Or$ has no universal 
covering orbifold if and only if $\Or$ is isomorphic to $\widehat{\C}$ 
with signature $(l)$ or $(l,k)$, where $l\neq k$. In all other cases the 
universal cover is unique up to a conformal isomorphism over the surface $S$, and given by either $\widehat{\C}$, $\C$ or $\D$. In particular, if $S\subsetneq \C$ and $\#(\widehat{\C} \setminus S)>2$, then $\Or$ is covered by $\D$.
\end{thm}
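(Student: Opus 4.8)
The plan is to prove the three assertions in turn: that each of the two listed exceptional types of orbifold over $\widehat{\C}$ admits no universal covering orbifold; that every other Riemann orbifold does admit one, unique up to conformal isomorphism over $S$; and that the underlying surface of this cover is always one of $\widehat{\C}$, $\C$, $\D$, with $\D$ forced as soon as $\#(\widehat{\C}\setminus S)>2$.

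For the non-existence direction, suppose $\Or$ has underlying surface $\widehat{\C}$ with exactly one ramified point $p$ of value $l$ (respectively two ramified points $p,q$ of values $l\neq k$), and suppose for contradiction that $f\colon\widetilde{\Or}\to\Or$ is an orbifold covering map with $\widetilde{\Or}=(\widetilde{S},\mathbf{1})$ and $\widetilde{S}$ simply connected. The set $E\defeq f^{-1}(\{p\})$ (resp.\ $f^{-1}(\{p,q\})$) is discrete in $\widetilde{S}$, and $f$ restricts to a genuine covering map $\widetilde{S}\setminus E\to\widehat{\C}\setminus\{p\}\cong\C$ (resp.\ $\to\widehat{\C}\setminus\{p,q\}\cong\C^{\ast}$) with connected total space. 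Since $\C$ is simply connected and $\pi_{1}(\C^{\ast})\cong\Z$, one enumerates the (finitely described) connected covers, fills the punctures $E$ back in holomorphically, and compares the local degrees at the filled points with the covering condition \eqref{eq_covering_orb}, which here reads $\nu(f(z))=\degr(f,z)$. In the one-point case a small loop about a point of $E$ is mapped homeomorphically onto a simple loop about $p$, hence with winding number $\pm1$, whereas \eqref{eq_covering_orb} demands winding number $l\geq 2$ --- a contradiction. In the two-point case the only possibility that yields a simply connected completion on which $f$ is still onto $\widehat{\C}$ is $\widehat{\C}\xrightarrow{z^{n}}\widehat{\C}$, and then \eqref{eq_covering_orb} forces $l=n$ at one pole and $k=n$ at the other, contradicting $l\neq k$.

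For the existence direction, first uniformize the underlying surface: if $S\notin\{\widehat{\C},\C,\D\}$ then $S$ is not simply connected, so its universal cover is $\pi\colon\hat{S}\to S$ with $\hat{S}\in\{\C,\D\}$; pulling back $\nu$ to $\hat{\nu}\defeq\nu\circ\pi$ makes $\hat{\Or}\defeq(\hat{S},\hat{\nu})\to\Or$ an orbifold covering, so a universal covering orbifold of $\hat{\Or}$ is one of $\Or$, and we may assume $S\in\{\widehat{\C},\C,\D\}$. Now build the candidate cover: with $S^{\ast}\defeq S\setminus\{p_{i}\}$ and $\gamma_{i}\in\pi_{1}(S^{\ast})$ a loop around $p_{i}$, let $N$ be the normal closure of $\{\gamma_{i}^{\nu_{i}}\}$ and $Y\to S^{\ast}$ the covering with $\pi_{1}(Y)=N$; near each preimage of $p_{i}$ this covering is $\nu_{i}$-fold on every sheet, so one adjoins a point over each and extends to a branched covering $\tilde{q}\colon\widetilde{S}\to S$ with branch data exactly $(p_{i},\nu_{i})$, on which the deck group $\Gamma=\pi_{1}(S^{\ast})/N$ still acts. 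Then $(\widetilde{S},\mathbf{1})\to\Or$ satisfies \eqref{eq_covering_orb}, so it is a universal covering orbifold as soon as $\widetilde{S}$ is simply connected; one checks that this holds precisely when $\Or$ is not one of the two exceptional types, by reducing to finite-type sub-orbifolds and running the classical Euler-characteristic and elementary-group case analysis (finite rotation groups for the spherical signatures, crystallographic groups for the Euclidean ones, Poincar\'e's polygon theorem for the Fuchsian ones, together with an exhaustion argument when there are infinitely many ramified points). Uniqueness up to conformal isomorphism over $S$ is then the standard uniqueness of universal covers, transported through $\tilde{q}$ and consistent with Observation~\ref{obs_covering_lifting}.

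It remains to identify $\widetilde{S}$ among $\widehat{\C},\C,\D$: it cannot be $\widehat{\C}$ unless $S$ is compact, and a non-constant holomorphic map out of $\widehat{\C}$ or $\C$ cannot land in a hyperbolic Riemann surface, so together with the orbifold Euler characteristic of $\Or$ (equivalently, the conformal type of $S^{\ast}$ corrected for the cone angles) this pins down $\widetilde{S}$ in each case. In particular, if $S\subsetneq\C$ and $\#(\widehat{\C}\setminus S)>2$ then $S$ is a hyperbolic surface, hence $\Or$ is not exceptional (the exceptional orbifolds live on $\widehat{\C}$) and a universal cover $(\widetilde{S},\mathbf{1})\to\Or$ exists; since $\widetilde{S}\to S\subseteq\C$ is a non-constant holomorphic map into a hyperbolic surface, $\widetilde{S}$ can be neither $\widehat{\C}$ nor $\C$, so $\widetilde{S}=\D$. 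The main obstacle throughout is the simple connectivity of $\widetilde{S}$ in the existence direction: this is the genuine content of orbifold uniformization and is where one invokes Poincar\'e's construction of Fuchsian groups and the limiting argument for orbifolds of infinite type; everything else is covering-space bookkeeping layered on top of the classical Uniformization theorem for Riemann surfaces.
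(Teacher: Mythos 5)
First, a point of comparison: the paper does not prove Theorem \ref{thm_uniform} at all — it is quoted as a classical result, with the proof delegated to \cite[Theorem A2]{mcmullen1994complex} and \cite[Proposition 13.2.4]{thurstonGeom}. So your sketch has to be measured against that classical proof. Your non-existence direction is essentially correct and complete: surjectivity of the branched covering onto the connected base, the degree count for the restricted genuine covering over $\widehat{\C}$ minus the marked points, and the comparison with \eqref{eq_covering_orb} at the filled-in points do yield the contradiction for the signatures $(l)$ and $(l,k)$ with $l\neq k$. The closing deduction that $S\subsetneq\C$ with $\#(\widehat{\C}\setminus S)>2$ forces the cover to be $\D$ is also fine.

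The genuine gap is in the existence direction, at the sentence ``near each preimage of $p_i$ this covering is $\nu_i$-fold on every sheet''. For the covering $Y\to S^{\ast}$ attached to the normal closure $N$ of $\{\gamma_i^{\nu_i}\}$, the local degree at a puncture over $p_i$ equals the least $m\geq 1$ with a conjugate of $\gamma_i^{m}$ in $N$, i.e.\ the order of $\gamma_i$ in $\pi_1(S^{\ast})/N$; this divides $\nu_i$ but need not equal it. It fails precisely in your excluded cases: for the teardrop $\pi_1(S^{\ast})$ is trivial, and for the spindle with values $2,3$ one gets $N=\pi_1(S^{\ast})$, so the construction returns branch order $1$ rather than $\nu_i$. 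Proving that the order is exactly $\nu_i$ for every non-exceptional signature (including infinitely many marked points and non-compact $S$) is the actual content of orbifold uniformization; that is where the finite rotation groups, the crystallographic groups, Poincar\'e's polygon theorem and the exhaustion argument must be brought to bear. In your write-up these tools are invoked only afterwards, to justify simple connectivity of $\widetilde S$ — which, once the branch orders are correct, follows by van Kampen, since killing the puncture loops kills a generating set of $N\cong\pi_1(Y)$. As written, then, the crux is asserted rather than proved, and the later appeal to ``the classical case analysis'' is in substance a citation of the very theorem being proved — defensible only in the same sense in which the paper itself simply cites Thurston and McMullen.
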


In analogy to Riemann surfaces, we call an orbifold $\Or$ \emph{elliptic, parabolic} or \emph{hyperbolic} if all of its connected components are covered by $\widehat{\C}, \C$ or $\D$ respectively. A more detailed version of this theorem can be found in \cite[Theorem A2]{mcmullen1994complex}. For the proof in the more general case see \cite[Proposition 13.2.4]{thurstonGeom}.

\begin{discussion}[Orbifold metric]
Theorem \ref{thm_uniform} allows us to induce a metric on those orbifolds that have a universal cover as the pushforward of the spherical, Euclidean or hyperbolic metric of their universal cover. More precisely, let $\Or=(S, \nu)$ be an orbifold that has universal covering surface $C \in \{\C, \widehat{\C}, \D\}$, and let $\rho_C(z)\vert dz\vert$ be a complete conformal metric on $C$. By pushing forward this metric by an orbifold covering map, we obtain a Riemannian metric on $\Or$, that we denote by $\rho_{\Or}(w)\vert dw\vert$ and call the \emph{orbifold metric} of $\Or$. If $C\in \{\D, \widehat{\C}\}$, this metric is uniquely determined by normalizing the curvature to $\pm 1$, and for $C=\C$ the metric is well-defined up to a positive scalar multiple. The orbifold metric on $\Or$ determines a metric in the surface $S$ with singularities at the ramified points of $\Or$. More precisely, if $\nu(w_0)=m>1$ for some $w_0 \in S$, then $\rho_{\Or}(w)\vert dw\vert$ has a singularity of the type $\vert w-w_0\vert^{(1-m)/m}$ near $w_0$ in $S$. We then say that $w_0$ is a \textit{cone point}.
\end{discussion}

\begin{remark}[Cone points versus punctures]
There is an advantage to defining an orbifold metric on $S\subsetneq \C$ for which $w_0$ is a cone point over inducing a hyperbolic metric in the punctured surface $S\setminus \{w_0\}$. Even if both of the corresponding densities tend to infinity as we approach $w_0$, contrary to what happens when $w_0$ is a puncture, the orbifold distance from a point of $S$ to the cone point $w_0$ is finite, since $w_0$ is part of the surface. See \cite[pp. 210-211]{milnor_book}, as well as Proposition~\ref{prop_circle} for examples where estimates are computed.
\end{remark}

\begin{remark}[Metrics equivalence] If $\Or=(S, \nu)$, with $S\subset\C$, is an orbifold that admits an orbifold metric in the sense above, then the corresponding induced metric in $S$ is topologically equivalent to the Euclidean metric in $S$. That is, both metrics generate the same topology on $S$. We will use this fact without further comment.
\end{remark}
Let $\Or=(S,\nu)$ be an orbifold with $S \subsetneq \C$ that admits an orbifold metric $\rho_{\Or}(w)\vert dw\vert$. This metric induces an \textit{$\Or$-distance} $d_\Or(x,y)$ between points $x,y \in S$ in the following way. We join $x$ to $y$ by a rectifiable curve $\gamma$ in $S$, and define
the $\Or$-length $\ell_\Or(\gamma)$ of $\gamma$ by
$$\ell_\Or(\gamma) \defeq \int_\gamma \rho_{\Or}(w)\vert dw\vert.$$
Note that the integral is well-defined, since the set of ramified points in $\gamma$, and thus singularities of $\rho_{\Or}$, is finite. (See for example \cite[A.1 and A.10]{bonkexpanding} for more details on conformal and orbifold metrics.) Finally, we set
$$d_\Or(x, y) \defeq \inf\{\ell_\Or(\gamma): \gamma \text{ is a rectifiable curve in } S \text{ joining } x \text{ and } y \}.$$
In particular, for any two subsets $A,B\subset S$, which may be singletons, we denote
$$d_\Or(A,B) \defeq \inf \left\{d_\Or(x,y) : x\in A, y\in B\right\}.$$
The so-called Schwarz lemma or Pick's theorem for hyperbolic surfaces \cite[Theorem~6.4]{beardon_minda} generalizes to hyperbolic orbifolds in the following:
\begin{thm}[Orbifold Pick's theorem] \label{pick}
A holomorphic map between two hyperbolic orbifolds can never 
increase distances as measured in the hyperbolic orbifold metrics. Distances are strictly decreased, unless the map is a covering map, in which case it is a local isometry.
\end{thm}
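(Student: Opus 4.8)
The plan is to reduce the statement to the classical Schwarz--Pick lemma on the unit disc by lifting $f$ to the universal covers. Write $f\colon\Ort\to\Or$ for the holomorphic orbifold map, with $\Ort=(\widetilde S,\tilde\nu)$ and $\Or=(S,\nu)$; we may assume $f$ is nonconstant (the constant case being trivial) and both underlying surfaces connected, the general case reducing componentwise. Since $\Ort$ and $\Or$ are hyperbolic, Theorem~\ref{thm_uniform} supplies universal orbifold coverings $\pi\colon\D\to\Ort$ and $\hat\pi\colon\D\to\Or$, and by definition they push the hyperbolic metric $\rho_\D$ forward to $\rho_{\Ort}$ and $\rho_\Or$. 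A first routine observation I would record is that these coverings preserve lengths of curves in both directions, even through cone points: a curve downstairs lifts to one upstairs by choosing local branches of the relevant roots, and $\pi^\ast\rho_{\Ort}=\rho_\D$ (and likewise for $\hat\pi$). Consequently, for any fixed $v_1\in\pi^{-1}(z_1)$ one has $d_{\Ort}(z_1,z_2)=\min\{\,d_\D(v_1,v):v\in\pi^{-1}(z_2)\,\}$, the minimum being attained because the fibre is discrete and closed and $d_\D(v_1,\cdot)$ is proper; the analogous identity holds for $\hat\pi$ and $\Or$.

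The heart of the argument is the lifting step: I claim $f$ lifts to a holomorphic map $F\colon\D\to\D$ with $\hat\pi\circ F=f\circ\pi$. First, $f\circ\pi\colon\D\to\Or$ is a holomorphic orbifold map out of the trivial orbifold $\D$: since $\pi$ is an orbifold covering from a trivial orbifold, \eqref{eq_covering_orb} gives $\degr(\pi,w)=\tilde\nu(\pi(w))$, so by multiplicativity of local degrees and \eqref{eq_holom_orb} applied to $f$, the value $\nu(f(\pi(w)))$ divides $\degr(f,\pi(w))\,\tilde\nu(\pi(w))=\degr(f\circ\pi,w)$ for every $w\in\D$. Let $Z\subset\D$ be the discrete closed set where $f\circ\pi$ takes a value in the ramified locus of $\Or$. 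Over the complement of that locus $\hat\pi$ is an ordinary covering; at each point of $Z$ the divisibility just noted forces the local degree of $f\circ\pi$ at that point to be a multiple of the corresponding ramification value, so the image of $\pi_1(\D\setminus Z)$ under $(f\circ\pi)_\ast$ lies inside the image of $\hat\pi_\ast$ — here one uses that $\pi_1(\D\setminus Z)$ is generated by small loops around the points of $Z$. The ordinary lifting criterion then produces a holomorphic lift over $\D\setminus Z$, which is bounded (being $\D$-valued) and hence extends holomorphically across $Z$ by Riemann's removable singularity theorem, the divisibility condition guaranteeing the extension is the right one. This orbifold lifting principle is standard; see \cite[Appendix~E]{milnor_book} and \cite[Appendix~A]{mcmullen1994complex}. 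I expect this to be the main obstacle, precisely because Observation~\ref{obs_covering_lifting} only supplies a lift when $f$ is an orbifold \emph{covering} map, whereas here one must verify the monodromy condition at the ramified points and perform the removable-singularity extension by hand.

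Granting the lift $F$, the non-increasing property is immediate. Fix $z_1,z_2\in\Ort$, a preimage $v_1\in\pi^{-1}(z_1)$, and a minimiser $v_2\in\pi^{-1}(z_2)$ with $d_{\Ort}(z_1,z_2)=d_\D(v_1,v_2)$. Since $\hat\pi(F(v_i))=f(\pi(v_i))=f(z_i)$, the point $F(v_2)$ lies in $\hat\pi^{-1}(f(z_2))$, so by the fibre identity for $\hat\pi$ and the classical Schwarz--Pick inequality,
\[ d_\Or\bigl(f(z_1),f(z_2)\bigr)\le d_\D\bigl(F(v_1),F(v_2)\bigr)\le d_\D(v_1,v_2)=d_{\Ort}(z_1,z_2). \]

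For the rigidity statement I would invoke Observation~\ref{obs_covering_lifting} in the form: $f\colon\Ort\to\Or$ is an orbifold covering map if and only if its lift $F$ is a conformal automorphism of $\D$. In that case $F$ is a $\rho_\D$-isometry, so differentiating $\hat\pi\circ F=f\circ\pi$ and using $|F'|\cdot(\rho_\D\circ F)=\rho_\D$ gives the pointwise equality $|f'(z)|\,\rho_\Or(f(z))=\rho_{\Ort}(z)$ wherever the densities are finite; thus $f$ preserves the infinitesimal orbifold metric and is a local isometry. (It need not be a global isometry when $\degr f>1$, since then $F(\pi^{-1}(z_2))$ is a proper subset of the fibre $\hat\pi^{-1}(f(z_2))$ and the minimum in the displayed estimate can drop.) If instead $f$ is not a covering map, then $F$ is not a conformal automorphism, so the strict Schwarz--Pick lemma gives $d_\D(F(v_1),F(v_2))<d_\D(v_1,v_2)$ whenever $v_1\ne v_2$; since $z_1\ne z_2$ forces $\pi(v_1)\ne\pi(v_2)$ and hence $v_1\ne v_2$, the displayed chain becomes strict and $d_\Or(f(z_1),f(z_2))<d_{\Ort}(z_1,z_2)$, completing the plan.
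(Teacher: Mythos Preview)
Your proof is correct and follows the standard approach: lift to the universal covers and apply the classical Schwarz--Pick lemma on $\D$, with the equality case characterised via conformal automorphisms. Note, however, that the paper does not actually supply its own proof of this theorem; it simply cites \cite[Proposition~17.4]{thurston_2} and \cite[Theorem~A.3]{mcmullen1994complex}. Your argument is precisely the one those references carry out, so there is nothing to compare beyond observing that you have filled in what the paper outsources.

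One minor remark on presentation: the phrase ``the divisibility condition guaranteeing the extension is the right one'' is superfluous, since the bounded holomorphic extension across the discrete set $Z$ is unique by Riemann's theorem; once the lift exists on $\D\setminus Z$ and extends, there is nothing further to check.
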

See \cite[Proposition 17.4]{thurston_2} or \cite[Theorem A.3]{mcmullen1994complex} for more details. Recall that as a consequence of Pick's theorem for hyperbolic surfaces, if $U$ and $V$ are hyperbolic domains with $V \subset U$, the inclusion from $V$ into $U$ is contracting, and so, it holds for their hyperbolic densities $\rho_U$ and $\rho_V$ that $\rho_V(z)\geq \rho_U(z)$ for all $z \in V$. Theorem \ref{pick} has analogous implications, that we shall use. 

\begin{remark}
From now on, we use the notation $\Ort \hookrightarrow \Or$ to indicate the inclusion map between $\Ort$ and $\Or$. We are implicitly stating that such map is well-defined, and in particular, the underlying surface of $\Ort$ is contained in the underlying surface of $\Or$.  
\end{remark}
\begin{cor}[Comparison of orbifold densities]\label{cor_pick}
Let $\Ort$ and $\Or$ be hyperbolic orbifolds for which the inclusion $\Ort \hookrightarrow \Or$ is holomorphic. If $\rho_{\Ort}$ and $\rho_{\Or}$ are the respective densities of their orbifold metrics, then $\rho_{\Ort}(z)\geq \rho_{\Or}(z)$ for all unramified $z \in \Ort$, with strict inequality when the inclusion map is not an orbifold covering map.
\end{cor}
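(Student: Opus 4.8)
This corollary is the orbifold counterpart of the elementary fact that the hyperbolic density of a hyperbolic domain increases under restriction to a smaller hyperbolic subdomain, and the plan is to prove it in the same spirit, leaning on Theorem \ref{pick}. First I would record that the statement even makes sense: if $z\in\Ort$ is unramified, then $z$ is also unramified as a point of $\Or$. Indeed, applying the holomorphicity condition \eqref{eq_holom_orb} to the inclusion $\iota\colon\Ort\hookrightarrow\Or$ at $z$, and using $\deg(\iota,z)=1$ and $\tilde\nu(z)=1$, the requirement that $\nu(z)$ divide $\deg(\iota,z)\cdot\tilde\nu(z)=1$ forces $\nu(z)=1$. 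Hence such a $z$ is a cone point of neither orbifold, both $\rho_{\Ort}$ and $\rho_{\Or}$ are smooth and strictly positive near $z$ in the respective underlying surfaces, and $\rho_{\Ort}(z)/\rho_{\Or}(z)$ is a genuine finite positive number.

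For the inequality itself I would pass to universal covers. Since $\Ort$ and $\Or$ are hyperbolic, Theorem \ref{thm_uniform} provides orbifold universal covering maps $\pi_{\Ort}\colon\D\to\Ort$ and $\pi_{\Or}\colon\D\to\Or$. The composition $\iota\circ\pi_{\Ort}\colon\D\to\Or$ is a holomorphic orbifold map, and (by the standard lifting property of universal covering orbifolds, whose only subtle point is discussed below) it lifts through $\pi_{\Or}$ to a holomorphic self-map $g\colon\D\to\D$ with $\pi_{\Or}\circ g=\iota\circ\pi_{\Ort}$. Fix an unramified $z\in\Ort$ and a preimage $\tilde z\in\D$. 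Differentiating the lifting relation at $\tilde z$ gives $\pi_{\Or}'(g(\tilde z))\,g'(\tilde z)=\iota'(z)\,\pi_{\Ort}'(\tilde z)=\pi_{\Ort}'(\tilde z)$, since $\iota'\equiv 1$. Because $\pi_{\Ort}$ and $\pi_{\Or}$ are orbifold covering maps, they are local isometries of the orbifold metrics away from the (discrete) cone points; by the previous paragraph neither $z$ nor its image $\iota(z)=z$ is a cone point, so $\rho_{\Ort}(z)\,|\pi_{\Ort}'(\tilde z)|=\rho_{\D}(\tilde z)$ and $\rho_{\Or}(z)\,|\pi_{\Or}'(g(\tilde z))|=\rho_{\D}(g(\tilde z))$. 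Combining these with the classical Schwarz--Pick inequality $\rho_{\D}(g(\tilde z))\,|g'(\tilde z)|\le\rho_{\D}(\tilde z)$ and the chain-rule identity above yields
\[
\Vert\Deriv\iota(z)\Vert^{\Or}_{\Ort}=\frac{\rho_{\Or}(z)}{\rho_{\Ort}(z)}=\frac{\rho_{\D}(g(\tilde z))\,|\pi_{\Ort}'(\tilde z)|}{\rho_{\D}(\tilde z)\,|\pi_{\Or}'(g(\tilde z))|}=\frac{\rho_{\D}(g(\tilde z))\,|g'(\tilde z)|}{\rho_{\D}(\tilde z)}\le 1,
\]
i.e. $\rho_{\Or}(z)\le\rho_{\Ort}(z)$. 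Alternatively one can quote Theorem \ref{pick} as a black box and pass from the distance inequality $d_\Or(z,w)\le d_\Ort(z,w)$ to the density inequality by using that near an unramified point the orbifold distance satisfies $d_\bullet(z,w)=\rho_\bullet(z)\,|w-z|+o(|w-z|)$ as $w\to z$.

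For strictness, suppose $\iota$ is not an orbifold covering map but $\rho_{\Or}(z)=\rho_{\Ort}(z)$ at some unramified $z\in\Ort$. Then equality holds in the Schwarz--Pick estimate for $g$ at $\tilde z$, so $g$ is a conformal automorphism of $\D$; thus $g$ is a conformal isomorphism between the universal covering orbifolds realizing a lift of $\iota$, and Observation \ref{obs_covering_lifting} forces $\iota$ to be an orbifold covering map, a contradiction. Hence the inequality is strict at every unramified point. The only step requiring genuine care is the existence of the holomorphic lift $g$: this reduces to checking that over each preimage of a cone point of $\Or$ the naive multivalued lift is in fact single-valued and holomorphic, which holds precisely because \eqref{eq_holom_orb} guarantees that the local exponent of $\iota\circ\pi_{\Ort}$ there is divisible by the corresponding ramification value of $\Or$; once that is in hand everything else is routine bookkeeping with Theorem \ref{pick} and the Schwarz lemma.
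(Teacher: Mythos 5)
Your proof is correct. Note, however, that the paper prints no argument for this corollary at all: it is stated as an immediate consequence of Theorem \ref{pick} (the orbifold Schwarz--Pick theorem, quoted from Thurston/McMullen), in direct analogy with the classical density comparison for nested hyperbolic domains, and the well-definedness of the quotient at unramified points is recorded separately in Observation \ref{obs_pick} (your first paragraph reproduces exactly that observation). What you do differently is open up the black box in the special case of the inclusion: you lift $\iota\circ\pi_{\Ort}$ through $\pi_{\Or}$ to a holomorphic self-map $g$ of $\D$ --- correctly identifying the divisibility condition \eqref{eq_holom_orb} as the reason the local lifts over cone points of $\Or$ are single-valued --- use that the two orbifold covering maps are local isometries at points lying over unramified points, and conclude with the classical Schwarz--Pick inequality; strictness then follows from the equality case of Schwarz--Pick together with Observation \ref{obs_covering_lifting}, which is precisely the right mechanism and is consistent with how the paper itself manipulates covering maps and densities in Observation \ref{obs_pick}, Corollary \ref{cor_derivative_covering} and the proof of Theorem \ref{prop_expansion_Intro}. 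Your route buys a self-contained re-derivation of the infinitesimal form of Theorem \ref{pick} for inclusions, at the price of the lifting step, which you only sketch (a full treatment would pass to the unramified part, check the monodromy condition via the divisibility of local exponents, and remove the resulting isolated singularities of the lift); the paper's route is shorter because it delegates exactly that step, and the covering/strict-contraction dichotomy, to the cited theorem. Both arguments are sound, and your alternative remark (infinitesimalizing the distance inequality at an unramified point) is also valid for the non-strict bound.
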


\begin{observation}[Relative densities]\label{obs_pick}
Note that if $\Ort \hookrightarrow \Or$ is holomorphic, then all ramified points of $\Or$ are also ramified points of $\Ort$, and so the quotient $\rho_\Ort(z)/\rho_\Or(z)$ is well-defined for all unramified $z\in \Ort$. Moreover, if $f\colon \Ort \rightarrow \Or$ is a covering map, then by Theorem \ref{pick} it is a local isometry, and so $\rho_\Ort(z)=\vert f'(z)\vert\rho_{\Or}(f(z))$ for all $z\in \Ort\cap \Or$. 
\end{observation}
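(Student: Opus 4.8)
The plan is to treat the two assertions in turn, using only the definitions of holomorphic and covering orbifold maps together with Theorem~\ref{pick}. Write $\Ort=(\widetilde S,\tilde\nu)$ and $\Or=(S,\nu)$, so that $\Ort\hookrightarrow\Or$ being holomorphic means $\widetilde S\subseteq S$ and that the inclusion $\iota\colon\widetilde S\hookrightarrow S$ satisfies \eqref{eq_holom_orb}. As $\iota$ is injective and conformal, $\degr(\iota,z)=1$ for every $z\in\widetilde S$, so \eqref{eq_holom_orb} reduces to the statement that $\nu(z)$ divides $\tilde\nu(z)$ for all $z\in\widetilde S$; in particular $\nu(z)>1$ forces $\tilde\nu(z)\geq\nu(z)>1$, which is exactly the assertion that every ramified point of $\Or$ lying in $\widetilde S$ is a ramified point of $\Ort$, equivalently that every $z$ which is unramified in $\Ort$ is unramified in $\Or$. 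Now recall from the construction of the orbifold metric that $\rho_\Ort$ and $\rho_\Or$ are smooth and strictly positive away from the cone points of the respective orbifolds, while near a cone point of ramification value $m$ the density has a singularity of the shape $|w-w_0|^{(1-m)/m}$; hence at any $z$ unramified in $\Ort$ --- which is then also unramified in $\Or$ --- both $\rho_\Ort(z)$ and $\rho_\Or(z)$ are finite and nonzero, so the quotient $\rho_\Ort(z)/\rho_\Or(z)\in(0,\infty)$ is well defined.

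For the ``moreover'' part, let $f\colon\Ort\to\Or$ be an orbifold covering map; Theorem~\ref{pick} already tells us that $f$ is a local isometry for the two orbifold metrics, so all that remains is to translate this into the pointwise identity. The cleanest route is via Observation~\ref{obs_covering_lifting}: since both orbifolds are hyperbolic, $f$ lifts to a conformal automorphism $\varphi$ of $\D$ satisfying $q_\Or\circ\varphi=f\circ q_\Ort$, where $q_\Ort\colon\D\to\widetilde S$ and $q_\Or\colon\D\to S$ are the universal covering orbifold maps, which by the very construction of the orbifold metrics satisfy $\rho_\Ort(q_\Ort(\zeta))|q_\Ort'(\zeta)|=\rho_\D(\zeta)$ and $\rho_\Or(q_\Or(\xi))|q_\Or'(\xi)|=\rho_\D(\xi)$ off the critical sets of $q_\Ort$ and $q_\Or$. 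As $\varphi$ is a $\rho_\D$-isometry, chaining these identities with $\rho_\D(\zeta)=\rho_\D(\varphi(\zeta))|\varphi'(\zeta)|$ and using the chain rule on $q_\Or\circ\varphi=f\circ q_\Ort$ yields, after cancelling the nonzero factor $|q_\Ort'(\zeta)|$, the equality $\rho_\Ort(z)=|f'(z)|\,\rho_\Or(f(z))$ at every unramified $z$. (Equivalently and more directly: off the cone points of $\Ort$ the map $f$ is a local biholomorphism, and ``local isometry'' for length metrics induced by conformal densities means precisely $f^*\bigl(\rho_\Or(w)\,|dw|\bigr)=\rho_\Ort(z)\,|dz|$, which after the substitution $w=f(z)$ is the claimed identity.)

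I do not expect a genuine obstacle: the statement is essentially formal once Theorem~\ref{pick} is granted. The only point needing care is the behaviour at cone points, so that neither the density identity nor the well-definedness of the quotient is spoiled there; here one uses the covering relation \eqref{eq_covering_orb} rather than the mere divisibility \eqref{eq_holom_orb}. Concretely, a cone point $z$ of $\Ort$ with $\tilde\nu(z)=m$ and $\degr(f,z)=d$ maps to a cone point of $\Or$ with ramification value $dm$, and comparing the orders of the three singular factors --- $|f'(z)|$ of order $d-1$ and $\rho_\Or(f(z))$ of order $d(1-dm)/(dm)$ --- shows their product carries exactly the order $(1-m)/m$ of $\rho_\Ort$ at $z$; the same bookkeeping also handles a point that is a critical point of $f$ but unramified in $\Ort$. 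Interpreted with these natural limits, the identity $\rho_\Ort(z)=|f'(z)|\rho_\Or(f(z))$ therefore holds at every $z$ in the underlying surface of $\Ort$, in particular on $\Ort\cap\Or$.
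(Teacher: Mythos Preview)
Your proposal is correct and follows the same line as the paper, which in fact does not provide a separate proof: the observation is stated with its justification inline (the divisibility condition for the inclusion gives the first claim, and Theorem~\ref{pick} gives the local-isometry identity for the second). Your treatment is simply a careful unpacking of that same reasoning, with the added (and correct, though not required by the paper) verification that the singularity orders at cone points are consistent under the covering relation~\eqref{eq_covering_orb}.
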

\noindent By the previous observation and Corollary \ref{cor_pick}, the following holds:
\begin{cor}[Lower bound on hyperbolic derivative]\label{cor_derivative_covering}
Let $f\colon \Ort \rightarrow \Or$ be a covering map between hyperbolic orbifolds for which the inclusion $\Ort \hookrightarrow \Or$ is holomorphic but not a covering. Let $\rho_{\Ort}$ and $\rho_{\Or}$ be the respective densities of their orbifold metrics. Then, for all unramified $z\in \Ort$,
\begin{equation*}
\Vert \Deriv f(z)\Vert_{\Or}=\frac{\vert f'(z)\vert\rho_{\Or}(f(z))}{\rho_{\Or}(z)} =\frac{\rho_\Ort(z)}{\rho_\Or(z)}>1.
\end{equation*}
\end{cor}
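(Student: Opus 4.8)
The plan is to obtain this as a direct combination of the two facts just recorded: the local-isometry property of orbifold covering maps (Observation \ref{obs_pick}, which rests on the orbifold Pick theorem, Theorem \ref{pick}) together with the strict density comparison for a holomorphic inclusion that is not a covering (Corollary \ref{cor_pick}). There is essentially no obstacle; the statement is a piece of bookkeeping, and the only point that deserves a moment of care is verifying that every quantity in the displayed formula is well-defined and finite.

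First I would check well-definedness. Since the inclusion $\Ort \hookrightarrow \Or$ is holomorphic, Observation \ref{obs_pick} tells us that every ramified point of $\Or$ is a ramified point of $\Ort$; hence an unramified $z \in \Ort$ is also unramified in $\Or$, so $\rho_\Or(z)$ is finite and nonzero. Moreover $f(z)$ lies in the underlying surface of $\Or$, so $\rho_\Or(f(z))$ and $f'(z)$ make sense, and the abbreviated notation $\Vert \Deriv f(z)\Vert_{\Or}$ is legitimate.

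Next, since $f \colon \Ort \to \Or$ is an orbifold covering map, Theorem \ref{pick} gives that $f$ is a local isometry for the orbifold metrics, that is, $\rho_\Ort(z) = \vert f'(z)\vert\,\rho_\Or(f(z))$ at every point of the underlying surface of $\Ort$ (this is exactly the identity recorded in Observation \ref{obs_pick}). Dividing both sides by $\rho_\Or(z)$ produces the middle equality of the claim, $\Vert \Deriv f(z)\Vert_{\Or} = \rho_\Ort(z)/\rho_\Or(z)$. Finally, I would invoke Corollary \ref{cor_pick}: because the inclusion $\Ort \hookrightarrow \Or$ is holomorphic but \emph{not} an orbifold covering map, the density strictly increases, $\rho_\Ort(z) > \rho_\Or(z)$ for every unramified $z \in \Ort$. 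Therefore $\rho_\Ort(z)/\rho_\Or(z) > 1$, which yields the stated inequality and completes the proof.
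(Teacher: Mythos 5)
Your proposal is correct and follows exactly the paper's route: the middle equality comes from the local-isometry identity of Observation \ref{obs_pick} (via Theorem \ref{pick}), and the strict inequality from Corollary \ref{cor_pick} applied to the holomorphic, non-covering inclusion. The paper states the corollary as an immediate consequence of these same two facts, so your argument (with its brief well-definedness check) matches it.
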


\section{Hyperbolic orbifold metrics}\label{sec_orb_estimates}
In the first part of this section we study the relation between the densities of the metrics of two hyperbolic orbifolds whenever one of them is holomorphically embedded in the other. More specifically, let $\Ort\defeq(\widetilde{S},\tilde{\nu})$ and $\Or=(S,\nu)$ be hyperbolic orbifolds such that the inclusion $\Ort\hookrightarrow\Or$ is holomorphic. Note that, in particular, we are assuming that $\widetilde{S}\subseteq S$. Then, recall that by Corollary \ref{cor_pick}, it holds that $\rho_\Ort(z) \geq \rho_\Or(z)$ for all unramified $z\in \Ort$. The intuition behind this fact is the following: since a hyperbolic orbifold metric is defined as a pushforward of the hyperbolic metric in $\D$ with singularities at ramified points, its density tends to infinity both when approaching ramified points, and when tending to the boundary of the underlying surface of the orbifold. Moreover, if $w_0$ is a ramified point, then the density function is of the form $\vert w-w_0\vert^{(1-m)/m}$ near it, where $m$ is its ramification value. Note that for a fixed $w_0$, as $m$ increases, the density function tends ``faster'' to infinity when we approach $w_0$. Hence, since $\Ort\hookrightarrow\Or$ being holomorphic implies that $\widetilde{S}\subseteq S$ and $\tilde{\nu}(z)\geq \nu(z)$ for all unramified $z\in \Ort$, the desired inequality on their densities follows. This motivates the definition of the following set.
\begin{defn}[Boundary of $\Ort$ in $\Or$]\label{def_setD}
Given a pair of orbifolds $\Ort=(\widetilde{S}, \tilde{\nu})$ and $\Or=(S, \nu)$ such that the inclusion $\Ort \hookrightarrow \Or$ is holomorphic, we define the \textit{boundary of $\Ort$ in $\Or$} as the set
$$\mathbb{B}^\Or_\Ort\defeq \partial \widetilde{S} \: \cup \: \big\{ z \in \widetilde{S} : \tilde{\nu}(z) > \nu(z) \big\}.$$
\end{defn}

\begin{remark}
If $\Ort \hookrightarrow \Or$ is holomorphic, then $\mathbb{B}^\Or_\Ort \neq \emptyset$ if and only if $\widetilde{S}\subsetneq S$, or $S=\widetilde{S}$ and the inclusion is not an orbifold covering map. Moreover, in this case, the quotient $\rho_{\Ort}(z)/\rho_{\Or}(z)$ is well-defined for all unramified $z\in \Ort$; see Observation \ref{obs_pick}.
\end{remark}

Under the conditions of Definition \ref{def_setD}, Theorem \ref{prop_expansion_Intro} provides bounds for the quotient of densities in terms of the $\Or$-distance between a point $z\in \widetilde{S}$ and the set $\mathbb{B}^\Or_\Ort$. This is inspired by \cite[Proposition 3.4]{lasseAbscense}, where an analogous result is shown to hold for hyperbolic Riemann surfaces. Let us restate Theorem \ref{prop_expansion_Intro} in a more precise version:

\begin{thmEstimates}[Relative densities of hyperbolic orbifolds]\label{prop_expansion}
Let $\Ort\defeq(\widetilde{S},\tilde{\nu})$ and $\Or\defeq(S,\nu)$ be hyperbolic orbifolds such that the inclusion $\Ort \hookrightarrow \Or$ is holomorphic. Let $z\in \Ort$ be unramified and suppose that $R\defeq d_{\Or}(z,\mathbb{B}^\Or_\Ort)<\infty$. Then,
\begin{equation}\label{eq_prop_expansion}
1< \frac{e^R}{\sqrt{e^{2R}-1}} \leq \frac{\rho_{\Ort}(z)}{\rho_{\Or}(z)}\leq 1+ \frac{2}{e^R-1}.
\end{equation}
\end{thmEstimates}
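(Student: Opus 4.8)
\emph{Step 1: reduction to the case $\Or=\D$.} We may assume the underlying surfaces of $\Ort$ and $\Or$ are connected. Since $\Or$ is hyperbolic, Theorem~\ref{thm_uniform} provides a universal covering orbifold map $\pi\colon\D\to\Or$, which by Theorem~\ref{pick} is a local isometry for the respective orbifold metrics; precomposing with an automorphism of $\D$ we arrange $\pi(0)=z$, so $\pi$ is a local biholomorphism near $0$. Let $\widehat S$ be the component of $\pi^{-1}(\widetilde S)$ containing $0$. By \eqref{eq_holom_orb} the function $\hat\nu(\zeta)\defeq\tilde\nu(\pi(\zeta))/\nu(\pi(\zeta))$ is a positive integer, and $\{\hat\nu>1\}=\pi^{-1}(\{w\in\widetilde S:\tilde\nu(w)>\nu(w)\})\cap\widehat S$ is discrete, so $\widehat\Ort\defeq(\widehat S,\hat\nu)$ is an orbifold; it is hyperbolic by Theorem~\ref{thm_uniform} (its underlying surface is a subdomain of $\C$ whose complement has infinitely many points), and $\widehat\Ort\hookrightarrow\D$ is holomorphic. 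One checks from \eqref{eq_covering_orb} that $\pi\colon\widehat\Ort\to\Ort$ is an orbifold covering map, hence a local isometry by Theorem~\ref{pick}; as $0$ is unramified in $\widehat\Ort$ and $z$ in both $\Ort$ and $\Or$, the factor $|\pi'(0)|$ cancels and $\rho_\Ort(z)/\rho_\Or(z)=\rho_{\widehat\Ort}(0)/\rho_\D(0)$. Finally, the $\Or$-metric ball $B_\Or(z,R)$ is contained in $\widetilde S$ and misses $\mathbb{B}^\Or_\Ort$, and $\pi$ is a local isometry with path-lifting; from these facts one deduces $\mathbb{B}^{\D}_{\widehat\Ort}\subseteq\pi^{-1}(\mathbb{B}^\Or_\Ort)$ and $d_\D(0,\mathbb{B}^{\D}_{\widehat\Ort})=R$. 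So it suffices to prove \eqref{eq_prop_expansion} when $\Or=\D$, which we now assume, writing $0$ for $z$.

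\emph{Step 2: upper bound.} Put $r\defeq\tanh(R/2)$, so that the hyperbolic ball $B_\D(0,R)$ equals the Euclidean disc $\D_r(0)$. Since $\mathbb{B}^{\D}_{\widehat\Ort}$ lies at $\D$-distance at least $R$ from $0$, the disc $\D_r(0)$ is contained in $\widehat S$ and carries no ramified point of $\widehat\Ort$; hence $\D_r(0)\hookrightarrow\widehat\Ort$ is holomorphic, and Corollary~\ref{cor_pick} gives $\rho_{\widehat\Ort}(0)\leq\rho_{\D_r(0)}(0)=2/r$. Dividing by $\rho_\D(0)=2$,
\begin{equation*}
\frac{\rho_{\widehat\Ort}(0)}{\rho_\D(0)}\leq\frac1r=\coth\!\Big(\frac R2\Big)=1+\frac{2}{e^R-1}.
\end{equation*}

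\emph{Step 3: lower bound.} Choose $q\in\mathbb{B}^{\D}_{\widehat\Ort}$ with $d_\D(0,q)=R$. If $q\in\partial\widehat S$, then $\widehat\Ort\hookrightarrow\D\setminus\{q\}$ is holomorphic; if instead $\hat\nu(q)=m\geq2$, let $\Or_q$ be the orbifold obtained from $\D$ by placing a single cone point of order $m$ at $q$, so that $\widehat\Ort\hookrightarrow\Or_q$ is holomorphic. Both $\D\setminus\{q\}$ and $\Or_q$ are hyperbolic by Theorem~\ref{thm_uniform}, so Corollary~\ref{cor_pick} bounds $\rho_{\widehat\Ort}(0)$ below by the orbifold density at $0$ of the relevant comparison orbifold, which we compute in closed form: transporting $q$ to $0$ by an automorphism of $\D$ and using the universal covering maps $w\mapsto w^m$ and (for the puncture) the exponential, one obtains
\begin{equation*}
\frac{\rho_{\Or_q}(0)}{\rho_\D(0)}=\frac{1-r^2}{m\,r^{(m-1)/m}\bigl(1-r^{2/m}\bigr)},\qquad\quad
\frac{\rho_{\D\setminus\{q\}}(0)}{\rho_\D(0)}=\frac{1-r^2}{2r\log(1/r)},
\end{equation*}
with $r=\tanh(R/2)$. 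A direct manipulation shows the first expression equals $e^R/\sqrt{e^{2R}-1}$ when $m=2$, so it remains to check that both expressions are at least $(1+r)/(2\sqrt r)=e^R/\sqrt{e^{2R}-1}$. Cancelling the factor $1+r$ from $1-r^2$, the bound for $\Or_q$ reads $2\sqrt r\,(1-r)\geq m\,r^{(m-1)/m}(1-r^{2/m})$; substituting $r=e^{-m\tau}$ turns this into $\sinh(m\tau/2)\geq\tfrac m2\sinh\tau$, which holds because $\sinh(cx)\geq c\sinh x$ for $c\geq1$, $x\geq0$ (both sides vanish at $x=0$ and their difference has non-negative derivative). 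For the puncture, writing $u=\sqrt r$ the bound becomes $u^{-1}-u+2\log u\geq0$, which is true since its left-hand side vanishes at $u=1$ and has derivative $-(1-u)^2/u^2\leq0$. Combined with the trivial inequality $1<e^R/\sqrt{e^{2R}-1}$ (from $e^{2R}>e^{2R}-1$), this yields \eqref{eq_prop_expansion}.

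\emph{Main obstacle.} The delicate points are the bookkeeping in Step~1 — above all the identity $d_\D(0,\mathbb{B}^{\D}_{\widehat\Ort})=R$, which relies on the facts that an $\Or$-metric ball of radius $R$ about $z$ cannot reach $\mathbb{B}^\Or_\Ort$ and that geodesics lift through the orbifold covering $\pi$ — and the need to control a ramified point of $\widehat\Ort$ of \emph{arbitrary} order $m\geq2$ rather than only $m=2$; handling all orders uniformly is precisely what forces the elementary monotonicity inequality $\sinh(m\tau/2)\geq\tfrac m2\sinh\tau$ in Step~3.
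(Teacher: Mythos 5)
Your proposal is correct and takes essentially the same route as the paper's proof: lift via the universal covering $\pi\colon\D\to\Or$ with $\pi(0)=z$, form the auxiliary orbifold on the component of $\pi^{-1}(\widetilde{S})$ through $0$, obtain the upper bound by comparison with the unramified disc of Euclidean radius $\tanh(R/2)$, and the lower bound by comparison with either a punctured disc or a one-cone-point orbifold using the explicit density formula, minimizing over the cone order. The only cosmetic differences are that you verify the puncture-case inequality explicitly (the paper simply asserts the minimum in its final display) and you handle arbitrary cone order via $\sinh(m\tau/2)\geq\tfrac{m}{2}\sinh\tau$ rather than the paper's observation that $s/\sinh(s)$ is decreasing, which are equivalent elementary facts.
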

\begin{remark}
The exact dependence of the bounds on $R$ is not relevant for our purposes, but instead, we are interested in the fact that the quotient of densities depends only on $R$ and is bounded away from $1$; see Figure \ref{fig:lambda}. Still, we point out that the proof will show that the bounds are sharp, in the sense that they can be attained. 
\end{remark}
\begin{figure}[h]
\begin{center}
\includegraphics[width=7cm]{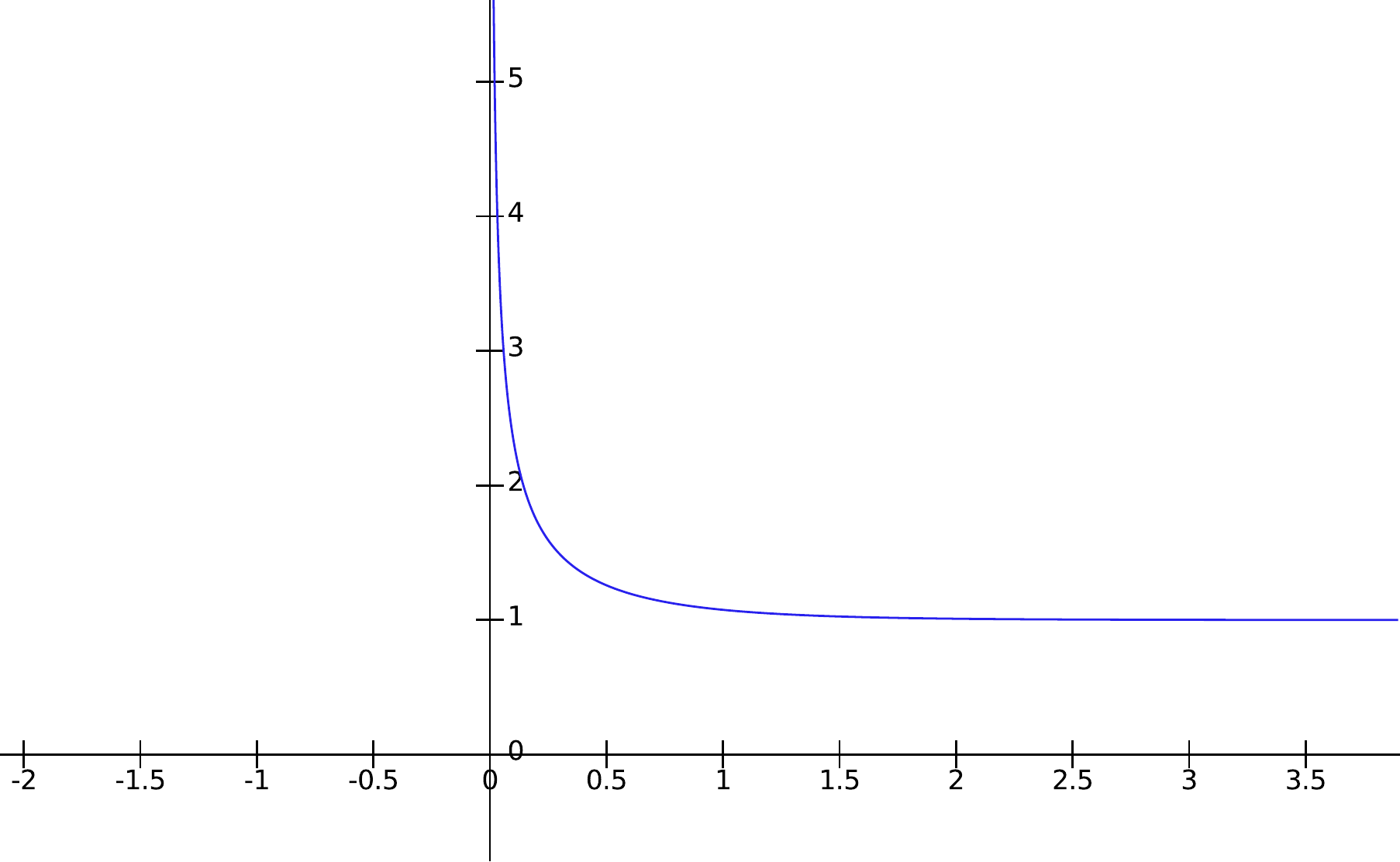}
\caption{Plot of the function $\Lambda(R)\defeq\e^R/\sqrt{e^{2R}-1}$, that provides a lower bound for the quotient of densities in the setting of Theorem \ref{prop_expansion_Intro}. Observe that $\Lambda(R)>1$ for all $R>0$.}
\label{fig:lambda}
\end{center}
\end{figure}
\begin{proof}[Proof of Theorem \ref{prop_expansion_Intro}] 
We can assume without loss of generality that the surfaces $\widetilde{S}$ and $S$ are both connected, since otherwise the same argument applies component-wise. For the point $z$ fixed in the statement of the proposition, let $\pi \colon \D\to \Or$ be a covering map with $\pi(0)=z$. In particular, by definition of orbifold covering map, for any $x\in \D$,
\begin{equation}\label{eq_degD}
\nu(\pi(x))=\deg(\pi, x)\cdot\nu_{\D}(x)=\deg(\pi, x),
\end{equation}
as $\nu_{\D}\equiv 1$ by definition. Since by assumption $z\in \widetilde{S} \subseteq S$, there exists a connected component of $\pi^{-1}(\widetilde{S})$ that contains the point $0$. We shall denote this component by $\widehat{D}$. Since $\Ort \hookrightarrow \Or$ is holomorphic, $\nu(w)$ divides $\tilde{\nu}(w)$ for all $w \in \Ort$, and so, using \eqref{eq_degD}, we can define a ramification map $\hat{\nu} \colon \widehat{D} \rightarrow \N_{\geq 1}$ as 
\begin{equation}\label{degVt}
\hat{\nu}(x)\defeq \dfrac{\tilde{\nu}(\pi(x))}{\deg(\pi,x)}.
\end{equation}
Note that by \eqref{eq_degD} and since $\Ort\hookrightarrow \Or$ is holomorphic, for each $x\in \widehat{D}$, $\hat{\nu}(x)>1$ if and only if $\pi(x) \in \big\{w\in \Ort : \tilde{\nu}(w) > \nu(w) \big\}.$ Since $\big\{w\in \Ort : \tilde{\nu}(w) > \nu(w) \big\}$ is a discrete set, as $\tilde{\nu}$ is the ramification map of an orbifold, $\hat{\Or}\defeq(\widehat{D}, \hat{\nu})$ is also a Riemann orbifold. Observe that by definition, the restriction $\pi\vert_{ \widehat{D}} \colon\hat{\Or} \rightarrow \tilde{\Or} $ is an orbifold covering map.

Since by assumption $d_{\Or}(z,\mathbb{B}^\Or_\Ort)=R$, by definition of the set $\mathbb{B}^\Or_\Ort$, there must exist at least one point $z_2\in \mathbb{B}^\Or_\Ort$ such that $d_\Or(z, z_2)= R$. In particular, $z_2\in S$. Let us connect $z_2$ to $z$ by a geodesic (in the metric of $\Or$) of length $R$. By lifting this geodesic to the unit disc using the map $\pi$, we see using Theorem \ref{pick} that there exists $w\in \text{cl}(\widehat{D})$ such that $\dist_{\D}(0,w)=R$. By pre-composing with a rotation, we can assume that $w$ is a positive real number. We recall that the densities of the hyperbolic metric on $\D_r$ for some $r\in \R^+$ and on $\D^\ast$, are respectively given by
\begin{equation}\label{eq_formulae_densities}
\rho_{\D_r}(x)=\frac{2}{r\left(1-\vert x\vert^{2}/r^2\right)} \quad \text{and} \quad \rho_{\D^\ast}(x)=\frac{1}{\vert x\vert \cdot \vert \log \vert x \vert \vert}.
\end{equation}
Since $\pi$ is a covering map, by Theorem \ref{pick}, $d_\D(x,y)\geq d_\Or(\pi(x), \pi(y))$ for all $x,y \in \D$. Hence, by the choice of $w$, the disc (in the hyperbolic metric on $\D$) of radius $R$ centred at the origin is contained in $\widehat{D}$, and particular is a Euclidean disc of radius $w$. Moreover, by definition of the constant $R$, $\hat{\nu}(z)=1$ for all $z \in \D_{w}\subset \widehat{D}$, and thus, if we regard $\D_{w}$ as a hyperbolic orbifold with ramification map constant and equal to $1$, the inclusion $\D_{w}\hookrightarrow \hat{\Or}$ is holomorphic. In particular, by Corollary \ref{cor_pick}, $\rho_{\hat{\Or}}(x)\leq \rho_{\D_{w}}(x)$ for all $x \in \D_{w}$. Thus, using Theorem \ref{pick}, \eqref{eq_formulae_densities} and recalling that $\pi(0)=z$,
\begin{equation}\label{eq_upperbound}
\frac{\rho_{\Ort}(z)}{\rho_{\Or}(z)} = \frac{ \vert \pi'(0)\vert \cdot \rho_{\Ort}(\pi(0))}{ \vert \pi'(0)\vert\cdot \rho_{\Or}(\pi(0))}=\frac{\rho_{\hat{\Or}}(0)}{\rho_{\D}(0)} \leq
\frac{\rho_{\D_{w}}(0)}{\rho_{\D}(0)} = \frac{1}{w}.
\end{equation}

We have obtained an upper bound for the relative densities at $z$ in terms of the value $w$. In order to get a lower bound, we divide the proof into two cases depending on whether $\hat{\nu}(w)=1$ or $\hat{\nu}(w)> 1$. In the first case, $z_2=\pi(w)\in \partial \widetilde{S}$, and so $w\in \partial \widehat{D}$. In particular, $\widehat{D} \subset \D \setminus\{w\}$, and so the inclusion $\hat{\Or} \hookrightarrow (\D \setminus\{w\}, \rho_{\D \setminus\{w\}})$ is holomorphic, where $\rho_{\D \setminus\{w\}}$ is the constant function equal to $1$. Therefore, by Corollary \ref{cor_pick}, $\rho_{\hat{\Or}}(x)\geq \rho_{{\D \setminus\{w\}}}(x)$ for all unramified $x \in \hat{\Or}$. Consider the Möbius transformation $T\colon \D\rightarrow \D$ given by $T(x)\defeq \frac{x-w}{wx-1}$, which in particular satisfies $T(w)=0$ and $T(0)=w$. The restriction $T\vert_{ \D \setminus\{w\}}$ is a covering map for the orbifold with underlying surface $\D^\ast$ and ramification map constant equal to one. Then, using Theorem \ref{pick}, \eqref{eq_formulae_densities} and \eqref{eq_upperbound},
\begin{equation}\label{eq_lower3}
\frac{\rho_{\Ort}(z)}{\rho_{\Or}(z)} =\frac{\rho_{\hat{\Or}}(0)}{\rho_{\D}(0)} \geq\frac{\rho_{{\D \setminus\{w\}}}(0)}{\rho_{\D}(0)}=\frac{ \vert T'(0)\vert \cdot \rho_{{\D^\ast}}(T(0))}{ \vert T'(0)\vert\cdot \rho_{\D}(T(0))}= \frac{\rho_{\D^{\ast}}(w)}{\rho_{\D}(w)}=\frac{1-w^{2}}{2w \vert\log w \vert }.
\end{equation}

For the second case, that is, whenever $k\defeq\hat{\nu}(w)\geq 2$, we define the orbifold $\Or^k_{w}\defeq (\D, \mu)$ with $\mu(w)=k$ and $\mu\equiv 1$ elsewhere. Then, the inclusion $\hat{\Or} \hookrightarrow \Or^k_{w}$ is holomorphic, and so by Corollary \ref{cor_pick}, $\rho_{\hat{\Or}}(x)\geq \rho_{\Or^k_{w}}(x)$ for all $x \in \hat{D}$. Thus, using \eqref{eq_upperbound},
\begin{equation}\label{eq_lower1}
\frac{\rho_{\Ort}(z)}{\rho_{\Or}(z)}=\frac{\rho_{\hat{\Or}}(0)}{\rho_{\D}(0)} \geq \frac{\rho_{\Or^k_{w}}(0)}{\rho_{\D}(0)}.
\end{equation}
Let $\Or^k_0$ be the orbifold with underlying surface the unit disc and signature $(k)$, being $0$ its only ramified point. Let $f \colon \D\rightarrow \Or^k_{0}$ be the covering map given by $f(x)=x^k$. Then, $T\circ f \colon \D \rightarrow \Or^k_{w}$ is an orbifold covering map, and thus, by Theorem \ref{pick},	
$$\vert f'(x) \vert \rho_{\Or^k_{0}}(f(x))= \rho_\D(x) = \left\vert T'(f(x))\right\vert\cdot \left\vert f'(x) \right\vert \rho_{\Or^k_{w}}(T(f(x))).$$ 
Hence, if we choose any $x \in \D$ such that $f(x)=w$, using that $T(w)=0$, we get that $\rho_{\Or^k_{0}}(w)= \vert T'(w) \vert \rho_{\Or^k_{w}}(0)$. Arguing similarly, $\rho_\D(w)=\vert T'(w) \vert \rho_\D(0)$. Thus, substituting in \eqref{eq_lower1}, 
\begin{equation}\label{eq_anotherlower}
\frac{\rho_{\Ort}(z)}{\rho_{\Or}(z)} \geq \frac{\rho_{\Or^k_{w}}(0)}{\rho_{\D}(0)}= \frac{\rho_{\Or^k_{0}}(w)}{\rho_{\D}(w)}.
\end{equation}

We aim to get a lower bound for $\rho_{\Or^k_{0}}(w)/\rho_{\D}(w)$ independent of the value $k$. We can compute the density of the induced metric in $\Or^k_{0}$ using that $\rho_{\D}(x)=\vert f'(x)\vert \rho_{\Or^k_{0}}(f(x))$ and \eqref{eq_formulae_densities}. Since $f(x)=x^{k}=u$ implies $x^{k-1}=u^{\frac{k-1}{k}}$, we get that for each $u \in \D$,
\begin{equation}\label{eq_density}
\rho_{\Or^k_{0}}(u)=\frac{2}{k\vert u\vert^{\frac{k-1}{k}}(1-\vert u\vert^{\frac{2}{k}})}.
\end{equation}
Thus, if we make the change of variables $q\defeq 1/k$, $r\defeq1/w$, we are aiming to find a lower bound, independent of $q$, for 
\begin{equation}\label{eq_lower2}
\frac{\rho_{\Or^k_{0}}(w)}{\rho_{\D}(w)}=\frac{1-w^{2}}{k w^{\frac{k-1}{k}}(1-w^{\frac{2}{k}})}= \frac{ q(1-r^{-2})r}{r^q-r^{-q}}, \quad \text{ where }\quad q\in (0, 1/2] \quad \text{and} \quad r> 1.
\end{equation}

Observe that for each fixed value of $r$, the last quotient above is strictly decreasing in $q$. This can be seen by considering for each $r> 1$ the functions $f_r\colon (0, 1/2] \rightarrow \R$ given by
$$f_r(q)\defeq \frac{q}{r^q-r^{-q}}=\frac{q}{\sinh(q \log r)}=\frac{s}{\log r\sinh(s)},$$
where we have made the change of variables $s=q\log r$. Let $h(s) \defeq s/\sinh(s)$ and note that $h'(s)=(\sinh(s) - s \cosh(s)) / \sinh^2(s)$ is always negative, as $\tanh(s) < s$ when $s$ is positive. Thus, the same holds for $f_r'(q)$ and so, each function $f_r$ is strictly decreasing in $q$. Substituting in \eqref{eq_lower2},
\begin{equation}\label{eq_quotient_R}
\frac{\rho_{\Or^k_{0}}(w)}{\rho_{\D}(w)} \geq \frac{(1-r^{-2})r}{2(r^{1/2}-r^{-1/2})}=\frac{r^{-1/2} (1+r^{-1})}{2} =\frac{1+w}{2\sqrt{w}} \quad \text{ for each } \quad w<1.
\end{equation}
Thus, putting together equations \eqref{eq_lower3}, \eqref{eq_anotherlower} and	\eqref{eq_quotient_R}, we get that for the point $z$ in the statement,
\begin{equation} \label{eq_lowerfinal}
\frac{\rho_{\Ort}(z)}{\rho_{\Or}(z)}\geq \min \left \{\frac{1-w^{2}}{2w \vert \log w \vert }, \frac{1+w}{2\sqrt{w}} \right\}=\frac{1+w}{2\sqrt{w}}>1.
\end{equation}	

Finally, \eqref{eq_prop_expansion} is obtained recalling that the hyperbolic distance between $0$ and any point $z \in \D$ is given by $\log \frac{1+\vert z\vert}{1-\vert z\vert}$. In our case, $d_{\D}(0,w)=\log \frac{1+w}{1-w}=R$, and so $w=\frac{\e^{R}-1}{\e^{R}+1}$. Substituting accordingly in equations \eqref{eq_upperbound} and \eqref{eq_lowerfinal}, the desired bounds are achieved.
\end{proof}

The second goal of this section is to prove Theorem \ref{thm_intro_orb}. In order to achieve this, we will first prove in Theorem \ref{thm_cont_or}, for orbifolds with the same number of ramified points, all with the same ramification value, that if these ramified points are ``continuously perturbed'', the orbifold metric of the new orbifold is a ``continuous perturbation'' of the metric of the original one. It is possible that these results have appeared before in the literature of orbifolds, but since a reference has not been located, we present proofs that use quasiconformal maps. We refer to \cite{lehto_quasiconformal, vuorinen2006conformal} for definitions.

We start by fixing the type of orbifolds that we shall consider. Namely, those for which their ramified points are at least at a certain (given) Euclidean distance from each other.
\begin{defn}[Orbifolds associated to vectors] \label{def_orb_cont} Given a compact subset $A$ of a Jordan domain $U\subsetneq \C$ and constants $N,X\in \N_{\geq 1}$ and $r>0$, denote
\begin{equation}
\mathcal{T}_r^N(A)\defeq \left\{(w_1, \ldots, w_N)\in A^N : \vert w_i-w_j \vert \geq r \text{ for all } i\neq j \right\}.
\end{equation}
Each $\bm{w}=(w_1, \ldots, w_N)\in \mathcal{T}_r^N(A)$ has an \textit{associated orbifold} $\Or^X_{\bm{w}}\defeq(U, \nu_{\!_{\bm{w}}})$, with 
\begin{align}\label{eq_ram_thm}
\nu_{\!_{\bm{w}}}(z)& \defeq \renewcommand{\arraystretch}{1.5}\left\{\begin{array}{@{}l@{\quad}l@{}}
X & \text{if } z=w_i \text{ for some } 1\leq i \leq N, \\
1 & \text{otherwise.}
\end{array}\right.\kern-\nulldelimiterspace
\end{align}
\end{defn}
\begin{remark}
By Theorem \ref{thm_uniform}, for any orbifold $\Or^X_{\bm{w}}$ as in Definition \ref{def_orb_cont}, since $U\subsetneq \C$, its universal cover is $\D$, and so the distance function $d_{\Or^X_{\bm{w}}}$ is well-defined. Moreover, it is easy to see that $\mathcal{T}_r^N(A)$ is compact.
\end{remark} 

In the following theorem, we see that continuous perturbations of a vector $\bm{w}\in\mathcal{T}_r^N(A)$ lead to continuous perturbations on the distance function $d_{\Or_{\bm{w}}}$ of its associated orbifold. Compare to \cite[Theorem 4.2]{helenaSemi} for a similar argument when a single ramified point of an orbifold is perturbed.
\begin{thm}[Continuity of orbifold metrics under perturbations]\label{thm_cont_or} Let $A$ be a compact subset of a Jordan domain $U$. Let $N,X\in \N_{\geq 1}$ and $r>0$. Then, the function $h​ \colon A^2 \times \mathcal{T}_r^N(A) \to \R$ given by 
$$h(p,q,w_1,\ldots, w_{N})\defeq d_{\Or_{\bm{w}}}(p,q)$$ is continuous, where $\bm{w}\defeq (w_1,\ldots, w_{N})\in \mathcal{T}_r^N(A)$ and $\Or_{\bm{w}}\defeq \Or^X_{\bm{w}} $ is its associated orbifold.
\end{thm}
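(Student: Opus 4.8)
The plan is to establish continuity in two conceptually separate steps: first, continuity of the orbifold \emph{density} $\rho_{\Or_{\bm w}}(z)$ jointly in $(z,\bm w)$ (away from the cone points, with a controlled blow-up near them), and second, the passage from densities to the distance function $h$. For the density step, I would fix a base vector $\bm w^0 \in \mathcal{T}_r^N(A)$ and, for $\bm w$ close to $\bm w^0$, build an explicit quasiconformal homeomorphism $\phi_{\bm w}\colon U \to U$ which is the identity outside a union of small Euclidean discs $\D_\delta(w_i^0)$ (with $\delta < r/3$, so these discs are pairwise disjoint), maps $w_i^0$ to $w_i$, and is conformal on a neighbourhood of each $w_i$ and of each $w_i^0$. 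Such interpolating maps can be taken with maximal dilatation $K(\bm w)\to 1$ as $\bm w\to \bm w^0$. Composing $\phi_{\bm w}$ with the universal covering map $\D \to \Or_{\bm w^0}$ and using the measurable Riemann mapping theorem exhibits $\Or_{\bm w}$ as the image of $\Or_{\bm w^0}$ under a $K(\bm w)$-quasiconformal map that is conformal near all ramified points; by the compatibility of cone points under conformal changes of coordinate, this gives a $K(\bm w)$-quasiconformal orbifold-homeomorphism $\Or_{\bm w^0}\to \Or_{\bm w}$. Pulling back the orbifold metric and invoking standard distortion estimates for quasiconformal maps (the metric and its quasiconformal image are comparable with constant depending only on $K$), one gets that on any compact subset of $U$ avoiding fixed neighbourhoods of the cone points, $\rho_{\Or_{\bm w}}(z)/\rho_{\Or_{\bm w^0}}(z)\to 1$ uniformly as $\bm w\to\bm w^0$.

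Next I would reduce the continuity of $h$ in \emph{all} variables to this density estimate. Continuity in $(p,q)$ for fixed $\bm w$ is classical (the orbifold metric is topologically equivalent to the Euclidean metric, as noted in the excerpt, and one has local comparability). For the $\bm w$-variable, fix $\varepsilon>0$ and $p,q\in A$. Using Theorem \ref{thm_intro_orb}, or more elementarily a direct compactness argument, choose a geodesic (or near-geodesic) $\gamma$ from $p$ to $q$ realizing $d_{\Or_{\bm w^0}}(p,q)$ up to $\varepsilon$; a priori $\gamma$ may pass near the cone points $w_i^0$. The key observation is that near a cone point of fixed ramification value $X$ the $\Or$-length of a short arc is \emph{bounded}, uniformly in the orbifold (this is exactly the ``cone point vs.\ puncture'' phenomenon emphasized in the Remark after the orbifold metric discussion, and is quantified by Proposition \ref{prop_circle} together with the formula $\rho_{\Or^k_0}(u)=2/(k|u|^{(k-1)/k}(1-|u|^{2/k}))$): the integral of such a density over a disc of radius $\delta$ is $O(\delta^{1/X})\to 0$. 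So I can modify $\gamma$ near each $w_i^0$ to avoid the discs $\D_\delta(w_i^0)$ at a cost of at most $c_\delta$ in $\Or_{\bm w^0}$-length, with $c_\delta\to 0$ as $\delta\to 0$, obtaining a curve $\gamma'$ staying in the region where the density comparison of the previous paragraph holds; then $\ell_{\Or_{\bm w}}(\gamma') \le (1+o(1))\,\ell_{\Or_{\bm w^0}}(\gamma')\le (1+o(1))(d_{\Or_{\bm w^0}}(p,q)+\varepsilon + c_\delta)$, giving the one-sided bound $h(p,q,\bm w)\le h(p,q,\bm w^0)+\varepsilon$ for $\bm w$ close to $\bm w^0$. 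The reverse inequality follows by the symmetric argument with the roles of $\bm w$ and $\bm w^0$ exchanged. Finally, joint continuity at $(p_0,q_0,\bm w^0)$ follows by combining the triangle inequality $|h(p,q,\bm w)-h(p_0,q_0,\bm w^0)|\le |h(p,q,\bm w)-h(p_0,q_0,\bm w)| + |h(p_0,q_0,\bm w)-h(p_0,q_0,\bm w^0)|$ with the (uniform in $\bm w\in\mathcal{T}_r^N(A)$) continuity in $(p,q)$, which itself comes from the quasiconformal comparability of all the metrics $\rho_{\Or_{\bm w}}$ on the compact set $A$ to a single reference metric.

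The main obstacle, and the reason a naive ``the density depends continuously on parameters, so integrate'' argument is insufficient, is the behaviour \emph{at} the cone points: there the density genuinely blows up and, worse, a perturbation moves the singularity itself, so there is no uniform pointwise bound on $|\rho_{\Or_{\bm w}}(z)-\rho_{\Or_{\bm w^0}}(z)|$ near $w_i^0$. Handling this requires exactly two ingredients — (i) the quasiconformal straightening of the previous paragraphs, which is conformal near the cone points and hence respects them, yielding \emph{multiplicative} rather than additive control of the densities on a neighbourhood of each $w_i$; and (ii) the finiteness of the $\Or$-length of short arcs through a cone point of bounded ramification, which lets us excise the bad discs from competitor curves at negligible cost. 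Both ingredients are available: (i) is standard quasiconformal theory, and (ii) is the content of Proposition \ref{prop_circle} (and the general estimate \eqref{eq_density}). Everything else is bookkeeping with the triangle inequality and the fact that $\mathcal{T}_r^N(A)$ is compact, so the dilatations $K(\bm w)$ can be controlled uniformly.
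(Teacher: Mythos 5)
Your overall strategy (quasiconformal interpolation of the marked points plus excision of small discs around cone points at a cost controlled by Proposition \ref{prop_circle} and \eqref{eq_density}) is reasonable, and the second half of your argument differs from the paper's, which never compares densities at all: the paper proves sequential continuity by lifting the quasiconformal conjugacy to the universal covers, showing the normalized lifts converge to the identity, deducing that the covering maps $\pi_k\to\pi$ locally uniformly, and then comparing distances upstairs in $\D$ via Hurwitz's theorem. But your first step contains a genuine gap. The assertion that a $K(\bm w)$-quasiconformal orbifold homeomorphism $\Or_{\bm w^0}\to\Or_{\bm w}$ with $K(\bm w)\to 1$ yields, via ``standard distortion estimates,'' that $\rho_{\Or_{\bm w}}(z)/\rho_{\Or_{\bm w^0}}(z)\to 1$ uniformly on compact sets away from the cone points is not something quasiconformality alone delivers. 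A $K$-quasiconformal map, even with $K$ arbitrarily close to $1$, is in general only H\"older continuous with exponent $1/K$ at small scales; consequently the hyperbolic (orbifold) distance is \emph{not} distorted by a multiplicative factor close to $1$ uniformly over all pairs of points, and no pointwise comparison of the two densities at the same point follows from the mere existence of such a map together with a bound on its dilatation. (Note also that the quantity you need compares the two orbifold densities through the \emph{identity} inclusion on the region where $\phi_{\bm w}=\mathrm{id}$, not through $\phi_{\bm w}$, so even a hypothetical bi-Lipschitz statement for $\phi_{\bm w}$ would not directly translate into the ratio you use to bound $\ell_{\Or_{\bm w}}(\gamma')$ by $(1+o(1))\,\ell_{\Or_{\bm w^0}}(\gamma')$.) A merely bounded comparability constant $C(K)$ would also not suffice: continuity requires the constant to tend to $1$.

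The density convergence you want is true, but it needs an extra compactness/normalization argument rather than a citation of quasiconformal distortion. This is exactly what the paper supplies: it normalizes orbifold covering maps $\pi,\pi_k\colon\D\to\Or_{\bm w},\Or_{\bm w_k}$ at a base point where the interpolating map is the identity (hence where the lift is holomorphic), lifts $\phi_k$ to $K(k)$-quasiconformal self-maps $\Phi_k$ of $\D$ fixing $0$, uses compactness of normalized $K$-quasiconformal families together with $K(k)\to1$ and the derivative normalization to conclude $\Phi_k\to\mathrm{id}$ locally uniformly, and hence $\pi_k\to\pi$ locally uniformly; from this one can indeed extract locally uniform convergence of densities away from the ramified points (or, as the paper does, bypass densities entirely and compare distances of suitable preimages in $\D$ via Hurwitz). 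If you replace your appeal to ``standard distortion estimates'' by such a lifting-and-normal-families argument, the remainder of your plan --- the excision of $\delta$-discs around cone points at cost $O(\delta^{1/X})$ justified by Corollary \ref{cor_pick} and the model density \eqref{eq_density}, the two-sided comparison by symmetry, and the uniformity over $\mathcal{T}_r^N(A)$ by compactness --- is sound, with only minor bookkeeping needed when $p$ or $q$ lies inside one of the excised discs or coincides with a marked point.
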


\begin{proof}
Since the domain of the function $h$ is a metric space, the notions of continuity and sequential continuity for $h$ are equivalent. Thus, we will prove the theorem by showing that for a fixed but arbitrary $\bm{x}\defeq(p,q,w_1,\ldots, w_N) \in A^2 \times \mathcal{T}_r^N(A)$, if $\lbrace \bm{x_k}\defeq(p^k,q^k,w^k_1,\ldots, w^k_N)\rbrace_{k\geq 1}$ is a sequence of points in $A^2 \times \mathcal{T}_r^N(A)$ such that $\bm{x_k} \rightarrow \bm{x}$ as $ k\to \infty$, then $h(\bm{x_k})\to h(\bm{x})$. That is, if $\bm{w}\defeq(w_1,\ldots, w_N)$ and for each $k\geq 1$, $\bm{w_k}\defeq(w^k_1,\ldots, w^k_N)$, then we will prove continuity of $h$ by showing that
\begin{equation}\label{eq_goalcont}
d_{\Or_{\bm{w_k}}}(p^k,q^k) \to d_{\Or_{\bm{w}}}(p,q) \quad \text{ as } \quad k \rightarrow \infty.
\end{equation}

By translation, we may assume without loss of generality that $0 \in A$ and $w_j\neq 0$ for all $1\leq j\leq N$. Then, since $\bm{w}\in \mathcal{T}_r^N(A)$, we can choose $\epsilon<r/2$ so that all disks in the set 
$$\{\D_\epsilon\} \cup \{\D_{\epsilon}(w_j) : 1\leq j\leq N\}$$
are pairwise disjoint and contained in $U$. Since by assumption $\bm{x_k}\rightarrow \bm{x}$ as $k\rightarrow \infty$, there exists $K>0$ such that $w^k_j \subset \D_{\epsilon}(w_j)$ for all $k\geq K$ and $1\leq j\leq N$. Moreover, for each $k\geq K$ and $1\leq j\leq N$, we define a quasiconformal map $\phi_j^k \colon \D_{\epsilon}(w_j) \rightarrow\D_{\epsilon}(w_j)$ that satisfies $\phi_j^k(w^k_j)=w_j$. With that aim, let $H^k_j\colon \D_{\epsilon}(w_j)\rightarrow \H $ be the unique Riemann map such that $H^k_j(w_j)=i$ and so that $H^k_j(w^k_j)$ lies in the positive imaginary axis. Recall that $\H$ denotes the upper half-plane. In particular, $H^k_j(w^k_j)=h^k_j i$, where $h^k_j\defeq\e^{d_j(w^k_j,w_j)}$ and $d_j$ denotes the hyperbolic metric in $\D_\epsilon(w_j)$.
Define $L^k_j:\H\to\H$ as $L^k_j(z)\defeq \Rea(z) + h^k_j \Ima(z) i$. Note that $L^k_j$ is a $h^k_j$-quasiconformal self-map of $\H$. We then define
\begin{align*}
\phi_j^k \colon \D_{\epsilon}(w_j) \rightarrow\D_{\epsilon}(w_j) \quad \text{ as } \quad \phi_j^k\defeq \left(H^k_j\right)^{-1}\circ L^k_j \circ H^k_j. 
\end{align*}
If follows from the definition of the functions involved that $\phi_j^k$ extends continuously to $\partial \D_{\epsilon}(w_j)$ as the identity map. Hence, the map $\phi_k\colon U \rightarrow U$ given by
\begin{align}\label{eq_qc_map}
\phi_k(z)& \defeq \renewcommand{\arraystretch}{1.5}\left\{\begin{array}{@{}l@{\quad}l@{}}
\phi_j^k(z) & \text{if } z\in \D_{\epsilon}(w_j) \text{ for some } 1\leq j\leq N, \\
z & \text{otherwise,}
\end{array}\right.\kern-\nulldelimiterspace
\end{align}
is well-defined and continuous. In fact, $\phi_k$ is a $K(k)$-quasiconformal map, where $K(k)\defeq \max^N_{j=1}\{h^k_j\}$ (see for example \cite{RealFatoubook}), and moreover,
\begin{equation}\label{eq_ident0}
\phi_k \vert_ {\D_\epsilon} \equiv \id\vert_{\D_\epsilon} \quad \text{ for all } k\geq K.
\end{equation}

In particular, $K(k)\rightarrow 1$ and $\phi_k\rightarrow \id$ as $k$ tends to infinity. Let $\pi:\D\to \Or_{\bm{w}}$ and let $\pi_k:\D\to \Or_{\bm{w_k}}$ be orbifold covering maps, normalized such that it holds $\pi(0)=\pi_k(0)=0$ and $\arg(\pi'(0))=\arg(\pi_k'(0))$. Note that as an orbifold map, $\phi_k\colon \Or_{\bm{w_k}} \rightarrow \Or_{\bm{w}}$ is a homeomorphism that preserves ramified points, that is, 
\begin{equation}\label{eq_preserves_degrees}
\nu_{\bm{w}}(z)>1 \iff \nu_{\bm{w_k}}(\phi_k(z))>1.
\end{equation}
Thus, we can \textit{lift} $\phi_k$ to a homeomorphism $\Phi_k\colon \D\rightarrow \D$ such that
\begin{equation}\label{eq_commut_phi_k}
\pi \circ \Phi_k = \phi_k \circ \pi_k \quad \text{ and } \quad \Phi_k(0)=0.
\end{equation}
Note that $\Phi_k$ is also $K(k)$-quasiconformal, since both $\pi$ and $\pi_k$ are holomorphic and $\phi_k$ is a $K(k)$-quasiconformal map. Recall that for every $K$, the space of $K$-quasiconformal self-maps of the disc fixing zero is compact, see \cite[Corollary 4.4.3]{hubbard_teich}. In particular, since $K(k)\rightarrow 1$ as $k$ tends to infinity, the limit function of $\{\Phi_k\}_k$ is $K$-quasiconformal for every $K>1$; that is, it is conformal, and thus a Möbius transformation fixing zero. Note that by \eqref{eq_ident0} and \eqref{eq_commut_phi_k}, 
the maps $\Phi_k$ are all holomorphic on the same neighbourhood of $0$, and by our assumptions on the derivatives of $\pi, \pi_k$ and $\phi_k$ at $0$, we have that $\vert \Phi_k'(0)\vert\rightarrow 1$ as $k\rightarrow \infty$. Consequently, the sequence $\{\Phi_k\}_k$ converges locally uniformly to the identity as $k\rightarrow \infty$. Hence, by \eqref{eq_preserves_degrees} and \eqref{eq_commut_phi_k}, $\pi_k$ converges locally uniformly to $\pi$ as $k\rightarrow \infty$.

Recall that our goal is to prove \eqref{eq_goalcont}. Note that since $\{\pi_k\}_k$ and $\pi$ are orbifold covering maps, by Theorem \ref{pick} they are local isometries. Hence, instead of proving \eqref{eq_goalcont} using the orbifold metrics in $\Or_{\bm{w_k}}$ and $\Or_{\bm{w}}$, we will prove an analogue of \eqref{eq_goalcont} for preimages of the points $p,q,p^k,q^k$ under the covering maps $\{\pi_k\}_k$ and $\pi$. More precisely, let us choose $\delta$ small enough so that there exist respective connected components $V_p$ and $V_q$ of $\pi^{-1}(\D_\delta(p))$ and $\pi^{-1}(\D_\delta(q))$ containing a single preimage of $p$ and of $q$ respectively. That is, $\pi^{-1}(p)\cap V_p \eqdef\{\tilde{p}\}$ and $\pi^{-1}(q)\cap V_q \eqdef\{\tilde{q}\}$. In addition, for each $k\geq K$, let us consider the holomorphic functions $\pi_k^p: V_p \rightarrow U$ and $\pi_k^q: V_q \rightarrow U$ given by $\pi_k^p(z)\defeq \pi_k(z)-p$ and $\pi_k^q(z)\defeq \pi_k(z)-q$. Then, the sequences $\{\pi_k^p\}_k$ and $\{\pi_k^q\}_k$ converge uniformly in compact subsets to the functions $\pi\vert_{ V_p}-p$ and $\pi\vert_{ V_p}-q$, which have respectively unique zeros at $p$ and $q$. Then, by Hurwitz's theorem, for each $k$ large enough, there exist points $\{\tilde{p}_{\pi_k}\}\defeq \pi_k^{-1}(p) \cap V_p\cap \D_{\delta_1}(\tilde{p})$ and $\{\tilde{q}_{\pi_k}\}\defeq \pi_k^{-1}(q) \cap V_q\cap \D_{\delta_1}(\tilde{q})$ for some $\delta_1$ small enough. In particular, 
\begin{equation}\label{eq_Hur_1}
\tilde{p}_{\pi_k}\xrightarrow{ \: k\rightarrow \infty \: } \tilde{p} \quad \text{and} \quad \tilde{q}_{\pi_k} \xrightarrow{ \: k\rightarrow \infty \: } \tilde{q}.
\end{equation}
Note that for each $k\geq K$, $\tilde{p}_{\pi_k}$ is a preimage of $p$ under $\pi_k$, rather than a preimage of $p^k$ under $\pi_k$, and hence the proof is not concluded just yet. However, since by assumption $p^k\xrightarrow{ \: k\rightarrow \infty \: } p$ and $q^k \xrightarrow{ \: k\rightarrow \infty \: } q$, for every $k$ sufficiently large, $\pi_k^{-1}(p^k)\cap V_p \eqdef\{P_k\}$, $\pi_k^{-1}(q^k)\cap V_q \eqdef\{Q_k\}$, and in addition
\begin{equation}\label{eq_Hur_2}
\vert P_k- \tilde{p}_{\pi_k} \vert \xrightarrow{ \: k\rightarrow \infty \: } 0 \quad \text{and} \quad \vert Q_k- \tilde{q}_{\pi_k} \vert \xrightarrow{ \: k\rightarrow \infty \: } 0.
\end{equation}
Thus, as a combination of \eqref{eq_Hur_1} and \eqref{eq_Hur_2}, and using that for compact subsets of $\D$, the Euclidean and hyperbolic metrics are equivalent, we have that
$$d_\D(P_k, Q_k) \rightarrow d_\D(\tilde{p}, \tilde{q}) \quad \text{as} \quad k \rightarrow \infty, $$
which is equivalent to \eqref{eq_goalcont}, as we wanted to show.
\end{proof} 

Theorem \ref{thm_intro_orb} now becomes a consequence of the preceding one together with Theorem \ref{prop_expansion_Intro}. We restate it here for ease of exposition.
\begin{thmIntroOrb}[Distances are uniformly bounded across certain orbifolds] Given a compact subset $A$ of a Jordan domain $U$ and constants $r>0$ and $c, M\in \N_{\geq 1}$, there exists a constant $R\defeq R(U,A,r,c,M) >0$ such that for every orbifold $\Or$ with underlying surface $U$ and at most $M$ ramified points, each with ramification value smaller than or equal to $c$, and such that the Euclidean distance between any two of them is at least $r$, it holds that $$d_\Or(p,q)<R \quad \text{ for every } \quad p,q \in A.$$
\end{thmIntroOrb}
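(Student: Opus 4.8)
The plan is, for each admissible orbifold $\Or=(U,\nu)$, to exhibit a single curve $\gamma$ from $p$ to $q$ in $U$ whose $\Or$-length is bounded by a constant depending only on $U,A,r,c,M$; since $d_\Or(p,q)\le\ell_\Or(\gamma)$, this suffices. The backbone of $\gamma$ will be the hyperbolic geodesic $\gamma_0$ of the \emph{bare} surface $U$ (which is hyperbolic, as $U\subsetneq\C$, with complete distance $d_U$) joining $p$ and $q$. Since $\ell_U(\gamma_0)=d_U(p,q)\le\diam_U(A)\eqdef D$, the arc $\gamma_0$ stays in the compact set $K_0\defeq\{z\in U:d_U(A,z)\le D\}\Subset U$, which depends only on $U$ and $A$. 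I would then fix, once and for all: a radius $\eta>0$ with $3\eta<r$ and $\overline{\D_{2\eta}(z)}\subset U$ for all $z\in K_0$; the compact neighbourhood $K_0^+\defeq\{z:\dist(z,K_0)\le\eta\}$ of $K_0$; and the larger compact set $A_1\defeq\{z:\dist(z,K_0^+)\le\eta\}\Subset U$ (so $K_0^+$ is contained in the interior of $A_1$).

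Given $\Or$ --- we may assume it has at least one ramified point, the other case being trivial --- I modify $\gamma_0$ as follows. For each ramified point $w$ with $\gamma_0\cap\D_\eta(w)\ne\emptyset$ (there are at most $M$ of these, they lie in $K_0^+$, and by $r$-separation the discs $\D_\eta(w)$ are disjoint), replace $\gamma_0\cap\D_\eta(w)$ by an arc of $\partial\D_\eta(w)$; if moreover $p$ (resp.\ $q$) lies within $\eta$ of some ramified point $w$ --- at most one such, again by $r$-separation --- prepend (resp.\ append) the radial segment from $p$ (resp.\ $q$) out to $\partial\D_\eta(w)$. Writing $\gamma$ for the resulting curve and $\gamma_{\mathrm{main}}$ for $\gamma$ minus its (at most two) radial segments, one checks that $\gamma_{\mathrm{main}}\subset K_0^+$ and that $\gamma_{\mathrm{main}}$ keeps Euclidean distance $\ge\eta$ from every ramified point of $\Or$ (ramified points not enclosed by a detour are already avoided by $\gamma_0$, while each detour arc sits at distance $\eta$ from its own centre and, since $3\eta<r$, at distance $>\eta$ from the others). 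Also $\ell_U(\gamma_{\mathrm{main}})\le D+2\pi M\eta\cdot\sup_{K_0^+}\rho_U$, a bound depending only on $U,A,r,M$.

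Two estimates then finish the argument. First, along $\gamma_{\mathrm{main}}$ the quantity $R(z)\defeq d_U\!\bigl(z,\mathbb{B}^{(U,\nu_0)}_{\Or}\bigr)$, where $\nu_0\equiv1$, equals $d_U\bigl(z,\{\text{ramified points of }\Or\}\bigr)$ --- the piece $\partial U$ of $\mathbb{B}^{(U,\nu_0)}_{\Or}$ being at infinite $d_U$-distance --- and is bounded below by some $\delta'>0$ depending only on $U,A,r,M$: ramified points lying in $A_1$ are at Euclidean distance $\ge\eta$ from $\gamma_{\mathrm{main}}\subset A_1$, hence at $d_U$-distance bounded below uniformly on the compact set $A_1$; ramified points outside $A_1$ force any path from $\gamma_{\mathrm{main}}$ to cross $\partial A_1$, hence lie at $d_U$-distance $\ge d_U(K_0^+,\partial A_1)>0$. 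As the inclusion $\Or\hookrightarrow(U,\nu_0)$ is holomorphic, Theorem~\ref{prop_expansion_Intro} now gives $\rho_\Or(z)\le\bigl(1+\tfrac{2}{e^{\delta'}-1}\bigr)\rho_U(z)$ on $\gamma_{\mathrm{main}}$, whence $\ell_\Or(\gamma_{\mathrm{main}})\le\bigl(1+\tfrac{2}{e^{\delta'}-1}\bigr)\ell_U(\gamma_{\mathrm{main}})$ is bounded as required. Second, on a radial segment near a ramified point $w$ of value $m\le c$, the restriction of $\Or$ to $\D_{2\eta}(w)$ is the one-point orbifold with underlying surface $\D_{2\eta}(w)$ and signature $(m)$ (no other ramified point lies in $\overline{\D_{2\eta}(w)}$ since $2\eta<r$); by Corollary~\ref{cor_pick} and the explicit density \eqref{eq_density}, rescaled to this disc, one gets $\rho_\Or(w+u)\le\bigl(1-2^{-2/c}\bigr)^{-1}(2\eta)^{-1/m}|u|^{-(m-1)/m}$ for $|u|\le\eta$, and integrating radially shows each such segment has $\Or$-length at most $2\bigl(1-2^{-2/c}\bigr)^{-1}$, a constant depending only on $c$. (Equivalently, this uniform control on the ``cost'' of leaving, or of going around, a ramified point is furnished by Theorem~\ref{thm_cont_or} applied to these one-point orbifolds, whose distance functions are mutual translates and hence uniformly bounded on the relevant compacta.) Adding the contributions yields $d_\Or(p,q)\le\ell_\Or(\gamma)\le R$ with $R=R(U,A,r,c,M)$.

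The main obstacle is the interplay between the ramified points and the endpoints $p,q$: the density comparison of Theorem~\ref{prop_expansion_Intro} deteriorates near a ramified point, so a curve that merely minimises bare length will not do --- it may run into a ramified point, conceivably $p$ or $q$ itself, and near such a point one must instead exploit the local shape of the orbifold metric directly. A related subtlety is to prevent the constants from depending circularly on $\Or$; this is precisely why the backbone of $\gamma$ is the bare hyperbolic geodesic of $U$, whose length and position are controlled \emph{a priori} by $U$ and $A$ alone, rather than a geodesic of $\Or$, so that the separating compact set $A_1$ can be fixed before $\Or$ is chosen.
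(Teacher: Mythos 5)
Your argument is correct in substance, but it follows a genuinely different route from the paper. The paper proves the statement by a compactness--continuity argument: it first establishes Theorem \ref{thm_cont_or} (via quasiconformal perturbation of the ramified points and lifting to the universal cover), applies it for each $N\le M$ to the compact parameter space $\mathcal{T}_r^N(A)$ with the common ramification value $c!$ (so that every admissible orbifold with marked points in $A$ includes holomorphically into one of the model orbifolds, and Corollary \ref{cor_pick} transfers the bound), and then handles ramified points lying in $U\setminus A$ by a separate step comparing $d_\Or$ with $d_{\hat\Or}$ through Theorem \ref{prop_expansion_Intro} on the set $W=\{z: d_{\hat\Or}(A,z)<\tilde R\}$. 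You instead exhibit an explicit competitor curve: the bare hyperbolic geodesic of $U$, surgered around the cone points by circular detours and radial segments, with its $\Or$-length controlled by Theorem \ref{prop_expansion_Intro} away from the cone points (using a uniform lower bound on $d_U(\cdot,\{\text{ramified points}\})$ along the detoured curve) and by the explicit rescaled density \eqref{eq_density} together with Corollary \ref{cor_pick} near them. Your approach is more elementary and quantitative --- it avoids Theorem \ref{thm_cont_or} and the quasiconformal machinery entirely, produces an explicit (in principle computable) constant, treats ramified points outside $A$ and the case where $p$ or $q$ is itself a cone point in one stroke, and correctly keys the argument to a priori data ($U$, $A$, $r$) rather than to the varying orbifold; the paper's route, while non-constructive, yields Theorem \ref{thm_cont_or} as a statement of independent interest and sidesteps the curve surgery and its bookkeeping. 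On that bookkeeping: your nested radii need minor adjustment --- to place $\D_{2\eta}(w)\subset U$ for a ramified point $w$ within $\eta$ of $p\in K_0$ you should require $\overline{\D_{3\eta}(z)}\subset U$ for $z\in K_0$, and the circle detours lie only in the $2\eta$-neighbourhood of $K_0$, not in $K_0^+$ as you defined it --- but these are trivial renormalisations of the constants and do not affect the argument; likewise the passage from Euclidean separation $\ge\eta$ on a compact subset of $U$ to a uniform lower bound on $d_U$ deserves the one-line justification that any path must first cross $\D_{\eta/2}(z)$, where $\rho_U$ is bounded below.
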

\begin{proof}

For each $N=1, \ldots, M$, we apply Theorem \ref{thm_cont_or} to the compact set $A$, the domain $U\supset A$ and constants $N$ and $X=c!$. Then, Theorem \ref{thm_cont_or} asserts that for each $N$, the function $h \colon A^2 \times \mathcal{T}_r^N(A) \to \R$ of its statement is continuous and defined on a compact set. Hence, for each $N$, there exists a constant $R_N$ such that for any orbifold $\Or_{\bm{w}}=(U, \nu_{\Or_{\bm{w}}})$ with $\bm{w}\in \mathcal{T}_r^N(A)$ and $\nu_{\Or_{\bm{w}}}$ as specified in \eqref{eq_ram_thm}, $d_{\Or_{\bm{w}}}(p,q)<R_N$ for all $p,q\in A$. Note that by Corollary \ref{cor_pick}, the same bound holds for any orbifold with $N$ ramified points in $A$ with ramification degrees between $1$ and $c$. This is because the inclusion map would be holomorphic as their ramification values divide $c!$, and the same argument applies if the orbifold has no ramified points in $A$. Let us define $\tilde{R}\defeq \max_{N\leq M}R_N$. Then, if $\hat{\Or}\defeq(U, \hat{\nu})$ is any orbifold with at most $M$ ramified points, any two at Euclidean distance at least $r$, all lying in $A$ and each of them with ramification value at most $c$, then
\begin{equation}\label{eq_cor_cont}
d_{\hat{\Or}}(p,q)<\tilde{R} \quad \text{ for all } \quad p,q \in A.
\end{equation}
Let us fix any orbifold $\Or\defeq (U,\nu)$ satisfying the hypotheses of the statement of this theorem. Moreover, let us fix $\hat{\Or}= (U,\hat{\nu})$ with $\hat{\nu} \equiv \nu\vert_{ A}$ in $A$ and $\hat{\nu}\equiv 1$ in $U \setminus A$, and note that \eqref{eq_cor_cont} holds for $\hat{\Or}$.

Let $W\defeq\{z\in U : d_{\hat{\Or}}(A, z)<\tilde{R}\}$ and define the orbifold $\Ort\defeq (W,\nu\vert_{ W})$, with $\nu\vert_{ W}$ being the restriction of $\nu$ to $W$. Observe that $A\Subset W$ and that the inclusions $\Ort\hookrightarrow \Or$ and $\Ort\hookrightarrow \hat{\Or}$ are holomorphic. In particular, the boundary of $\Ort$ in $\hat{\Or}$, denoted $\mathbb{B}_\Ort^{\hat{\Or}}$, consists of all ramified points of $\Ort$ lying in $W\setminus A$ together with $\partial W$. Then, by definition of $W$, for all $z\in A$, $d_{\hat{\Or}}(z,\mathbb{B}_\Ort^{\hat{\Or}})<\tilde{R}$, and by Theorem \ref{prop_expansion_Intro}, for all unramified $z\in A$, $\frac{\rho_{\Ort}(z)}{\rho_{\hat{\Or}}(z)}\geq 1+ \frac{2}{e^{\tilde{R}}-1} \eqdef K.$ Moreover, if $\gamma$ is a geodesic in the metric of $\hat{\Or}$ joining two points $p,q\in A$, again by the choice of $W$, $\gamma$ must be totally contained in $W$, and hence in $\widetilde{\Or}$. Thus, 
$$d_{\hat{\Or}}(p,q)=\int \vert \gamma'(t)\vert \rho_{\hat{\Or}}(\gamma(t))dt \geq \frac{1}{K}\int \vert \gamma'(t)\vert \rho_{\Ort}(\gamma(t))dt \geq \frac{1}{K} d_\Ort (p,q).$$ 
By this and by Corollary \ref{cor_pick}, for all $p,q \in A$ 
$$d_\Or(p,q) \leq d_\Ort(p,q) \leq K d_{\hat{\Or}}(p,q) \leq K \cdot \tilde{R} \eqdef R.$$
Since the constant $K$ does not depend on the domain $W$ but only on $\tilde{R}$, the statement follows.
\end{proof}

\section{Uniform expansion} \label{sec_orbifolds}
This section is devoted to the proof of Theorem \ref{thm_main_intro_Orb}: for each strongly postcritically separated function $f\in \B$, we define a pair of hyperbolic orbifolds $(\Ort, \Or)$ so that in particular their underlying surfaces contain $J(f)$ and so that $f\colon \Ort\rightarrow \Or$ is an orbifold covering map. In order to construct these orbifolds, we take into account Corollary \ref{cor_derivative_covering}. That is, a first step towards \textit{expansion} requires, in addition to the conditions above, that the inclusion $\Ort\hookrightarrow \Or$ is holomorphic. Then, the combination of the inclusion being holomorphic and $f$ being a covering map, (i.e. merging formulae \eqref{eq_holom_orb} and \eqref{eq_covering_orb}) implies that the ramification map $\nu$ of $\Or$ must satisfy
\begin{equation}\label{eq_merge_formulae}
\deg(f,z)\cdot \nu(z) \text{ divides } \nu(f(z)) \text{ for all } z\in \Ort.
\end{equation}
In other words, if $z \in \Ort$, then $\deg(f,p)\cdot \nu(p)$ divides $\nu(z)$ for all $p\in f^{-1}(z)$.

\begin{remark} Note that if $J(f)$ is in the underlying surfaces of $\Ort$ and $\Or$, then by \eqref{eq_merge_formulae}, all points in $P(f)\cap J(f)$ are ramified in $\Or$.
\end{remark}

In order to achieve our goal, we have followed Mihaljevi\'c-Brandt's strategy when proving the corresponding statement for strongly subhyperbolic transcendental maps; \cite[Propositions 3.2 and 3.4]{helenaSemi}. The underlying idea is essentially the same as that in Douady and Hubbard's work for subhyperbolic rational maps \cite[p. 22]{Orsaynotes} (see also \cite[\S 19]{milnor_book}): the ramification value of each point in $\Or$ is defined as a multiple of the local degrees of all points on its backward orbit; see \eqref{eq_ram}. In particular, with this definition, all postsingular points of $f$ are ramified. Unlike in the polynomial case, both for strongly subhyperbolic and postcritically separated maps, in addition to those in $P(f)$, more ramified points in $\Or$ are needed in order to guarantee expansion, i.e., to guarantee that the set $\mathbb{B}^\Or_\Ort$ from Definition \ref{def_setD} has ``enough points''. Thus, the set of ramified points of $\Or$ will consist of $P_J$ together with a repelling periodic cycle:
\begin{dfn&prop}[Dynamically associated orbifolds]
\label{prop_Or}
Let $f$ be a strongly postcritically separated map. Then there exist orbifolds $\Or\defeq(S,\nu)$ and $\Ort\defeq(\tilde{S},\tilde{\nu})$ with the following properties:
\begin{enumerate}[label=(\alph*)]
\item \label{item_a_assocorb} Either $S=\C =\tilde{S}$ or $\text{cl}(\tilde{S})\subset S=\C\setminus \overline{U}$, where $U$ is a finite union of bounded Jordan domains.
\item \label{item_b_assocorb} The set of ramified points of $\Or$ equals $P_J \cup B$, where $B$ is a periodic cycle in $J(f)\setminus P_J$.
\item \label{item_c_assocorb} $J(f)\subset \tilde{S}\subset S$ and $P_F \cap S= \emptyset$.
\item \label{item_d_assocorb} $\Or$ and $\Ort$ are hyperbolic orbifolds.
\item \label{item_e_assocorb} $f \colon\Ort\to\Or$ is an orbifold covering map and the inclusion $\Ort \hookrightarrow \Or$ is holomorphic.
\item \label{item_f_assocorb} There exists $p\in S \setminus P_J$ such that $\# \{f^{-1}(p) \cap \tilde{S}\}$ is infinite and $$\# \{z\in f^{-1}(p): \tilde{\nu}(z)\leq \nu(z) \}< \infty.$$
\end{enumerate}
We say that a pair $(\Ort,\Or)$ of Riemann orbifolds is \emph{dynamically associated to $f$} if $\Ort$ and $\Or$ satisfy \ref{item_a_assocorb}-\ref{item_f_assocorb}.
\end{dfn&prop}

\begin{proof}
If $F(f) =\emptyset$, then we define $S \defeq \C$. Otherwise, by Lemma \ref{lem_deff}, $P_F$ is contained in a finite union of attracting basins, and so, by Proposition \ref{prop_Jordan} we can find bounded Jordan domains $U_1, \ldots,U_n$ such that for $U\defeq\cup_{i=1}^{n}U_i$, it holds that $P_F\cup f(U)\Subset U \Subset F(f)$. We then define $S\defeq \C \setminus \overline{U}$. In particular, $S$ is connected and $J(f) \subset S$.
\begin{Claim}
There exists a periodic cycle, that we denote by $B$, contained in $J(f) \setminus P_J$.
\end{Claim}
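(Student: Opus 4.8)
The plan is to show that $J(f)$ contains a repelling periodic cycle lying outside the discrete set $P_J$, which is precisely what is needed since $J(f)\setminus P_J$ is a nonempty open subset of $J(f)$ (indeed dense in $J(f)$, as $P_J$ is discrete and $J(f)$ is perfect). The standard tool is that repelling periodic points are dense in $J(f)$ for any transcendental entire function (a theorem of Baker), so the only real content is to avoid the finitely-or-countably many ``bad'' points of $P_J$; moreover, by Lemma \ref{lem_deff}, every periodic cycle in $J(f)$ is automatically repelling, so we need not even verify the repelling property separately.

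Concretely, I would argue as follows. Fix any point $z_0\in J(f)$ and a small disc $D\defeq \D_\delta(z_0)$ such that $D\cap P_J$ contains at most finitely many points; this is possible because $P_J$ is discrete. Since $J(f)\cap D$ is an uncountable perfect set while $P_J\cap D$ is finite, we may further shrink to a subdisc $D'$ with $\overline{D'}\subset D$ and $\overline{D'}\cap P_J=\emptyset$, while still $D'\cap J(f)\neq\emptyset$. Now invoke the density of (repelling) periodic points of $f$ in $J(f)$: there is a periodic point $z^\ast\in D'\cap J(f)$ of some period $p\geq 1$. Its cycle $B\defeq\{z^\ast, f(z^\ast),\ldots, f^{p-1}(z^\ast)\}$ lies in $J(f)$ since $J(f)$ is completely invariant. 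It remains to ensure the \emph{whole} cycle $B$, not merely the chosen point $z^\ast$, misses $P_J$. For this I would note that $P_J$ is forward invariant (as $P(f)$ and $J(f)$ both are), hence backward-orbit-closed in the relevant sense: if some iterate $f^j(z^\ast)$ were in $P_J$, then since $z^\ast$ is periodic we would get $z^\ast=f^{p\cdot m - j}(f^j(z^\ast))\in \text{Orb}^+(P_J)\subseteq P_J$ for a suitable $m$, contradicting $z^\ast\notin P_J$. Therefore $B\cap P_J=\emptyset$, and $B\subset J(f)\setminus P_J$ is the desired cycle.

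The main subtlety — and the step I would be most careful about — is the forward-invariance argument showing that one point of the cycle avoiding $P_J$ forces the entire cycle to avoid $P_J$. This hinges on $P_J=P(f)\cap J(f)$ being forward invariant under $f$, which follows because $P(f)=\overline{\bigcup_{n\geq 0}f^n(S(f))}$ satisfies $f(P(f))\subseteq P(f)$ (continuity plus $f$ of the union being contained in the union) and $f(J(f))\subseteq J(f)$. Then a periodic point one of whose iterates lies in the forward-invariant set $P_J$ must itself lie in $P_J$, since it is an iterate of that iterate. If one prefers to avoid even this, an alternative is to choose the disc $D'$ small enough that $f(D'), f^2(D'),\ldots$ avoid the relevant finite portion of $P_J$; but that requires controlling finitely many iterates near a point whose orbit could approach $P_J$, so the forward-invariance route is cleaner. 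With $B$ in hand, the claim is proved, and $B$ is exactly the periodic cycle referenced in part \ref{item_b_assocorb} of the Definition and Proposition.
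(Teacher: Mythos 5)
Your proof is correct and follows essentially the same route as the paper: both arguments rest on the density of repelling periodic points in $J(f)$ together with the forward invariance of $P_J$, which guarantees that a single cycle point outside $P_J$ forces the entire cycle to lie in $J(f)\setminus P_J$. The only difference is cosmetic: the paper first extracts a bounded piece of a non-degenerate continuum in $J(f)$ at positive Hausdorff distance from $P_J$ before invoking density, whereas you shrink a disc around a point of $J(f)\setminus P_J$ using discreteness of $P_J$ and perfectness of $J(f)$, which is, if anything, slightly more direct.
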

\begin{subproof}
Note that as $P_J$ is forward invariant, the cycle any repelling periodic point in $J(f) \setminus P_J$ belongs to, is totally contained  $J(f) \setminus P_J$. Since $f$ is an entire transcendental function, $J(f)$ must contain non-degenerate continua \cite{Baker_normality}. Since $P_J$ is discrete, we can choose a bounded piece $\gamma$ of such a continuum, so that $\gamma$ and $P_J$ are at $\epsilon$-Hausdorff distance for some fixed $\epsilon>0$. Then, since $J(f)$ can be characterized as the closure of the set of repelling periodic points of $f$, \cite[Theorem 4]{Bergweilermerophormic}, each point in $\gamma$ is either a repelling periodic point, or an accumulation of those. In any case, we can find a repelling periodic point in the $\epsilon$-neighbourhood of $\gamma$, and so its cycle belongs to $J(f) \setminus P_J$.
\end{subproof}
We define the map $\nu \colon S\rightarrow \N^+$ as
\begin{align}\label{eq_ram}
\nu(z)& \defeq \renewcommand{\arraystretch}{1.5}\left\{\begin{array}{@{}l@{\quad}l@{}}
\lcm\lbrace \deg(f^m,w),\text{ where } f^m(w)=z \text{ for some } m\geq 1\rbrace & \text{if } z\notin B, \\
2 & \text{if } z \in B.
\end{array}\right.\kern-\nulldelimiterspace
\end{align}

Note that no critical point of $S$ belongs to a periodic cycle, since $P_F \subset U$ and by Lemma~\ref{lem_deff}, all periodic cycles in $J(f)$ are repelling. By this, Definition \ref{def_strongps}, and expanding the definition of local degree for an iterate of $f$, there exists a constant $C$ such that for any $w\in S$ and $m\geq 1$, 
\begin{equation}\label{eq_degree}
\deg(f^m,w)= \prod^{m}_{j=1}\deg(f,f^{j}(w)) \leq C.
\end{equation}
Therefore, $\nu(z)\leq \lcm \lbrace 1,2, \ldots, C \rbrace<\infty$ for all $z\in S$. Moreover, the map $\nu$ is defined in \eqref{eq_ram} such that $\nu(z)>1$ if and only if $z$ belongs to $P_J\cup B$. Hence, since $f$ is postcritically separated, $P_J$ is discrete, and thus $\Or\defeq(S,\nu)$ is a Riemann orbifold. In particular, \ref{item_b_assocorb} follows by construction.

The orbifold $\Or$ is hyperbolic: if $S\neq \C$, then this follows from Theorem \ref{thm_uniform}. If on the contrary $S=\C$, by \cite[Theorem A2]{mcmullen1994complex}, the only orbifolds such that $S=\C$ are either hyperbolic, or they are parabolic with signature $(n)$ or $(2,2)$. It is shown in \cite[Proof of Proposition 3.2]{helenaSemi} that 
\begin{itemize}[wide=0pt, leftmargin=\dimexpr\labelwidth + 2\labelsep\relax]
\item for any $n\geq 2$, each orbifold with underlying surface $\C$ and signature $(n)$ must contain an asymptotic value in $S$, and
\item the orbifold with surface $\C$ and signature $(2,2)$ can only occur for polynomials.
\end{itemize}
These two cases lead to contradictions with AV$(f)\cap S=\emptyset$ and $f$ being postcritically separated. Thus, $\Or$ must be hyperbolic. By definition of the map $\nu$, for every $z\in f^{-1}(S)$, $\deg(f,z)$ divides $\nu(f(z))$, and hence we can define
\begin{equation}\label{def_Otilde}
\tilde{S}\defeq f^{-1}(S) \quad \text{ and }\quad \tilde{\nu}(z)\colon\tilde{S}\to\N^+ \quad \text{ with } \quad \tilde{\nu}(z)\defeq \frac{\nu(f(z))}{\deg(f,z)}.
\end{equation}
Since the set of ramified points of $\Or$ is discrete, one can see, using for example the Identity Theorem, that the set $\{z\in\tilde{S} \text{ such that }\tilde{\nu}(z)>1\}$ is also discrete. Thus, $\Ort\defeq (\tilde{S}, \tilde{\nu})$ is an orbifold. By construction, $\text{AV}(f)\cap S=\emptyset$, and so the map $f \colon\tilde{S}\rightarrow S$ is a branched covering. Furthermore, for all $z\in \tilde{S}$, $\deg(f,z)\cdot\tilde{\nu}(z)=\nu(f(z))$, and hence $f \colon\Ort\rightarrow\Or$ is an orbifold covering map.

Recall that $f(U)\Subset U$ whenever $U\neq \emptyset$, which implies $\text{cl}(\tilde{S})\subset S$. Moreover, if $S=\C$, then $J(f)=\C$, and since $\text{AV}(f)\cap J(f)=\emptyset$ by assumption, $\tilde{S}=f^{-1}(\C)=\C$. Thus, \ref{item_a_assocorb} follows. Moreover, since $J(f)$ is a totally invariant set, $J(f)\subset \tilde{S}$, as stated in \ref{item_c_assocorb}. Let $z\in \tilde{S}$. The definition of $\nu$ together with \eqref{eq_degree} imply that $\nu(z)\cdot\deg(f,z)$ divides $\nu(f(z))$. In turn, by \eqref{def_Otilde}, $\nu(f(z))=\tilde{\nu}(z)\cdot\deg(f,z)$. Hence, $\nu(z)$ divides $\tilde{\nu}(z)$ and so the inclusion $\Ort\hookrightarrow\Or$ is a holomorphic map, proving statement \ref{item_e_assocorb}. Since in addition $\Or$ is hyperbolic, by Theorem~\ref{thm_uniform}, each connected component of $\Ort$ must be hyperbolic, and so $\Ort$ is a hyperbolic orbifold. Thus, statement \ref{item_d_assocorb} follows. We are only left to show item \ref{item_f_assocorb}. With that purpose, choose any $p\in B$. In particular $p\in S$, and so $f^{-1}(p) \subset \tilde{S}$. Moreover, since $\text{AV}(f)\cap J(f)=\emptyset$, by Picard's theorem, $\#f^{-1}(p)$ is infinite, see \cite[Theorem 1.14]{Schleicher_entire}. Since $p\in B\subset J(f) \setminus P_J$, $\deg(f,z)=1$ for all $z \in \text{Orb}^{-}(p)$ and in particular, for all $ z \in f^{-1}(p) \setminus B$, it holds that $\tilde{\nu}(z)=2 $ and $\nu(z)=1$. Consequently, \ref{item_f_assocorb} follows and the proof is concluded.
\end{proof}

Note that condition \ref{item_f_assocorb} in the previous proposition implies that for any pair $(\Ort, \Or)$ of orbifolds associated to $f$, the inclusion $\Ort \hookrightarrow \Or$ is not an orbifold covering map, and hence the set $\mathbb{B}^\Or_\Ort$ is non-empty. The next proposition tells us that when in addition $f\in \B$, the set $\mathbb{B}^\Or_\Ort$ contains a sequence of points whose moduli converge to infinity at a specific rate.
\begin{prop}[Unbounded sequence in $\mathbb{B}_\Ort^\Or$]
\label{cor_Kzi} Let $f\in \B$ be strongly postcritically separated and let $(\Ort, \Or)$ be a pair of orbifolds dynamically associated to $f$. Then, there exist a constant $N>1$ and an infinite sequence of points $\{z_i\}_{i\geq 0} \subset \mathbb{B}_\Ort^\Or$ such that $\vert z_i\vert<\vert z_{i+1}\vert\leq N\vert z_i\vert$ for all $i\geq 0$.
\end{prop}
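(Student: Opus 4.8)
The plan is to locate a cofinite subset of a suitable fibre $f^{-1}(p)$ inside $\mathbb B^\Or_\Ort$ and then show that this fibre is ``geometrically dense'' at infinity. Take $p\in S\setminus P_J$ as provided by Definition and Proposition~\ref{prop_Or}\ref{item_f_assocorb}. Since $p\in S$ we have $f^{-1}(p)\subseteq f^{-1}(S)=\widetilde S$, so no point of $f^{-1}(p)$ lies on $\partial\widetilde S$; by \ref{item_f_assocorb}, all but finitely many $z\in f^{-1}(p)$ satisfy $\widetilde\nu(z)>\nu(z)$ and hence lie in $\mathbb B^\Or_\Ort$, and the same item gives that $f^{-1}(p)$ is infinite. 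It therefore suffices to produce constants $N_0>1$ and $r_0>0$ with $f^{-1}(p)\cap A(r,N_0r)\neq\emptyset$ for every $r\geq r_0$: choosing, for each large $i$, a point $z_i$ of $f^{-1}(p)$ (necessarily in $\mathbb B^\Or_\Ort$ once $i$ is large) inside $A(N_0^{\,i}r_0,N_0^{\,i+1}r_0)$ and setting $N:=N_0^2$, one gets $|z_i|<N_0^{\,i+1}r_0<|z_{i+1}|$ and $|z_{i+1}|<N_0^{\,i+2}r_0\leq N_0^2|z_i|$, so $|z_i|<|z_{i+1}|\leq N|z_i|$ after reindexing.

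\textbf{The key estimate via the logarithmic change of variable.}
Here I would use that $f\in\B$. Fix $R>0$ with $S(f)\cup\{p\}\subseteq\D_R$, and invoke the Eremenko--Lyubich logarithmic change of variable: every component $T$ of $f^{-1}(\C\setminus\overline{\D_R})$ is an unbounded simply connected domain on which $f|_T\colon T\to\C\setminus\overline{\D_R}$ is a universal covering, and $f^{-1}(p)\subseteq f^{-1}(\overline{\D_R})=\C\setminus\bigcup_T T$. The claim is that for all $\rho$ large, no tract $T$ contains the round annulus $A(\rho,2\rho)$. Indeed, if $T\supseteq A(\rho,2\rho)$, then a loop around the origin inside that annulus cannot be null-homotopic in the simply connected $T$ unless $\C\setminus T$ avoids $\overline{\D_{2\rho}}$, which forces $\D_{2\rho}\subseteq T$; hence $\rho_T\leq\rho_{\D_{2\rho}}$ on $\D_\rho$, and since $\sup_{\D_\rho}\rho_{\D_{2\rho}}\leq \tfrac{4}{3\rho}$ we get $\diam_T(\overline{\D_\rho})\leq \tfrac{8}{3}$. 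As $f|_T$ is a covering it is a local isometry (Theorem~\ref{pick}), so $\diam_{\C\setminus\overline{\D_R}}\!\bigl(f(\overline{\D_\rho})\bigr)\leq\tfrac{8}{3}$ as well; but $f(\overline{\D_\rho})$ contains $f(0)$ (with $|f(0)|>R$, since $0\in T$) and a point of modulus $M(\rho,f):=\max_{|z|\leq\rho}|f(z)|$, and the $(\C\setminus\overline{\D_R})$-distance between these is at least the radial term $\log\frac{\log(M(\rho,f)/R)}{\log(|f(0)|/R)}$, which tends to infinity because $f$ is transcendental. This contradiction shows that $\{|f|\leq R\}$ meets $A(\rho,2\rho)$ for every $\rho$ large.

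\textbf{Passing from $\{|f|\leq R\}$ to $f^{-1}(p)$, and the main obstacle.}
It remains to upgrade the previous conclusion to the same statement for $f^{-1}(p)$ itself, at the cost of enlarging $N_0$; I expect this to be the delicate part. Each component $K$ of $\{|f|\leq R\}$ satisfies $p\in f(K)$: for bounded $K$ this follows from $f|_K$ being proper onto $\overline{\D_R}$ (so that $f(K)\cap\D_R$ is a nonempty clopen subset of the connected $\D_R$), and for unbounded $K$ it should be extracted from the covering structure of $f$ on $K$. Moreover, because $f$ is a covering over $\C\setminus\overline{\D_R}$ with deck group $\cong\Z$ acting essentially by a translation in the logarithmic scale, the preimages of $p$ inside $K$ are spread over every annular scale that $K$ reaches; combining this with the key estimate of the previous paragraph — which prevents the tracts, equivalently the gaps between consecutive components $K$, from skipping a scale — yields the required $N_0$ and $r_0$. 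The hard part will be the bookkeeping for \emph{unbounded} components of $\{|f|\leq R\}$: both confirming that $p$ is attained on them and quantifying the radial spreading of their $p$-preimages, which is precisely where the hypothesis $f\in\B$ (the boundedness of $S(f)$, hence the tract/covering picture) does the essential work; I would carry this out entirely in logarithmic coordinates.
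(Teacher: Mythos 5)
Your first two paragraphs are sound: the reduction to producing a fixed $N_0>1$ with $f^{-1}(p)\cap A(r,N_0r)\neq\emptyset$ for all large $r$ is correct (Proposition \ref{prop_Or}\ref{item_f_assocorb} does place all but finitely many points of $f^{-1}(p)$ in $\mathbb{B}^\Or_\Ort$), and your hyperbolic-metric argument showing that no tract over $\C\setminus\overline{\D_R}$ can contain a round annulus $A(\rho,2\rho)$ once $\rho$ is large is a correct estimate. But what that establishes is only that the level set $\{|f|\leq R\}$ meets every such annulus; the passage from this to the fibre $f^{-1}(p)$ — your third paragraph — is where the actual content of the proposition lies, and it is not carried out. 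Moreover, the mechanism you invoke there does not apply in the configuration you chose: because you placed $p$ \emph{inside} $\D_R$ together with $S(f)$, its preimages lie in $\{|f|\leq R\}$, i.e.\ in the complement of the tracts, where $f$ has no covering structure. The deck group of $f|_T\colon T\to\C\setminus\overline{\D_R}$ acts on the tract and controls fibres of points of modulus greater than $R$, but it says nothing about how $p$-preimages are distributed inside a component $K$ of $\{|f|\leq R\}$; the claim that they ``are spread over every annular scale that $K$ reaches'' is precisely what would need proof (as is the claim that every unbounded component attains the value $p$), and it does not follow from $\{|f|\leq R\}$ meeting every annulus, since an annulus could meet $\{|f|\leq R\}$ only in pieces of components whose $p$-preimages sit at entirely different scales.

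The fix is essentially the paper's choice, which is the opposite of yours: since $p$ belongs to the repelling cycle $B\subset J(f)\setminus P_J$ and $S(f)\cap J(f)\subseteq P_J$, we have $p\notin S(f)$, so one may take a Jordan domain $D\supset S(f)$ with $p\in\C\setminus\overline{D}$. Then every preimage of $p$ lies in a tract over $\C\setminus\overline{D}$, where $f$ \emph{is} a universal covering; each tract contains infinitely many $p$-preimages, consecutive ones (in the deck-orbit order) lie at uniformly bounded hyperbolic distance in the simply connected tract because they are endpoints of lifts of a fixed loop around $\overline{D}$ based at $p$, and a hyperbolic-density estimate of exactly the kind you use in your second paragraph then bounds $\log\bigl(|z_{i+1}|/|z_i|\bigr)$ along a subsequence; this is the argument of \cite[Proof of Lemma 5.1]{lasseRidigity} (see also \cite[Proof of Proposition 3.4]{helena_landing}) that the paper quotes. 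So your overall plan — work in the fibre of $p$ and exploit the tract/covering picture coming from $f\in\B$ — is the right one, but it only works with $p$ outside the disc surrounding $S(f)$; as written, the final step is a genuine gap.
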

\begin{proof}
Let $\Or=(S, \nu)$ and let $p\in S\setminus S(f)$ be the point in Proposition \ref{prop_Or} for which $\# \{z\in f^{-1}(p): \tilde{\nu}(z)\leq \nu(z) \}< \infty.$ That is, all but finitely many preimages of $p$ belong to $\mathbb{B}_\Ort^\Or$. Since $f\in \B$, we can find a Jordan domain $D$ such that $S(f)\subset D$ and $p\in \C \setminus \overline{D}$. Since $p$ is non-exceptional, each connected component of $f^{-1}(\C \setminus \overline{D})$, that is, each tract of $f$, contains infinitely many preimages of $p$, see \cite[\S2.4]{mio_thesis}. If $\{z_i\}_{i\geq 0}$ is the set of preimages of $p$ in one tract, it follows from estimates on the hyperbolic metric on simply connected domains that there exists a constant $N'>1$ such that $\vert z_{i_k}\vert<\vert z_{i_{k+1}}\vert\leq N'\vert z_{i_k}\vert$ for an infinite subsequence $\{z_{i_k}\}_{i_k\in \N}$. For details on this argument see \cite[Proof of Lemma 5.1]{lasseRidigity} or \cite[Proof of Proposition 3.4]{helena_landing}. Hence, since all but finitely many points of $\{z_i\}_i$ must belong to $\mathbb{B}_\Ort^\Or$, the statement follows.
\end{proof}

Note that Corollary \ref{cor_derivative_covering} applies to any pair of orbifolds $(\Ort, \Or)$ dynamically associated to $f$, and so $\Vert \Deriv f(z)\Vert_{\Or}=\rho_\Ort(z)/\rho_\Or(z)>1$ for all unramified $z\in \Ort$. We aim to prove Theorem \ref{thm_main_intro_Orb} for any such pair of associated orbifolds by finding a sharper uniform lower bound for $\rho_\Ort/\rho_\Or$ combining the following lemma with Theorem \ref{prop_expansion_Intro}. In turn, Lemma \ref{lem_annulus} is a consequence of Proposition \ref{cor_Kzi} together with Theorem \ref{thm_intro_orb} and item \ref{itemd_defsps} in the Definition \ref{def_strongps} of strongly postcritically separated maps:
\begin{lemma}[Distances within annuli are uniformly bounded] \label{lem_annulus} Suppose that $f$ is a strongly postcritically separated function with parameters $(c, \epsilon)$, and let $\Or=(S,\nu)$ and $\Ort=(\tilde{S}, \tilde{\nu})$ be a pair of orbifolds dynamically associated to $f$. Let us fix some constant $K>1$. Then, there exists a constant $R\defeq R(K)>0$ such that if $p,q\in \overline{A(t, Kt)}\subset A(t/K,t K^2) \subset S$ for some $t>0$, then $d_\Or(p,q) \leq R$. If in addition $f\in \B$, then for all $z\in \Ort$, $$d_{\Or}(z, \mathbb{B}^\Or_\Ort)\leq R.$$
\end{lemma}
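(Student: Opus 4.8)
The strategy is to combine the uniform-distance bound of Theorem~\ref{thm_intro_orb} with the rescaling invariance built into Definition~\ref{def_strongps}\ref{itemd_defsps}, and then to use Proposition~\ref{cor_Kzi} to convert the conclusion about annuli into the conclusion about $\mathbb{B}^\Or_\Ort$. First I would prove the statement about the pair $p,q\in\overline{A(t,Kt)}$. The key observation is that, by the linear rescaling $z\mapsto z/t$, the orbifold $\Or$ restricted to the annulus $A(t/K,tK^2)$ is conformally equivalent (via $z\mapsto z/t$) to an orbifold whose underlying surface is $A(1/K, K^2)$ and whose ramified points are the images under $z\mapsto z/t$ of those points of $P_J\cup B$ that lie in $A(t/K,tK^2)$. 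Crucially, condition \ref{itemd_defsps} guarantees that, after rescaling, any two of these ramified points are at Euclidean distance at least $\epsilon$; and Observation~\ref{obs_separation}, applied with the ratio $K^3$, bounds the number of such points by some $M=M(\epsilon,K)$. Their ramification values are all at most $c'\defeq\lcm\{1,\dots,C\}$, a constant depending only on $f$. So we are exactly in the situation of Theorem~\ref{thm_intro_orb}, applied with the fixed data $U\defeq A(1/K,K^2)$, $A\defeq \overline{A(1,K)}$, separation constant $\epsilon$, ramification bound $c'$, and point bound $M$. That theorem yields a constant $R=R(K)$, independent of $t$, bounding the $d_{(U,\nu)}$-distance between any two points of $\overline{A(1,K)}$ for every such orbifold; undoing the rescaling (which multiplies the metric by $1/t$ but, being conformal, preserves distances after the corresponding change of variable — more precisely, the rescaled orbifold distance between $p/t$ and $q/t$ equals the $\Or'$-distance for the orbifold $\Or'$ with surface $A(t/K,tK^2)$, and that distance is an upper bound for $d_\Or(p,q)$ by Corollary~\ref{cor_pick} since $A(t/K,tK^2)\subseteq S$) gives $d_\Or(p,q)\le R$.

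For the second assertion (assuming now $f\in\B$), I would apply Proposition~\ref{cor_Kzi} to obtain the constant $N>1$ and the sequence $\{z_i\}_{i\ge 0}\subset\mathbb{B}^\Or_\Ort$ with $|z_i|<|z_{i+1}|\le N|z_i|$. Given an arbitrary $z\in\Ort$, I want to find some $z_i$ close to it in the $\Or$-metric. The moduli $|z_i|$ form a sequence marching to infinity with bounded multiplicative gaps, so for any $z$ with $|z|$ at least $|z_0|$ there is an index $i$ with $|z_i|/N \le |z| \le N|z_i|$, say; then both $z$ and $z_i$ lie in a common closed annulus $\overline{A(t,K_0 t)}$ for a suitable $t$ and $K_0$ depending only on $N$ (taking, e.g., $t\defeq|z|/N^2$, $K_0\defeq N^4$, so that the enclosing annulus $A(t/K_0,tK_0^2)$ is still in $S$). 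The first part of the lemma then gives $d_\Or(z,z_i)\le R(K_0)$, hence $d_\Or(z,\mathbb{B}^\Or_\Ort)\le R(K_0)$. This handles all $z$ with $|z|\ge|z_0|$; for the finitely-many-scales' worth of remaining $z$ with $|z|<|z_0|$ — more carefully, for $z$ in the $\Or$-bounded region $\{w\in\Ort : |w|\le|z_0|\}$, which is contained in a fixed compact subset of $\widehat\C\setminus\overline{U}$ away from $P_J$ near the origin — one covers it by finitely many annuli of ratio $N$ (together, if $0\in S$, with a disc about the origin, whose $\Or$-diameter is finite since $\Or$ is hyperbolic and $0$ has finite ramification value), each of which meets $\{z_i\}$, and again invokes the first part; taking $R$ to be the maximum over this finite collection of the constants obtained absorbs the exceptional region.

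\textbf{Main obstacle.} The routine verifications (rescaling compatibility of orbifold metrics, membership of the enlarged annuli in $S$, finiteness of point counts) are all controlled by the hypotheses already in place. The one genuinely delicate point is the reduction of the first assertion to Theorem~\ref{thm_intro_orb}: that theorem is stated for orbifolds with a \emph{fixed} underlying Jordan domain $U$ and a \emph{compact} $A\subset U$, whereas our annulus $A(t/K,tK^2)$ is not simply connected and, worse, need not lie in $S$ when $S=\C\setminus\overline U$ with $U$ intersecting the relevant annulus. I expect to resolve this by noting that for the annulus to be troublesome one would need $U$ (the bounded Jordan domains around attracting basins) to reach out to modulus $\approx t$, which can happen for at most boundedly many scales $t$; for all large $t$ the annulus $A(t/K,tK^2)$ sits inside $S$, and the finitely many bad scales are absorbed into the constant exactly as in the exceptional-region argument above. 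Alternatively — and this is cleaner — one works not with the annulus itself but with a fixed simply connected Jordan domain $U_0\Supset\overline{A(1,K)}$ with $U_0\subset A(1/K,K^2)$, applies Theorem~\ref{thm_intro_orb} to the orbifold on $U_0$ carrying the rescaled ramified points, and uses Corollary~\ref{cor_pick} (monotonicity of orbifold densities under holomorphic inclusion) to pass from this auxiliary orbifold on $U_0$ to the true $\Or'$ on $A(t/K,tK^2)\subseteq S$; the inclusion of the $U_0$-orbifold into $\Or'$ is holomorphic provided the ramification values match, which we arrange by taking the common value $c'!$ as in the proof of Theorem~\ref{thm_intro_orb}. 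Once this reduction is set up correctly, the rest is bookkeeping.
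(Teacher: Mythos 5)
Your route is essentially the paper's: rescale by $z\mapsto z/t$, apply Theorem~\ref{thm_intro_orb} uniformly to the rescaled orbifolds on $A(1/K,K^2)$ with compact set $\overline{A(1,K)}$, pull back through the covering $z\mapsto tz$ and use Corollary~\ref{cor_pick}; then use the sequence of Proposition~\ref{cor_Kzi} and chains of annuli for the second assertion. However, two of your steps fail as written. First, your ``cleaner'' resolution of the Jordan-domain issue is geometrically impossible: there is no simply connected domain $U_0$ with $\overline{A(1,K)}\subset U_0\subset A(1/K,K^2)$, because a simply connected plane domain containing the circle $\{|z|=1\}$ must contain the unit disc. (The paper applies Theorem~\ref{thm_intro_orb} directly to the annulus; if one insists on a Jordan domain, the correct repair is to cover $\overline{A(1,K)}$ by two overlapping simply connected subsets of $A(1/K,K^2)$ and chain, or to observe that the proof of Theorem~\ref{thm_cont_or} never uses simple connectivity of $U$.) A smaller but necessary point: Definition~\ref{def_strongps}\ref{itemd_defsps} separates only points of $P_J$, whereas the ramified points of $\Or$ are $P_J\cup B$ by Proposition~\ref{prop_Or}\ref{item_b_assocorb}; the finitely many points of the cycle $B$ need their own separation constants (the paper's $\epsilon_b$), and after rescaling by $t$ the Euclidean separation you get on $A(1/K,K^2)$ is $\epsilon/K$ rather than $\epsilon$. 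These must be fixed before Theorem~\ref{thm_intro_orb} can be invoked with uniform parameters.

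The more serious gap is in the second assertion. For $|z|$ comparable to $|z_0|$, the containment $A(t/K_0,tK_0^2)\subset S$ that you assert is not automatic when $S=\C\setminus\overline{U}$: the enlarged annulus can meet $\overline{U}$, so the first part of the lemma cannot be applied there. This is precisely why the paper introduces the index $I$ with $\C\setminus\D_{|z_I|/K}\subset S$ and runs the annulus argument only for $i>I$. Moreover, your treatment of the leftover bounded region does not work as stated: you cover $\{w\in\Ort:|w|\le|z_0|\}$ by finitely many annuli ``each of which meets $\{z_i\}$'', but every $z_i$ has modulus at least $|z_0|$, so none of these annuli contains any $z_i$ (and their enlarged annuli need not lie in $S$ either); the finite $\Or$-diameter of a disc about the origin by itself does not connect $z$ to $\mathbb{B}^\Or_\Ort$. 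One needs a separate argument for this region, e.g.\ as in the paper: $\mathcal{K}=\text{cl}(\D_{|z_I|}\cap\tilde S)$ is a compact subset of $S$ that contains a point of $\mathbb{B}^\Or_\Ort$ (either $z_I$ itself or a point of $\partial\tilde S$), and its $\Or$-distance to that point is bounded via Theorem~\ref{thm_intro_orb} applied to a domain $\mathcal{K}\subset U\subset S$ (or simply by compactness and continuity of $d_\Or$). With these repairs your argument coincides with the paper's proof.
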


\begin{proof}
By Proposition \ref{prop_Or}\ref{item_b_assocorb}, the set of ramified points of $\Or$ equals $P_J \cup B$, where $B$ is a periodic cycle in $J(f)\setminus P_J$. Thus, since $f$ is strongly postcritically separated, by Observation~\ref{obs_separation}, there exists a constant $M>0$ so that for each $r>0$ such that $\overline{A(r, Kr)}\subset \Or$, the closed annulus $\overline{A(r, Kr)}$ contains at most $\tilde{M}\defeq M+\#B$ ramified points of $\Or$. For each $b\in B$, let $\epsilon_b>0$ be such that
\begin{equation}\label{eq_epsilon_b}
\text{ if } z,w\in \left((P_J\cup B) \cap \overline{A(K^{-1}\vert b \vert, K\vert b \vert)}\right), \quad \text{ then } \vert z-w\vert\geq \epsilon_b \max\{\vert z \vert, \vert w \vert\}.
\end{equation}
For each $b\in B$, the constant $\epsilon_b$ exists because $\#B$ is finite and $P_J$ is discrete. Note that if $b\in \overline{A(r, Kr)}\subset \Or$ for some $r>0$, then $\overline{A(r, Kr)}\subset \overline{A(K^{-1}\vert b \vert, K\vert b \vert)}$. Let 
$$\tilde{\epsilon}\defeq \min \left\{\epsilon,\min_{b \in B} {\epsilon_b}\right\}.$$
Recall that, by Proposition \ref{prop_Or}\ref{item_a_assocorb}, $S$ is a punctured neighbourhood of infinity, and so we can fix an arbitrary $r>0$ such that $A_r\defeq A(r/K, K^2r)\subset S$. Since 
\begin{equation}\label{eq_union_annuli}
\overline{A_r}= \bigcup_{j=0}^{2} \overline{A(K^{j-1}r, K^{j}r)},
\end{equation}
$A_r$ contains at most $3\tilde{M}$ ramified points of $\Or$. We might assume without loss of generality that $r=1$, since otherwise the same argument applies by scaling by $r$. Let $C\defeq \max_{z\in S}\nu(z)$, and note that by \eqref{eq_degree}, $C<\infty$. Then, by Theorem \ref{thm_intro_orb} applied to the domain $A(1/K, K^2)$, the compact set $\overline{A(1,K)}$ and the parameters $3\tilde{M},C \in \N$ and $\tilde{\epsilon}>0$, we conclude that there exists a constant $R_1$ such that 
\begin{equation}\label{eq_cor_exporb}
d_{\hat{\Or}}(p,q)<R_1 \quad \text{ for all }\ p,q \in \overline{A(1,K)} \ \text{ and all orbifolds } \ \hat{\Or}\defeq (A(1/K, K^2), \nu_{\hat{\Or}}),
\end{equation}
where $\nu_{\hat{\Or}}$ is any ramification map that only assumes values smaller or equal to $C$, and it does so for at most $3\tilde{M}$ points, that are at Euclidean distance at least $\tilde{\epsilon}$ from each other. We shall now complete the proof of the first part of the statement using \eqref{eq_cor_exporb}: for each $t>0$ such that $A_t= A(t/K, K^2t)\subset S$, define the orbifolds $\Or_t\defeq (A_t, \nu\vert_{ A_t})$ and $\Or^t_1\defeq (A_1, \nu^t_1)$, where $\nu\vert_{ A_t}$ is the restriction of the ramification map $\nu$ of $\Or$ to $A_t$, and $\nu^t_1(z)\defeq \nu\vert_{ A_t}(tz)$. Note that by \eqref{eq_union_annuli}, the definition of $\nu^t_1$, \eqref{eq_epsilon_b} and Observation \ref{obs_separation}, both $\Or_t$, $\Or^t_1$ contain at most $3\tilde{M}$ ramified points, any pair at a (Euclidean) distance at least $\tilde{\epsilon}$. Consequently, \eqref{eq_cor_exporb} applies to $\Or^t_1$. Then, the map $\phi_{t} \colon \Or^t_1 \rightarrow \Or_t$ given by $\phi_{t}(z)\defeq tz$ is an orbifold covering map, and since $\Or_t \hookrightarrow \Or$ is holomorphic, by definition of $\Or_t$, Corollary \ref{cor_pick} and \eqref{eq_cor_exporb}, for every $p,q \in \overline{A(t,Kt)}$,
$$d_{\Or}(p,q)\leq d_{\Or_t}(p,q)= d_{\Or^t_1}(\phi^{-1}_{t}(p),\phi^{-1}_{t}(q))<R_1,$$
and the first statement of the lemma is proved.

In order to prove the second part of the lemma, if $f\in \B$, let $\{z_i\}_{i\geq 0} \subset \mathbb{B}_\Ort^\Or$ be the infinite sequence of points from Proposition \ref{cor_Kzi} for which there is $N>1$ such that $\vert z_i\vert<\vert z_{i+1}\vert\leq N\vert z_i\vert$ holds for all $i\geq 0$. Recall that by Proposition \ref{prop_Or}, $S=\C=\tilde{S}$, or $S$ is the complement of a finite union of bounded Jordan domains and $\text{cl}({\tilde{S}})\subset S$. Then, there exists a finite number 
$$I\defeq\min\left\{ j\geq 0: \C \setminus \D_{\frac{\vert z_j\vert}{K}} \subset S\right\},$$
that equals $0$ when $S=\C$. Let $J\defeq\left \lceil{\frac{\log N}{\log K}}\right \rceil$ and for each $i> I$, denote
$$A_{i}\defeq \overline{A(\vert z_{i-1}\vert, N\vert z_{i-1}\vert)}\subseteq \bigcup_{j=1}^{J } \overline{A\left(K^{j-1}\vert z_{i-1} \vert, K^{j}\vert z_{i-1} \vert \right)}.$$
In particular, $z_i\in A_i$ for all $i>I$. Hence, for all $z \in A_{i}\subset S$, using the first part of the lemma, $d_{\Or}(z, \mathbb{B}_\Ort^\Or)\leq J \cdot R_1\eqdef R_2.$ Since the constant $J$ is independent of the index $i>I$, we can conclude that
\begin{equation}\label{eq_cor_exporb2}
d_\Or(z,\mathbb{B}_\Ort^\Or)<R_2 \quad \text{ for all }\quad z\in \bigcup_{i>I}A_i=\C\setminus \D_{\vert z_I \vert}.
\end{equation}
If $\tilde{S} \subset \C\setminus \D_{\vert z_I \vert}$, we are done. Otherwise, recall that either $\C\setminus \D_{\vert z_I \vert} \subset \tilde{S}=\C$, or we have that $\text{cl}(\tilde{S})\subset S$. In any case, we can consider the compact set $\mathcal{K}\defeq \text{cl}(\D_{\vert z_I \vert }\cap \tilde{S})$ and any domain $U$ such that $\mathcal{K} \subset U \subset S$. In particular, if $\Or_{U}\defeq(U, \nu\vert_{ U})$, then the inclusion $\Or_{U} \hookrightarrow \Or$ is holomorphic. Note also that in the first case, $z_I\in \mathcal K$, while in the second, $\partial S \cap \mathcal{K}\neq \emptyset$ and all points in that intersection also belong to $\mathbb{B}_\Ort^\Or$. Consequently, in any case we can choose a point $p\in \mathcal{K} \cap \mathbb{B}_\Ort^\Or$. Let $\tilde{N}$ be the number of ramified points of $\Or_{U}$. If $\tilde{N}>1$, let $\delta$ be the minimum of the (Euclidean) distances between any two ramified points in $U$. Otherwise, if $\tilde{N}$ equals $0$ or $1$, let $\delta$ be any positive real number. Then, by Corollary \ref{cor_pick} and Theorem \ref{thm_intro_orb} applied to $U,\mathcal{K}$ and the parameters $C,\tilde{N}\in \N$ and $\delta>0$, there exists a constant $R_3>0$ such that for all $z\in \text{cl}(\D_{\vert z_I \vert }\cap \tilde{S})$,
\begin{equation*}\label{eq_cor_exporb3}
d_\Or(z,\mathbb{B}_\Ort^\Or) \leq d_{\Or_{U}}(z,\mathbb{B}_\Ort^\Or)\leq d_{\Or_U}(z,p)<R_3.
\end{equation*}
By this together with \eqref{eq_cor_exporb2}, the lemma follows letting $R\defeq \max\{R_1, R_2, R_3\}$.
\end{proof}

\noindent Theorem \ref{thm_main_intro_Orb} now follows easily on combining Lemma \ref{lem_annulus} and Theorem \ref{prop_expansion_Intro}:
\begin{proof}[Proof of Theorem \ref{thm_main_intro_Orb}]
Let $f\in \B$ be strongly postcritically separated. By Proposition \ref{prop_Or}, there exists a pair of hyperbolic orbifolds $\Or\defeq(S,\nu)$ and $\Ort\defeq(\tilde{S},\tilde{\nu})$ such that $J(f)\subset \tilde{S}\subset S$, $f\colon \Ort \rightarrow \Or$ is a covering map and the inclusion $\Ort \hookrightarrow \Or$ is holomorphic. Hence, by Corollary~\ref{cor_derivative_covering}, 
\begin{equation}\label{eq_quotient_deriv}
\Vert \Deriv f(z)\Vert_{\Or}=\frac{\vert f'(z)\vert\rho_{\Or}(f(z))}{\rho_{\Or}(z)} =\frac{\rho_\Ort(z)}{\rho_\Or(z)}.
\end{equation}
Moreover, by Lemma \ref{lem_annulus}, there exists a constant $R$ such that $d_\Or( \mathbb{B}_\Ort^\Or, z)<R$ for all unramified $z \in \Ort$. Thus, by Theorem \ref{prop_expansion_Intro} and using \eqref{eq_quotient_deriv}, $\Vert \Deriv f(z)\Vert_{\Or}\geq (\e^R/\sqrt{\e^{2R}-1}) \eqdef \Lambda>1$ for all unramified $z \in \Ort$, as we wanted to show.
\end{proof}
As a consequence of Theorem \ref{thm_main_intro_Orb}, we obtain the following corollary that relates the $\Or$-length of bounded curves to the $\Or$-length of its successive images.
\begin{cor}[Shrinking of preimages of bounded curves] \label{cor_uniform} Let $f\in \B$ be a strongly postcritically separated map, and let $(\Ort,\Or)$ be a pair of dynamically associated orbifolds. Then, for any curve $\gamma_0 \subset \Or$, for all $k\geq 1$ and each curve $\gamma_k \subset f^{-k}(\gamma_0)$ such that $f^k\vert_{\gamma_k}$ is injective, $$\ell_{\Or}(\gamma_k)\leq \frac{\ell_{\Or}(\gamma_0)}{\Lambda^{k}}$$
for some constant $\Lambda>1$.	
\end{cor}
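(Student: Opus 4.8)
The plan is to reduce the estimate to a single expansion step and then telescope. Given $k\geq 1$ and a curve $\gamma_k\subset f^{-k}(\gamma_0)$ on which $f^k$ is injective, I would introduce the intermediate curves $\gamma^{(j)}\defeq f^{k-j}(\gamma_k)$ for $j=0,1,\ldots,k$, so that $\gamma^{(k)}=\gamma_k$, $\gamma^{(0)}=f^k(\gamma_k)$, and $f(\gamma^{(j)})=\gamma^{(j-1)}$ for $1\leq j\leq k$. First I would record two structural observations. (i) Since $\gamma_0\subset S$ and $\widetilde{S}=f^{-1}(S)$ with $\widetilde{S}\subseteq S$ by Proposition \ref{prop_Or}\ref{item_a_assocorb}, one gets $f^{-j}(S)\subseteq\widetilde{S}\subseteq S$ for every $j\geq 1$; hence $\gamma^{(j)}\subset\widetilde{S}$ for $j\geq 1$, and for $z$ on such a curve both $z$ and $f(z)=\gamma^{(j-1)}(\cdot)$ lie in $S$, so that $\Vert\Deriv f(z)\Vert_{\Or}$ is exactly the quantity controlled by Theorem \ref{thm_main_intro_Orb}. (ii) Writing $f^k=f^{j}\circ f^{k-j}$, injectivity of $f^k$ on $\gamma_k$ propagates: $f^{k-j}$ is injective on $\gamma_k$, $f^j$ is injective on $\gamma^{(j)}$, and consequently $f$ itself is injective on each $\gamma^{(j)}$; thus each $\gamma^{(j-1)}=f(\gamma^{(j)})$ is an injectively traversed arc, and in particular $\gamma^{(0)}$ is an arc of $\gamma_0$.

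The core step is the one-step length contraction $\ell_{\Or}\bigl(f(\gamma^{(j)})\bigr)\geq\Lambda\,\ell_{\Or}(\gamma^{(j)})$, where $\Lambda>1$ is the constant from Theorem \ref{thm_main_intro_Orb}. I would prove it by parametrising $\gamma^{(j)}$ injectively, expressing $\ell_{\Or}\bigl(f(\gamma^{(j)})\bigr)$ as the integral of $\lvert f'(\gamma^{(j)}(t))\rvert\cdot\lvert(\gamma^{(j)})'(t)\rvert\cdot\rho_{\Or}\bigl(f(\gamma^{(j)}(t))\bigr)$ over the parameter interval, using $\lvert f'(z)\rvert\,\rho_{\Or}(f(z))=\Vert\Deriv f(z)\Vert_{\Or}\cdot\rho_{\Or}(z)$, and applying $\Vert\Deriv f(z)\Vert_{\Or}\geq\Lambda$ pointwise under the integral. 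Telescoping this over $j=1,\ldots,k$ yields $\ell_{\Or}(\gamma^{(0)})\geq\Lambda^{k}\,\ell_{\Or}(\gamma_k)$, and since $\gamma^{(0)}$ is an arc of $\gamma_0$ one has $\ell_{\Or}(\gamma^{(0)})\leq\ell_{\Or}(\gamma_0)$; combining these gives $\ell_{\Or}(\gamma_k)\leq\Lambda^{-k}\,\ell_{\Or}(\gamma_0)$, which is the claim.

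The only delicate point — and the mild obstacle — is the pointwise use of Theorem \ref{thm_main_intro_Orb} inside the integral, since its bound holds only ``whenever the quotient is defined'', i.e.\ away from the ramified points of $\Or$, where $\rho_{\Or}$ has cone-type singularities. Here I would use that the ramified set of $\Or$ is discrete (it is $P_J\cup B$ by Proposition \ref{prop_Or}\ref{item_b_assocorb}) together with the injectivity of the parametrisations from step (ii): the curve $\gamma^{(j)}$ and its image $\gamma^{(j-1)}$ meet the ramified set in at most countably many points, so the corresponding parameter values form a null set; and because the cone singularities are of integrable type $\lvert w-w_0\rvert^{(1-m)/m}$ with exponent $>-1$, all the $\Or$-lengths involved are finite and the integral inequality is unaffected. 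The remaining ingredients — the change of variables along $f$ and the propagation of injectivity under iteration — are routine, so I expect no further difficulty.
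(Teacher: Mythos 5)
Your argument is correct and follows essentially the same route as the paper: a single-step length contraction obtained by change of variables along $f$ together with the pointwise bound $\Vert \Deriv f(z)\Vert_{\Or}\geq\Lambda$ from Theorem \ref{thm_main_intro_Orb}, iterated over the intermediate images (your telescoping is just the paper's induction), with the ramified points dismissed as a discrete, hence negligible, set. Your extra care in noting that $f^k(\gamma_k)$ is only an arc of $\gamma_0$ and that the cone singularities are integrable is a welcome but minor refinement of the same proof.
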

\begin{proof}
By Theorem \ref{thm_main_intro_Orb} and Corollary \ref{cor_derivative_covering}, there exists a constant $\Lambda$ such that for all unramified $z\in\Ort$,
\begin{equation}\label{eq_exp_lambda2}
\Vert \Deriv f(z)\Vert_{\Or}=\frac{\rho_\Ort(z)}{\rho_\Or(z)}\geq\Lambda>1.
\end{equation}
In particular, recall that the set of ramified points in $\Ort$ is negligible when computing the length of bounded curves, as it is discrete and so has Lebesgue measure $0$. Let $\gamma_0$ be any curve as in the statement. We proceed by induction on $k$. Suppose $k=1$ and let us parametrize the curves $\gamma_0$ and $\gamma_1$ such that $f(\gamma_1(t))=\gamma_0(t)$ for all $t\geq 0$. Since by Proposition \ref{prop_Or} $f\colon \Ort \rightarrow \Or$ is an orbifold covering map, by Theorem \ref{pick}, $\rho_\Ort(\gamma_1(t)) =\vert f'(\gamma_1(t))\vert\cdot \rho_\Or(\gamma_0(t))$ for all $t\geq 0$. Using this and \eqref{eq_exp_lambda2},
\begin{equation*}
\begin{split}\ell_\Or(\gamma_1)&=\int \vert \gamma'_1(t)\vert \rho_{\Or}(\gamma_1(t))dt=\int \vert \gamma'_1(t)\vert \frac{\rho_\Or(\gamma_1(t))}{\rho_\Ort(\gamma_1(t))} \rho_{\Ort}(\gamma_1(t))dt\\
&\leq \frac{1}{\Lambda} \int \vert \gamma'_1(t)\vert \!\!\cdot \!\!\vert f'(\gamma_1(t))\vert \rho_\Or(\gamma_0(t))dt=\frac{1}{\Lambda} \int \vert \gamma'_0(t)\vert\rho_\Or(\gamma_0(t)) dt \leq \frac{\ell_\Or(\gamma_{0})}{\Lambda}.
\end{split}
\end{equation*}
Let us suppose that the statement is true for some $k-1$. Then, if $\gamma_k \subset f^{-k}(\gamma_0)$, $f(\gamma_{k})=\gamma_{k-1}$ for some curve $\gamma_{k-1} \subset f^{-k+1}(\gamma_0)$. By the same argument as before and using the inductive hypothesis, 
\begin{equation*}
\begin{split}
\ell_\Or(\gamma_k)\leq \frac{1}{\Lambda} \int \vert \gamma'_{k-1}(t)\vert\rho_\Or(\gamma_{k-1}(t)) dt=\frac{1}{\Lambda} \ell_\Or(\gamma_{k-1}) \leq \frac{\ell_\Or(\gamma_{0})}{\Lambda^{k}}.\qedhere
\end{split}
\end{equation*}
\end{proof}

\section{Results on the topology of Fatou and Julia sets} \label{sec_Fatou}
In this section we provide the proofs of Theorem \ref{thm_intro1.2} and Corollaries \ref{cor_intro1.8} and \ref{cor_intro1.9}. We note that the arguments in the proofs of the corresponding results for hyperbolic maps in \cite{lasseNuriaWalter}, rely mostly on the maps being \textit{expanding}. That is, in their derivative with respect to the  hyperbolic metric being greater than one in a punctured neighbourhood of infinity that contains their Julia set. Since we have achieved an analogous result for strongly postcritically separated maps in Theorem \ref{thm_main_intro_Orb}, we are able to adapt most of the proofs in \cite{lasseNuriaWalter} with few modifications. We start by borrowing some auxiliary results from \cite{lasseNuriaWalter}. The first one is a well-known result that we cite as stated in {\cite[Lemma 2.7]{lasseNuriaWalter}}.
\begin{lemma}[Coverings of doubly-connected domains] \label{lem_2.7}
Let $U,V\subset\C$ be domains and let $f\colon V\to U$ be a covering map. Suppose that $U$ is doubly-connected. Then either $V$ is doubly-connected and $f$ is a proper map, or $V$ is simply connected and $f$ is a universal cover of infinite degree.
\end{lemma}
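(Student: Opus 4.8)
The plan is to reduce the statement to the classification of connected coverings of an annulus. Since $U$ is doubly-connected it is homeomorphic to a round annulus, so $\pi_1(U)\cong\Z$. As $V$ is a domain it is connected, so $f\colon V\to U$ is a connected covering map; hence $f_*\colon\pi_1(V)\to\pi_1(U)$ is injective, its image $H\defeq f_*(\pi_1(V))$ is a subgroup of $\Z$, and the number of sheets of $f$ equals the index $[\pi_1(U):H]$. The subgroups of $\Z$ are precisely $\{0\}$ and $n\Z$ for $n\geq 1$, so exactly two cases occur.

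First I would treat $H=n\Z$ with $n\geq 1$. Then $f$ has finite degree $n$, and a finite-degree covering map between locally compact Hausdorff spaces is proper (it is closed, with finite — hence compact — fibres), so $f$ is a proper map. It remains to check that $V$ is doubly-connected: $V$ is a planar domain with $\pi_1(V)\cong H\cong\Z$, and a domain whose complement in $\widehat{\C}$ has $k$ components has free fundamental group of rank $k-1$; since $\Z$ is free of rank $1$ this forces $k=2$, i.e.\ $V$ is doubly-connected. (Equivalently, one can exhibit the connected $n$-fold cover of $S^1\times\R$ as $S^1\times\R$ itself, via $(w,t)\mapsto(w^n,t)$, and transport this along a homeomorphism $U\cong S^1\times\R$.)

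In the remaining case $H=\{0\}$, injectivity of $f_*$ gives $\pi_1(V)=\{0\}$, so $V$ is simply connected; then $f\colon V\to U$ is a covering with simply connected total space, hence a universal covering of $U$, and its degree is $|\pi_1(U)|=|\Z|=\infty$. Together these two cases yield the asserted dichotomy.

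The only step requiring a little care — the "main obstacle", such as it is — is the implication that $V\subset\C$ with $\pi_1(V)\cong\Z$ forces $V$ to be doubly-connected, which I would justify via the description of the fundamental group of a planar domain in terms of the number of its complementary components in $\widehat{\C}$ (an infinitely-connected domain would have infinitely generated $\pi_1$, contradicting $\pi_1(V)\cong\Z$). Everything else is the standard Galois correspondence for covering spaces of a connected, locally nice base together with the elementary fact that finite-sheeted covering maps are proper.
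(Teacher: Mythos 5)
Your argument is correct: the classification of subgroups of $\pi_1(U)\cong\Z$, the injectivity of $f_*$ for a covering of a (locally path-connected) domain, the properness of finite-degree covering maps, and the identification of planar domains with $\pi_1\cong\Z$ as exactly the doubly-connected ones together give the stated dichotomy. Note that the paper itself offers no proof to compare against -- it quotes this lemma as a well-known fact from \cite[Lemma 2.7]{lasseNuriaWalter} -- and your covering-space argument is precisely the standard justification of that fact, so nothing further is needed.
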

The next proposition gathers some well-known facts on the behaviour of entire maps on preimages of simply-connected domains. For ease of reference, the following statement merges \cite[Propositions 2.8 and 2.9]{lasseNuriaWalter}. For the first part, compare to \cite{heins57,herring_propFatou, bolsch}. 
\begin{prop}[Mapping of simply connected sets]\label{prop_5.2_3}
Let $f$ be an entire function, let $D\subset\C$ be a simply connected domain, and let $\widetilde{D}$ be a component of $f^{-1}(D)$. Then either
\begin{enumerate}[label=(\arabic*)]
\item \label{item_1.5.2} $f\colon\widetilde{D}\to D$ is	a proper map and hence has finite degree, or 
\item \label{item_2.5.2} for every $w\in D$ with at most one exception, $\#(f^{-1}(w)\cap \widetilde{D})$ is infinite. In this case, either $\widetilde{D}$ contains an asymptotic curve corresponding to an asymptotic value in $D$, or $\widetilde{D}$ contains infinitely many critical points.
\end{enumerate}
If in addition $D\cap S(f)$ is compact,
\begin{enumerate}[label=(\Alph*)]
\item If $\# (D\cap S(f))\leq 1$, then $\widetilde{D}$ contains at most one critical point of $f$. \label{item_A}
\item In case \ref{item_1.5.2}, if $D$ is a bounded Jordan domain such that $\partial D\cap S(f)=\emptyset$, then $\widetilde{D}$ is also a bounded Jordan domain.\label{item_B}
\item In case \ref{item_2.5.2}, the point $\infty$ is accessible from $\widetilde{D}$.\label{item_C}
\end{enumerate}
\end{prop}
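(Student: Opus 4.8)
\noindent\emph{Proof proposal.} The dichotomy between \ref{item_1.5.2} and \ref{item_2.5.2}, together with the structural alternative recorded in \ref{item_2.5.2}, is the classical Iversen--Gross description of the behaviour of an entire map over a simply connected domain, and is precisely \cite[Proposition 2.8]{lasseNuriaWalter}; see also \cite{heins57, herring_propFatou, bolsch}. I plan to quote it, recording only the mechanism: $f$ is open and $\widetilde{D}$ is relatively closed in the open set $f^{-1}(D)$, so $f(\widetilde{D})=D$; if $f|_{\widetilde{D}}$ is proper it is a branched covering of finite degree, which is \ref{item_1.5.2}; if it is not proper, there is a sequence $z_n\in\widetilde{D}$ with $z_n\to\infty$ and $f(z_n)\to w_0\in D$, and analysing the surface spread by $f$ over a small Jordan subdomain of $D$ about $w_0$ yields that inside $\widetilde{D}$ the value distribution over $D$ is infinite-to-one off at most one point, and that $\widetilde{D}$ carries either a singularity over an asymptotic value in $D$ (equivalently, an asymptotic curve) or infinitely many critical points. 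I will also use that $\widetilde{D}$ is simply connected: if $\gamma\subset\widetilde{D}$ is a Jordan curve bounding a region $B$, then for every $w_0\in\C\setminus D$ the argument principle gives $n(f\circ\gamma,w_0)=\#(f^{-1}(w_0)\cap B)$ (counted with multiplicity, since $f$ is entire), while $n(f\circ\gamma,w_0)=0$ because $f\circ\gamma\subset D\subseteq\C\setminus\{w_0\}$ and $D$ is simply connected; hence $f(B)\subseteq D$, so $B\subseteq f^{-1}(D)$ and therefore $B\subseteq\widetilde{D}$.

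For \ref{item_A} I plan to argue directly. Assume $D\cap S(f)\subseteq\{a\}$ and set $A\defeq f^{-1}(a)\cap\widetilde{D}$. Every critical point of $f$ in $\widetilde{D}$ has critical value in $S(f)\cap D\subseteq\{a\}$, hence lies in $A$, and since $AV(f)\cap(D\setminus\{a\})=\emptyset$, the restriction $f\colon\widetilde{D}\setminus A\to D\setminus\{a\}$ is an unbranched covering. The set $A$ is discrete in the domain $\widetilde{D}$, so $\widetilde{D}\setminus A$ is connected, and it is not simply connected when $A\neq\emptyset$ (a small loop about a point of $A$ has winding number $1$ there, so is non-contractible in $\C$ minus that point, hence in $\widetilde{D}\setminus A$). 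If $D\cap S(f)=\emptyset$, then $\widetilde{D}\setminus A\to D$ is a connected covering of a simply connected domain, hence a conformal isomorphism, so $A=\emptyset$ and $\widetilde{D}$ has no critical points. If $D\cap S(f)=\{a\}$, then $D\setminus\{a\}$ is conformally $\D^\ast$, whose connected coverings are $\D^\ast$ (finite degree) and $\D$; the $\D$ case forces $A=\emptyset$ by the previous remark (again no critical points), and in the $\D^\ast$ case, say of degree $d$, the unique puncture of $\widetilde{D}\setminus A$ fills in within $f^{-1}(D)$ — hence within $\widetilde{D}$, as $\widetilde{D}$ is a full preimage component — to a point $z_0$, so $A=\{z_0\}$ and $z_0$ has local degree $d$ (unramified if $d=1$). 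In all cases $\widetilde{D}$ has at most one critical point; this also shows that under $\#(D\cap S(f))\leq 1$, case \ref{item_2.5.2} can occur only through its asymptotic-curve alternative.

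For \ref{item_B}, in case \ref{item_1.5.2} the proper map $f\colon\widetilde{D}\to D$ has some finite degree $d$; boundedness of $D$ forces boundedness of $\widetilde{D}$, and $f$ restricts to a proper branched covering $\overline{\widetilde{D}}\to\overline{D}$ with $f(\partial\widetilde{D})=\partial D$. Since $\partial D\cap S(f)=\emptyset$ there are no critical points of $f$ on $\partial\widetilde{D}$, so $f\colon\partial\widetilde{D}\to\partial D$ is a proper local homeomorphism, hence a finite covering of the Jordan curve $\partial D$; as $\widetilde{D}$ is simply connected and bounded, $\partial\widetilde{D}$ is connected, whence homeomorphic to a circle, i.e.\ a Jordan curve, and $\widetilde{D}$ is a bounded Jordan domain. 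For \ref{item_C}, in the asymptotic-curve alternative of \ref{item_2.5.2} the asymptotic curve is by definition a curve in $\widetilde{D}$ tending to $\infty$, so $\infty$ is accessible; the general statement that $\infty$ is accessible from the tract $\widetilde{D}$ is exactly \cite[Proposition 2.9]{lasseNuriaWalter}, which I plan to cite (its proof passes to a logarithmic change of coordinates over a disc meeting $\widetilde{D}$ non-properly). Finally I would combine \ref{item_1.5.2}, \ref{item_2.5.2}, \ref{item_A}, \ref{item_B} and \ref{item_C}.

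Since all the content is classical, there is no genuinely hard step; the parts demanding care are the covering-type bookkeeping in \ref{item_A} — turning the $\D^\ast$-versus-$\D$ alternative, and the ``the puncture fills in because $\widetilde{D}$ is a full preimage component'' step, into the bound ``at most one critical point'' — and the precise reduction underlying \ref{item_C}, for which I expect to rely on the cited statement rather than reprove accessibility of $\infty$ from a tract from scratch.
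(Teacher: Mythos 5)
The paper offers no proof of this proposition at all: it is explicitly presented as a merger of \cite[Propositions 2.8 and 2.9]{lasseNuriaWalter} and simply cited, so your plan to quote Proposition 2.8 for the dichotomy \ref{item_1.5.2}/\ref{item_2.5.2} and Proposition 2.9 for \ref{item_C} coincides with what the paper does. Your added argument for \ref{item_A} is essentially sound, with two easily repaired slips: if $D=\C$ then $D\setminus\{a\}$ is conformally $\C^\ast$ rather than $\D^\ast$ (its connected coverings are $\C^\ast$ under $z^d$ and $\C$ under $\exp$, and the same bookkeeping goes through), and in the finite-degree case the unique puncture of $\widetilde{D}\setminus A$ need not fill in to a finite point --- it may correspond to an end of $\widetilde{D}$ at $\infty$ (making $a$ an asymptotic value), in which case $A=\emptyset$; either way one gets at most one critical point.

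Your proof of \ref{item_B}, however, has a genuine gap at its central step: the assertion that ``boundedness of $D$ forces boundedness of $\widetilde{D}$'' for a proper map $f\colon\widetilde{D}\to D$ is false. Take $f(z)=e^z$ and $D\defeq\{w:\vert w-1\vert<1\}$. Since $D\subset\C^\ast$ is simply connected and $\exp$ is the universal cover of $\C^\ast$, every component $\widetilde{D}$ of $f^{-1}(D)$ is mapped conformally --- hence properly, with degree $1$ --- onto $D$, yet each such component is unbounded, because $\log w\to\infty$ as $w\to 0$ within $D$. So properness together with boundedness of $D$ (and even compactness of $D\cap S(f)$, which here is empty) does not yield boundedness of $\widetilde{D}$; what blocks this behaviour is precisely the hypothesis $\partial D\cap S(f)=\emptyset$, which in the example fails ($0\in\partial D\cap S(f)$) and which you invoke only afterwards, to exclude critical points on $\partial\widetilde{D}$. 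The boundedness of $\widetilde{D}$ is in fact the main content of \ref{item_B} and needs a genuine argument --- for instance, covering considerations over an annular neighbourhood of $\partial D$ disjoint from $S(f)$, in the spirit of Lemma~\ref{lem_2.7}, as in the cited Proposition 2.9 of \cite{lasseNuriaWalter} --- rather than the asserted implication. The remainder of your \ref{item_B} (no critical points on $\partial\widetilde{D}$, hence $\partial\widetilde{D}$ a Jordan curve once it is known to be compact) is fine after that point, so the cleanest fix is either to supply such an argument or, as with the other items, simply to cite the result.
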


In addition, we will make use of the following result in order to show that the boundaries of certain Fatou components are not locally connected.
\begin{thm}[Boundaries of periodic Fatou components {\cite[Theorem 2.6]{lasseNuriaWalter}}] \label{thm_2.6}
Let $f$ be a transcendental entire function, and suppose that $U$ is an unbounded periodic component of $F(f)$ such that $f^n\vert_U$ does not tend to infinity. Then $\widehat{\C}\setminus U$ is not locally connected at any finite point of $\partial U$.
\end{thm}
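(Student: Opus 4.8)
The plan is to run the classical argument of Baker and Domínguez that underlies this statement. First I would reduce to the case that $U$ is \emph{invariant}: replacing $f$ by $f^p$, where $p$ is the period of $U$, changes neither $F(f)$ nor $\partial U$, turns $U$ into an invariant component, and preserves the hypothesis, since $f^{pn}$ tends to $\infty$ on $U$ exactly when $f^n$ tends to $\infty$ on every component of the cycle of $U$. A multiply connected Fatou component of a transcendental entire function is a wandering domain (Baker), so $U$ is simply connected; as $U$ is unbounded, $\infty\in\partial U$, with boundaries and closures taken in $\widehat{\C}$, and $\widehat{\C}\setminus U$ is a compact connected subset of $\widehat{\C}$. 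Fix a Riemann map $\phi\colon\D\to U$; since $f(U)=U$, the map $f$ lifts through $\phi$ to an inner function $g\colon\D\to\D$ with $\phi\circ g=f\circ\phi$.

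Now assume, for a contradiction, that $\widehat{\C}\setminus U$ is locally connected at some finite $z_0\in\partial U$. The first step is to convert this into information about the prime ends of $U$: local connectedness of the complement at $z_0$ means precisely that every prime end of $U$ whose impression contains $z_0$ has impression equal to $\{z_0\}$, and, because $\partial U\subseteq J(f)$ while the forward iterates of any neighbourhood of a Julia point cover $\widehat{\C}$ up to at most one point and $f$ maps $\partial U$ into itself, this triviality of prime ends should propagate from $z_0$ to every \emph{finite} point of $\partial U$. Granting this — together with the Carathéodory-type extension it yields — $\phi$ extends to a continuous map $\overline{\D}\to\widehat{\C}$ onto $\overline{U}$, and the relation $\phi\circ g=f\circ\phi$ persists on $\overline{\D}$ away from the set $E\defeq\phi^{-1}(\infty)\cap\partial\D$. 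This is exactly the point where the restriction to finite boundary points is essential: near $E$ one has no such control, since $f$ has an essential singularity at $\infty$.

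The heart of the argument is then a dynamical analysis of $g$ on $\partial\D$ relative to $E$. The set $E$ is closed, non-empty (as $\infty\in\partial U$), and a proper subset of $\partial\D$, for $E=\partial\D$ would force $\partial U=\{\infty\}$, hence $U=\C$, which is impossible as $J(f)\neq\emptyset$. For $\zeta\in\partial\D\setminus E$ one has $\phi(g(\zeta))=f(\phi(\zeta))\in\C$, so $g(\zeta)\notin E$; thus $\partial\D\setminus E$ is a non-empty open set forward invariant under the boundary map of $g$. If $g$ is not a Möbius transformation, then $\partial\D$ lies in the Julia set of $g$ and the forward orbit of every non-degenerate arc eventually covers $\partial\D$, hence meets the non-empty set $E$, contradicting forward invariance. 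If $g$ is an irrational rotation (so $U$ is a Siegel disc), then a non-empty proper closed set invariant under an irrational rotation cannot exist, again a contradiction. The remaining possibility is that $g$ is a parabolic Möbius transformation, i.e.\ $U$ is a parabolic basin on which $f$ acts as a conformal bijection (the hyperbolic Möbius case does not occur under our hypotheses); here I would use that continuity of $\phi$ on $\overline{\D}$ makes each point of $\phi(g(E))$ an asymptotic value of $f$ attained along a curve in $U$ tending to $\infty$, and that this rigid structure of $\partial U$ near $\infty$ is incompatible with the triviality of the prime ends at the finite points of $\partial U$ adjacent to $\infty$.

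The main obstacle is the prime-end bookkeeping of the first and last steps: making rigorous the propagation of the local-connectedness hypothesis along $\partial U\subseteq J(f)$ when $f$ may have infinitely many critical points and asymptotic values, and closing the degenerate case in which $g$ is Möbius, where the transitivity of the circle dynamics is unavailable and one must instead exploit the essential singularity of $f$ at $\infty$. These are precisely the technical points handled in the proof behind \cite[Theorem~2.6]{lasseNuriaWalter}, which is why here we are content to cite it.
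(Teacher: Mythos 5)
The paper does not prove Theorem \ref{thm_2.6} at all: it is imported verbatim, with attribution, from \cite[Theorem 2.6]{lasseNuriaWalter}, and is only used as a black box in the proof of Theorem \ref{thm_my1.10} (implication \ref{item_2per}$\Rightarrow$\ref{item_3per}). Since your proposal ultimately defers the technical core to that same reference, your treatment coincides with the paper's; there is no internal proof to compare against.

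That said, your sketch should not be mistaken for a proof, because its pivotal step is unjustified and is in fact where the whole difficulty lies. Local connectedness of $\widehat{\C}\setminus U$ at the \emph{single} finite point $z_0$ does not give triviality of all prime end impressions of $U$, nor a continuous extension of the Riemann map $\phi$ to $\overline{\D}$; that would amount to local connectivity of essentially all of $\partial U$, a far stronger hypothesis, and the asserted ``propagation along $J(f)$'' of the local condition from $z_0$ to every finite boundary point is exactly the delicate point (the blow-up property of $J(f)$ spreads \emph{expansion}, not local connectivity of a complement, and $f(\partial U)\subset\partial U$ alone does not transport the prime-end statement). Granting that extension, the inner-function analysis you outline (non-M\"obius case via covering of arcs on $\partial\D$, Siegel case via minimality of irrational rotations, exclusion of the hyperbolic automorphism, and the parabolic M\"obius case) follows the Baker--Dom\'inguez line but still leans on nontrivial lemmas about boundary dynamics of inner functions and on a vague ``rigidity'' argument in the parabolic case. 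Since you acknowledge these obstacles and close by citing \cite[Theorem 2.6]{lasseNuriaWalter}, the citation is doing the real work here, just as it does in the paper; if you ever want a self-contained argument, the extension/propagation step is the one that must be replaced by a genuinely local argument at $z_0$.
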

The proof of Theorem \ref{thm_intro1.2} will follow easily once we show that whenever condition $(b)$ on its statement holds, every \textit{periodic} Fatou component is bounded. We achieve so in the following theorem. In particular, we note the similarities with \cite[Theorem 1.10]{lasseNuriaWalter}: Theorem~\ref{thm_my1.10} holds for a more general class of maps, but \cite[Theorem 1.10]{lasseNuriaWalter} has the stronger conclusion that periodic Fatou components are quasidiscs. We suspect that this is also the case for the class of maps we study. However, we have not been able to conclude so; see \cite[p. 169]{mio_thesis} for further discussion.
\begin{thm}[Immediate basins of strongly postcritically separated maps]\label{thm_my1.10}
Let $f\in\B$ be strongly postcritically separated and let $D$ be a periodic Fatou component of $f$, of some period $p\geq 1$. Then the following are equivalent:
\begin{enumerate}[label=(\arabic*)]
\item $D$ is a Jordan domain;\label{item_1per}
\item $\widehat{\C}\setminus D$ is locally connected at some finite point of $\partial D$; \label{item_2per}
\item $D$ is bounded; \label{item_3per}
\item the point $\infty$ is not accessible in $D$;\label{item_4per}
\item the orbit of $D$ contains no asymptotic curves and only finitely many critical points;\label{item_5per}
\item $f^p\colon D\to D$ is a proper map;\label{item_6per}
\item for at least two distinct choices of $z\in D$, the set $f^{-p}(z)\cap D$ is finite. \label{item_7per}
\end{enumerate}
\end{thm}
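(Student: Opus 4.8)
The plan is to prove the seven statements equivalent through the cycle
$\ref{item_1per}\Rightarrow\ref{item_2per}\Rightarrow\ref{item_3per}\Rightarrow\ref{item_4per}\Rightarrow\ref{item_5per}\Rightarrow\ref{item_6per}\Rightarrow\ref{item_7per}\Rightarrow\ref{item_1per}$.
First I would record the standing reductions. By Lemma~\ref{lem_deff}, $F(f)$ consists of finitely many attracting basins, so $D$ must be an immediate‑basin component and, since periodic Fatou components of entire functions are simply connected, $D$ is simply connected. Replacing $f$ by $g\defeq f^p$ (again strongly postcritically separated and in $\B$, with $J(g)=J(f)$ and $F(g)=F(f)$) we may assume $D$ is $g$-invariant with an attracting fixed point $z_0\in D$ and $g^n|_D\to z_0$; a routine covering count turns statement~\ref{item_5per} into ``$g|_D$ has no asymptotic curve in $D$ and only finitely many critical points in $D$'', and statements~\ref{item_6per}, \ref{item_7per} into the corresponding statements for $g$ (note $f^{-p}(z)=g^{-1}(z)$). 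Since $J(f)\subset\widetilde S$, the orbifold metric $d_\Or$ of a pair $(\Ort,\Or)$ dynamically associated to $g$ is available on $\overline D\cap\C$, and $S(g)\cap D\subseteq P(f)\cap D\subseteq P_F\Subset F(f)$ is compact; hence Proposition~\ref{prop_5.2_3} applies to $g$, the simply connected domain $D$, and the component $\widetilde D=D$ of $g^{-1}(D)$.

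The soft links use only Proposition~\ref{prop_5.2_3} and Theorem~\ref{thm_2.6}: $\ref{item_1per}\Rightarrow\ref{item_2per}$, as the complement on $\widehat\C$ of a Jordan domain is locally connected everywhere and $\partial D\subseteq J(f)$ has finite points; $\ref{item_2per}\Rightarrow\ref{item_3per}$, since $g^n|_D\to z_0$ means $f^n|_D$ does not tend to $\infty$, so Theorem~\ref{thm_2.6} forbids $D$ from being unbounded; $\ref{item_3per}\Rightarrow\ref{item_4per}$ trivially; $\ref{item_4per}\Rightarrow\ref{item_5per}$ in contrapositive, because if $D$ contains an asymptotic curve of $g$ or infinitely many critical points of $g$, then $D$ falls into case~\ref{item_2.5.2} of Proposition~\ref{prop_5.2_3}, whence part~\ref{item_C} makes $\infty$ accessible from $D$; $\ref{item_5per}\Rightarrow\ref{item_6per}$, because \ref{item_5per} excludes case~\ref{item_2.5.2}, leaving $g\colon D\to D$ proper; and $\ref{item_6per}\Rightarrow\ref{item_7per}$, because a proper map has finite degree $d$, so every fibre $g^{-1}(z)\cap D$ has at most $d$ points. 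To close the cycle one must prove $\ref{item_7per}\Rightarrow\ref{item_1per}$: by Proposition~\ref{prop_5.2_3} once more (two finite fibres exclude case~\ref{item_2.5.2}), $\ref{item_7per}$ forces $g\colon D\to D$ to be proper of some finite degree, and it remains to deduce that $D$ is a bounded Jordan domain.

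This is the crux, and I would prove it by transplanting Douady--Hubbard's argument for subhyperbolic polynomials (\cite[p.~19]{Orsaynotes}) to the orbifold setting of Theorem~\ref{thm_main_intro_Orb}, using Corollary~\ref{cor_uniform}. Fix a linearizing Jordan neighbourhood $V\ni z_0$ with $g(\overline V)\subset V\Subset D$ and $\partial V\cap S(g^n)=\emptyset$ for all $n$ (possible since the relevant postsingular points accumulate only at $z_0$), and let $V_n$ be the component of $g^{-n}(V)$ containing $z_0$. Then $g^n\colon V_n\to V$ is proper of finite degree, each $V_n$ is a bounded Jordan domain by Proposition~\ref{prop_5.2_3}\ref{item_B}, $\overline{V_{n-1}}\subset V_n$, and $\bigcup_n V_n=D$. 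Since $\Crit(g)\cap D$ is finite, there is a level $n_0$ beyond which the annulus $V_n\setminus\overline{V_{n-1}}$ contains no critical point of $g$; cutting the level‑$n_0$ annulus along an arc between its two boundary circles (off the finitely many critical values there) and pulling back exhibits, for $n\ge n_0$, a partition of $V_n\setminus\overline{V_{n-1}}$ into finitely many Jordan ``boxes'', each of which some iterate of $g$ maps homeomorphically onto a fixed level‑$n_0$ box. Corollary~\ref{cor_uniform} then bounds the $d_\Or$-diameter of a level‑$n$ box by $L_0\Lambda^{-(n-n_0)}$, with $\Lambda>1$ and $L_0<\infty$ fixed ($L_0$ being the largest $d_\Or$-diameter of a level‑$n_0$ box, finite because $\overline{V_{n_0}}$ is compact in $\widetilde S$); in particular the ``crossing width'' of $V_n\setminus\overline{V_{n-1}}$ is at most $L_0\Lambda^{-(n-n_0)}$. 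Summing the geometric series bounds the $d_\Or$-diameter of $V_n$, hence of $D$, uniformly in $n$; since $\infty$ is an isolated puncture of the underlying surface $S$, a set of finite $d_\Or$-diameter cannot approach $\infty$, so $D$ is bounded. The same summable bounds make $\{\partial V_n\}_n$ a Cauchy sequence of Jordan curves for the Hausdorff distance on the now‑compact $\overline D$, converging to $\partial D$, while the nested box decompositions present $\partial D$ as an injective continuous image of a circle; hence $D$ is a bounded Jordan domain.

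The main obstacle I anticipate is precisely this implication, and within it the control of the geometry near the essential singularity: Corollary~\ref{cor_uniform} bounds only orbifold lengths, and the orbifold metric degenerates as $z\to\infty$ in $S$, so the puncture structure of $S$ at $\infty$ must be invoked to pass from ``the pieces are orbifold‑small'' to ``$D$ is bounded''. Grouping the pullbacks into boxes mapped \emph{injectively} by iterates of $g$ is what lets Corollary~\ref{cor_uniform} be applied without losing a factor equal to the degree, and the finiteness of $\Crit(g)\cap D$ (from \ref{item_5per}/\ref{item_6per}) is exactly what confines the branching to the finitely many levels $\le n_0$. The remaining items --- simple connectivity of $D$, the covering counts relating ``orbit of $D$'' data to data for $g$, the choice of the linearizing neighbourhood $V$, and the compactness of $S(g)\cap D$ needed to invoke Proposition~\ref{prop_5.2_3} --- are routine.
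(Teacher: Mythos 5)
Your chain of implications and the soft links coincide with the paper's proof: the same reductions (simple connectivity, passing to $g=f^p$), the same use of Lemma~\ref{lem_deff}, Theorem~\ref{thm_2.6} and Proposition~\ref{prop_5.2_3} for \ref{item_1per}$\Rightarrow\cdots\Rightarrow$\ref{item_7per}, and the same key idea (orbifold expansion via Corollary~\ref{cor_uniform}) for the crux \ref{item_7per}$\Rightarrow$\ref{item_1per}. However, your implementation of the crux has genuine gaps. First, the constant $L_0$ is not justified by the reason you give: Corollary~\ref{cor_uniform} only applies to curves $\gamma_k\subset g^{-k}(\gamma_0)$ on which $g^k$ is injective, so to bound the diameter of a level-$n$ box you need a connecting curve that stays inside the fixed level-$n_0$ box (or at least inside the region where the relevant inverse branch is defined and single-valued). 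The compactness of $\overline{V_{n_0}}$ bounds the \emph{ambient} $d_\Or$-diameter, but an ambient curve may run through $V_{n_0-1}$ (where the critical values of $g^{n-n_0}$ lie), in which case it either does not lift or its lift does not join the two prescribed preimages. What you need is an \emph{internal} length bound for the box, which requires an extra argument (e.g.\ choosing $V$ with analytic boundary so that all the curves $\partial V_n$, and hence the box boundaries, are piecewise analytic). Relatedly, the claim that each level-$n$ annulus is partitioned into boxes mapped homeomorphically onto the level-$n_0$ box presupposes that $g^{n-n_0}\colon V_n\setminus\overline{V_{n-1}}\to V_{n_0}\setminus\overline{V_{n_0-1}}$ is a covering; since your $V$ is a linearizing neighbourhood that need not contain $P(f)\cap D$, one must rule out extra components of $g^{-1}(V_{n-1})$ inside $V_n$ by a Riemann--Hurwitz/degree count (or simply choose the absorbing domain of Proposition~\ref{prop_Jordan}, which contains $P(f)\cap D$, as the paper does, after which Lemma~\ref{lem_2.7} gives the covering structure directly).

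The more serious gap is the endgame. From ``orbifold-small boxes'' you conclude that $\{\partial V_n\}$ is Cauchy in Hausdorff distance and that ``the nested box decompositions present $\partial D$ as an injective continuous image of a circle''. Hausdorff convergence of Jordan curves does not yield a Jordan limit, and injectivity of the limit parametrization is precisely the delicate point; you assert it without an argument (and even producing a consistent circle parametrization from your boxes requires building, level by level, the combinatorial conjugacy you have not constructed). The paper avoids this by importing the conformal conjugacy $\phi$ of $f$ on $D\setminus\overline{U_0}$ to $z\mapsto z^d$ (the Claim quoted from the proof of Theorem~1.10 in \cite{lasseNuriaWalter}), applying Corollary~\ref{cor_uniform} to the radial arcs $\gamma_{n,\theta}$ to show the parametrizations $\sigma_n$ of $\partial U_n$ form a uniformly Cauchy sequence, obtaining a continuous extension $\sigma$ of $\phi$ to $\partial\D$ (so $\partial D$ is a continuous closed curve and $D$ is bounded), and only then concluding that $D$ is a Jordan domain via the maximum principle and the structure of $\C\setminus\overline D$ --- not by proving injectivity of the limit curve directly. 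Your boundedness argument (finite $d_\Or$-diameter plus the cusp-like behaviour of the metric at $\infty$) is fine in principle, but as written the passage from the geometric estimates to ``$D$ is a Jordan domain'' is missing its proof.
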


\begin{proof}
Let $f$ and $D$ be as in the statement. In particular, $D$ is simply connected (multiply-connected Fatou components of transcendental entire functions are wandering domains \cite[Theorem 3.1]{Baker_wandering84}). By passing to an iterate, we may assume without loss of generality that $p=1$. Since the complement of a Jordan domain is locally connected at every point, \ref{item_1per}$\Rightarrow$\ref{item_2per} is immediate. If \ref{item_2per} holds, then since by Lemma \ref{lem_deff} all Fatou components of $f$ belong to attracting cycles, by Theorem \ref{thm_2.6} $D$ must be bounded, and so \ref{item_2per} implies \ref{item_3per}. If $D$ is bounded, then $D$ cannot contain a curve to $\infty$, and hence \ref{item_3per}$\Rightarrow$\ref{item_4per}. Since $f$ is postcritically separated and $D\subset F(f)$, $P(f)\cap D$ is compact. Thus, by Proposition \ref{prop_5.2_3}\ref{item_C}, if infinity is not accessible in $D$, then item \ref{item_1.5.2} must occur in Proposition~\ref{prop_5.2_3}, and so $D$ contains only finitely many critical points and no asymptotic values, which is equivalent to $f\colon D\to D$ being a proper map. Thus, \ref{item_4per}$\Rightarrow$\ref{item_5per}$\Leftrightarrow$\ref{item_6per}. Since any proper map has finite degree, \ref{item_6per}$\Rightarrow$\ref{item_7per}.

To conclude the proof, it suffices to show that \ref{item_7per}$\Rightarrow$\ref{item_1per}. With that aim, suppose that \ref{item_7per} holds for $f$. Recall that by Proposition \ref{prop_Jordan}, there exists a bounded Jordan domain $U_0\Subset D$ such that $\overline{f(U_0)}\subset U_0$ and $P(f)\cap D\Subset U_0$. For each $n\geq 1$, let
$$U_n\defeq f^{-n}(U_0)\cap D,$$ and note that by the property $\overline{f(U_0)}\subset U_0$, one can see using induction that
\begin{equation}\label{eq_U_n}
U_{n}\subset U_{n+1} \quad \text{ for all } \quad n\geq 0, \quad \text{ and } \quad D=\bigcup^\infty_{n=0} U_n.
\end{equation}
Since we have assumed that \ref{item_7per} holds for $f$, so does Proposition \ref{prop_5.2_3}\ref{item_1.5.2}, and hence $f\colon D\to D$ is a proper map of some degree $d\geq 1$. Moreover, by definition of $U_0$, for each $n\geq 1$, $f^n\colon D\setminus \overline{U_n}\to A \defeq D\setminus\overline{U_0}$ is a finite-degree covering map (of degree $d^n$) over the doubly-connected domain $A$. By Lemma \ref{lem_2.7}, the domain $D\setminus\overline{U_n}$ is also doubly-connected, and hence $U_n$ is connected for all $n$. Furthermore, since $P(f)\cap D\Subset U_0$, it follows from Proposition \ref{prop_5.2_3}\ref{item_B} applied to $f^n\colon U_n \rightarrow U_0$ that each $U_n$ is a bounded Jordan domain. Hence, for every $n\geq 0$, $f\colon \partial U_{n+1}\to \partial U_n$ is, topologically, a covering map of degree $d$ over a circle.
\begin{Claim}There exists a diffeomorphism $\phi \colon \{z\in\C\colon 1/e<|z|<1\} \to D\setminus \overline{U_0}$ such that 
\begin{equation}\label{eq_diffeo}
f(\phi(z)) = \phi(z^d) \qquad \text{whenever } e^{-1/d}<|z|<1. \end{equation}
\end{Claim}
This claim and its proof appear in \cite[Proof of Theorem 1.10]{lasseNuriaWalter}, and thus we omit the proof. Our next and final goal is to extend continuously the domain of the function $\phi$ to include $\partial \D$. With that aim, for each $\theta\in\R$ and $n\geq 0$, consider the curve
\begin{equation}\label{eq_def_gammatheta}
\gamma_{n,\theta} \defeq \phi( \{ e^{a +i\theta} \colon -d^{-n} \leq a \leq -d^{-(n+1)}\} ).
\end{equation}
Note that by the commutative relation in \eqref{eq_diffeo}, $\gamma_{n,\theta}$ is the preimage of the arc $\gamma_{0,\theta\cdot d^n}$ under some branch of $f^{-n}$, and in particular it is a simple curve with endpoints in $\partial U_n$ and $\partial U_{n+1}$.

Let $\Ort\defeq (\widetilde{S}, \tilde{\nu})$ and $\Or\defeq (S, \nu)$ be a pair of orbifolds dynamically associated to $f$. In particular, by Proposition \ref{prop_Or}\ref{item_a_assocorb}, $S$ can be chosen so that $D\setminus U_0\subset S$. Note that for each $\tilde{\theta}\in \R$, the curve $\gamma_{0,\widetilde{\theta}}$ is contained in the compact set $\overline {U_1\setminus U_0}$. Since $D\setminus U_0\subset F(f) \setminus P(f)$, by Proposition \ref{prop_Or}\ref{item_b_assocorb}, there are no ramified points of $\Or$ in $\overline{U_1\setminus U_0}$. Thus,  the density $\rho_\Or$ of the orbifold metric of $\Or$ attains a maximum value in $\overline{U_1\setminus U_0}$, and since the Euclidean length of the curves $\{\gamma_{0,\widetilde{\theta}}\}_{\tilde{\theta}}$ must be finite, as these curves are the image under $\phi$ of a straight line, there exists a constant $L>0$ such that 
$$\max_{\widetilde{\theta}}\ell_{\Or}(\gamma_{0,\widetilde{\theta}})<L.$$
Moreover, for all $n\geq 0$ and $\theta \in \R$, $\gamma_{n,\theta}\subset D\setminus U_0 \subset S\setminus P(f)$, and so, since $D$ is by assumption invariant, $f^n$ maps $\gamma_{n,\theta}$ injectively to $\gamma_{0,\theta\cdot d^n}$. Hence, we can apply Corollary \ref{cor_uniform} to conclude that there exists a constant $\Lambda>1$ such that 
\begin{equation}\label{eq_contractiontheta}
\ell_{\Or}(\gamma_{n,\theta})\leq \frac{\ell_{\Or}(\gamma_{0,\theta\cdot d^n})}{\Lambda^{n}} \leq \frac{\max_{\widetilde{\theta}}\ell_{\Or}(\gamma_{0,\widetilde{\theta}})}{\Lambda^{n}}\leq \frac{L}{\Lambda^{n}},
\end{equation}
where we note that the upper bound is independent of $\theta$. For each $n\geq 0$, let us define the function 
\[
\sigma_n\colon \R/\Z \rightarrow \partial U_n \quad \text{ as }\quad \sigma_n(t)\defeq \varphi\left(e^{-d^{-n}+ 2\pi t i }\right). \]
Note that for each $t\in \R/\Z$, the curve $\gamma_{n,2\pi t}$ defined in \eqref{eq_def_gammatheta} joins $\sigma_n(t)$ and $\sigma_{n+1}(t)$. Thus, by \eqref{eq_contractiontheta}, $\{\sigma_n\}_n$ forms a Cauchy sequence of continuous functions. Consequently, using \eqref{eq_U_n}, there exists a limit function $\sigma\colon \partial \D \rightarrow \partial D,$ which by \eqref{eq_diffeo} is the continuous extension of $\varphi$ to the unit circle. Hence, $\partial D$ is a continuous closed curve as it is the continuous image of $\partial \D$. In particular, $D$ is bounded. By the maximum principle, $\partial D = \partial \overline{D}$ and $\C\setminus \overline{D}$ has no bounded connected components. Hence, $D$ is a Jordan domain.
\end{proof}
Using the preceding theorem, we are now ready to provide the proofs of our results on the topology of Fatou and Julia sets.
\begin{proof}[Proof of Theorem \ref{thm_intro1.2}]
We start proving that $(a)$ implies $(b)$ by showing the contrapositive. Note that since $f$ is strongly postcritically separated, all asymptotic values of $f$ must lie in $F(f)$, and hence if $\text{AV}(f)\neq \emptyset$, then $F(f)$ must have an unbounded component by definition of asymptotic value. Moreover, if some Fatou component $U$ contains infinitely many critical points, since these are the zeros of the analytic function $f'$, they can only accumulate at infinity, and therefore $U$ is unbounded.

To prove that $(b)$ implies $(a)$, we note that by Lemma \ref{lem_deff}, all Fatou components of $f$ are (pre)periodic. If $(b)$ holds for $f$, that is, $\text{AV}(f)=\emptyset$ and each Fatou component contains at most finitely many critical points, then by Theorem \ref{thm_my1.10} ($\ref{item_5per}\!\!\! \iff \!\!\!\ref{item_1per}$), every periodic Fatou component is a bounded Jordan domain. Next, we see that strictly preperiodic Fatou components are also bounded Jordan domains. If $V$ is any preimage of a periodic Fatou component $U$, then, by assumption, Proposition \ref{prop_5.2_3}\ref{item_2.5.2} cannot hold. Thus, $f\colon V\to U$ must be a proper map. In addition, $V$ is also bounded by Proposition \ref{prop_5.2_3}\ref{item_B}. Proceeding by induction on the pre-period of $V$, the claim follows.
\end{proof}

In order to prove Corollary \ref{cor_intro1.8}, we will make use of a result from \cite{Bergweiler_morosawa}, where the concept of \textit{semihyperbolic} entire maps is introduced:
\begin{defn}[Semihyperbolic functions] An entire function $f$ is \textit{semihyperbolic at a point} $p$ if there exist $r > 0$ and $N \in \N$ such that for all $n \in \N$ and for all components $U$ of $f^{-n}(\D_r(p)) = \{z \in \C : f^n(z) \in \D_r(p)\}$, the function $f^n\vert_U \colon U \rightarrow \D_r(p)$ is a proper map of degree at most $N$. A function $f$ is \textit{semihyperbolic} if $f$ is semihyperbolic at all $p \in J(f)$.
\end{defn}

\begin{prop} If $f$ is strongly postcritically separated, then $f$ is semihyperbolic.
\end{prop}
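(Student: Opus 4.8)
The plan is to read semihyperbolicity directly off the dynamically associated orbifolds of Definition and Proposition~\ref{prop_Or}, exploiting only that their ramification is uniformly bounded.

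Fix $p\in J(f)$ and apply Proposition~\ref{prop_Or} to get orbifolds $(\Ort,\Or)$ with $\Or=(S,\nu)$, $J(f)\subseteq\tilde S\subseteq S$, and $f\colon\Ort\to\Or$ an orbifold covering map; recall the marked points of $\Or$ are exactly $P_J\cup B$. The key feature is that $\nu$ is bounded: by \eqref{eq_degree} there is $C<\infty$ with $\deg(f^m,w)\le C$ for all $w\in S$ and $m\ge1$, whence $\nu\le\lcm\{1,\dots,C\}<\infty$ on $S$. The first step is to promote this to all iterates. Since $f^{-1}(S)=\tilde S\subseteq S$, the surfaces $\tilde S_n\defeq f^{-n}(S)$ decrease with $n$, and the defining $\lcm$ in \eqref{eq_ram} shows that $\deg(f^n,z)$ divides $\nu(f^n(z))$ for every $z\in\tilde S_n$ with $f^n(z)\notin B$ (while if $f^n(z)\in B$ then $\deg(f^n,z)=1$, as no critical point of $f$ can map into $B$ because $B\cap P_J=\emptyset$). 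Hence $\tilde\nu_n(z)\defeq\nu(f^n(z))/\deg(f^n,z)$ is a well-defined ramification datum, and $f^n\colon(\tilde S_n,\tilde\nu_n)\to\Or$ is again an orbifold covering map — being a composition of $n$ orbifold coverings, with $f^n\colon\tilde S_n\to S$ a branched covering since $f^n$ has no asymptotic value in $S$ (as in the proof of Proposition~\ref{prop_Or}).

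Next, shrink $r>0$ so that $\overline{\D_{2r}(p)}\subseteq S$, $\overline{\D_{2r}(p)}\cap P(f)\subseteq\{p\}$, and $\D_r(p)\cap B\subseteq\{p\}$; this is possible because $p\in J(f)$, $P_J$ is discrete, $P_F$ is compact and disjoint from $J(f)$, and $B$ is finite. Then $p$ is the only possible marked point of $\Or$ in $\D_r(p)$, so $\D_r(p)$ with the restricted ramification map is a ``teardrop'' orbifold $\mathcal{T}$ with a single cone point of order $k\defeq\nu(p)\le C$ (or an ordinary disc if $p$ is unmarked). For a component $U$ of $f^{-n}(\D_r(p))$, restricting $f^n\colon(\tilde S_n,\tilde\nu_n)\to\Or$ gives an orbifold covering map $f^n|_U\colon U\to\mathcal{T}$. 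By Theorem~\ref{thm_uniform}, $\mathcal{T}$ is uniformized by $\D$, a universal covering map being $z\mapsto p+rz^k$, which has degree $k$; so by Observation~\ref{obs_covering_lifting} the universal cover $\D\to\mathcal{T}$ factors through $f^n|_U$, and multiplicativity of degrees forces $\deg(f^n|_U)$ to divide $k$. In particular $\deg(f^n|_U)\le C$, and a finite-degree branched covering onto a disc is proper. Thus $r$ and $N\defeq C$ witness that $f$ is semihyperbolic at $p$; as $p$ was arbitrary, $f$ is semihyperbolic.

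I expect the only delicate point to be the claim that each iterate $f^n$ is itself an orbifold covering onto $\Or$, i.e.\ that the ramification datum of $\Or$ is compatible with all iterates of $f$. This is exactly what the $\lcm$ in the definition \eqref{eq_ram} of $\nu$ is built to ensure; once it is in place, the bounded-ramification property of $\Or$ does the rest, with no appeal to the expansion estimate of Theorem~\ref{thm_main_intro_Orb} or to the hypothesis $f\in\B$.
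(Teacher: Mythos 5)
Your argument is correct, but it takes a genuinely different route from the paper. The paper's own proof never touches the orbifolds: it simply picks $r$ with $\D_r(p)\cap P(f)\subseteq\{p\}$, observes that any critical point of $f^n$ in a component $U$ of $f^{-n}(\D_r(p))$ is sent by $f^n$ into $P(f)\cap \D_r(p)\subseteq\{p\}$ (forward invariance of $P(f)$), and then bounds the degree of the proper map $f^n\vert_U$ directly by $\mu^{c}$ using Definition \ref{def_strongps}\ref{itema_defsps}--\ref{itemb_defsps}, properness coming from the absence of asymptotic values of $f^n$ in $\D_r(p)$. You instead route everything through Proposition \ref{prop_Or}: the $\lcm$ over all $m\geq 1$ in \eqref{eq_ram} does make every iterate $f^n\colon (f^{-n}(S),\tilde\nu_n)\to\Or$ an orbifold covering (your treatment of the case $f^n(z)\in B$ is the right extra check), and then uniformization of the one--cone--point disc together with Observation \ref{obs_covering_lifting} forces $\deg(f^n\vert_U)$ to divide $\nu(p)$. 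What the paper's argument buys is brevity, elementarity and the sharper explicit bound $\mu^{c}$; what yours buys is a conceptual explanation of why the $\lcm$-construction of $\nu$ encodes all iterates simultaneously, at the cost of a few standard facts you assert rather than prove (that the restriction of an orbifold covering over a component of the preimage of a subdisc is again an orbifold covering, and that a finite-degree branched covering onto a disc is proper) -- these are true and at the paper's own level of implicit detail. One slip to fix: by your own estimate $\nu\leq\lcm\{1,\dots,C\}$ on $S$, so the cone order satisfies $k=\nu(p)\leq\lcm\{1,\dots,C\}$, not $k\leq C$; accordingly the witness should be $N\defeq\lcm\{1,\dots,C\}$, which is still finite and uniform in $n$ and $U$, so the conclusion is unaffected.
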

\begin{proof}
Let us fix $p\in J(f)$. Since $P(f)\cap J(f)$ is discrete and $P(f)\cap F(f)$ is compact, there exists $r>0$ such that $\D_r(p)\cap P(f)$ contains at most the point $p$. By Definition \ref{def_strongps}, there exist constants $C,\mu>0$ such that for all $z\in J(f)$, 
$$\# (\text{Orb}^+(z)\cap \Crit(f)) \leq C \quad \text{ and } \quad \deg(f,z)<\mu.$$
Therefore, for any $n\geq 0$ and any connected component $U$ of $f^{-n}(\D_r(p))$, since we have that $U\cap \text{Orb}^{-}(P(f))\subset \text{Orb}^{-}(p)$, $f^n\vert_U$ is a proper map of degree at most $\mu^C\eqdef N$.
\end{proof}

The following theorem is a version of \cite[Theorem 4]{Bergweiler_morosawa} for our class of maps. In particular, this theorem tells us that if Fatou components are Jordan domains, in certain cases local connectivity of their Julia sets follows. 
\begin{thm}[Bounded components and bounded degree imply local connectivity]\label{thm_morosawalc} 
Let $f\in \B$ be strongly postcritically separated with no asymptotic values. Suppose that every immediate attracting basin of $f$ is a Jordan domain. If there exists $N\in \N$ such that the degree of the restriction of $f$ to any Fatou component is bounded by $N$, then $J(f)$ is locally connected.
\end{thm}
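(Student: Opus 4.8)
The plan is to adapt the proof of \cite[Theorem 4]{Bergweiler_morosawa}, using the orbifold expansion of Theorem~\ref{thm_main_intro_Orb} (via Corollary~\ref{cor_uniform}) wherever that proof relies on an expansion estimate which is only available there when the postsingular set is bounded. First I would reduce the statement to a fact about the sizes of Fatou components. Since $f$ has no asymptotic values and the degree bound $N$ forces each Fatou component to contain at most $N-1$ critical points counted with multiplicity, Theorem~\ref{thm_intro1.2} applies and every component of $F(f)$ is a bounded Jordan domain (if $F(f)=\emptyset$ then $J(f)=\C$ is locally connected and there is nothing to prove). Writing $\widehat{J}\defeq J(f)\cup\{\infty\}$, every complementary component of $\widehat{J}$ in $\widehat{\C}$ is then a Jordan domain; in particular each is simply connected, so $\widehat{J}$ is a continuum. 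By the classical criterion for local connectivity of a continuum all of whose complementary components are Jordan domains (as used in \cite[Theorem 4]{Bergweiler_morosawa}, cf.\ \cite{lasseNuriaWalter}), $\widehat{J}$ --- and hence $J(f)$, since local connectivity is a local property and $\widehat{J}$ coincides with $J(f)$ near every finite point --- is locally connected provided that for every $\varepsilon>0$ only finitely many Fatou components have spherical diameter at least $\varepsilon$; equivalently, $\operatorname{diam}_{\mathrm{sph}}(W_k)\to0$ along any sequence $(W_k)$ of pairwise distinct Fatou components.

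To prove this, note first that by Lemma~\ref{lem_deff} there are only finitely many periodic Fatou components, so we may assume each $W_k$ is strictly preperiodic with preperiod $n_k$, and, passing to a subsequence, that $f^{n_k}(W_k)=U$ for one fixed periodic component $U$, a bounded Jordan domain. A standard finite-valence argument shows that for each fixed $n$ only finitely many components of $f^{-n}(U)$ meet a given disc; hence if the $n_k$ are bounded then $\dist(0,W_k)\to\infty$, and since $\operatorname{diam}_{\mathrm{sph}}\{|z|\ge R\}\to0$ we are done --- the same disposes of any subsequence along which $W_k$ escapes to infinity. So assume $n_k\to\infty$ and that each $W_k$ meets a fixed disc. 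Let $\Omega$ be the finite union of bounded Jordan domains excised in the construction of the dynamically associated orbifolds $(\Ort,\Or)$ in Proposition~\ref{prop_Or}, so $S=\C\setminus\overline{\Omega}$ and $P_F\subset\Omega$, and fix a bounded Jordan domain $U^{\ast}$ with $\overline{\Omega\cap U}\subset U^{\ast}\Subset U$; then $U\setminus\overline{U^{\ast}}\subset S$. Since all critical values of $f^{m}$ lie in $P(f)$, every critical value of $f^{n_k}|_{W_k}$ lies in $P_F\cap U\subset U^{\ast}$, so $f^{n_k}$ maps the annulus $W_k\setminus\overline{W_k^{\ast}}$ as an unbranched covering of $U\setminus\overline{U^{\ast}}$, where $W_k^{\ast}$ is the component of $(f^{n_k}|_{W_k})^{-1}(U^{\ast})$. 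Consequently every point of $\partial W_k$ is joined to $\partial W_k^{\ast}$ by a crosscut mapped \emph{injectively} by $f^{n_k}$ onto a crosscut of $U\setminus\overline{U^{\ast}}$, so by Corollary~\ref{cor_uniform} that crosscut has $\Or$-length at most $\operatorname{diam}_{\Or}(U\setminus\overline{U^{\ast}})/\Lambda^{n_k}$, whence $\operatorname{diam}_{\Or}(W_k)\le\operatorname{diam}_{\Or}(W_k^{\ast})+C\Lambda^{-n_k}$. It remains to show $\operatorname{diam}_{\mathrm{sph}}(W_k^{\ast})\to0$: here $W_k^{\ast}$ is a component of $f^{-n_k}(U^{\ast})$ of the \emph{fixed} bounded Jordan domain $U^{\ast}$, of degree at most $N^{n_k}$, with all its critical values inside $U^{\ast}$, and --- exactly as in \cite[Theorem 4]{Bergweiler_morosawa} --- one concludes by iterating the crosscut estimate over a finite nested family of fixed subdomains interpolating between $U^{\ast}$ and $\overline{P_F\cap U}$ and then estimating an innermost preimage component directly, using that $f$ is semihyperbolic (so each factor of the iterate has degree at most $N$ and the local degree near postsingular points is bounded). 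Together with the comparison between $\rho_{\Or}$ and the spherical metric (the quantity $\rho_{\Or}(z)(1+|z|^2)$ is bounded below on $S$, which is precisely the input handling the geometry at $\infty$) this gives $\operatorname{diam}_{\mathrm{sph}}(W_k)\to0$.

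The main obstacle, and the only step where passing from the postsingularly bounded setting of \cite{Bergweiler_morosawa} to ours is not routine, is the control of the core preimages $W_k^{\ast}$: one must \emph{not} estimate $\operatorname{diam}_{\Or}(W_k^{\ast})$ through the $\Or$-length of $\partial W_k^{\ast}$, since this curve covers a fixed curve with multiplicity of order $N^{n_k}$, which would overwhelm the $\Lambda^{-n_k}$ contraction. The estimate may only ever be applied to a single \emph{injectively} mapped crosscut or arc --- where Corollary~\ref{cor_uniform} loses no multiplicity --- or to a preimage \emph{component} of a fixed compact set, where the branching contributes only a bounded distortion factor rather than one growing with $n_k$. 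Carrying this out while the density $\rho_{\Or}$ degenerates both near the essential singularity at $\infty$ and near the postsingular set $P_F$ (which is why the excised ``core'' $\Omega$ must sit outside the underlying surface $S$ on which $\rho_{\Or}$ lives) is the technical heart of the argument; the remaining parts are formally identical to \cite[Theorem 4]{Bergweiler_morosawa} and to the arguments of \cite{lasseNuriaWalter}.
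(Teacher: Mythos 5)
The paper itself does not prove Theorem~\ref{thm_morosawalc} by an argument of this kind: it obtains it by combining the proposition proved immediately before it (every strongly postcritically separated map is semihyperbolic) with the quoted result for semihyperbolic entire functions, \cite[Theorem 4]{Bergweiler_morosawa}; the remark that follows only observes that an expansion-based proof in the spirit of \cite[Theorem 2.5]{lasseNuriaWalter}, via Theorem~\ref{thm_main_intro_Orb}, could alternatively have been given. You are attempting precisely that alternative route, which is legitimate; but then the decisive estimate must be proved rather than cited, and your write-up does not do this. The shrinking of the core preimages $W_k^\ast$ is deferred to ``exactly as in \cite[Theorem 4]{Bergweiler_morosawa}'' at the very moment you (correctly) identify it as the only place where the adaptation from the postsingularly bounded setting is non-routine, so the proposal is incomplete at its technical heart.

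Moreover, your diagnosis of that obstacle --- that $f^{n_k}|_{W_k}$ may have degree of order $N^{n_k}$, overwhelming the contraction $\Lambda^{-n_k}$ --- overlooks the observation that removes it. Every critical value of $f$ lying in $F(f)$ belongs to $P_F$, and by Proposition~\ref{prop_Jordan} (as used in Lemma~\ref{lem_deff}) $P_F$ is contained in finitely many bounded Jordan domains, each compactly contained in a single Fatou component; hence $P_F$ meets only some fixed number $m$ of Fatou components. Since $W_k, f(W_k),\dots,f^{n_k-1}(W_k)$ are pairwise distinct (minimality of the preperiod), at most $m$ of them contain a critical point, so $\deg\bigl(f^{n_k}|_{W_k}\bigr)\leq N^{m}$, a bound independent of $k$. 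With this uniform degree bound the argument closes along the expected lines: choose $\partial U^\ast$ smooth and disjoint from $P(f)$, so that $f^{n_k}\colon\partial W_k^\ast\to\partial U^\ast$ is a circle covering of degree at most $N^{m}$; decompose $\partial W_k^\ast$ into that many arcs mapped injectively and apply Corollary~\ref{cor_uniform} to each, giving $\ell_{\Or}(\partial W_k^\ast)=O(\Lambda^{-n_k})$, which together with your crosscut estimate and the comparison of $\rho_{\Or}$ with the spherical density forces $\operatorname{diam}_{\mathrm{sph}}(W_k)\to 0$. Semihyperbolicity, which concerns preimage components of small discs centred at Julia points, does not by itself deliver what your parenthetical asserts about the fixed domain $U^\ast$. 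A smaller gap of the same kind: the ``standard finite-valence argument'' in the bounded-preperiod case is not justified as stated, since infinitely many components of $f^{-n}(U)$ can a priori meet a fixed disc (accumulating on Julia points mapping to $\partial U$); once the uniform estimate above is in place, this case needs no separate treatment. Finally, if one is content to quote \cite{Bergweiler_morosawa} at the key step, as your sketch effectively does, the paper's own derivation --- semihyperbolicity plus the citation --- is the shorter and safer route.
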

\begin{remark}
We note that \cite[Theorem 2.5]{lasseNuriaWalter} is a version of Theorem \ref{thm_morosawalc} for hyperbolic maps whose proof is based on expansion of hyperbolic maps in a neighbourhood of their Julia set. Therefore and alternatively, we could have presented an analogous proof for functions in class $\B$ that are strongly postcritically separated using Theorem \ref{thm_main_intro_Orb}. 
\end{remark}
\begin{proof}[Proof of Corollary \ref{cor_intro1.8}]
Let $f\in\B$ be strongly postcritically separated with no asymptotic values, and assume that every Fatou component of $f$ contains at most $N$ critical points, counting multiplicity, for some $N\in\N$. Then, hypothesis $(b)$ in Theorem \ref{thm_intro1.2} holds for $f$, and consequently every Fatou component $U$ is a bounded Jordan domain. Moreover, Proposition~\ref{prop_5.2_3}\ref{item_1.5.2} must hold and so the restriction $f\vert_U\colon U\to f(U)$ is a proper map. Since $f$ has no wandering domains (Lemma \ref{lem_deff}), all Fatou components of $f$ are simply connected \cite[Theorem 3.1]{Baker_wandering84}. Then, the Riemann-Hurwitz formula, see \cite[Theorem 7.2]{milnor_book}, tells us that the degree of $f\vert_U$ is bounded by $N+1$. Consequently, local connectivity of $J(f)$ follows from Theorem \ref{thm_morosawalc}.
\end{proof}
\begin{proof}[Proof of Corollary \ref{cor_intro1.9}]
Let $f$ be strongly postcritically separated with no asymptotic values, and assume that every Fatou component contains at most one critical value. Then, by Proposition \ref{prop_5.2_3}\ref{item_A}, each Fatou component also contains at most one critical point. Since, by assumption, the multiplicity of the critical points is uniformly bounded, local connectivity of $J(f)$ is a consequence of Corollary \ref{cor_intro1.8}.
\end{proof}
\section{Pullbacks and post-homotopy classes }\label{sec_homotopies}
Given an entire function $f$ and two simple curves $\gamma, \beta \subset f(\C)\setminus P(f)$, homotopic and with fixed endpoints, by the homotopy lifting property, for each curve in $f^{-1}(\gamma)$, there exists a curve in $f^{-1}(\beta)$ homotopic to it and sharing the same endpoints. In Proposition \ref{cor_homot} we get, by using a modified notion of homotopy, an analogue of this result for a certain class of curves that contain postsingular points. Moreover, in this section we also show that if $f$ is an entire function with dynamic rays in its Julia set and $U$ is a certain bounded domain of any hyperbolic orbifold whose underlying surface intersects $J(f)$, then there exists a constant $\mu$ such that for every piece of dynamic ray contained in $U$, we can find a curve in its ``modified-homotopy'' class with orbifold length at most $\mu$; see Definition \ref{def_ray} and Corollary \ref{cor_homot2}. In particular, this result is crucial to prove the main result in \cite{mio_splitting}.

For completeness and in order to fix notation, we include some definitions regarding homotopy and covering spaces theory that we require, and we refer the reader to \cite[Chapter~1]{hatcher2002algebraic} or \cite[Chapter 9]{Munkres} for an introduction to these topics. In this section, by a \textit{curve} in a space $X$ we mean a continuous map $\gamma :I\rightarrow X$ with $I=[0,1]$, and in particular its image $\gamma(I)$ is bounded. With slight abuse of notation, we also refer by $\gamma$ to $\gamma(I)$, and we denote by $\text{int}(\gamma)$ the curve obtained from $\gamma$ by removing its endpoints. A \textit{homotopy of curves} in $X$ is a family $\{\gamma_t:I\rightarrow X\}_{t\in [0,1]} $ for which the associated map $\overline{\gamma}:I\times [0,1]\rightarrow X$ given by $\overline{\gamma}(s,t) \defeq \gamma_t(s)$ is continuous. Two curves $\alpha$ and $\beta$ are \textit{homotopic in $X$} when there exists a homotopy $\{\gamma_t\}_{t\in [0,1]}$ in $X$ such that $\gamma_0\equiv\alpha$ and $\gamma_1\equiv\beta$. Being homotopic is an equivalence relation on the set of all curves in $X$. Given a covering space $f : \tilde{X} \rightarrow X$, a \textit{lift} of a map $g: Y\rightarrow X $  by $f$ is a map $\tilde{g}: Y \rightarrow \tilde{X}$ such that $f\circ \tilde{g}= g$. The main result that serves our purposes is the following:
\begin{prop}[Homotopy lifting property] \label{prop_hatcher} Given a covering space $f : \tilde{X} \rightarrow X$, a homotopy $\{\gamma_t:Y\rightarrow X\}_{t\in [0,1]}$ and a map $\tilde{\gamma}_0: Y \rightarrow \tilde{X}$ lifting $\gamma_0$, there exists a unique homotopy $\{\tilde{\gamma}_t:Y\rightarrow \tilde{X}\}_{t\in [0,1]}$ that lifts $\{\gamma_t\}_{t\in [0,1]}$.
\end{prop}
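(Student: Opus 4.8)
The plan is to give the standard covering-space argument, reducing everything to the uniqueness of lifts of paths. First I would establish the basic fact that if $\beta\colon I\to X$ is a path and $\tilde\beta,\tilde\beta'\colon I\to\tilde X$ are two lifts of $\beta$ with $\tilde\beta(0)=\tilde\beta'(0)$, then $\tilde\beta\equiv\tilde\beta'$: the set $\{s\in I:\tilde\beta(s)=\tilde\beta'(s)\}$ is open (given $s$ with common value $\tilde x$, pick an evenly covered neighbourhood $U\ni\beta(s)$, i.e.\ one whose preimage is a disjoint union of open sheets each mapped homeomorphically onto $U$, and the sheet $\tilde U\ni\tilde x$; for $s'$ near $s$ both lifts lie in $\tilde U$ and project to $\beta(s')$, hence coincide), it is closed (as $\tilde X$ is Hausdorff), and it is nonempty, so it is all of the connected interval $I$. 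The same argument shows that any two lifts of a map out of a connected space agreeing at one point agree everywhere; in particular it forces uniqueness of the desired homotopy $\{\tilde\gamma_t\}$, since on each slice $\{y\}\times I$ the map $t\mapsto\tilde\gamma_t(y)$ must be the unique lift of $t\mapsto\gamma_t(y)$ starting at $\tilde\gamma_0(y)$.

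For existence, write $F\colon Y\times I\to X$, $F(y,t)\defeq\gamma_t(y)$, and fix $y_0\in Y$. By compactness of $\{y_0\}\times I$ and the tube lemma there is a neighbourhood $N$ of $y_0$ and a partition $0=s_0<s_1<\dots<s_m=1$ such that $F(N\times[s_{i-1},s_i])$ lies in an evenly covered open set $U_i\subset X$ for each $i$. I would then build a continuous lift $\tilde F$ over $N\times I$ (shrinking $N$ as needed) by induction on $i$: assuming $\tilde F$ is already defined and continuous on $N\times[0,s_{i-1}]$ and equals $\tilde\gamma_0$ on $N\times\{0\}$, let $\tilde U_i$ be the sheet over $U_i$ containing $\tilde F(y_0,s_{i-1})$, replace $N$ by the open subneighbourhood $\{y\in N:\tilde F(y,s_{i-1})\in\tilde U_i\}$ of $y_0$, and extend by $\tilde F\defeq (f|_{\tilde U_i})^{-1}\circ F$ on $N\times[s_{i-1},s_i]$; this is continuous and matches the earlier stage on the slice $N\times\{s_{i-1}\}$. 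After $m$ steps this produces a continuous lift $\tilde F_{y_0}$ on a neighbourhood $N_{y_0}\times I$.

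Finally I would glue: if $y\in N_{y_0}\cap N_{y_0'}$, then the restrictions of $\tilde F_{y_0}$ and $\tilde F_{y_0'}$ to $\{y\}\times I$ are lifts of $t\mapsto F(y,t)$ agreeing at $t=0$ (both equal $\tilde\gamma_0(y)$), hence equal on all of $\{y\}\times I$ by the uniqueness fact. So the local lifts patch to a well-defined $\tilde F\colon Y\times I\to\tilde X$, which is continuous because continuity is local and $\tilde F$ agrees with the continuous $\tilde F_{y_0}$ on each $N_{y_0}\times I$; setting $\tilde\gamma_t(y)\defeq\tilde F(y,t)$ gives a homotopy of curves lifting $\{\gamma_t\}$ and extending $\tilde\gamma_0$. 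The main obstacle is purely organizational — ensuring that at each inductive step the neighbourhood $N$ is shrunk \emph{before} the piece over $N\times[s_{i-1},s_i]$ is defined, so that the newly added piece genuinely agrees with the earlier one on the shared slice — but there is no real conceptual difficulty: everything rests on the evenly-covered-neighbourhood structure, compactness of $I$, and the connectedness of $I$ used in the uniqueness step.
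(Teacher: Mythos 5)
Your argument is correct and complete; it is precisely the standard proof of the homotopy lifting property (uniqueness of path lifts, the tube-lemma/inductive construction of local lifts over $N\times[s_{i-1},s_i]$, and slice-wise gluing) found in Hatcher's Proposition 1.30, which is exactly the reference the paper cites in lieu of a proof. The only cosmetic remark is that closedness of the agreement set does not really need $\tilde{X}$ to be Hausdorff---the disagreement set is open by the same sheet argument, since distinct preimages of a point lie in distinct sheets---but in the paper's setting the covering spaces are subsets of $\C$, so your Hausdorff argument is valid as written.
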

\begin{proof}
See \cite[Proposition 1.30]{hatcher2002algebraic} for the proof of the statement whenever the homotopies have fixed endpoints, and \cite[(5.3) Covering Homotopy Theorem]{Greenberg} or \cite[Section 4.2]{hatcher2002algebraic} for the general case. 
\end{proof}

Recall that for an entire function $f$, its singular set $S(f)$ is the smallest closed set for which $f: \C \setminus f^{-1}(S(f)) \rightarrow \C \setminus S(f)$ is a covering map, and regarding the iterates of $f$, for each $k\geq 1$, $S(f^{k})\subseteq P(f)$; see \cite[Proposition 2.13]{mio_thesis}. Consequently, for all $k\geq 1$ and every entire function $f$,
\begin{equation}\label{eq_covering}
f^{k} \colon \C \setminus f^{-k}(P(f))\rightarrow\:\C \setminus P(f) \: \text{ is a covering map}.
\end{equation}
Thus, the homotopy lifting property applies to any homotopy of curves in $\C \setminus P(f)$. We are interested in obtaining an analogous property that applies to certain curves whose image in $\C$ contains postsingular points. We specify now which curves we are interested in:

\begin{discussion}[Definition of the sets $\mathcal{H}^{q}_{p} (W(k))$] Let us fix an entire function $f$ and let $k\in \N$. We suggest the reader keeps in mind the case when $k=0$, since it will be the one of greatest interest for us. Let $W(k)$ be a finite set of (distinct) points in $f^{-k}(P(f))$, totally ordered with respect to some relation ``$\prec$''. That is, $W(k)\defeq(W(k), \prec)=\{w_1, \ldots, w_N \}\subset f^{-k}(P(f))$ such that $w_{j-1}\prec w_j\prec w_{j+1}$ for all $ 1< j< N$. We note that $W(k)$ can be the empty set. Then, for every pair of points\footnote{In particular, $p$ and $q$ might belong to $f^{-k}(P(f))$.} $p,q \in \C\setminus W(k)$, we denote by $\mathcal{H}^{q}_{p} (W(k))$ the collection of all curves in $\C$ with endpoints $p$ and $q$ that join the points in $W(k)$ \textit{in the order} ``$\prec $'', starting from $p$. More formally, $\gamma\in \mathcal{H}^{q}_{p} (W(k))$ if $\text{int}(\gamma)\cap f^{-k}(P(f))=W(k)$ and $\gamma$ can be parametrized so that $\gamma(0)=p$, $\gamma(1)=q$ and $\gamma(\frac{j}{N+1})=w_j$ for all $1\leq j \leq N$. In particular, $\gamma$ can be expressed as a concatenation of $N+1$ curves 
\begin{equation}\label{def_concat}
\gamma=\gamma^{w_{1}}_{p} \bm{\cdot}\gamma^{w_{2}}_{w_1} \bm{\cdot} \cdots \bm{\cdot}\gamma^{q}_{w_N},
\end{equation}
each of them with endpoints in $W(k) \cup\{p,q\}$ and such that $$\text{int}(\gamma^{w_1}_{p}), \text{int}(\gamma^{w_{i+1}}_{w_i}), \text{int}(\gamma^{q}_{w_N}) \subset \C \setminus f^{-k}(P(f))$$
for each $1\leq i \leq N$; see Figure \ref{fig:def_pfhomot}.
\end{discussion}

\begin{figure}[htb]
\begingroup%
\makeatletter%
\providecommand\color[2][]{%
\errmessage{(Inkscape) Color is used for the text in Inkscape, but the package 'color.sty' is not loaded}%
\renewcommand\color[2][]{}%
}%
\providecommand\transparent[1]{%
\errmessage{(Inkscape) Transparency is used (non-zero) for the text in Inkscape, but the package 'transparent.sty' is not loaded}%
\renewcommand\transparent[1]{}%
}%
\providecommand\rotatebox[2]{#2}%
\newcommand*\fsize{\dimexpr\f@size pt\relax}%
\newcommand*\lineheight[1]{\fontsize{\fsize}{#1\fsize}\selectfont}%
\ifx\svgwidth\undefined%
\setlength{\unitlength}{340.15748031bp}%
\ifx\svgscale\undefined%
\relax%
\else%
\setlength{\unitlength}{\unitlength * \real{\svgscale}}%
\fi%
\else%
\setlength{\unitlength}{\svgwidth}%
\fi%
\global\let\svgwidth\undefined%
\global\let\svgscale\undefined%
\makeatother%
\resizebox{0.65\textwidth}{!}{ 
\begin{picture}(1,0.375)%
\lineheight{1}%
\setlength\tabcolsep{0pt}%
\put(0,0){\includegraphics[width=\unitlength,page=1]{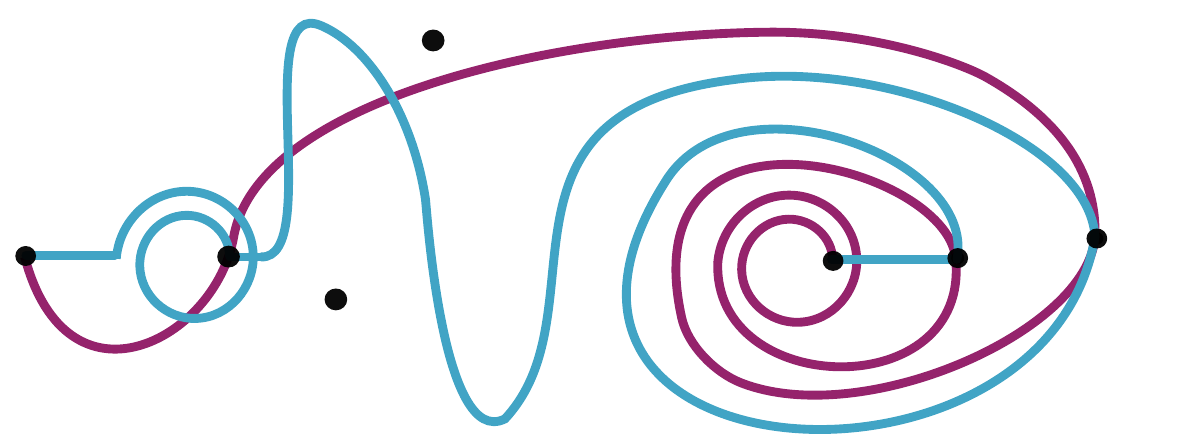}}%
\put(0.06938719,0.03241959){\color[rgb]{0.58431373,0.1372549,0.42352941}\makebox(0,0)[lt]{\lineheight{1.25}\smash{\begin{tabular}[t]{l}\fontsize{15pt}{1em}$\bm{\beta}$\end{tabular}}}}%
\put(0.19285463,0.3230327){\color[rgb]{0.25490196,0.64313725,0.77254902}\makebox(0,0)[lt]{\lineheight{1.25}\smash{\begin{tabular}[t]{l}\fontsize{15pt}{1em}$\bm{\gamma}$\end{tabular}}}}%
\put(0.00314301,0.17159607){\color[rgb]{0,0,0}\makebox(0,0)[lt]{\lineheight{1.25}\smash{\begin{tabular}[t]{l}$p$\end{tabular}}}}%
\put(0.93335476,0.15072401){\color[rgb]{0,0,0}\makebox(0,0)[lt]{\lineheight{1.25}\smash{\begin{tabular}[t]{l}$w_2$\end{tabular}}}}%
\put(0.13715631,0.14550355){\color[rgb]{0,0,0}\makebox(0,0)[lt]{\lineheight{1.25}\smash{\begin{tabular}[t]{l}$w_1$\end{tabular}}}}%
\put(0.82403138,0.15007164){\color[rgb]{0,0,0}\makebox(0,0)[lt]{\lineheight{1.25}\smash{\begin{tabular}[t]{l}$w_3$\end{tabular}}}}%
\put(0.66403745,0.1422822){\color[rgb]{0,0,0}\makebox(0,0)[lt]{\lineheight{1.25}\smash{\begin{tabular}[t]{l}$q$\end{tabular}}}}%
\end{picture}

}
\endgroup
\caption{Example of two curves $\gamma, \beta \in H_{p}^{q}(\{w_1,w_2, w_3\})$ that are post-$k$-homotopic for some $k\geq 1$. Points in $f^{-k}(P(f))$ are represented by black dots.}
\label{fig:def_pfhomot}
\end{figure}
\noindent We use the following notion of homotopy for the sets of curves described:
\begin{defn}[Post-$k$-homotopic curves] \label{def_post0}
Consider $W(k)=\{w_1, \ldots, w_N \}\subset f^{-k}(P(f))$ and two curves $\gamma,\beta\in \mathcal{H}^{w_{N+1}}_{w_0}(W(k))$, for some $\{w_0, w_{N+1}\}\subset \C\setminus W(k)$. We say that $\gamma$ is \emph{post-$k$-homotopic to $\beta$} if for all $0 \leq i\leq N$, $\gamma^{w_{i+1}}_{w_i}$ is homotopic to $\beta^{w_{i+1}}_{w_i} \text{ in } (\C \setminus f^{-k}(P(f))) \cup \{w_i, w_{i+1} \}$.
\end{defn}

\begin{remark}Note that both the curves $\gamma$ and $\beta$ in the definition above belong to $\mathcal{H}^{w_{N+1}}_{w_0}(W(k))$, and so they share the points $w_0, \ldots, w_{N+1}$. In particular, $\gamma$ and $\beta$ share their endpoints, and for each $0 \leq i\leq N$, the respective subcurves $\gamma^{w_{i+1}}_{w_i}$ and $\beta^{w_{i+1}}_{w_i}$ also share the same fixed endpoints $w_i$ and $w_{i+1}$.
\end{remark}
In other words, for each $1\leq i\leq N$, the restrictions of $\gamma$ and $\beta$ between $w_i$ and $w_{i+1}$ are homotopic in the space $(\C \setminus f^{-k}(P(f))) \cup \{w_i, w_{i+1}\}$; see Figure \ref{fig:def_pfhomot}. It is easy to see that this defines an equivalence relation in $\mathcal{H}^q_p (W(k))$, with $p=w_0$ and $q=w_{N+1}$. For each $\gamma\in \mathcal{H}^q_p (W(k))$, we denote by $[\gamma]_{_k}$ its equivalence class. Note that if $W(k)=\emptyset$ and $p,q \in \C \setminus f^{-k}(P(f))$, then for any curve $\gamma\in\mathcal{H}^q_p (W(k))$, $[\gamma]_{_k}$ equals the equivalence class of $\gamma$ in $\C \setminus f^{-k}(P(f))$ in the usual sense. Moreover, if $\gamma$ is any curve that meets only finitely many elements of $f^{-k}(P(f))$, then it belongs to a unique set of the form ``$\mathcal{H}^q_p(W(k))$'' up to reparametrization of $\gamma$, and so its equivalence class $[\gamma]_{_k}$ is defined in an obvious sense. Hence, the notion of post-$k$-homotopy is well-defined for all such curves, and from now on we will sometimes omit the set of curves they belong to.

\noindent The following is an analogue of Proposition \ref{prop_hatcher} for post-$k$-homotopic curves: 
\begin{prop}[Post-homotopy lifting property]\label{cor_homot}
Let $f$ be an entire map and let $C\subset \C$ be a domain so that $f^{-1}(C) \subset C$ and $\text{AV}(f) \cap C=\emptyset.$ Let $\gamma \subset C$ be a bounded curve such that $\#(\gamma \cap P(f))< \infty$. Fix any $k\geq 0$ and any curve $\gamma_k\subset f^{-k}(\gamma)$ for which the restriction $f^k\vert_{ \gamma_k}$ is injective. Then, for each $\beta \in [\gamma]_{_0}$, there exists a unique curve $\beta_k\subset f^{-k}(\beta)$ such that $\beta_k \in [\gamma_k]_{_k}$. In particular, $\beta_k$ and $\gamma_k$ share their endpoints.
\end{prop}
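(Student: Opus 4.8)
The plan is to reduce the statement to the ordinary Homotopy Lifting Property (Proposition \ref{prop_hatcher}), applied piecewise between consecutive points of $f^{-k}(P(f))$ on the curve. First, I would fix notation: write $\gamma \in \mathcal{H}^q_p(W(0))$ with $W(0) = \{w_1,\ldots,w_N\} = \mathrm{int}(\gamma) \cap P(f)$ totally ordered along $\gamma$, so that $\gamma = \gamma^{w_1}_p \bm{\cdot} \gamma^{w_2}_{w_1} \bm{\cdot} \cdots \bm{\cdot} \gamma^{q}_{w_N}$ with each interior subcurve avoiding $P(f)$. Given $\gamma_k \subset f^{-k}(\gamma)$ with $f^k|_{\gamma_k}$ injective, the preimage $\gamma_k$ inherits a concatenation structure: it passes through the points $\tilde w_j := (f^k|_{\gamma_k})^{-1}(w_j) \in f^{-k}(P(f))$ in the induced order, and each subcurve between consecutive $\tilde w_j$'s maps injectively onto the corresponding subcurve of $\gamma$ and has interior in $\C \setminus f^{-k}(P(f))$. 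So $\gamma_k \in \mathcal{H}^{\tilde q}_{\tilde p}(\widetilde W(k))$ for $\widetilde W(k) = \{\tilde w_1,\ldots,\tilde w_N\}$, with $\tilde p, \tilde q$ the endpoints of $\gamma_k$ lying over $p,q$.

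Next, for a given $\beta \in [\gamma]_{_0}$, by Definition \ref{def_post0} each subcurve $\beta^{w_{i+1}}_{w_i}$ is homotopic to $\gamma^{w_{i+1}}_{w_i}$ in $(\C \setminus f^{-k}(P(f))) \cup \{w_i,w_{i+1}\}$ — wait, in $(\C \setminus P(f)) \cup \{w_i,w_{i+1}\}$, i.e. relative to $P(f)$; but since $f^{-k}(P(f)) \supseteq P(f)$ only after one more pullback, I should be careful. The correct reading is: $\beta^{w_{i+1}}_{w_i} \simeq \gamma^{w_{i+1}}_{w_i}$ in $(\C \setminus P(f)) \cup \{w_i, w_{i+1}\}$. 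Restricting the homotopy to its interior, we get a homotopy \emph{with fixed endpoints} $w_i, w_{i+1}$ between $\mathrm{int}(\gamma^{w_{i+1}}_{w_i})$ and $\mathrm{int}(\beta^{w_{i+1}}_{w_i})$, which takes place in $\C \setminus P(f)$ for all intermediate times (the endpoints are the only points allowed to touch $P(f)$, and they are fixed). The key point is that the homotopy itself can be taken to avoid \emph{all} of $P(f)$ except at the two fixed endpoints, hence in particular it is a homotopy rel endpoints in $\C \setminus P(f)$ after removing the endpoints and working with half-open subcurves — or more cleanly, by a standard fattening argument, in $\C\setminus f^{-k+j}(P(f))$-type spaces. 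I would then invoke \eqref{eq_covering}, namely that $f^k\colon \C \setminus f^{-k}(P(f)) \to \C \setminus P(f)$ is a covering map (the hypotheses $f^{-1}(C)\subset C$ and $\mathrm{AV}(f)\cap C = \emptyset$ ensure everything stays inside $C$ where this covering behaves well, and guarantee no lifting obstruction from asymptotic values). By Proposition \ref{prop_hatcher} applied to this covering and the endpoint-fixing homotopy between $\gamma^{w_{i+1}}_{w_i}$ and $\beta^{w_{i+1}}_{w_i}$, starting from the lift $\gamma_k^{\,\tilde w_{i+1}}_{\tilde w_i}$, we obtain a unique lifted homotopy ending at some curve $\beta_k^{\,\tilde w_{i+1}}_{\tilde w_i}$ lifting $\beta^{w_{i+1}}_{w_i}$, with endpoints $\tilde w_i, \tilde w_{i+1}$ (endpoints lift uniquely once $\tilde w_i$ is fixed, by uniqueness of path lifting; here we use that $w_i, w_{i+1} \in P(f)$ so their preimages under $f^k$ form the discrete set $f^{-k}(P(f))$ and the lifted endpoint is forced to be the same $\tilde w_j$ we started with). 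Concatenating $\beta_k := \beta_k^{\,\tilde w_1}_{\tilde p} \bm{\cdot} \cdots \bm{\cdot} \beta_k^{\,\tilde q}_{\tilde w_N}$ gives the desired curve: it lies in $f^{-k}(\beta)$, it belongs to $\mathcal{H}^{\tilde q}_{\tilde p}(\widetilde W(k))$, each of its subcurves is homotopic rel endpoints to the corresponding subcurve of $\gamma_k$ in $(\C\setminus f^{-k}(P(f)))\cup\{\tilde w_i,\tilde w_{i+1}\}$, hence $\beta_k \in [\gamma_k]_{_k}$, and it shares its endpoints $\tilde p, \tilde q$ with $\gamma_k$. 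Uniqueness of $\beta_k$ follows from uniqueness in the Homotopy Lifting Property applied to each subcurve, together with the observation that the matching at the shared points $\tilde w_j$ is forced.

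The main obstacle I anticipate is the bookkeeping at the junction points $w_j \in P(f)$: one must verify that the lifted subcurves $\beta_k^{\,\tilde w_{i+1}}_{\tilde w_i}$ and $\beta_k^{\,\tilde w_{i+2}}_{\tilde w_{i+1}}$ actually share the endpoint $\tilde w_{i+1}$, so that their concatenation is a genuine curve — this requires that the lift of the endpoint $w_{i+1}$, computed from the $i$-th subcurve's homotopy versus the $(i{+}1)$-st subcurve's homotopy, agrees. This is where the hypothesis $\#(\gamma\cap P(f)) < \infty$ and the injectivity of $f^k|_{\gamma_k}$ do the work: injectivity means $f^k$ is a bijection between $\gamma_k$ and $\gamma$, so $\tilde w_{i+1}$ is unambiguously defined as the unique preimage of $w_{i+1}$ on $\gamma_k$, and the continuity of the lifted homotopies forces both adjacent lifts to terminate there. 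A secondary technical point, which I would handle with a brief fattening/retraction argument rather than by force, is justifying that a homotopy rel $\{w_i,w_{i+1}\}$ in $(\C\setminus P(f))\cup\{w_i,w_{i+1}\}$ genuinely lifts under the covering \eqref{eq_covering} whose base is $\C\setminus P(f)$: one restricts to the open subcurve, notes the homotopy stays in $\C\setminus P(f)$, lifts it there, and checks the lifted endpoints extend continuously to $\tilde w_i, \tilde w_{i+1}$ by path-lifting uniqueness. I expect the remaining details to be routine.
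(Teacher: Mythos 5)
Your overall strategy coincides with the paper's: split $\gamma$ and $\gamma_k$ at the finitely many points of $P(f)$ (resp.\ $f^{-k}(P(f))$) they carry, lift each intermediate piece through the covering \eqref{eq_covering} via Proposition \ref{prop_hatcher}, and re-concatenate. The genuine gap is at the step you dismiss with ``the lifted endpoints extend continuously to $\tilde w_i,\tilde w_{i+1}$ by path-lifting uniqueness.'' Uniqueness of lifts says nothing about \emph{existence} of the limit: the subcurve of $\beta$ ends at $w_{i+1}\in P(f)$, a point removed from the base of the covering, so its lift is a priori defined only on the open piece and could escape to infinity instead of converging to a preimage of $w_{i+1}$ --- this is exactly what happens when the endpoint is an asymptotic value (lifts under $\exp$ of a path ending at $0$ do not converge). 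This is precisely where the hypothesis $\text{AV}(f)\cap C=\emptyset$ has to be put to work, and where the paper does real work: it lifts the $\epsilon$-truncated subcurves, then proves convergence as $\epsilon\to 0$ by choosing a neighbourhood $V_{i+1}\ni v_{i+1}$ meeting $f^{-k}(w_{i+1})$ only in $v_{i+1}$, using the Open Mapping Theorem (available because no asymptotic values lie in $C$) to find $W_{i+1}\ni w_{i+1}$ with $W_{i+1}\subset f^k(V_{i+1})$, and deducing that the lifted curves eventually enter $V_{i+1}$ and tend to $v_{i+1}$. The same local control is what identifies the limit as the point $\tilde w_{i+1}$ sitting on $\gamma_k$, i.e.\ what makes your adjacent lifted pieces actually match; ``continuity of the lifted homotopies'' by itself does not force this, since the lifted homotopy only exists over the punctured base. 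You cite the AV hypothesis in passing, but your argument never actually uses it at the one point where it is indispensable.

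A secondary point: with the paper's (free) homotopy convention, a post-$0$-homotopy between $\gamma^{w_{i+1}}_{w_i}$ and $\beta^{w_{i+1}}_{w_i}$ lives in $(\C\setminus P(f))\cup\{w_i,w_{i+1}\}$ and its intermediate curves may pass through $w_i$ or $w_{i+1}$ at interior times, so your claim that the homotopy ``can be taken to avoid all of $P(f)$ except at the two fixed endpoints'' also requires justification; the paper sidesteps it by homotoping only the truncated curves, whose endpoints are not fixed. So: right architecture, but the endpoint-limit step --- the one place the hypotheses on $C$ and $\text{AV}(f)$ enter --- would not go through as written.
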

\begin{proof}
Suppose that $\gamma \in \mathcal{H}^{w_{N+1}}_{w_0}(W(0))$, where $w_0$ and $w_{N+1}$ are the endpoints of $\gamma$ and $W(0)=P(f)\cap \text{int}(\gamma)\eqdef\{w_1, \ldots, w_{N}\}$ for some $N>0$. Let $$\tilde{W}(k)\defeq f^{-k}(P(f))\cap \text{int}(\gamma_k)=f^{-k}(W(0))\cap \text{int}(\gamma_k)\eqdef\{v_1, \ldots, v_N\}.$$
In particular, $\gamma_k\in \mathcal{H}^{v_{N+1}}_{v_{0}}(\tilde{W}(k))$ for some $v_0, v_{N+1} \in f^{-k}(\{ w_0, w_{N+1}\}).$
For each $0\leq i\leq N$, we denote by $\gamma^i_k$ the subcurve in $\gamma_k$ with endpoints $v_i$ and $v_{i+1}.$

Similarly, for a fixed $\beta \in [\gamma]_{_0}$ and each $0 \leq i\leq N$, we denote by $\beta^i$ and $\gamma^i$ the respective subcurves in $\beta$ and $\gamma$ with endpoints $w_i$ and $w_{i+1}$. That is, for parametrizations of $\beta$ and $\gamma$ such that $\gamma(\frac{i}{N+1})=w_i=\beta(\frac{i}{N+1})$ for every $0\leq i \leq N+1$, 
$$ \beta^i\defeq \beta\vert_{[\frac{i}{N}, \frac{i}{N+1}]} \quad \text { and } \quad \gamma^i\defeq \gamma\vert_{[\frac{i}{N}, \frac{i}{N+1}]}.$$ For each $\epsilon >0$ small enough, we consider the restrictions $\beta^{i,\epsilon}\defeq \beta\vert_{[\frac{i}{N}+\epsilon, \frac{i}{N+1}-\epsilon]}$ and $\gamma^{i,\epsilon}\defeq \gamma\vert_{[\frac{i}{N}+\epsilon, \frac{i}{N+1}-\epsilon]}$. Then, since $\beta^{i,\epsilon}\subset \beta^i$ and $\gamma^{i,\epsilon}\subset \gamma^i$, 
$\beta^{i,\epsilon}$ is homotopic (in the usual sense) to $\gamma^{i,\epsilon}$ in $(\C \setminus f^{-k}(P(f)) )\cup \{w_i, w_{i+1}\}$. Recall that the notion of homotopy does not demand curves to share their endpoints. Therefore, for each $0\leq i \leq N$, if $\gamma^{i,\epsilon}_k\defeq \gamma^i_k \cap f^{-k}(\gamma^{i,\epsilon})$, by \eqref{eq_covering} and Proposition \ref{prop_hatcher}, there exists a unique curve $\beta^{i,\epsilon}_k \subset f^{-k}(\beta^{i,\epsilon})$ such that 
\begin{equation}\label{eq_homotnew}
\beta^{i,\epsilon}_k \quad \text{ is homotopic to } \gamma^{i,\epsilon}_k \quad \text{ in } \C \setminus f^{-k}(P(f)).
\end{equation}

We shall now see that as $\epsilon \rightarrow 0$, for all $0 \leq i\leq N$, $\beta^{k,\epsilon}_i$ converges to a curve $\beta^k_i$, with endpoints $v_i$ and $v_{i+1}$ and that is homotopic to $\gamma^{k}_i$ in $\C \setminus f^{-k}(P(f)) \cup \{v_i, v_{i+1}\}$. Indeed, note that $f^{-k}(w_i)$ and $f^{-k}(w_{i+1})$ are discrete sets of points, and hence we can find open neighbourhoods $V_i \ni v_i$ and $V_{i+1} \ni v_{i+1}$ such that $V_i \cap f^{-k}(w_i)=\{ v_i\}$ and $V_{i+1} \cap f^{-k}(w_{i+1})=\{ v_{i+1}\}$. By the assumption $\text{AV}(f) \cap C=\emptyset$, using the Open Mapping theorem, we conclude that $f^k\vert_{ V_i}$ is an open map, and so, we can find an open neighbourhood $ W_i \ni w_i$ with $W_i \subset f^k(V_i)$. In particular, $\beta^{i,\epsilon}(t) \in V_i$ for all $t$ sufficiently close to $i/N+\epsilon$, and $\beta^{i,\epsilon}(t) \in V_{i+1}$ for all $t$ sufficiently close to $(i+1)/N+\epsilon$. Thus, by continuity of $f$, as $\epsilon \rightarrow 0$, each of the curves in $\{\beta^{i,\epsilon}\}_i$ converges to a curve with endpoints $v_i$ and $v_{i+1}$, that we denote by $\beta^k_i$. By \eqref{eq_homotnew} and since by construction $V_i\cap f^{-k}(P(f))\cap V_{i+1}=\{v_i,v_{i+1}\}$, we have that $\beta^k_i$ is homotopic to $\gamma^{k}_i$ in $\C \setminus f^{-k}(P(f)) \cup \{v_i, v_{i+1}\}$.

Thus, the curve
$$\beta_k\defeq \{ v_0\} \bm{\cdot} \beta^k_0 \bm{\cdot} \{ v_1\} \bm{\cdot} \cdots \bm{\cdot} \beta^k_N \bm{\cdot} \{v_{N+1}\},$$
satisfies by construction that $\beta_k\in [\gamma]_{_k} $ and $f^k(\beta_k)=\beta$, as required.
\end{proof}
The second goal of this section is to prove Corollary \ref{cor_homot2}. This result asserts that given a function $f$ and a domain $U$ in a hyperbolic orbifold, if certain technical conditions are satisfied, then there is a positive constant $\mu$ such that for any curve $\gamma \subset U$, there exists a curve in $[\gamma]_{_0}$ of orbifold length less than $\mu$. In the next auxiliary proposition we construct curves in any desired post-$0$-homotopy class of arbitrarily small orbifold length for orbifolds with a unique ramified point:
\begin{prop}[Short post-$0$-homotopy curves around a ramified point]\label{prop_circle} Given $\epsilon>0$ and $d\in \N_{\geq 1}$, define the hyperbolic orbifold $\Or\defeq(\D_\epsilon,\nu_d)$ with $\nu_d(0)=d$ and $\nu_d\equiv 1$ elsewhere, and let $\rho_{\Or}(z)dz$ be its orbifold metric. Let $f$ be an entire function such that $P(f)\cap \overline{\D}_{\epsilon} =\lbrace 0\rbrace$. Then, for all $\epsilon'<\epsilon$ small enough, $\ell_\Or(\partial \D_{\epsilon'})<\epsilon/6. $ Moreover, for any curve $\gamma\subset \overline{\D}_{\epsilon'}$, there exists $\tilde{\gamma} \in [\gamma]_{_0}$ satisfying $\ell_\Or(\tilde{\gamma})<\epsilon/6.$
\end{prop}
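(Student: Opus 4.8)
The plan is to compute the orbifold density $\rho_\Or$ explicitly, deduce Part~1 by a direct integral, and then for Part~2 replace each curve $\gamma\subset\overline{\D}_{\epsilon'}$ by a short standard representative of its post-$0$-homotopy class, built from radial segments and small circular arcs around the cone point $0$.

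First I would compute $\rho_\Or$. The map $g\colon\D\to\Or$ given by $g(z)\defeq\epsilon z^d$ is a universal covering map of $\Or$: it is a branched cover of degree $d$ ramified only over $0$, and $\D$ is simply connected with trivial ramification, so $\rho_\D(z)=\vert g'(z)\vert\,\rho_\Or(g(z))$. Solving this exactly as in the derivation of \eqref{eq_density}, one obtains for $u=g(z)$ the radially symmetric density
\[
\rho_\Or(u)=\frac{2}{\epsilon\, d\,(\vert u\vert/\epsilon)^{(d-1)/d}\bigl(1-(\vert u\vert/\epsilon)^{2/d}\bigr)}.
\]
Part~1 is then immediate: parametrising $\partial\D_{\epsilon'}$ by $\epsilon' e^{it}$ gives
\[
\ell_\Or(\partial\D_{\epsilon'})=2\pi\epsilon'\rho_\Or(\epsilon')=\frac{4\pi\,(\epsilon'/\epsilon)^{1/d}}{d\bigl(1-(\epsilon'/\epsilon)^{2/d}\bigr)},
\]
which tends to $0$ as $\epsilon'\to0$, and the radial segment from $0$ to any $w$ with $\vert w\vert\le\epsilon'$ has $\Or$-length $\int_0^{\vert w\vert}\rho_\Or(r)\,dr$, which is finite because the integrand is $O(r^{-(d-1)/d})$ near $0$ with $(d-1)/d<1$, and tends to $0$ as $\epsilon'\to0$. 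Hence there is a threshold $\epsilon_0\in(0,\epsilon)$ (depending only on $\epsilon,d$) such that for all $\epsilon'<\epsilon_0$ both $\ell_\Or(\partial\D_{\epsilon'})<\epsilon/18$ and $\sup_{\vert w\vert\le\epsilon'}\int_0^{\vert w\vert}\rho_\Or(r)\,dr<\epsilon/18$; in particular the first assertion holds.

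Now fix such an $\epsilon'$ and let $\gamma\subset\overline{\D}_{\epsilon'}$ have endpoints $p,q$. The decisive point is that, since $P(f)\cap\overline{\D}_\epsilon=\{0\}$, we have $\overline{\D}_\epsilon\setminus\{0\}\subseteq\C\setminus P(f)$ and $\overline{\D}_\epsilon\subseteq(\C\setminus P(f))\cup\{0\}$, so any homotopy carried out inside $\overline{\D}_\epsilon$ (avoiding $0$, except possibly at endpoints) is a legitimate homotopy for the purpose of computing $[\gamma]_{_0}$, and the postsingular points outside $\overline{\D}_\epsilon$ are irrelevant. If $0\notin\gamma$, then $\gamma$ lies in $\overline{\D}_\epsilon\setminus\{0\}$, which is homotopy equivalent to a circle, so to exhibit a representative of $[\gamma]_{_0}$ it suffices to exhibit a curve in $\overline{\D}_\epsilon\setminus\{0\}$ with endpoints $p,q$ and the same winding number $n\in\Z$ about $0$ as $\gamma$. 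Choosing $\epsilon''=\epsilon''(\gamma)\in(0,\epsilon')$ small enough that $(\vert n\vert+1)\,\ell_\Or(\partial\D_{\epsilon''})<\epsilon/18$ (possible by Part~1), I let $\tilde\gamma$ be the concatenation of the radial segment from $p$ to $\epsilon'' e^{i\arg p}$, an arc along $\partial\D_{\epsilon''}$ starting there, winding $n$ times, and ending at $\epsilon'' e^{i\arg q}$, and the radial segment from there to $q$. Then $\tilde\gamma\subset\overline{\D}_{\epsilon'}\setminus\{0\}$ has winding number $n$ about $0$, so $\tilde\gamma\in[\gamma]_{_0}$, and $\ell_\Or(\tilde\gamma)<\epsilon/18+\epsilon/18+\epsilon/18=\epsilon/6$. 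If instead $0\in\gamma$, we may assume (passing to a representative if necessary) that $\gamma$ meets $0$ once and write $\gamma=\gamma^0_p\bm{\cdot}\gamma^q_0$; each factor lies in the simply connected set $\overline{\D}_\epsilon$ with fixed endpoints, hence is post-$0$-homotopic to the radial segment $[p,0]$, resp.\ $[0,q]$, via a homotopy inside $\overline{\D}_\epsilon$. Thus $\tilde\gamma\defeq[p,0]\bm{\cdot}[0,q]\in[\gamma]_{_0}$ and $\ell_\Or(\tilde\gamma)=\int_0^{\vert p\vert}\rho_\Or(r)\,dr+\int_0^{\vert q\vert}\rho_\Or(r)\,dr<\epsilon/9<\epsilon/6$.

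The metric computations (the density formula, $\ell_\Or(\partial\D_{\epsilon'})$, and integrability of $\rho_\Or$ along radii through the cone point) are routine. The step that needs care is the homotopy bookkeeping underlying Part~2: one must verify that post-$0$-homotopy of curves confined to $\overline{\D}_{\epsilon'}$ is detected by the winding number about $0$ (and is trivial once the curve passes through $0$), that this relies essentially on $P(f)\cap\overline{\D}_\epsilon=\{0\}$, that a curve meeting $0$ can be taken — rel endpoints, without leaving its post-$0$-homotopy class — to pass through $0$ exactly once and then follow radii, and that the auxiliary radius $\epsilon''$ may legitimately depend on $\gamma$, hence on its winding number, which can be arbitrarily large.
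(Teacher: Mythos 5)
Your proposal is correct and follows essentially the same route as the paper's proof: an explicit computation of $\rho_\Or$ via the covering $z\mapsto \epsilon z^d$, the observation that circle lengths $2\pi\epsilon_1\rho_\Or(\epsilon_1)$ and radial lengths tend to $0$, radial segments $[p,0]\bm{\cdot}[0,q]$ when $0\in\gamma$, and otherwise a representative built from two radial segments plus arcs on a circle whose radius is chosen (depending on the winding number $n$) so that the $(|n|+1)$ loops cost less than $\epsilon/18$. The only cosmetic differences are that you justify the winding-number reduction and the integrability of $\rho_\Or$ along radii explicitly, where the paper appeals to its preceding remark and a cruder bound.
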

\begin{remark}
The function $f$ does not play any role in the proof of the proposition, and its role in the statement is to fix post-$0$-homotopy classes. Note that for any function $f$ as in the statement and each curve $\gamma \subset \D_\epsilon^\ast$, $[\gamma]_{_0}$ equals the homotopy class of $\gamma$ (in the usual sense) in the punctured disc $\D^\ast_\epsilon$.
\end{remark}

\begin{proof}[Proof of Proposition \ref{prop_circle}]
Let $\Ort$ be the orbifold with underlying surface $\D$ and with $0$ as its unique ramified point, of degree $d$. We computed in \eqref{eq_density} an explicit formula for the density of its orbifold metric, namely, for each $u\in \D^\ast$, it holds $\rho_{\Ort}(u) du=2\left(d\vert u\vert^{\frac{d-1}{d}}(1-\vert u\vert^{\frac{2}{d}})\right)^{-1
}$. If $\lambda_\epsilon$ is the function that factors by $\epsilon^{-1}$, that is, $\lambda_\epsilon(z)\defeq\epsilon^{-1} z$, then $\lambda_\epsilon \colon \Or \rightarrow \Ort$ is an orbifold covering map. Hence, see Observation \ref{obs_covering_lifting}, for each $z=\epsilon u$, the density of the orbifold metric of $\Or$ is $$\rho_{\Or}(z)=\epsilon^{-1}\rho_{\Ort}(u/\epsilon)=2\left( \epsilon^{1/d}\vert z\vert^{\frac{d-1}{d}}\left(1-\epsilon^{-2/d}\vert z\vert^{\frac{2}{d}}\right)\right)^{-1}.$$ Observe that for any $\epsilon_1<\epsilon$, the function $\rho_{\Or}$ is constant when restricted to $\partial \D_{\epsilon_1}$, and $\ell_{\Or}(\partial \D_{\epsilon_1})=2\pi\epsilon_1 \rho_{\Or}(\epsilon_1)$, as a function of $\epsilon_1$, is strictly decreasing and converging to $0$ whenever $\epsilon_1 \to 0$. Thus, the first part of the statement follows.

In order to prove the second part of the statement, note that for any $z\in \D_\epsilon$, the $\Or$-length of the radial line joining $0$ to $z$, a segment that we denote by $[0,z]$, is at most $\vert z \vert \rho_{\Or}(z)$. By a \textit{radial line} we mean any subcurve of a straight line in $\overline{\D}$ joining the origin to $\partial \D$. Thus, the $\Or$-length of the segment $[0,z]$ also converges to $0$ as $\vert z \vert \to 0$. Hence, we can fix any $\epsilon'<\epsilon$ such that
\begin{equation}\label{eq_e18}
\ell_\Or([0, \epsilon'])<\epsilon/18.
\end{equation}
Let $\gamma\subset \D_{\epsilon'}$ with endpoints $p$ and $q$. If $\gamma$ contains the point $0$, then $\gamma \in H^q_p(\{0\})$ and the concatenation of the radial lines joining $p$ and $q$ to $0$, that is, $\tilde{\gamma}\defeq [p,0] \bm{\cdot}[0,q]$, satisfies $\tilde{\gamma}\in [\gamma]_{_0}$ and by \eqref{eq_e18}, $\ell_\Or(\tilde{\gamma})<\epsilon/9<\epsilon/6$. Otherwise, $\gamma \subset H^q_p(\emptyset)$. Thus, the curves in $[\gamma]_{_0}$ are exactly those homotopic to $\gamma$ (in the usual sense) in $\D_{\epsilon'} \setminus \lbrace 0 \rbrace$ with fixed endpoints. Note that roughly speaking, the homotopy class of such a curve is determined by the number $n$ of times that the curve ``loops'' around $0$ following an orientation. Hence, we are aiming to construct a representative of any such class with a bound on its orbifold length, namely $\epsilon/6$. In a rough sense, for each $n\geq 0$, we define a representative $\gamma_n^+$ as follows: we start at the point $p$ and follow the radial line towards the origin until we meet a circle centred at the origin of some radius $\epsilon_n$ small enough. Then, we follow anticlockwise an arc of this circle until meeting the point on the radial line from $0$ to $q$. Then, we follow the circle of radius $\epsilon_n$ anticlockwise $n$ times. Finally, we follow the radial line to $q$. Similarly, we define a curve $\gamma_n^-$ starting at $q$ and following the circle of radius $\epsilon_n$ clockwise $n$ times.

More formally, for each natural $n\geq 0$, by the observations made at the beginning of the proof, we can choose $\epsilon_n< \epsilon'$ such that
\begin{equation}\label{eq_18n1}
\ell_{\Or}(\partial \D_{\epsilon_n})<\frac{\epsilon}{18(n+1)}.
\end{equation}
Define $[p,x(n)]$ and $[y(n),q]$ as the restriction of the radial lines from $p$ to $0$ and $0$ to $q$ with respective endpoints $\{x(n),y(n)\}\subset \partial\D_{\epsilon_n}$. Let $\alpha_n^+$ and $\beta_n^-$ be the arcs in $\partial \D_{\epsilon_n}$ that connect $x(n)$ to $y(n)$ in positive and negative orientation respectively; see Figure \ref{fig:homot_epsilon}. Let $\partial \D_{\epsilon_n}^+$ and $\partial \D_{\epsilon_n}^-$ be the loops starting at $y(n)$ positively and negatively oriented respectively. We define the curves $\gamma^+_n$ and $\gamma^-_n$ as the concatenations
\begin{equation*}
\begin{split}
\gamma^+_n&\defeq [p,x(n)]\bm{\cdot} \alpha_n^+ \bm{\cdot} \underbrace{\partial \D_{\epsilon_n}^+\bm{\cdot} \cdots \bm{\cdot} \partial \D_{\epsilon_n}^+}_{ n \text{ times }} \bm{\cdot} [y(n),q] \quad \text{ and } \\
\gamma^-_n&\defeq [p,x(n)]\bm{\cdot} \beta_n^- \bm{\cdot} \underbrace{\partial \D_{\epsilon_n}^-\bm{\cdot} \cdots \bm{\cdot} \partial \D_{\epsilon_n}^-}_{ n \text{ times }} \bm{\cdot} [y(n),q].
\end{split}
\end{equation*}

\begin{figure}[htb]
\begingroup%
\makeatletter%
\providecommand\color[2][]{%
	\errmessage{(Inkscape) Color is used for the text in Inkscape, but the package 'color.sty' is not loaded}%
	\renewcommand\color[2][]{}%
}%
\providecommand\transparent[1]{%
	\errmessage{(Inkscape) Transparency is used (non-zero) for the text in Inkscape, but the package 'transparent.sty' is not loaded}%
	\renewcommand\transparent[1]{}%
}%
\providecommand\rotatebox[2]{#2}%
\newcommand*\fsize{\dimexpr\f@size pt\relax}%
\newcommand*\lineheight[1]{\fontsize{\fsize}{#1\fsize}\selectfont}%
\ifx\svgwidth\undefined%
\setlength{\unitlength}{226.46421748bp}%
\ifx\svgscale\undefined%
\relax%
\else%
\setlength{\unitlength}{\unitlength * \real{\svgscale}}%
\fi%
\else%
\setlength{\unitlength}{\svgwidth}%
\fi%
\global\let\svgwidth\undefined%
\global\let\svgscale\undefined%
\makeatother%
\begin{picture}(1,0.76992088)%
\lineheight{1}%
\setlength\tabcolsep{0pt}%
\put(0,0){\includegraphics[width=\unitlength,page=1]{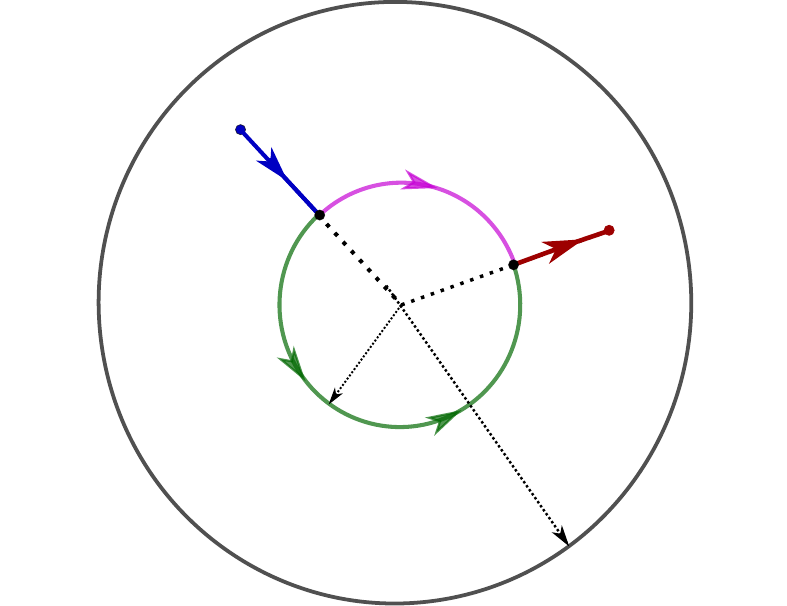}}%
\put(0.28388524,0.62451042){\color[rgb]{0,0,0}\makebox(0,0)[lt]{\lineheight{1.25}\smash{\begin{tabular}[t]{l}$p$\end{tabular}}}}%
\put(0.79583535,0.48664155){\color[rgb]{0,0,0}\makebox(0,0)[lt]{\lineheight{1.25}\smash{\begin{tabular}[t]{l}$q$\end{tabular}}}}%
\put(0.48185038,0.30867773){\color[rgb]{0,0,0}\makebox(0,0)[lt]{\lineheight{1.25}\smash{\begin{tabular}[t]{l}$\epsilon_n$\end{tabular}}}}%
\put(0.61112363,0.13001984){\color[rgb]{0,0,0}\makebox(0,0)[lt]{\lineheight{1.25}\smash{\begin{tabular}[t]{l}$\epsilon'$\end{tabular}}}}%
\put(0.38459563,0.53829906){\color[rgb]{0,0,0}\makebox(0,0)[lt]{\lineheight{1.25}\smash{\begin{tabular}[t]{l}$x(n)$\end{tabular}}}}%
\put(0.64035766,0.48353159){\color[rgb]{0,0,0}\makebox(0,0)[lt]{\lineheight{1.25}\smash{\begin{tabular}[t]{l}$y(n)$\end{tabular}}}}%
\put(0.21189424,0.50037018){\color[rgb]{0,0,0.76078431}\makebox(0,0)[lt]{\lineheight{1.25}\smash{\begin{tabular}[t]{l}\fontsize{9pt}{1em}$[p,x(n)]$\end{tabular}}}}%
\put(0.68329975,0.39505964){\color[rgb]{0.61568627,0,0}\makebox(0,0)[lt]{\lineheight{1.25}\smash{\begin{tabular}[t]{l}\fontsize{9pt}{1em}$[y(n),q]$\end{tabular}}}}%
\put(0.66003351,0.25662962){\color[rgb]{0,0.4,0}\makebox(0,0)[lt]{\lineheight{1.25}\smash{\begin{tabular}[t]{l}$\alpha_n^{+}$\end{tabular}}}}%
\put(0.51928283,0.57479722){\color[rgb]{0.77254902,0,0.83137255}\makebox(0,0)[lt]{\lineheight{1.25}\smash{\begin{tabular}[t]{l}$\beta_n^{-}$\end{tabular}}}}%
\end{picture}%
\endgroup%
\caption{Construction of representatives $\gamma_n^-$ and $\gamma_n^+$ for each post-$0$-homotopy class of curves in Proposition \ref{prop_circle} as a concatenation of oriented curves.}
	\label{fig:homot_epsilon}
\end{figure}

By the choices of $\epsilon'$ and $\epsilon_n$ in \eqref{eq_e18} and \eqref{eq_18n1}, $\max\{\ell_{\Or}(\gamma^-_n),\ell_{\Or}(\gamma^+_n) \}<\epsilon/6$, and thus, for each homotopy class of curves in $\D_{\epsilon'} \setminus \{0\}$, we have constructed a representative with the desired $\Or$-length. The statement now follows.
\end{proof}	

Given an entire function $f$, in order to construct in Corollary \ref{cor_homot2} curves of any post-$0$-homotopy class with uniformly bounded orbifold length in a compact set $U$, we will assume that there are dynamic rays landing at every point in $P(f)\cap U$. The reason for this is that we will use those dynamic rays as a boundary that other dynamic rays cannot cross more than once. Then, Corollary \ref{cor_homot2} will be a consequence of the more general Theorem \ref{new_lem_hom}, that shows that we can find curves of uniformly bounded orbifold length in any desired post-$0$-homotopy class, lying in any simply connected domain $C$ for which $P(f)\cup \overline{C}\subset \partial C$ and $\# (P(f)\cap \partial C)$ is finite. 

If $\pi: \D\rightarrow C$ is the Riemann map for some simply connected domain $C$, whenever $\partial C$ is locally connected, by Carathéodory-Torhorst's Theorem\footnote{\label{footTor}This theorem is commonly attributed only to Carathéodory, although its first full proof seems to date back to 1921 and was given by Marie Torhorst \cite{Torhorst}; see \cite[\S2]{lasse_prime_ends} for further discussion.} \cite[Theorem 2.1]{pommerenke_boundary}, $\pi$ extends continuously to a surjective map $\pi: \overline{\D}\rightarrow \overline{C}$, that we call the \textit{extended Riemann map.} Note that in that case, there might exist curves $\gamma \subset \partial C$ for which there is not a curve $\beta\subset \pi^{-1}(\gamma)$ satisfying $\pi(\beta)=\gamma$. For example, let $C$ be a disc $D$ minus a cross ``$+$'' that intersects $\partial D$ at a single point. Then, the horizontal segment of the cross would be an example of such a curve in $\partial C$. We will exclude those ``pathological cases'' in our result:
\begin{thm}[Curves in post-$0$-homotopy classes with uniformly bounded lengths] \label{new_lem_hom} 
Let $f$ be an entire map and let $\Or=(S, \nu)$ be a hyperbolic orbifold with $S\subset \C$. Let $C\subset S\setminus P(f)$ be a simply connected domain such that $C\Subset S$, $\partial C$ is locally connected and $\partial C \cap P(f)$ is finite. Let $\pi: \overline{\D}\rightarrow \overline{C}$ be the extended Riemann map. Then, there exists a constant $\eta\defeq \eta(C)>0$ with the following property. Let $\gamma$ be any injective curve such that either $\text{int}(\gamma) \subset C$, or $\gamma \subset \partial C$ and there exists a curve $\beta\subset \pi^{-1}(\gamma)$ satisfying $\pi(\beta)=\gamma$. Then, there exists a curve $\tilde{\gamma} \in [\gamma]_{_0}$ such that $\ell_{\Or}(\tilde{\gamma})\leq \eta.$
\end{thm}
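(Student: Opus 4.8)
The plan is to reduce the statement to the two cases separately and use the extended Riemann map to transfer everything to the unit disc, where hyperbolic geometry is well understood.

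\textbf{Setup and strategy.} First I would recall that since $C \Subset S$ and $\Or$ is hyperbolic, the density $\rho_\Or$ attains a finite maximum $M$ on the compact set $\overline{C}$ away from the (finitely many) ramified points in $\overline C$; in fact since $\partial C \cap P(f)$ is finite and $P(f)\cup \overline C \subset \partial C$ has the ramified points isolated, I should be careful: ramified points of $\Or$ may lie in $\overline C$, so $\rho_\Or$ may blow up there. The key observation is that near a ramified point the orbifold metric still assigns finite distance to the cone point (this is the advantage of cone points over punctures emphasised in the excerpt), so one can apply Proposition~\ref{prop_circle} locally. So I would first fix small disjoint discs $\D_{\epsilon_j}(p_j)$ around the finitely many ramified points $p_j$ of $\Or$ that lie in $\overline C$, with $P(f)\cap \overline{\D}_{\epsilon_j}(p_j)$ equal to $\{p_j\}$ if $p_j\in P(f)$ and empty otherwise, and shrink so that $\ell_\Or(\partial \D_{\epsilon_j'}(p_j)) < \delta$ for a parameter $\delta$ to be fixed; away from these discs, $\rho_\Or \le M$ for some finite $M$ depending only on $C$.

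\textbf{Transferring to the disc.} Let $\pi\colon\overline\D\to\overline C$ be the extended Riemann map. Given $\gamma$ as in the statement, lift it: if $\mathrm{int}(\gamma)\subset C$ then $\gamma$ lifts canonically to an arc in $\D$; if $\gamma\subset\partial C$ then by hypothesis there is an arc $\beta\subset\pi^{-1}(\gamma)\subset\partial\D$ with $\pi(\beta)=\gamma$. In either case we have an arc $\beta\subset\overline\D$ with $\pi(\beta)=\gamma$. Now replace $\beta$ by the hyperbolic geodesic of $\D$ (or a bounded-length curve through $\partial\D$) joining its endpoints — but here I must be careful about which points of $P(f)$ the curve $\gamma$ passes through, since post-$0$-homotopy classes are sensitive to the ordering in which the finitely many points $w_1,\ldots,w_N = P(f)\cap\mathrm{int}(\gamma)$ are met. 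So instead I would break $\beta$ into the $N+1$ subarcs $\beta^i$ corresponding to the subcurves $\gamma^i$ of $\gamma$ between consecutive $w_i,w_{i+1}$, and replace each $\beta^i$ — if it avoids all preimages of $P(f)$ in its interior, which it does by construction — by a curve in its homotopy class (rel endpoints) in $\overline\D$ minus $\pi^{-1}(P(f))$; project back and concatenate. Wherever $w_i \in P(f)$ is a ramified point of $\Or$, use Proposition~\ref{prop_circle} applied in the disc $\D_{\epsilon_i'}(w_i)$ to route the relevant piece with orbifold length $<\delta$ through that disc, giving the correct winding; wherever $w_i$ is unramified, the orbifold metric is bounded near $w_i$ and a short Euclidean detour suffices. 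The images of the $\beta^i$-replacements in $C$ have Euclidean length bounded in terms of $\mathrm{diam}(C)$ and hence, away from the ramified discs, $\Or$-length at most $M\cdot\ell_{\mathrm{eucl}}$, a constant $\eta_0(C)$; inside the ramified discs the contribution is at most $\delta$ per point.

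\textbf{Assembling the bound.} The total: $\ell_\Or(\tilde\gamma) \le \eta_0(C) + (N_{\max})\delta$, but $N$ is not a priori bounded over all curves $\gamma$. To control this I would use that $\#(P(f)\cap \overline C)$ is finite (it is $\le \#(P(f)\cap\partial C)$, which is finite by hypothesis, since $P(f)\cap C=\emptyset$), so $N\le \#(P(f)\cap\partial C) =: N_0$ is a genuine uniform bound, and setting $\eta \defeq \eta_0(C) + N_0\delta$ finishes. The main obstacle I anticipate is the careful bookkeeping at the ramified/postsingular points: ensuring that the replacement curve lies in the correct post-$0$-homotopy class $[\gamma]_{_0}$ (in particular that it meets exactly the points $w_1,\dots,w_N$ in the correct $\prec$-order and that each subarc between them is homotopic rel endpoints in the complement of $P(f)$ to the corresponding subarc of $\gamma$), while simultaneously keeping orbifold length under control — reconciling the "straighten to a geodesic" move with the "wind around the cone point the correct number of times" move from Proposition~\ref{prop_circle}, and handling the boundary case $\gamma\subset\partial C$ where $\pi$ is only a continuous (not injective) extension, so that homotopies in $\overline\D$ must be pushed forward with care using local connectivity of $\partial C$.
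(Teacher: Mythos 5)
There is a genuine gap, and it sits exactly where the theorem is hardest: your uniform bound $\eta_0(C)$. You assert that the replacement curves ``have Euclidean length bounded in terms of $\operatorname{diam}(C)$ and hence, away from the ramified discs, $\Or$-length at most $M\cdot\ell_{\mathrm{eucl}}$''. This is not justified by anything in your construction. If the replacements are chords or geodesics of $\D$ pushed forward by the extended Riemann map $\pi$, their images are merely continuous curves: Carathéodory--Torhorst gives continuity of $\pi$ on $\overline{\D}$, not rectifiability, so no Euclidean length bound (let alone one depending only on $\operatorname{diam}(C)$) follows. If instead you straighten in the plane (``straighten to a geodesic'', ``short Euclidean detour''), the new curve may leave $\overline{C}$, and outside $\overline{C}$ you control neither $P(f)$ nor $S$: the detour can hit postsingular points or exit the underlying surface, which destroys membership in $[\gamma]_{_0}$ or makes $\ell_\Or$ meaningless, and you cannot repair this without information about $P(f)\setminus\partial C$ that the hypotheses do not provide. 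Finally, restricting representatives to $\overline{C}$ does not save the estimate either: there are bounded simply connected domains with locally connected boundary (e.g.\ the region between two slowly converging spirals) whose internal Euclidean diameter is infinite, so ``length $\lesssim\operatorname{diam}(C)$ inside $C$'' is simply false in general. Your bookkeeping of the order of the points $w_1,\dots,w_N$ and the use of Proposition \ref{prop_circle} near cone points are fine, but in the paper that part is comparatively easy (any two curves with common endpoints lying in the simply connected set $C$, which misses $P(f)$, are automatically post-$0$-homotopic); the real content is the uniform length bound, which your proposal takes for granted.

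The paper obtains that bound by an entirely different mechanism, which your proposal is missing: define $L(z,w)\defeq\inf_{\beta\in[\pi([z,w])]_{_0}}\ell_\Or(\beta)$ for $(z,w)\in\overline{\D}\times\overline{\D}$, observe that $L$ is finite (orbifold geodesics between points of $\overline{C}$ have finite length, so one never needs $\pi([z,w])$ itself to be rectifiable), prove that $L$ is upper semicontinuous --- this is the technical heart, done by surgery with arcs of small circles $\partial\D_{\epsilon'}(\pi(z))$, $\partial\D_{\epsilon'}(\pi(w))$ and Proposition \ref{prop_circle} (together with Corollary \ref{cor_pick}) to control the pieces near the finitely many ramified points --- and then extract the uniform constant $\mu$ from compactness of $\overline{\D}\times\overline{\D}$. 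Curves with interior in $C$, and boundary curves cut at the finitely many points of $P(f)\cap\partial C$, are then compared to curves of the form $\pi([z,w])$ and handled by $2\mu$ per piece. To make your approach work you would need to supply some substitute for this compactness/semicontinuity argument; a direct geometric length estimate of the kind you propose does not exist under the stated hypotheses.
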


\begin{proof}
For any two points $z,w \in \overline{\D}$, we denote by $[z,w]$ the straight segment joining them. We start by finding for each curve of the form $\pi([z,w])\subset \overline{C}$, a curve in its post-$0$-homotopy class of (uniformly) bounded $\Or$-length. With that aim, let $L: \overline{\D} \times \overline{\D} \rightarrow \R_{\geq 0}$ be given by
\begin{equation}\label{eq_L}
L(z,w)\defeq\inf_{\beta \in [\pi([z,w])]_{_0}}\ell_\Or(\beta).
\end{equation}

We claim that $L$ achieves a maximum value $\mu$ in $\overline{\D}\times \overline{\D}$. To prove this, firstly we note that since any geodesic in $\Or$ joining two points in $\overline{C}$ has by definition finite $\Or$-length, $L(z,w)<\infty$ for all $(z,w) \in \overline{\D} \times \overline{\D}$. Then, we show in the next claim that $L$ is upper semicontinuous, and the existence of the maximum follows from the combination of these two facts. 
\begin{Claim} \label{ClaimLem}The function $L$ is upper semicontinuous. 
\end{Claim}		
\begin{subproof} Let $(z,w) \in \overline{\D}\times \overline{\D}$ and $\epsilon>0$ be arbitrary but fixed. We want to show that there exists a neighbourhood $U(z)\times U(w)$ of $(z,w)$ such that for every $(\tilde{z},\tilde{w})\in U(z)\times U(w)$, $L(\tilde{z},\tilde{w})< L(z,w)+\epsilon.$ Since by assumption $\#(\overline{C}\cap P(f))<\infty$, $d\defeq \max_{z\in\overline{C}\cap P(f)}\nu(z) <\infty$. Let us choose $\epsilon'<\epsilon$ small enough so that the estimates provided by Proposition \ref{prop_circle} with the parameters $\epsilon$ and $d!$ hold. Moreover, since the set of ramified points of $\Or$ is discrete, we can choose $\epsilon'<\epsilon/3$ such that $\D_{\epsilon'}(\pi(z)) \cup \D_{\epsilon'}(\pi(w)) \subset \Or$ and the only possible ramified points in $\D_{\epsilon'}(\pi(z)) \cup \D_{\epsilon'}(\pi(w))$ are $\pi(z)$ and $\pi(w)$. We also choose $\epsilon'$ small enough such that $\D_{\epsilon'}(\pi(z)) \cap \D_{\epsilon'}(\pi(w))=\emptyset$. For the rest of the proof of the claim, we assume that $\pi(z)$ and $\pi(w)$ are ramified points of $\Or$ of degree $d!$, since by Corollary \ref{cor_pick}, any estimates on lengths of curves obtained in this setting also hold for the original ramification values of $\pi(z)$ and $\pi(w)$, that lie between $1$ and $d$.
	
By continuity of $\pi$, we can find connected neighbourhoods $U(z)\ni z, U(w)\ni w$, relatively open in $\D$ and satisfying the following properties:
\begin{itemize}[noitemsep,wide=0pt, leftmargin=\dimexpr\labelwidth + 2\labelsep\relax]
\item $\pi(U(z)) \cup \pi(U(w))\subset (\D_{\epsilon'}(\pi(z)) \cup \D_{\epsilon'}(\pi(w)))\cap \overline{C}$.
\item For any $(\tilde{z}, \tilde{w})\in U(z)\times U(w)$, there exists a curve in $\pi^{-1}(\partial \D_{\epsilon'}(\pi(z)))\cap \D$, that we denote by $\xi^{\tilde{z}}$, that joins the first point of intersection of $[z, w]$ with $\pi^{-1}(\partial \D_{\epsilon'}(\pi(z)))$, with the first point of intersection of $[\tilde{z}, \tilde{w}]$ with $\pi^{-1}(\partial \D_{\epsilon'}(\pi(z)))$. Similarly, there is an arc $\xi^{\tilde{w}}$ in $\pi^{-1}(\partial \D_{\epsilon'}(\pi(w)))\cap \D$ with analogous properties; see Figure \ref{fig:lem_homot}.
\end{itemize}
	
	\begin{figure}[htb]
	\begingroup%
	\makeatletter%
	\providecommand\color[2][]{%
		\errmessage{(Inkscape) Color is used for the text in Inkscape, but the package 'color.sty' is not loaded}%
		\renewcommand\color[2][]{}%
	}%
	\providecommand\transparent[1]{%
		\errmessage{(Inkscape) Transparency is used (non-zero) for the text in Inkscape, but the package 'transparent.sty' is not loaded}%
		\renewcommand\transparent[1]{}%
	}%
	\providecommand\rotatebox[2]{#2}%
	\newcommand*\fsize{\dimexpr\f@size pt\relax}%
	\newcommand*\lineheight[1]{\fontsize{\fsize}{#1\fsize}\selectfont}%
	\ifx\svgwidth\undefined%
	\setlength{\unitlength}{425.19685039bp}%
	\ifx\svgscale\undefined%
	\relax%
	\else%
	\setlength{\unitlength}{\unitlength * \real{\svgscale}}%
	\fi%
	\else%
	\setlength{\unitlength}{\svgwidth}%
	\fi%
	\global\let\svgwidth\undefined%
	\global\let\svgscale\undefined%
	\makeatother%
	\begin{picture}(1,0.53333333)%
	\lineheight{1}%
	\setlength\tabcolsep{0pt}%
	\put(0,0){\includegraphics[width=\unitlength,page=1]{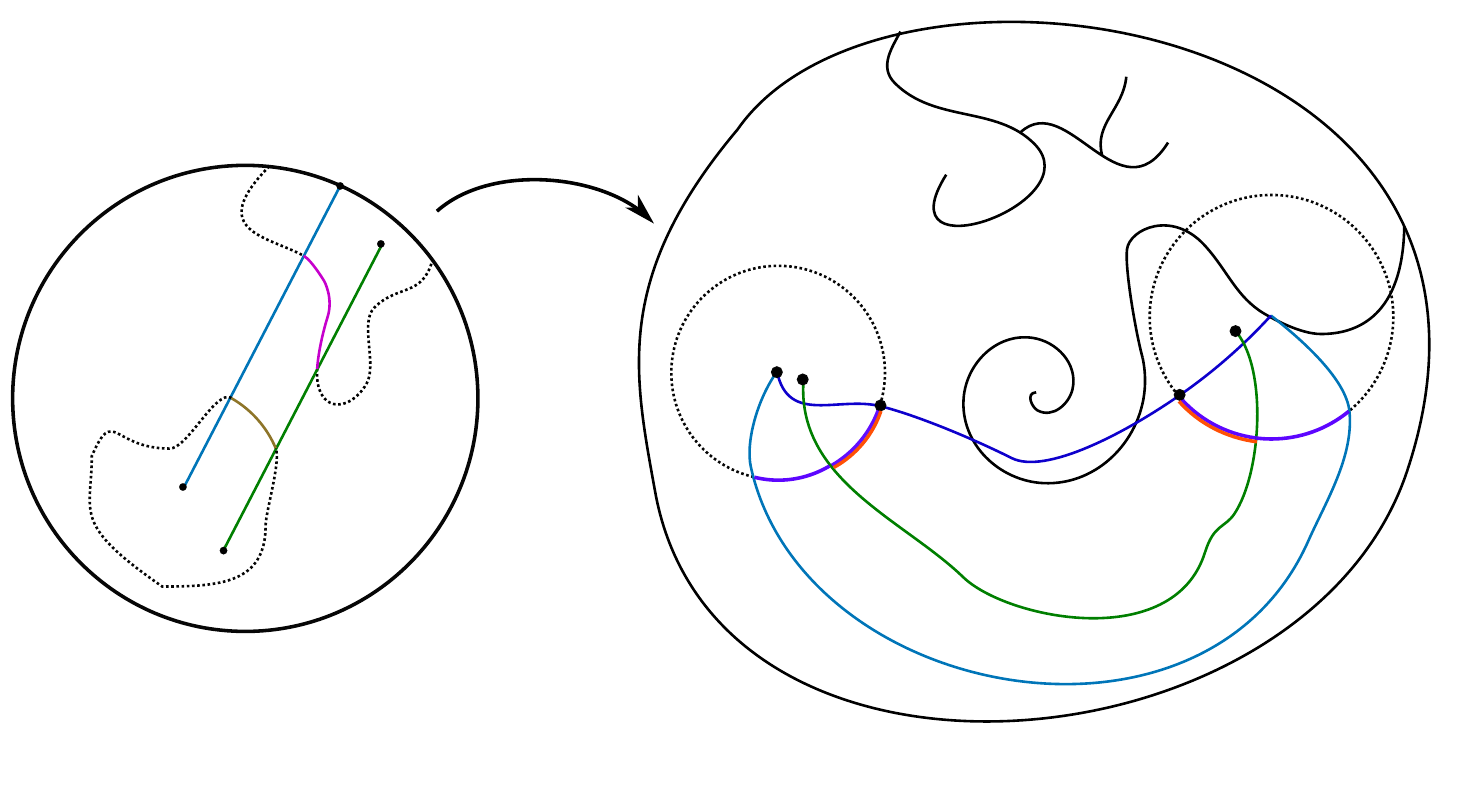}}%
	\put(0.05436037,0.41991304){\color[rgb]{0,0,0}\makebox(0,0)[lt]{\lineheight{1.25}\smash{\begin{tabular}[t]{l}$\overline{\mathbb{D}}$\end{tabular}}}}%
	\put(0.84936531,0.49435073){\color[rgb]{0,0,0}\makebox(0,0)[lt]{\lineheight{1.25}\smash{\begin{tabular}[t]{l}$\overline{C}$\end{tabular}}}}%
	\put(0,0){\includegraphics[width=\unitlength,page=2]{lemma_homot1.pdf}}%
	\put(0.22001214,0.41576166){\color[rgb]{0,0,0}\makebox(0,0)[lt]{\lineheight{1.25}\smash{\begin{tabular}[t]{l}$w$\end{tabular}}}}%
	\put(0.25815651,0.34645493){\color[rgb]{0,0,0}\makebox(0,0)[lt]{\lineheight{1.25}\smash{\begin{tabular}[t]{l}$\tilde{w}$\end{tabular}}}}%
	\put(0.10448885,0.18768942){\color[rgb]{0,0,0}\makebox(0,0)[lt]{\lineheight{1.25}\smash{\begin{tabular}[t]{l}$z$\end{tabular}}}}%
	\put(0.12806169,0.15391693){\color[rgb]{0,0,0}\makebox(0,0)[lt]{\lineheight{1.25}\smash{\begin{tabular}[t]{l}$\tilde{z}$\end{tabular}}}}%
	\put(0.20645698,0.2332133){\color[rgb]{0,0.49803922,0}\makebox(0,0)[lt]{\lineheight{1.25}\smash{\begin{tabular}[t]{l}\fontsize{9pt}{1em}$[\tilde{z},\tilde{w}]$\end{tabular}}}}%
	\put(0.11701106,0.30735251){\color[rgb]{0,0.45882353,0.72156863}\makebox(0,0)[lt]{\lineheight{1.25}\smash{\begin{tabular}[t]{l}\fontsize{9pt}{1em}$[z,w]$\end{tabular}}}}%
	\put(0.14776905,0.22458039){\color[rgb]{0.55686275,0.45882353,0.16078431}\makebox(0,0)[lt]{\lineheight{1.25}\smash{\begin{tabular}[t]{l}$\xi^{\tilde{z}}$\end{tabular}}}}%
	\put(0.21962243,0.35297553){\color[rgb]{0.77647059,0,0.8}\makebox(0,0)[lt]{\lineheight{1.25}\smash{\begin{tabular}[t]{l}$\xi^{\tilde{w}}$\end{tabular}}}}%
	\put(0.85040794,0.32654332){\color[rgb]{0,0,0}\makebox(0,0)[lt]{\lineheight{1.25}\smash{\begin{tabular}[t]{l}\fontsize{9pt}{1em}$\pi(w)$\end{tabular}}}}%
	\put(0.79157303,0.31880503){\color[rgb]{0,0,0}\makebox(0,0)[lt]{\lineheight{1.25}\smash{\begin{tabular}[t]{l}\fontsize{9pt}{1em}$\pi(\tilde{w})$\end{tabular}}}}%
	\put(0.81656709,0.26083227){\color[rgb]{0,0,0}\makebox(0,0)[lt]{\lineheight{1.25}\smash{\begin{tabular}[t]{l}$\fontsize{9pt}{1em}y$\end{tabular}}}}%
	\put(0.79770804,0.22887451){\color[rgb]{1,0.29803922,0}\makebox(0,0)[lt]{\lineheight{1.25}\smash{\begin{tabular}[t]{l}$\gamma_y$\end{tabular}}}}%
	\put(0.54239296,0.28370738){\color[rgb]{0,0,0}\makebox(0,0)[lt]{\lineheight{1.25}\smash{\begin{tabular}[t]{l}\fontsize{9pt}{1em} $\pi(\tilde{z})$\end{tabular}}}}%
	\put(0.55462659,0.2414167){\color[rgb]{0,0.25490196,0}\makebox(0,0)[lt]{\lineheight{1.25}\smash{\begin{tabular}[t]{l}\fontsize{8pt}{1em}$\lambda(\tilde{z})$\end{tabular}}}}%
	\put(0.60647118,0.25815654){\color[rgb]{0,0,0}\makebox(0,0)[lt]{\lineheight{1.25}\smash{\begin{tabular}[t]{l}$\fontsize{9pt}{1em}x$\end{tabular}}}}%
	\put(0.58837601,0.22309896){\color[rgb]{1,0.29803922,0}\makebox(0,0)[lt]{\lineheight{1.25}\smash{\begin{tabular}[t]{l}$\gamma_x$\end{tabular}}}}%
	\put(0.53002334,0.18385808){\color[rgb]{0.34901961,0,1}\makebox(0,0)[lt]{\lineheight{1.25}\smash{\begin{tabular}[t]{l}$\delta_x$\end{tabular}}}}%
	\put(0.69967843,0.13033699){\color[rgb]{0,0.49411765,0}\makebox(0,0)[lt]{\lineheight{1.25}\smash{\begin{tabular}[t]{l}\fontsize{9pt}{1em} $\pi([\tilde{z},\tilde{w}])$\end{tabular}}}}%
	\put(0.64748007,0.08644327){\color[rgb]{0,0.45882353,0.72156863}\makebox(0,0)[lt]{\lineheight{1.25}\smash{\begin{tabular}[t]{l}\fontsize{9pt}{1em} $\pi([z,w])$\end{tabular}}}}%
	\put(0.72686697,0.24263778){\color[rgb]{0.05490196,0,0.8}\makebox(0,0)[lt]{\lineheight{1.25}\smash{\begin{tabular}[t]{l}$\beta$\end{tabular}}}}%
	\put(0.86454475,0.20945232){\color[rgb]{0.34901961,0,1}\makebox(0,0)[lt]{\lineheight{1.25}\smash{\begin{tabular}[t]{l}$\delta_y$\end{tabular}}}}%
	\put(0.5121224,0.22034893){\color[rgb]{0.55686275,0.45882353,0.16078431}\makebox(0,0)[lt]{\lineheight{1.25}\smash{\begin{tabular}[t]{l}\fontsize{6pt}{1em}$\pi(\xi^{\tilde{z}})$\end{tabular}}}}%
	\put(0.85643694,0.25264343){\color[rgb]{0.77647059,0,0.8}\makebox(0,0)[lt]{\lineheight{1.25}\smash{\begin{tabular}[t]{l}\fontsize{6pt}{1em}$\pi(\xi^{\tilde{w}})$\end{tabular}}}}%
	\put(0.48002963,0.2813311){\color[rgb]{0,0,0}\makebox(0,0)[lt]{\lineheight{1.25}\smash{\begin{tabular}[t]{l}\fontsize{9pt}{1em}$\pi(z)$\end{tabular}}}}%
	\put(0.35435731,0.42724404){\color[rgb]{0,0,0}\makebox(0,0)[lt]{\lineheight{1.25}\smash{\begin{tabular}[t]{l}$\pi$\end{tabular}}}}%
	\put(0.11,0.39){\color[rgb]{0.4,0.4,0.4}\makebox(0,0)[lt]{\lineheight{1.25}\smash{\begin{tabular}[t]{l}\fontsize{9pt}{1em}$U(w)$\end{tabular}}}}%
	\put(0.18687377,0.18023402){\color[rgb]{0.4,0.4,0.4}\makebox(0,0)[lt]{\lineheight{1.25}\smash{\begin{tabular}[t]{l}\fontsize{9pt}{1em}$U(z)$\end{tabular}}}}%
	\end{picture}%
	\endgroup%
			\caption{Proof of upper semicontinuity of the function $L$. Points in $P(f)$ are represented by red stars.}
			\label{fig:lem_homot}
		
	\end{figure}
	
	In particular, $\pi(\xi^{\tilde{z}})$ and $\pi(\xi^{\tilde{w}})$ are arcs in $\partial\D_{\epsilon'}(\pi(z)) \cap C$ and $\partial \D_{\epsilon'}(\pi(w)) \cap C$ joining $\pi([\tilde{z},\tilde{w}])$ and $\pi([z,w])$. Let
	$\lambda(\tilde{z})$ be the restriction of $\pi([\tilde{z},\tilde{w}])$ between $\pi(\tilde{z})$ and the endpoint of $\pi(\xi^{\tilde{z}})$ that also belongs to $\pi([\tilde{z},\tilde{w}])$. In particular, $\lambda(\tilde{z})$ belongs to $\overline{\D}_{\epsilon'}(\pi(z))$, and thus, by Proposition \ref{prop_circle}, there exists $\tilde{\lambda}(\tilde{z}) \in [\lambda(\tilde{z})]_{_0}$ satisfying $\ell_\Or(\tilde{\lambda}(\tilde{z}))<\epsilon/6.$ Analogously, if $\lambda(\tilde{w})$ is the restriction of $\pi([\tilde{z},\tilde{w}])$ between $\pi(\tilde{w})$ and the endpoint of $\pi(\xi^{\tilde{w}})$ that also belongs to $\pi([\tilde{z},\tilde{w}])$, then there exists $\tilde{\lambda}(\tilde{w}) \in [\lambda(\tilde{w})]_{_0}$ such that $\ell_\Or(\tilde{\lambda}(\tilde{w}))<\epsilon/6.$ We also define $\lambda(z)$ (resp. $\lambda(w)$) as the restriction of $\pi([z,w])$ between $\pi(z)$ (resp. $\pi(w)$) and the endpoint of $\pi(\xi^{z})$ (resp. $\pi(\xi^{w})$) that also belongs to $\pi([z,w])$.
	
	\noindent Consider the subcurves 
	\begin{equation*}
	\begin{split}
	\lceil\pi([\tilde{z},\tilde{w}])\rceil&\defeq \pi([\tilde{z},\tilde{w}]) \setminus 
	(\lambda(\tilde{z}) \cup \lambda(\tilde{w})) \quad \text{ and } \\
	\lceil\pi([z,w])\rceil &\defeq \pi([z,w]) \setminus 
	(\lambda(z) \cup \lambda(w)).
	\end{split}
	\end{equation*}
	In particular, for each of the just-defined restrictions, one of their endpoints is an endpoint of $\pi(\xi^{\tilde{z}})$, and the other one is an endpoint of $\pi(\xi^{\tilde{w}})$. Since all curves with fixed endpoints totally contained in a simply connected domain are homotopic, see \cite[Proposition 1.6]{hatcher2002algebraic}, any two curves totally contained in
	$C$ are homotopic in $\C\setminus P(f)$, and thus the concatenation
	\begin{equation}\label{eq_conc1}
	\pi(\xi^{\tilde{z}}) \bm{\cdot} \lceil\pi([z,w])\rceil \bm{\cdot} \pi(\xi^{\tilde{w}})\quad \text{ is post-$0$-homotopic to } \quad \lceil\pi([\tilde{z},\tilde{w}])\rceil. 
	\end{equation}
	Let us choose\footnote{We believe that the infimum in \eqref{eq_L} is in fact a minimum, that is, we can choose a curve $\beta$ that is an \textit{orbifold geodesic} in the corresponding post-$0$-homotopy class with minimum length. Nonetheless, a reference has not been located and its existence is not required for our purposes.} any curve $\beta \in [\pi([z,w])]_{_0}$ so that $\ell_\Or(\beta)< L(z,w)+ \epsilon/3$. Let $x$ be the first point in $\beta \cap \partial \D_{\epsilon'}(\pi(z))$ and $y$ be last point in $\beta \cap \partial \D_{\epsilon'}(\pi(w))$ with respect to a parametrization of $\beta$ from $\pi(z)$ to $\pi(w)$, and let $\lceil\beta \rceil$ be the restriction of $\beta$ between those points; see Figure~\ref{fig:lem_homot}. Let us choose a pair of arcs $\delta_x \subset \partial \D_{\epsilon'}(\pi(z))$ and $\delta_y \subset \partial \D_{\epsilon'}(\pi(w))$ connecting respectively $x$ and $y$ to the single points in the intersections $\pi(\xi^{\tilde{z}})\cap \lceil\pi([\tilde{z},\tilde{w}])\rceil $ and $\pi(\xi^{\tilde{w}}) \cap \lceil\pi([\tilde{z},\tilde{w}])\rceil$, in such a way that the regions that those arcs together with $\lceil\beta \rceil$ and $\lceil\pi([\tilde{z},\tilde{w}])\rceil$ enclose, do not contain $\pi(z)$ nor $\pi(w)$. Since by assumption $\beta \in [\pi([z,w])]_{_0}$, by construction, the concatenation
	$$ \delta_x \bm{\cdot} \lceil\pi([z,w])\rceil \bm{\cdot} \delta_y \quad \text{ is post-$0$-homotopic to } \quad \lceil\beta\rceil.$$
	Consequently, if $\gamma_x\subset (\pi(\xi^{\tilde{z}})\cup \delta_x)$ and $\gamma_y\subset (\pi(\xi^{\tilde{w}})\cup \delta_y)$ are the curves joining the endpoints of $\lceil\pi([\tilde{z},\tilde{w}])\rceil$ and $\lceil \beta \rceil$, then, using \eqref{eq_conc1}, 
	$$\gamma_2 \defeq \gamma_x \bm{\cdot} \lceil \beta \rceil \bm{\cdot} \gamma_y \quad \text{ is post-$0$-homotopic to }\quad \lceil\pi([\tilde{z},\tilde{w}])\rceil. $$
	By construction and using Proposition \ref{prop_circle}, $$\ell_\Or(\gamma_2)\leq \ell_\Or(\partial \D_{\epsilon'}(\pi(z)))+ \ell_\Or(\beta)+\ell_\Or(\partial \D_{\epsilon'}(\pi(w))) < \ell_\Or(\beta)+ \epsilon/3.$$ Finally, the concatenation 
	$$\gamma \defeq \tilde{\lambda}(\tilde{z}) \bm{\cdot} \gamma_2 \bm{\cdot} \tilde{\lambda}(\tilde{w}) \quad \text{ is post-$0$-homotopic to }\quad \pi([\tilde{z},\tilde{w}]) $$
	and $L(\tilde{z}, \tilde{w})\leq \ell_{\Or}(\gamma) < L(z,w)+\epsilon$.
\end{subproof}	

If $\mu$ is the maximum value that $L$ attains in $\overline{\D} \times \overline{\D}$, then for every point $(z,w)\in \overline{\D}\times \overline{\D}$, we can find a curve $\beta\in [\pi([z,w])]_{_0}$ such that $\ell_\Or(\beta)< 2\mu$. Let $\gamma$ be an injective curve as in the statement. We start by considering both of the cases when int$(\gamma)\subset C$, and when $\gamma\subset \partial C$ and in addition int$(\gamma)\cap P(f)=\emptyset$. Let $p$ and $q$ be the endpoints of $\gamma$ and let $z,w \in \pi^{-1}(\{p,q\})$ be the endpoints of a curve in $\overline{\D}$ that is mapped injectively to $\gamma$ under $\pi$ . Note that such curve always exists: when int$(\gamma)\subset C$, it is the curve that contains the unique preimage $\pi^{-1}(\text{int}(\gamma))$, and when $\gamma\subset \partial C$, there are two such curves, that in particular share one of their endpoints. In both cases, $\pi([z,w])$ and $\gamma$ belong to a simply connected domain contained in $(\C \setminus P(f)) \cup \{p,q\}$. Thus, if we consider the set $W\defeq \gamma\cap P(f)$, which might be either empty or contain one of both of the endpoints $\{p,q\}$ of $\gamma$, we have, using again \cite[Proposition 1.6]{hatcher2002algebraic}, that $\pi([z,w]),\gamma\in \mathcal{H}^q_p(W(0))$ are post-$0$-homotopic, and in particular, $\pi([z,w])\in [\gamma]_{_0}$. Thus, there exists $\tilde{\gamma} \in [\pi([z,w])]_{_0}= [\gamma]_{_0}$ such that $\ell_\Or(\tilde{\gamma} )\leq 2\mu$.

We are left to consider the case when $\gamma \subset \partial C$ and there is a curve $\beta\subset \pi^{-1}(\gamma)$ satisfying $\pi(\beta)=\gamma$. Let $p$ and $q$ be the endpoints of $\gamma$ and suppose that $\gamma\in H^{q}_{p}(W)$ for $W\defeq P(f)\cap \gamma=\{w_1, \ldots, w_N\}$. Let us parametrize $\gamma$ so that $\gamma(0)=p$, $\gamma(1)=q$ and $\gamma(\frac{j}{N+1})=w_j$ for all $1\leq j \leq N$. In particular, following \eqref{def_concat}, we can express $\gamma$ as a concatenation $\gamma=\gamma_0\cdot\gamma_2\cdots \gamma_N$, where $\gamma_i\defeq \gamma\vert_{ [i/N, (i+1)/N ]}$. Note that for each $0\leq i\leq N$, $\gamma_i$ satisfies the hypotheses of the case considered above, that is, $\gamma_i \subset \partial C$ and int$(\gamma_i) \cap P(f)=\emptyset$. Thus, for each $i$, there exists a curve $\tilde{\gamma}_i \in [\gamma_i]_{_0}$ with $\ell_\Or(\tilde{\gamma}_i )\leq 2\mu$. Then, the concatenation $\tilde{\gamma}=\tilde{\gamma}_0\cdot\tilde{\gamma}_2\cdots \tilde{\gamma}_N$ satisfies $\tilde{\gamma} \in [\gamma]_{_0}$ and $\ell_\Or(\tilde{\gamma})\leq 2\mu \vert P(f)\vert\eqdef\nu $. Letting $\eta\defeq \max \{\nu,2\mu \}$, the theorem follows.
\end{proof}

Before stating our last result, we provide a formal definition of dynamic ray:
\begin{defn}[Dynamic rays {\cite[Definition 2.2]{RRRS}}]\label{def_ray}
Let $f\in \B$. A \emph{ray tail} of $f$ is an injective curve $\gamma :[t_0,\infty)\rightarrow I(f)$, with $t_0>0$, such that
\begin{itemize}
\item for each $n\geq 1$, $t\mapsto f^{n}(\gamma(t))$ is injective with $\lim_{t \rightarrow \infty} f^{n}(\gamma(t))=\infty$. 
\item $f^{n}(\gamma(t))\rightarrow \infty$ uniformly 
in $t$ as $n\rightarrow \infty$.
\end{itemize}
A \emph{dynamic ray} of $f$ is a maximal injective curve $\gamma :(0,\infty)\rightarrow I(f)$ such that the restriction $\gamma_{|[t,\infty)}$ is a ray tail for all $t > 0$. We say that $\gamma$ \emph{lands} at $z$ if $\lim_{t \rightarrow 0^+} \gamma(t)=z$, and we call $z$ the \emph{endpoint} of $\gamma$.
\end{defn}

\begin{observation}[Properties of dynamic rays] \label{obs_prop_rays} With our definition, dynamic rays might contain singular values and postsingular points, as occurs for the map $f=\cosh$, see \cite{mio_cosine,mio_splitting}. In addition, dynamic rays might overlap pairwise in subcurves delimited by (preimages) of critical points; see \cite[Chapter 4]{mio_thesis} or \cite[proof of Proposition 2.3]{RRRS} for further discussion. Moreover, any two rays intersect in a connected set, since otherwise, their intersection would enclose a domain that escapes uniformly to infinity, contradicting that $I(f)$ has empty interior as $f\in \mathcal{B}$, see \cite{eremenkoclassB}.
\end{observation}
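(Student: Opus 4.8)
Observation~\ref{obs_prop_rays} bundles three assertions of rather different natures, and I would treat them in turn. The first two --- that a dynamic ray may pass through singular and postsingular points, and that two dynamic rays may overlap along a subcurve terminating at a preimage of a critical point --- are essentially existence statements for which nothing needs to be proved from scratch. For the first I would recall the analysis of $f=\cosh$ in \cite{mio_cosine}, where $S(f)=\text{CV}(f)=\{-1,1\}\subset I(f)$ and the description of the escaping set exhibits dynamic rays through $\pm1$, hence, by forward invariance of the ray structure, through every point of $P(f)$. For the second I would recall the standard construction of dynamic rays as pullbacks of ray tails under the branches of $f^{-n}$: a branch that passes through a critical point is forced to coincide, up to that point, with a neighbouring branch, so the two rays so obtained share an initial arc whose free endpoint is a preimage of a critical point; this is precisely the discussion in \cite[Chapter~4]{mio_thesis} and \cite[proof of Proposition~2.3]{RRRS}, which I would cite rather than reproduce.

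The one genuine claim is the third: for $f\in\B$, any two dynamic rays $\gamma_1,\gamma_2$ meet in a connected set. The argument I would give is by contradiction. Suppose $\gamma_1\cap\gamma_2$ is disconnected. Since each $\gamma_i$ is injective, I can pick points $a=\gamma_1(t_a)=\gamma_2(s_a)$ and $b=\gamma_1(t_b)=\gamma_2(s_b)$, with all four parameters strictly positive, lying in two distinct components of $\gamma_1\cap\gamma_2$; the subarc of $\gamma_1$ joining $a$ to $b$ and the subarc of $\gamma_2$ joining $a$ to $b$ are then distinct simple arcs with common endpoints, so their union contains a Jordan curve $\Gamma$, which bounds a bounded Jordan domain $W$ with $\partial W\subseteq\Gamma\subseteq\gamma_1\cup\gamma_2\subseteq I(f)$. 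Because the parameters are bounded away from $0$, each of these two subarcs is a compact piece of a ray tail of $\gamma_1$, respectively $\gamma_2$, so $f^n\to\infty$ uniformly on the compact set $\partial W$. The remaining step is to upgrade this to $f^n\to\infty$ uniformly on $\overline W$ as well, so that $W$ is a non-empty open subset of $I(f)$, contradicting that $I(f)$ has empty interior for $f\in\B$ \cite{eremenkoclassB}, and finishing the proof.

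That upgrade is the delicate point, and the step I expect to be the main obstacle: the minimum modulus principle yields $|f^n|\geq\min_{\partial W}|f^n|$ on $\overline W$ only when $f^n$ has no zero in $W$, so one must also handle the possibility of such zeros. The plan is to route around this using the standard machinery for uniformly escaping compact sets together with the fact that, for $f\in\B$, the (fast) escaping set contains no non-empty open set: one shows that $\overline W$ lies in a level of the fast escaping set, such a level has no bounded complementary component (Rippon--Stallard), and hence $\overline W$ is itself contained in that level, again contradicting empty interior. I would finally check the degenerate configurations permitted by the second assertion: if $\gamma_1$ and $\gamma_2$ coincide along one subarc and separate along another, the points $a,b$ can still be chosen in the two distinct components of $\gamma_1\cap\gamma_2$, the arcs of $\gamma_1$ and $\gamma_2$ between them are genuinely distinct, and the extraction of $\Gamma$ goes through unchanged.
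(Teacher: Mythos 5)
Your reduction of the observation to its third assertion, and your construction of the Jordan curve $\Gamma$ and the bounded Jordan domain $W$ whose boundary is a compact union of pieces of ray tails escaping uniformly, is exactly the configuration the paper's one-line argument has in mind (the first two assertions are handled by citation both by you and by the paper). The genuine gap is in the step you yourself flag, because the repair you propose does not work. Uniform escape on $\partial W$ carries no information about the \emph{rate} of escape, and points on ray tails need not be fast escaping (whether ``hairs'' lie in $A(f)$ is a known question whose answer is negative in general), so there is no way to place $\partial W$, let alone $\overline{W}$, in a level set $A_R(f)$. Moreover the Rippon--Stallard statement you invoke is misquoted: what they prove is that every \emph{component of} $A_R(f)$ is unbounded, not that its complement has no bounded components; in fact $A(f)$ does have bounded complementary ``holes'' (for instance inside multiply connected wandering domains, whose closures lie in $A(f)$ while the holes contain periodic points). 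That example also shows that no lemma of the form ``a bounded domain whose boundary escapes uniformly is contained in $I(f)$'' can hold for general entire functions, so the hypothesis $f\in\B$ must enter precisely at this step, and not only in the final appeal to the empty interior of $I(f)$.

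The standard completion, which is what the paper's terse justification relies on, uses the geometry of class $\B$: fix $R$ with $S(f)\subset \D_R$ and $\overline{W}\subset\D_R$. One checks by induction on $n$ that every component of $f^{-n}(\C\setminus\overline{\D}_R)$ is unbounded and simply connected (a bounded component would map properly, hence onto, an unbounded domain; simple connectivity follows from the maximum principle because the image component is simply connected, so the filled hull of the image of any Jordan curve stays inside it). For all large $n$ one has $|f^n|>R$ on $\partial W$, so the connected set $\partial W$ lies in a single such component $V$; since $V$ is simply connected and unbounded, the filled hull $\overline{W}$ of $\partial W$ is contained in $V$, whence $|f^n|>R$ on all of $\overline{W}$. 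As $R$ was arbitrary, $f^n\to\infty$ uniformly on $\overline{W}$, so $W$ is a nonempty open subset of $I(f)$, contradicting $I(f)\subset J(f)$ and the emptiness of the interior of $I(f)$ for $f\in\B$ \cite{eremenkoclassB}. If you replace your fast-escaping detour by this argument (or an equivalent minimum-principle argument showing that $f^n$ omits $\D_R$ on $W$), your proof coincides with the paper's.
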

\begin{cor}[Pieces of rays with uniformly bounded length]\label{cor_homot2}
Let $f\in \B$, let $\Or=(S, \nu)$ be a hyperbolic orbifold with $S\subset \C$, and let $U\Subset S$ be a simply connected domain with locally connected boundary. Assume that $P(f)\cap \overline{U} \subset J(f)$, $\#( P(f)\cap \overline{U})$ is finite and there exists a dynamic ray or ray tail landing at each point in $P(f)\cap U$. Then, there exists a constant $L_U \geq 0$, depending only on $U$, such that for any (connected) piece of ray tail $\xi \subset U$, there exists $\delta\in [\xi]_{_0}$ with $\ell_{\Or}(\delta)\leq L_U.$
\end{cor}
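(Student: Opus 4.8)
The plan is to deduce this from Theorem \ref{new_lem_hom}, which already provides the uniformly bounded post-$0$-homotopy curves for any simply connected domain $C$ with locally connected boundary such that $\overline C \subset S$, $C \cap P(f) = \emptyset$ and $\partial C \cap P(f)$ is finite. The subtlety is that a piece of ray tail $\xi \subset U$ may pass through points of $P(f) \cap U$, so $\xi$ itself need not lie in a component of $U \setminus P(f)$; thus Theorem \ref{new_lem_hom} cannot be applied directly to $\xi$ inside $U$. The idea is to enlarge the domain slightly, or rather to pass to an auxiliary domain $C$ obtained from $U$ by removing the (finitely many) dynamic rays or ray tails landing at the points of $P(f) \cap U$, together with those postsingular points themselves. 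Since $P(f)\cap \overline U$ is finite and each such point in $P(f)\cap U$ has a ray or ray tail landing at it, the set $R$ consisting of the pieces of those rays lying in $\overline U$ (from the landing point out to $\partial U$, or to the first exit from $U$) is a finite union of arcs; removing $R$ from $U$ yields a set $C$ whose components are simply connected (the rays act as slits). We then arrange that $\overline{C}\subset S$, $\partial C$ is locally connected, and $\partial C \cap P(f)$ is finite — the latter because $\partial C \subset \partial U \cup R \cup (P(f)\cap U)$ and each piece of $R$ meets $P(f)$ in only its finitely many (pre)critical subdivision points (using Observation \ref{obs_prop_rays} and the fact that $f\in\mathcal B$ is strongly postcritically separated, so these rays meet $P(f)$ finitely often inside $U$). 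Here Theorem \ref{new_lem_hom} applies to (the relevant components of) $C$ and supplies a constant $\eta = \eta(C)$.

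Next I would reduce the piece of ray tail $\xi \subset U$ to finitely many subcurves each lying in $\overline C$ with the required structure. First, by Observation \ref{obs_prop_rays}, $\xi \cap R$ is a finite union of points and subarcs (two rays meet in a connected set, and only finitely many rays of $R$ are involved), so after subdividing $\xi$ at those intersection points and at the points of $\xi \cap P(f)\cap U$ — finitely many, since $\#(P(f)\cap\overline U)<\infty$ — we write $\xi = \xi_1 \bm\cdot \xi_2 \bm\cdot \cdots \bm\cdot \xi_m$ where each $\xi_j$ either lies in $\partial C$ with $\mathrm{int}(\xi_j)\cap P(f)=\emptyset$, or lies in $\overline C$ with $\mathrm{int}(\xi_j)\subset C$. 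In either case $\xi_j$ is an injective curve of one of the two types covered by Theorem \ref{new_lem_hom} (for the $\partial C$-case one must check the non-pathological condition, i.e.\ that there is a curve $\beta\subset\pi^{-1}(\xi_j)$ with $\pi(\beta)=\xi_j$; this holds because $\xi_j$ is a subarc of a slit we introduced by hand, hence genuinely accessible). Applying Theorem \ref{new_lem_hom} to each $\xi_j$ gives $\tilde\xi_j \in [\xi_j]_{_0}$ with $\ell_\Or(\tilde\xi_j)\le \eta$, and concatenating, $\delta \defeq \tilde\xi_1 \bm\cdot \cdots \bm\cdot \tilde\xi_m$ lies in $[\xi]_{_0}$ with $\ell_\Or(\delta) \le m\,\eta$.

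Finally I would make the bound $L_U$ independent of the particular piece $\xi$. The number $m$ of pieces is controlled: the number of subdivision points of $\xi$ coming from $\xi\cap P(f)$ is at most $\#(P(f)\cap\overline U)$, and the number coming from $\xi\cap R$ is bounded by twice the number of rays in $R$ plus the number of their (pre)critical subdivision points inside $U$ — all of which depend only on $U$ and $f$, not on $\xi$ (using strong postcritical separation to bound critical multiplicities and orbit lengths, and that $\xi$ is injective so it cannot oscillate arbitrarily). Hence $m \le m(U)$ for a constant $m(U)$, and we may set $L_U \defeq m(U)\,\eta(C)$. The main obstacle I expect is the bookkeeping in this last step: carefully arguing that the slit domain $C$ has the properties needed for Theorem \ref{new_lem_hom} (local connectivity of $\partial C$ after slitting, simple connectivity of the components, finiteness of $\partial C \cap P(f)$), and that an injective ray piece $\xi$ crosses the slits $R$ only boundedly often — this is where Observation \ref{obs_prop_rays} on connectedness of intersections of rays, together with the finiteness provided by strong postcritical separation, does the real work.
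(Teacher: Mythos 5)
Your proposal is correct and follows essentially the same route as the paper: slit $U$ along (pieces of) the rays landing at the points of $P(f)\cap U$ to obtain a simply connected domain $C$ with locally connected boundary avoiding $P(f)$, apply Theorem \ref{new_lem_hom} to $C$, and decompose an arbitrary ray piece $\xi\subset U$ into a bounded number of subcurves of the two admissible types using Observation \ref{obs_prop_rays}. Two small remarks: the finiteness and counting facts you attribute to strong postcritical separation (which is not a hypothesis of this corollary) already follow from $\#(P(f)\cap\overline{U})<\infty$ together with the finite tree structure of the slits, and for subarcs lying on the slits the liftability condition of Theorem \ref{new_lem_hom} should be verified edge-by-edge on the slit trees (subdividing at branch points, where a curve need not lift as a single arc), exactly as the paper does, rather than by appealing to accessibility of the slit as a whole.
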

\begin{proof}
Let $P(f) \cap U\eqdef\{p_1,\ldots, p_N\}$ for some $N<\infty$. We start defining a set $X \supset (P(f) \cap U)$ using pieces of dynamic rays. By assumption, for each $1\leq i\leq N,
$ there exists at least one dynamic ray  or ray tail landing at each $p_i\in P(f) \cap U$. We choose any such ray and let $\Gamma_i$ be its parametrization including its landing point. Then, denote by $\gamma_i$ the unique connected component of $\Gamma_i\cap U$ that contains its landing point $p_i$. Note that $\gamma_i$ might contain some other points in $P(f)\cap U$. We construct the set $X$ inductively: define $X_1\defeq \gamma_1 \cup \partial U$. For each $2\leq j\leq N$, if $p_j\in X_{j-1}$, we define $X_j=X_{j-1}.$ Otherwise, let $X_j$ be the union of $X_{j-1}$ with the connected component of $\gamma_j \setminus X_{j-1}$ containing $p_j$, which by Observation \ref{obs_prop_rays}, is a bounded piece of dynamic ray. By construction, $X\defeq X_N$ is the union of $\partial U$ with a collection of $\tilde{N}\leq N$ connected components, each of them consisting of a concatenation of finitely many pieces of rays, and so that $U\setminus X$ is simply connected. That is, the set $X$ can be written as $X=\bigcup^{\tilde{N}}_{k=1} T_k \cup \partial U$, where each $T_k$ is topologically a \textit{tree} with finitely many edges $\lbrace e^k_1,\ldots,e^k_{m(k)} \rbrace$. We denote $M\defeq \max_{k\leq \tilde{N}}m(k)$.

Let $C\defeq U\setminus X$ and note that by construction, $C$ is a simply connected domain such that $C\cap P(f)=\emptyset$, and moreover, since $\partial C=X\cup \partial U$, $\partial C$ is locally connected. Hence, the set $C$ satisfies the hypotheses in Theorem \ref{new_lem_hom}. We claim that if $\xi$ is a piece of ray tail in $U=\overline{C}\setminus \partial U$, then $\xi$ is of one of the following three types:\\
\textbf{Type 1.} $\text{int}(\xi)\subset C$.\\
\textbf{Type 2.} $\xi\subset X$ and $\xi$ is a concatenation of at most $M$ curves $\{\alpha_i\}_{i\leq M}$, so that for each $\alpha_i$, there exists a curve $\beta_i \subset \partial \D$ such that $\pi(\beta_i)=\alpha_i$, where $\pi \colon \overline{\D} \rightarrow \overline{C}$ is the extended Riemann map.\\
\textbf{Type 3.} $\xi$ is a concatenation of at most $2\tilde{N}+1$ curves of types $1$ and $2$.\\
Indeed, if $\xi\subset \partial C$, by assumption, $\xi\subset T_k$ for some $k$, and so $\xi$ is contained in a concatenation of some of the edges $\lbrace e^k_1,\ldots,e^k_{m(k)} \rbrace$ of $T_k$. For each $i$, $\pi^{-1}(e^k_i)$ is either an arc that maps $2$-to-$1$ to $e^k_i$, or consists of two different arcs, each of them mapping $1$-to-$1$ to $e^k_i$. Thus, $\xi$ is of type $2$. Let us now analyse the case when $\xi\subset \overline{C}\setminus \partial U$ is a piece of dynamic ray which is not of type $1$ nor $2$. Then $\xi \cap T_k\neq \emptyset$ for some $k$. Since $T_k$ is a union of pieces of dynamic rays, all of its points but maybe some endpoints escape uniformly to infinity. Hence, by Observation~\ref{obs_prop_rays}, $\xi \cap T_k$ is connected. This means that $\xi \cap X$ is a collection of at most $\tilde{N}$ curves, preceded and/or followed by subcurves of $\xi$ with interior in $C$. Thus, $\xi$ is of type $3$. Consequently, by Theorem \ref{new_lem_hom}, there exists a constant $\eta$ such that if $\xi$ is a piece of dynamic ray in $U$, then there exists a curve $\tilde{\xi} \in [\xi]_{_0}$ such that $\ell_{\Or}(\tilde{\xi})\leq \max\{M, 2\tilde{N}+1\}\eta\eqdef\mu.$
\end{proof}

\bibliographystyle{alpha}
\bibliography{biblioComplex}
\end{document}